\def\@textbottom{\vskip \z@ \@plus 3.5em \@minus 1.5em}\let\@texttop\relax
\newenvironment*{enum}[1][\enumlabel]%
{\begin{enumerate}[label=#1,topsep=0pt,leftmargin=*]}%
{\end{enumerate}}
\newcommand{\enumlabel}{\,-}
\newcommand{\dhook}{\rotatebox[origin=b]{-90}{\ensuremath{\mkern-1.6mu\lhook\mkern-1.1mu}}}
\newcommand{\uhook}{\rotatebox[origin=b]{90}{\ensuremath{\mkern-1.35mu\lhook\mkern-1mu}}}
\lhook\rhook{\dhook\mkern.5mu}{\mkern.5mu\uhook}
\newcommand{\downmapstochar}{\rotatebox[origin=b]{-90}{\ensuremath{\mkern-1.6mu\mapstochar\mkern2mu}}}
\newcommand{\upmapstochar}{\rotatebox[origin=b]{90}{\ensuremath{\mkern-1.5mu\mapstochar\mkern2mu}}}
\newtheoremstyle{mystyle}
{.75em plus .25em minus .1em} 
{.8em plus .3em minus .15em}
{}{}{}{}{\newline}
{\thmnumber{#2. }{\scshape\thmname{#1}}\thmnote{ -- {\sffamily #3}}}
\theoremstyle{mystyle}
\newtheorem{theorem}{Theorem}[section]
\newtheorem{lemma}[theorem]{Lemma}
\newtheorem{proposition}[theorem]{Proposition}
\newtheorem{corollary}[theorem]{Corollary}
\newtheorem{definition}[theorem]{Definition}
\newtheorem{conjecture}[theorem]{Conjecture}
\newtheorem{construction}[theorem]{Construction}
\newtheorem{notation}[theorem]{Notation}
\newtheorem{convention}[theorem]{Convention}
\newtheorem{remark}[theorem]{Observation}
\newtheorem{examples}[theorem]{Examples}
\newtheorem{example}[theorem]{Example}
\newtheorem{describe}[theorem]{Description}
\renewenvironment*{proof}[1][\proofname]{\par
  \pushQED{\qed}%
  \normalfont \topsep6\p@\@plus6\p@\relax
  \trivlist
  \item[\hskip\labelsep
        \scshape
    #1\@addpunct{:}]\mbox{}\par\nopagebreak[4]\ignorespaces
}{%
  \ignorespaces\par\nopagebreak[4]\popQED\endtrivlist\@endpefalse
}
\numberwithin{equation}{section}
\newcommand*\defrefnospace[2]{%
   \expandafter\newcommand\expandafter{\csname #1#2refnospace\endcsname}[2]{%
      \hyperref[##2]{##1\csname #2reftagform\endcsname{\csname #1refaux\endcsname{##2}}}%
   }%
}
\def\defrefspace#1#2{%
   \expandafter\newcommand\expandafter{\csname #1#2refnospace\endcsname}[2]{%
      \hyperref[##2]{##1\csname #2reftagform\endcsname{\csname #1refaux\endcsname{##2}}}%
   }%
}
\newcommand\ExecuteMapList[2]{%
   \@for\@ii:= #2\do{\expandafter #1\@ii}%
}
\ExecuteMapList{\defrefnospace}{{short}{},{short}{eq},{full}{},{full}{eq},{s}{},{s}{eq}}
\newcommand{\shortref}[2][]{\shortrefnospace{#1}{#2}}
\newcommand{\sref}[2][]{\srefnospace{#1}{#2}}
\newcommand{\seqref}[2][]{\seqrefnospace{#1}{#2}}
\DeclareMathAlphabet{\mathpzc}{OT1}{pzc}{m}{it}
\newcommand*{\ie}{i.e.\ }
\newcommand*\etc{etc\@ifnextchar.{}{.\@}}
\newcommand{\comment}[1]{}
\newcommand{\vcent}[2][]{\vcenter{\hbox{#1\ensuremath{#2}}}}
\newcommand{\newchoice}[6]{\mathchoice{#2{#3#1}}{#2{#4#1}}{#2{#5#1}}{#2{#6#1}}}
\newcommand{\low}[2]{\lower#1\hbox{\ensuremath{#2}}}
\newcommand{\textsymbol}[2][\sizenormal]{\newchoice{#2}{\mbox}#1} 
\newcommand{\sizenormal}{\noexpand\normalsize\noexpand\normalsize\noexpand\scriptsize\noexpand\tiny}
\newcommand{\sizelarge}{\noexpand\large\noexpand\large\noexpand\small\noexpand\scriptsize}
\newcommand{\sizeLarge}{\noexpand\Large\noexpand\Large\noexpand\normalsize\noexpand\footnotesize}
\newcommand{\sizeLARGE}{\noexpand\LARGE\noexpand\LARGE\noexpand\large\noexpand\normalsize}
\newcommand{\sizehuge}{\noexpand\huge\noexpand\huge\noexpand\Large\noexpand\large}
\newcommand{\sizeHuge}{\noexpand\Huge\noexpand\Huge\noexpand\LARGE\noexpand\Large}
\newcommand{\sizenormal}{\unexpanded\normalsize\normalsize\scriptsize\tiny}
\newcommand{\sizelarge}{\unexpanded\large\large\small\scriptsize}
\newcommand{\sizeLarge}{\unexpanded\Large\Large\normalsize\footnotesize}
\newcommand{\sizeLARGE}{\unexpanded\LARGE\LARGE\large\normalsize}
\newcommand{\sizehuge}{\unexpanded\huge\huge\Large\large}
\newcommand{\sizeHuge}{\unexpanded\Huge\Huge\LARGE\Large}
\newcommand{\rotc}[2]{{\rotatebox[origin=c]{#1}{#2}}}
\newcommand{\func}[6][\rTo]{
\begin{diagram}[h=1.2em,w=.2em]
{#2}&\,:\,&{#3}&#1&{#4}\\
&&{#5}&\rMapsto&{#6}
\end{diagram}}
\newcommand{\sfunc}[5][\rTo]{
\begin{diagram}[h=1.2em,w=.2em]
{#2}&#1&{#3}\\
{#4}&\rMapsto&{#5}
\end{diagram}}
\DeclareMathOperator*{\colimit}{colim}
\newcommand{\colim}{\mathop{\colimit}}
\DeclareMathOperator*{\hocolimit}{hocolim}
\newcommand{\hocolim}{\mathop{\hocolimit}}
\DeclareMathOperator*{\tensorproduct}{\otimes}
\newcommand{\tensor}{\mathop{\tensorproduct}}
\DeclareMathOperator*{\Tensorproduct}{\bigotimes}
\newcommand{\Tensor}{\mathop{\Tensorproduct}}
\DeclareMathOperator*{\identity}{id}
\newcommand*{\id}{\identity\nolimits}
\DeclareMathOperator*{\source}{dom}
\DeclareMathOperator*{\target}{codom}
\DeclareMathOperator*{\mor}{\mathtt{mor}}
\DeclareMathOperator*{\ob}{\mathtt{ob}}
\DeclareMathOperator*{\Map}{Map}
\DeclareMathOperator*{\LKE}{LKE}
\newcommand{\Cat}{{\mathrm{Cat}}}
\newcommand{\CAT}{{\mathrm{CAT}}}
\newcommand{\bigCAT}{{\mathbf{CAT}}}
\newcommand{\Ord}{{\mathrm{Ord}}}
\newcommand{\FinSet}{{\mathrm{FinSet}}}
\newcommand{\Set}{{\mathrm{Set}}}
\newcommand{\SET}{{\mathrm{SET}}}
\newcommand{\bigSET}{{\mathbf{SET}}}
\newcommand{\Top}{{\mathrm{Top}}}
\newcommand{\TOP}{{\mathrm{TOP}}}
\newcommand{\spectra}{{\mathrm{Sp}}}
\newcommand{\SymmMonCat}{{\mathrm{SMCat}}}
\newcommand{\SymmMonCAT}{{\mathrm{SMCAT}}}
\newcommand{\NN}{{\mathbb N}}
\newcommand{\ZZ}{{\mathbb Z}}
\newcommand{\RR}{{\mathbb R}}
\newcommand{\To}{\longrightarrow}
\renewcommand{\Mapsto}{\longmapsto}
\newcommand{\into}{\hookrightarrow}
\newcommand{\longhookrightarrow}{\lhook\joinrel\relbar\joinrel\rightarrow}
\newcommand{\Into}{\longhookrightarrow}
\newcommand{\defeq}{\coloneqq}
\newcommand{\xto}{\xrightarrow}
\newcommand{\xTo}{\xlongrightarrow}
\newcommand{\suchthat}{\mathrel{}\vcentcolon\mathrel{}}
\mathchardef\dash="2D
\newcommand{\op}[1]{{{#1}^\mathrm{op}}}
\newcommand{\set}[1]{{\left\{ {#1} \right\} }}
\DeclareMathOperator{\disc}{{\texttt{disc}}}
\DeclareMathOperator{\indisc}{{\texttt{indisc}}}
\DeclareMathOperator{\composition}{comp}
\DeclareMathOperator{\BarConst}{Bar}
\DeclareMathOperator{\im}{im}
\DeclareMathOperator{\Diff}{Diff}
\DeclareMathOperator{\interior}{int}
\DeclareMathOperator{\hopb}{{ho\,pb}}
\DeclareMathOperator{\hofibre}{{ho\,fib}}
\DeclareMathOperator{\Nerve}{Nerve}
\DeclareMathOperator{\poMap}{\overrightarrow{\mathrm{Map}}}
\newcommand{\hoLKE}{ho\hspace{-.1em}\LKE}
\newcommand{\setuniverse}{\mathfrak{S}}
\newcommand{\Groth}{{\mathbf{Groth}}}
\newcommand{\EE}{\mathcal{E}}
\newcommand{\StrictInt}{{\mathrm{StrictInt}}}
\newcommand{\LeftInt}{{\mathrm{LeftInt}}}
\newcommand{\RightInt}{{\mathrm{RightInt}}}
\newcommand{\Int}{{\mathrm{Int}}}
\newcommand{\reverse}{\text{rev}}
\newcommand{\dual}{\text{dual}}
\newcommand{\sSet}{s\Set}
\newcommand{\Group}{{\mathrm{Grp}}}
\newcommand{\overGL}[1]{{\Group_{\smash{\nicefrac{}{n}}}}}
\newcommand{\T}{{\mathbf{T}}}
\renewcommand{\mod}{{\mathrm{mod}}}
\newcommand{\alg}{{\mathrm{alg}}}
\newcommand{\PROP}{{\mathrm{PROP}}}
\newcommand{\internal}{{\mathcal{I}}}
\newcommand{\Yoneda}{{\mathrm{Yon}}}
\newcommand{\onearrow}{{({\scriptstyle 0\to 1})}}
\newcommand{\proj}{{\mathrm{proj}}}
\newcommand{\inclusion}{{\mathrm{incl}}}
\newcommand{\concat}{{cc}}
\newcommand{\discrete}{{\delta}}
\newcommand{\textbig}{{\mathrm{big}}}
\newcommand{\oriented}{{\mathrm{or}}}
\newcommand{\derived}{{\ensuremath{\scriptscriptstyle\mathsf{L}}}}
\newcommand{\twocell}[5][0pt]{{\overset{\raisebox{{#3}}{$\scriptstyle {#2}$}}{\raisebox{#1}{\rotatebox[origin=c]{#5}{\clap{\Large{\ensuremath{#4}}}}}}}}
\newcommand{\bigast}{\vcent[\Large]{\ast}}
\newcommand{\clop}[2]{{\left[#1,#2\right[\mbox{}\hspace{-.8pt}}}
\newcommand{\opcl}[2]{{\hspace{-.8pt}\mbox{}\left]#1,#2\right]}}
\newcommand{\opop}[2]{{\hspace{-.8pt}\mbox{}\left]#1,#2\right[\mbox{}\hspace{-.8pt}}}
\newcommand{\clcl}[2]{{\left[#1,#2\right]}}
\newcommand{\Conf}{{\mathsf{Conf}}}
\newcommand{\D}{{\mathbf{D}}}
\newcommand{\E}{{\mathsf{E}}}
\newcommand{\Total}{{\mathsf{T}}}
\newcommand{\Emb}{{\mathrm{Emb}}}
\newcommand{\REmb}{{\mathsf{I\hspace{-1.3pt}Emb}}}  
\newcommand{\RDer}{{\mathsf{I\hspace{-1.3pt}D}}}  
\newcommand{\Princ}{{\mathsf{P}}}
\newcommand{\Frame}{{\mathsf{Fr}}}
\newcommand{\M}{{\mathbb{M}}}
\newcommand{\B}[1]{{\mathfrak{B}{#1}}}
\newcommand{\free}[2]{{{#1}\langle{#2}\rangle}}
\newcommand{\quot}[2]{{\nicefrac{{#2}}{{#1}}}}
\newcommand{\evaluation}[1]{{\mathbf{ev}_{#1}}}
\newcommand{\restrict}[2]{{{#1}|_{#2}}}
\newcommand{\HomCat}[3][]{{[{#2},{#3}]_{#1}}}
\newcommand{\HomCatlarge}[3][]{{\textsymbol[\large\large\small\scriptsize]{[}{#2},{#3}\textsymbol[\large\large\small\scriptsize]{]}_{#1}}}
\newcommand{\HomCatLarge}[3][]{{\textsymbol[\Large\Large\normalsize\footnotesize]{[}{#2},{#3}\textsymbol[\Large\Large\normalsize\footnotesize]{]}_{#1}}}
\newcommand{\HomCatLARGE}[3][]{{\textsymbol[\LARGE\LARGE\large\normalsize]{[}{#2},{#3}\textsymbol[\LARGE\LARGE\large\normalsize]{]}_{#1}}}
\newcommand{\cartesian}{\text{cart}}
\newcommand{\isocart}{\text{qc}}
\newcommand{\poset}{{poset}}
\newcommand{\potop}{{potop}}
\newcommand{\filtTop}{\mathfrak{F}\Top}
\newcommand{\popathcat}{\overrightarrow{path}}
\newcommand{\rtriangle}{\triangle}  
\newcommand{\squares}[2]{\square(#1\vert#2)}
\newcommand{\squaresbig}[2]{\square\big(#1\big\vert#2\big)}
\newcommand{\rtriangles}[2]{\rtriangle(#1\vert#2)}
\newcommand{\rtrianglesbig}[2]{\rtriangle\big(#1\big\vert#2\big)}
\newcommand{\rtrianglesBig}[2]{\rtriangle\Big(#1\Big\vert#2\Big)}
\newcommand{\popath}{\overrightarrow{H}}
\newcommand{\popathstrong}{\popath_\mathrm{s}}
\newcommand{\holink}{\mathrm{holink}}
\newcommand{\reparam}{rprm}
\newcommand{\pathcat}{{path}}
\newcommand{\stickypathcat}{{st}}
\newcommand{\stcat}{{\stickypathcat\dash\pathcat}}
\newcommand{\disccat}[1]{{{#1}^{\smash{\discrete}}}}
\newcommand{\disccatsub}[2]{{{#1}^{\smash{\discrete}}_{\hphantom{\discrete}\hspace{-0.18em}{#2}}}}
\newcommand{\bimod}[2]{{{#1}\text{-}\mathrm{bimod}\text{-}{#2}}}
\newcommand{\realization}[1]{{\left\vert #1 \right\vert}}
\newcommand{\undoperad}[1]{{\mathbf{\Sigma}\hspace{.5pt}{#1}}}	
\newcommand{\catop}[1]{{{\Xi}\hspace{.5pt}{#1}}}	
\newcommand{\Ass}{{\mathbf{Ass}}}
\newcommand{\Comm}{{\mathbf{Comm}}}
\begin{document}

\addtolength{\abovedisplayskip}{0em plus .3em minus .15em}
\addtolength{\belowdisplayskip}{0em plus .3em minus .15em}
\addtolength{\abovedisplayshortskip}{0em plus .2em minus .1em}
\addtolength{\belowdisplayshortskip}{0em plus .2em minus .1em}



\thispagestyle{empty}

\newcommand*\mycopyright{{\fontfamily{pag}\fontseries{b}\selectfont\textcopyright}}
\newcommand*\mycopyleft{\reflectbox{\mycopyright}}

\begin{center}
{\Large\begin{doublespace}
\textbf{From manifolds to invariants of $E_n$-algebras}\\
{\large by}\\
Ricardo Andrade\\
\end{doublespace}}

\vspace{4em}

{\large
\raisebox{-.08ex}{\mycopyleft}~Ricardo Andrade, MMXI.\\
This work is licensed under a Creative Commons\\
Attribution-ShareAlike 3.0 License.\\[1.2ex]
{\huge\ccLogo\,\ccAttribution\,\ccShareAlike}\\
To view a summary of this license, visit\\
\url{http://creativecommons.org/licenses/by-sa/3.0/}}
\end{center}

\cleardoublepage


\begin{singlespace}
\chapter*{Abstract}

This thesis is the first step in an investigation of an interesting class of invariants of $E_n$-algebras which generalize topological Hochschild homology. The main goal of this thesis is to simply give a definition of those invariants.

We define PROPs $\E^G_n$, for $G$ a structure group sitting over $GL(n,\RR)$. Given a manifold with a (tangential) $G$-structure, we define functors
\[ \E^G_n[M]:\op{\big(\E^G_n\big)}\To\Top \]
constructed out of spaces of {\em $G$-augmented embeddings} of disjoint unions of euclidean spaces into $M$. These spaces are modifications to the usual spaces of embeddings of manifolds.

Taking $G=1$, $\E^1_n$ is equivalent to the $n$-little discs PROP, and $\E^1_n[M]$ is defined for any parallelized $n$-dimensional manifold $M$.

The invariant we define for a $\E^G_n$-algebra $A$ is morally defined by a derived coend
\[ \T^G(A;M)\defeq\E^G_n[M]\tensor^\derived_{\E^G_n} A \]
for any $n$-manifold $M$ with a $G$-structure.

The case $\T^1(A;S^1)$ recovers the topological Hochschild homology of an associative ring spectrum $A$.

These invariants also appear in the work of Jacob Lurie and Paolo Salvatore, where they are involved in a sort of non-abelian Poincar\'e duality.

\thispagestyle{empty}

\chapter*{Acknowledgements}

This is a personal note, attempting to recall some of the people who I feel have significantly enriched my life as a graduate student at MIT, academic and otherwise. It is marked by my current perspective, with all its prejudices and faulty memory.

I must start with my advisor, Haynes Miller. I want to thank him not just for the vast mathematical knowledge he shared with me. He also showed me unending patience, and I learned about the value and difficulty of finding a subject I care about. In my biased opinion, I cannot think of a better choice for my Ph.D. advisor.

On the academic side, I cannot possibly begin to comprehend the great network of people, events, and discussions which have directed my current ideas and research. This is compounded by my ephemeral memory, and my tendency to keep academic discussions to a minimum. Still, I would like to mention, in no particular order, Mark Behrens, Tyler Lawson, Mike Hill, Teena Gerhardt, Josh Nichols-Barrer, Gustavo Granja, Vigleik Angeltveit, John Francis, Matthew Gelvin, Jacob Lurie, Ang\'elica Osorno, Jennifer French, Sam Isaacson, and Nick Rozenblyum. Many more are certainly missing, and the details of our interactions are lost.

I would also like to thank Mark Behrens and Clark Barwick for taking part in my thesis committee.

On a more personal note, some in the group of graduate students at MIT have been like a family away from home to me. A few were there at the beginning and left shortly. Teena Gerhardt, Mike Hill, Josh Nichols-Barrer, Vigleik Angeltveit, and Max Lipyanskiy made me feel welcome from the start, even if I came off as annoying or derisive at times. Others have been there with me for most of my stay at MIT. My friendships with Matthew Gelvin, Ana Rita Pires, Ang\'elica Osorno, Amanda Redlich, Olga Stroilova, Craig Desjardins, and Jennifer French are priceless.

I could go on thanking more people, or analyzing the discussions which influenced me most\ldots\ Instead, I will simply state my appreciation of my parents, my brother, and the beautiful complexity of the universe.

\end{singlespace}

\tableofcontents




\chapter*{Introduction}\label{chapter:introduction}

This introduction will describe a bit of the short history leading to the research presented in this text. In a nutshell, the material stems from an investigation of a sufficiently natural diagrammatic interpretation of topological Hochschild homology ($THH$). This naturality shows the way towards generalizations of $THH$ for $E_n$-algebras.

The author's gateway into this problem was an apparent lack of naturality of the usual definition of topological Hochschild homology, as the geometric realization of the cyclic bar construction of an associative ring spectrum. A telling sign is that the indexing category $\op{\Delta}$ for the cyclic bar construction does not reflect the full rotational symmetry of $S^1$. Consequently, it is insufficient to recover the action of $S^1$ on $THH$.

In order to repair this state of affairs, the author conceived of the category $\EE$, here called {\em Elmendorf's category}. This category is essentially a combinatorial description of the spaces of configurations in $S^1$. Topological Hochschild homology can be recovered as a homotopy colimit along $\EE$. This draws an analogy with the well-known result that $THH$ of a commutative ring spectrum is given by tensoring with $S^1$.

A more natural, yet equivalent, amalgamation of the spaces of configurations of $S^1$ is given by the topologically enriched category $\M(S^1)$ of {\em sticky configurations in $S^1$}. The advantage of $\M(S^1)$ is that it generalizes promptly to a category $\M(X)$ for any space $X$. The significance of $\M(X)$ is most apparent for the case of a manifold, where it carries homotopical information about the tangent space of the manifold, and about embeddings into other manifolds.

Restricting then to the case of a $n$-manifold $M$, the analogies between $\M(M)$ and $E_n$-operads are very strong. This raises the question of whether one can obtain invariants of $E_n$-algebras as homotopy colimits along $\M(M)$, just as $THH$ is a homotopy colimit along $\M(S^1)\simeq\EE$.

To answer that question, we reformulate the little discs operads in terms of modifications to the spaces of embeddings of manifolds. We designate these modifications by {\em \(G\)-augmented embedding spaces}, where $G$ is a structure group. The augmented embedding spaces give rise to PROPs $\E^G_n$ and, for each appropriate $n$-manifold $M$, a right module
\[ \E^G_n[M]:\op{\big(\E^G_n\big)}\To\Top \]
over $\E^G_n$. In the case $G=1$, $E^1_n$ is equivalent to the little $n$-discs PROP, and the right modules $\E^1_n[M]$ are defined for any parallelized $n$-manifold $M$.

The category $\M(M)$ reappears as the Grothendieck construction of the functor $\E^G_n[M]$. The hypothesized invariant of $\E_n$-algebras can now be phrased simply as a derived enriched colimit
\[ \E^1_n[M]\tensor_{\E^1_n}^\derived\underline{A} \]
for any $\E^1_n$-algebra $\underline{A}$, and any parallelized $n$-manifold $M$. In case $M=S^1$, we recover topological Hochschild homology.

\section*{Bibliographic references and influences}

The category $\EE$ has appeared repeatedly in the literature in different guises, e.g.\ \cite{Elmendorf} and \cite{Bokstedt-Hsiang-Madsen}; see also \cite{Dwyer-Kan} for a very similar category studied even earlier.

The operations on $E_n$-algebras which we describe have already appeared in the literature. The first construction known to the author was given by Paolo Salvatore in \cite{Salvatore}, using the Fulton-MacPherson operads. More recently, Jacob Lurie has defined topological chiral homology, as explained in \cite{Lurie1} and \cite{Lurie2}.

The work of Lurie was influential in the present research. This happened at a time when the author had defined the categories $\M(M)$, and was analyzing their relationship with $E_n$-operads, with the goal of constructing generalizations of $THH$ for $E_n$-algebras. Immediately, the author happened upon \cite{Lurie2}, where topological chiral homology is briefly described. From that exposition, the precise form of the connection between $\M(M)$ and $E_n$-operads --- via augmented embedding spaces --- became obvious, and the remaining pieces of the research for this thesis finally fell into place.

On a more historical context, these ideas closely follow earlier work of Graeme Segal and Dusa McDuff (among others too numerous to name) on spaces of labeled configurations (see \cite{Segal} and \cite{McDuff}). Additionally, the diagrammatic approach taken here is most reminiscing of the characterization of the infinite symmetric product of a space detailed in \cite{Kuhn} (called there the {\em McCord model}): this model can actually be seen as a sort of limiting case $n\to\infty$ of our framework.





\chapter*{Summary}\label{chapter:summary}

\shortref[Chapter ]{chapter:terminology_basic} establishes some basic terminology and concepts which will be used throughout the text. It discusses quite disparate subjects and is meant only for reference.

Chapter \ref{chapter:internal_categories} gives some basic theory of internal categories, with the dual aim of relating them to enriched categories, and of defining the Grothendieck construction in a sufficiently general context.

Chapter \ref{chapter:sticky_conf} associates to each space $X$ the topologically enriched category of sticky configurations on $X$, $\M(X)$, together with an equivariant analogue. Chapter \ref{chapter:sticky_conf_S^1} analyzes the example $\M(S^1)$, which is weakly equivalent to Elmendorf's category $\EE$. It finishes by showing that topological Hochschild homology is a homotopy colimit along $\EE$.

Chapter \ref{chapter:embeddings_manifolds} defines the concept of $G$-structure on a manifold. Then the $G$-augmented embedding spaces of manifolds are constructed as modifications of the usual embedding spaces of manifolds. These are used to define a PROP $\E^G_n$, together with a right module over it for each $n$-manifold with a $G$-structure.

Chapter \ref{chapter:stratified_spaces} describes convenient concepts of stratified spaces, together with some basic results. Moreover, it recovers $\M(X)$ from spaces of filtered paths on stratified spaces. This analysis comes in handy in the next chapter \ref{chapter:sticky<->embeddings} where it is shown that the category $\M(M)$ --- for $M$ a $n$-manifold with a $G$-structure --- is essentially the Grothendieck construction of the corresponding right module over the PROP $\E^G_n$. More precisely, a zig-zag of weak equivalences is given between the category $\M(M)$ and a Grothendieck construction of the functor $\E^G_n[M]$, which we denote by $\disccat{\Total^G_n[M]}$.

Chapter \ref{chapter:homotopical_properties_enriched_categories} is another technical chapter describing the concepts of homotopy colimits necessary in the final chapter. In particular, it is stated, without proof, how the homotopy colimits along Grothendieck constructions can be computed as derived enriched colimits over the base category.

The last chapter \ref{chapter:invariants_En-algebras} defines the invariant $\T^G(A;M)$ of a $\E^G_n$-algebra $A$, for each $n$-manifold $M$ with a $G$-structure. A proof is given that $\T^1(-;S^1)$ for the category of spectra is equivalent to topological Hochschild homology.






\chapter{Basic terminology}\label{chapter:terminology_basic}

\section*{Introduction}

In this very disconnected chapter, we introduce some basic notation, terminology, and definitions which we will use in the remainder of the text.

\section*{Summary}

Section \sref{section:terminology_categories_sets_categories} describes some categories of sets, and categories of categories designed to deal with issues of size. It also mentions the category of finite sets and categories of finite ordinals. Section \sref{section:terminology_categories} makes some important abstract remarks on categories, $2$-categories, functors, and natural transformations.

Section \sref{section:terminology_topological_spaces} introduces the category of topological spaces and the category of weak Hausdorff compactly generated topological spaces.

Section \sref{section:terminology_principal_bundles} settles some language concerning principal bundles and principal spaces.

Section \sref{section:terminology_homotopy_theory} makes a few comments regarding the homotopy theory of spaces, with a focus on the notion of homotopy equivalence. Section \sref{section:terminology_Moore_path_space} defines the space of Moore paths on a topological space and gives some important maps based on this space.

Section \sref{section:terminology_enriched_categories} discusses basic concepts of enriched category theory. Section \sref{section:terminology_properties_enriched_functors} establishes some terminology regarding properties of enriched functors.

Section \sref{section:model_categories} explains basic notions regarding enriched model categories and monoidal model categories.

The last three section deal with PROPs and operads. Section \sref{section:terminology_PROPs} defines the notions of PROP, and algebra for a PROP. Section \sref{section:terminology_PROPs_operads} relates PROPs to operads: to each PROP it associates an operad, and for each operad it constructs a PROP called the associated category of operators. Furthermore, it relates the notions of algebras for PROPs and operads. Finally, section \sref{section:terminology_examples_PROPs} gives examples of PROPs in $\Set$ and $\Top$: the associative and commutative PROPs, and the little discs PROPs.

\section{Categories of sets and categories}\label{section:terminology_categories_sets_categories}

\begin{convention}[sets]
To avoid problems relating to size of sets, we will assume the existence of several universes of sets.\\
More precisely, we will assume the existence of three categories of sets
\[ \Set\Into\SET\Into\bigSET \]
such that
\begin{enum}
\item all three are closed under taking subsets and elements;
\item $\Set$ is $\Set$-bicomplete, i.e.\ it has all products and coproducts indexed by sets in $\Set$;
\item $\SET$ is $\SET$-bicomplete;
\item $\bigSET$ is $\bigSET$-bicomplete;
\item $\ob(\Set)$ is in $\SET$;
\item $\ob(\SET)$ is in $\bigSET$;
\end{enum}
These categories can be constructed assuming the existence of three inaccessible cardinals.\\
We will refer to the elements of $\Set$ as {\em small sets}, and a set will be by default a small set. The elements of $\SET$ will be called {\em large sets}.
\end{convention}

\begin{notation}[categories]\label{notation:categories_of_categories}
We will need many categories of categories.\\
Given two categories of sets, $\mathfrak{S}$ and $\mathfrak{T}$, we will have a corresponding $2$-category of categories, $\mathfrak{T}\dash\Cat_{\mathfrak{S}}$, whose objects are categories $C$ such that $\ob C\in\mathfrak{S}$, and $C(x,y)\in\mathfrak{T}$ for any $x,y\in\ob C$.\\
We will use a few useful abbreviations:
\begin{align*}
\Cat&\defeq\Set\dash\Cat_\Set\\
\CAT&\defeq\Set\dash\Cat_\SET\\
\bigCAT&\defeq\SET\dash\Cat_\bigSET
\end{align*}
A category in $\Cat$ is a {\em small category}. A category in $\CAT$ is a {\em locally small large category}. Without further mention, a category will be, by default, in $\bigCAT$, except if it is constructed not to be. Often, it will actually not matter where exactly the category is.
\end{notation}

We will also need a few explicit smaller categories, such as the categories of ordinals and of finite sets.

\begin{notation}[finite sets]
The category of (small) {\em finite sets} will be denoted $\FinSet$. We consider the inclusion
\[ \sfunc[\rInto]{\NN}{\ob\FinSet}{n}{\set{1,\ldots,n}} \]
and we will, for brevity, denote the set $\set{1,\ldots,n}$ (for $n\in\NN$) simply by $n$.
\end{notation}

\begin{notation}[categories of ordinals]\label{notation:Ord_Delta}
The category of (small) {\em finite ordinals} and order preserving functions will be denoted $\Ord$.\\
For each $n\in\NN$, the ordinal corresponding to the set $\set{1,\ldots,n}$ with the order induced from $\NN$ is denoted simply by $n$.\\
We will denote by $\Delta$ the full subcategory of $\Ord$ generated by the ordinals $n$ for $n\in\NN\setminus\set{0}$.
\end{notation}

\begin{remark}
The category $\Ord$ is monoidal, with the monoidal product given by
\[ +:\Ord\times\Ord\To\Ord \]
where for any finite ordinals $x$ and $y$, the ordinal $x+y$ is the disjoint union of $x$ and $y$ with the unique total order which recovers the total orders on $x$ and $y$, and such that any element of $x$ is less than any element of $y$.
\end{remark}

\section{Categories}\label{section:terminology_categories}

As we mentioned, $\mathfrak{S}\dash\Cat_{\mathfrak{T}}$, $\Cat$, $\CAT$, \ldots, are all $2$-categories. We leave some remarks about the $2$-categories which will appear.

\begin{convention}[$2$-categories, $2$-functors]
For us, a $2$-category is always a strict $2$-category. Furthermore, all functors and natural transformations between $2$-categories will be {\em strict}, unless specifically stated otherwise.\\
Any ordinary category will be viewed as a $2$-category with only identity $2$-morphisms.
\end{convention}

\begin{convention}[lax natural transformation]
One of the rare instances that we will need of a non-strict natural transformation will be that of a {\em lax natural transformation} $\alpha$ between strict $2$-functors, as in definition 7.5.2 of \cite{Borceux}. This will appear in chapter \ref{chapter:sticky_conf}.\\
If all the $2$-morphisms in the lax naturality squares for $\alpha$ are isomorphisms then we call $\alpha$ a {\em pseudo-natural transformation} (again following definition 7.5.2 of \cite{Borceux}). This will only appear in proposition \sref{proposition:adjunction_operads_PROPs} below.
\end{convention}

\begin{notation}[categories of functors]
Given any $2$-categories $A$ and $B$, we will denote the $2$-category of strict functors, strict natural transformations, and strict modifications (see chapter 7 of \cite{Borceux}) from $A$ to $B$ by $\HomCat{A}{B}$.\\
If $A$, $B$ are ordinary categories, then $\HomCat{A}{B}$ is just the usual category of functors and natural transformations from $A$ to $B$, $\bigCAT(A,B)$. We will, nevertheless, prioritize the notation $\HomCat{A}{B}$.
\end{notation}

\begin{notation}[composition of natural transformations]
For any $2$-categories $A$, $B$, and $C$, the composition functor will be denoted
\[ -\circ-:\HomCat{B}{C}\times\HomCat{A}{B}\To\HomCat{A}{C} \]
In particular, given a natural transformation $\alpha$ from $A$ to $B$, and a natural transformation $\beta$ from $B$ to $C$, we will denote their {\em horizontal composition} by $\beta\circ\alpha$.\\
The composition of $1$-morphisms in the category $\HomCat{A}{B}$ will be denoted differently: given functors $F,G,H:A\to B$, and natural transformations \begin{align*}
\alpha&:F\to G\\
\beta&:G\to H
\end{align*}
their {\em vertical composition} will be abbreviated $\beta\cdot\alpha$.\\
This notation is the one used in the book \cite{MacLane}.
\end{notation}

\begin{notation}[opposite of category]
Given a category $A$, $\op{A}$ will denote the {\em opposite category} of $A$.\\
We will also consider the opposite $\op{A}$ of a $2$-category $A$. In this case $\op{A}$ only reverses the $1$-morphisms, and not the $2$-morphisms.
\end{notation}

\begin{convention}[limits and colimits in categories]
We will follow the common convention of writing limits (respectively, colimits) in a category $C$ as if there were a preassigned limit (respectively, colimit) for each diagram in $C$ which does have a limit (respectively, colimit).\\
For that purpose, the reader may assume that each category $C$ comes equipped with an assignment of a limit (respectively, colimit) to each diagram in $C$ which has a limit (respectively, colimit).
\end{convention}

\begin{notation}[terminal object]
Given a category $C$ with a terminal object, we will often denote the terminal object by $1$. For example, $1\in\Cat$ denotes a category with one object and one morphism, and $1\in\Top$ denotes a topological space with a single element.\\
This convention will not be followed when $1$ already has an assigned meaning, such as in the categories $\op{\FinSet}$ and $\op{\Ord}$.
\end{notation}

\begin{notation}[category associated to a monoid]
Given an associative monoid $A$ in the cartesian category $\Set$, we will denote by $\B{A}$ the category associated with $A$: $\ob(\B{A})=1$, $\B{A}(1,1)=A$, and the composition in $\B{A}$ comes from the binary operation on $A$.
\end{notation}

\section{Categories of topological spaces}\label{section:terminology_topological_spaces}

\begin{notation}[topological spaces]
We will denote by $\Top$ the category of topological spaces and continuous maps. This will be our default category of spaces.\\
We will also occasionally need the category $\TOP$ of large topological spaces and continuous maps.
\end{notation}

\begin{convention}[sets as topological spaces]
The canonical inclusion functor
\[ \Set\Into\Top \]
will be used to consider all small sets canonically as discrete topological spaces. In particular, any ordinary locally small category will be considered as a $\Top$-category whenever necessary.
\end{convention}

\begin{notation}
$\Top$ is not cartesian closed, but for any topological spaces $X$ and $Y$, we will consider the space $\Map(X,Y)$ of continuous maps $X\to Y$ with the {\em compact-open topology}.
\end{notation}

Since $\Top$ is not cartesian closed, it is useful to introduce a category of spaces which is.

\begin{definition}[compactly generated space]
A topological space $X$ is said to be {\em compactly generated} if it has the initial topology induced by all maps $C\to X$ with $C$ compact Hausdorff.
\end{definition}

\begin{notation}[category of compactly generated spaces]\label{notation:compactly_generated_spaces}
We will denote by $k\Top$ the category of compactly generated spaces, and continuous maps. This category is cartesian closed.\\
The inclusion $k\Top\hookrightarrow\Top$ has a right adjoint
\[ \kappa:\Top\To k\Top \]
called the {\em \(k\)-ification functor}. We remark that $\kappa$ preserves small coproducts.
\end{notation}

\section{Principal bundles}\label{section:terminology_principal_bundles}

We will follow the nomenclature of \cite{Husemoller}, which we briefly describe below. We fix a topological group $G$.

\begin{definition}[principal bundle]
By a {\em principal \(G\)-bundle} we will mean a right $G$-space $X$ and a map \[ p:X\To B \] of topological spaces such that
\begin{enum}
\item $p$ induces a homeomorphism $\quot{G}{X}\cong B$;
\item there exists a map
\[ transl:X\underset{Y}{\times}X\To G \]
such that
\[ x\cdot transl(x,y)=y \]
for any $x,y\in X$ such that $p(x)=p(y)$.
\end{enum}
\end{definition}

\begin{definition}[map of principal bundles]
Let $p:X\to B$, and $p':X'\to B'$ be principal $G$-bundles.\\
A {\em map of principal \(G\)-bundles} $(f,g):p\to p'$ is a $G$-equivariant map $f:X\to X'$ of right $G$-spaces, together with a map $g:B\to B'$ such that
\begin{diagram}[midshaft,h=2em]
X&\rTo{f}&X'\\
\dTo{p}&&\dTo{p'}\\
B&\rTo{g}&B'
\end{diagram}
commutes.
\end{definition}

\begin{proposition}
Let $p:X\to B$, and $p':X'\to B'$ be principal $G$-bundles.\\
Given a map of principal $G$-bundles, $(f,g):p\to p'$, the diagram
\begin{diagram}[midshaft,h=2em]
X&\rTo{f}&X'\\
\dTo{p}&&\dTo{p'}\\
B&\rTo{g}&B'
\end{diagram}
is a cartesian square.
\end{proposition}

\begin{definition}[locally trivial principal bundle]
We say that a principal $G$-bundle $p:X\to B$ is {\em locally trivial} if each point of $B$ has a neighborhood $U$ in $B$ such that there exists a $G$-equivariant map $p^{-1}(U)\to G$.
\end{definition}

\begin{notation}[principal $G$-space]
We call a right $G$-space $X$ a {\em principal \(G\)-space} if the map $X\to\quot{G}{X}$ gives a principal $G$-bundle. We say $X$ is a {\em locally trivial} principal $G$-space if $X\to\quot{G}{X}$ gives a locally trivial principal $G$-bundle.
\end{notation}

\begin{remark}[principal left $G$-space]
We will occasionally refer to (locally trivial) principal left $G$-spaces. This will mean a left $G$-space (i.e.\ a right $\op{G}$-space) which is a (locally trivial) principal $\op{G}$-space.
\end{remark}

\section{Homotopy theory of topological spaces}\label{section:terminology_homotopy_theory}

We will be mostly interested in homotopy equivalences of topological spaces. Therefore, we will mostly deal with the category $\Top$ of topological spaces equipped with the Str{\o}m model structure from \cite{Strom}:
\begin{enum}
\item the weak equivalences are homotopy equivalences of spaces;
\item the cofibrations are closed maps with the homotopy extension property;
\item the fibrations are Hurewicz fibrations.
\end{enum}

The Str{\o}m model structure on $\Top$ is right proper and naturally {\em framed} (see section 16.6 of \cite{Hirschhorn}), in a way that recovers the usual homotopy pullbacks and pushouts, and the usual homotopy limits and colimits in $\Top$ (see chapter 19 of \cite{Hirschhorn}). While the framing is not strictly necessary, and all necessary results can be derived directly, it is nevertheless useful.

We will also make use of the analogous Str{\o}m model structure on $k\Top$, which is the only model structure on $k\Top$ which we consider. $k\Top$ with the Str{\o}m model structure is actually a simplicial model category.

In keeping with our focus on homotopy equivalences, we will say that a commutative square in $\Top$
\begin{diagram}[h=2em]
A&\rTo&B\\
\dTo&&\dTo\\
C&\rTo&D
\end{diagram}
is {\em homotopy cartesian} (or a {\em homotopy pullback square}) if the natural map from $A$ to the homotopy pullback of $C\to D\leftarrow B$ is a homotopy equivalence.

Despite our focus on homotopy equivalences, when talking about topological spaces, ``weak equivalence'' always has the usual meaning (isomorphisms on homotopy groups). We will always refer explicitly to homotopy equivalences of topological spaces as such.

\section{Moore path space}\label{section:terminology_Moore_path_space}

\begin{definition}[Moore path space]\label{definition:Moore_path_space}
Let $X$ be a topological space. Recall that $\Map(\clop{0}{+\infty},X)$ is endowed with the compact-open topology.\\
The subspace of $\Map(\clop{0}{+\infty},X)\times\clop{0}{+\infty}$ corresponding to its subset
\[ \set{(\alpha,\tau)\in\Map(\clop{0}{+\infty},X)\times\clop{0}{+\infty}\suchthat\restrict{\alpha}{\clop{\tau}{+\infty}} \text{ is constant}} \]
will be called the {\em Moore path space} of $X$, $H(X)$.
\end{definition}

\begin{remark}[functoriality of Moore path space]
The Moore path space above extends to a functor \[ H:\Top\To\Top \]
\end{remark}

\begin{definition}[maps on Moore path space]\label{definition:maps_Moore_path_space}
Let $X$ be a topological space.\\
We define the following continuous maps on the Moore path space of $X$:
\begin{enum}
\item the {\em length} map, $l$:
\[ l:H(X)\Into\Map(\clop{0}{+\infty},X)\times\clop{0}{+\infty}\,\xTo{\proj}\clop{0}{+\infty} \]
\item the {\em source} map, $s$:
{\small
\[ s:H(X)\Into\Map(\clop{0}{+\infty},X)\times\clop{0}{+\infty}\,\xTo{\proj}\Map(\clop{0}{+\infty},X)\xTo{\evaluation{0}}X \]}
\item the {\em target} map, $t$:
\[ t:H(X)\Into\Map(\clop{0}{+\infty},X)\times\clop{0}{+\infty}\,\xTo{\evaluation{}}X \]
\item the {\em inclusion} of $X$ is the unique map $i:X\to H(X)$ such that
\begin{align*}
s\circ i&=\id_X\\
l\circ i&=0
\end{align*}
\end{enum}
\end{definition}

\begin{proposition}
Let $X$ be a topological space.\\
The map \[ (s,t):H(X)\To X\times X \] is a Hurewicz fibration.
\end{proposition}

\begin{definition}[concatenation of Moore paths]\label{definition:concat_Moore_paths}
Let $X$ be a topological space.\\
Let $P$ be determined by the pullback square
\begin{diagram}[midshaft,height=2.1em]
P&\rTo{p_1}&H(X)\\
\dTo{p_2}&&\dTo{t}\\
H(X)&\rTo{s}&X
\end{diagram}
The {\em concatenation} map $cc:P\to H(X)$ is now characterized by:
\[ l\circ cc=l\circ p_1+l\circ p_2 \]
and, for $x\in P$
\begin{align*}
\restrict{cc(x)}{\clcl{0}{l\circ p_1(x)}}&=p_1(x)\\
\restrict{cc(x)}{\clop{l\circ p_1(x)}{+\infty}}&=p_2(x)(?-l\circ p_1(x))
\end{align*}
\end{definition}

\begin{definition}[reparametrization of Moore paths]\label{definition:reparametrization_Moore_paths}
Let $X$ be a topological space.\\
Define the {\em reparametrization map}
\[ \reparam:H(X)\To\Map(I,X) \]
by
\[ \reparam(\gamma,\tau)\defeq \gamma(\tau\cdot -) \]
for $(\gamma,\tau)\in H(X)$
\end{definition}

\begin{proposition}\label{proposition:reparametrization_Moore_paths_equiv}
Let $X$ be a topological space.\\
The map \[ \reparam:H(X)\To\Map(I,X) \] is a homotopy equivalence over $X\times X$. Here, $H(X)$ maps by $(s,t)$ to $X\times X$; $\Map(I,X)$ maps by $(\evaluation{0},\evaluation{1})$ to $X\times X$.
\end{proposition}

\section{Enriched categories}\label{section:terminology_enriched_categories}

For material on enriched category theory, we refer the reader to the book \cite{Kelly}. We leave here some notation regarding enriched categories. $V$ will denote a symmetric monoidal category with unit $I$.

\begin{notation}[$2$-category of $V$-categories]\label{notation:categories_V-categories}
If $\mathfrak{S}$ is a category of sets, $V\dash\Cat_\mathfrak{S}$ is the $2$-category whose:
\begin{enum}
\item objects are $V$-categories whose set of objects lies in $\mathfrak{S}$;
\item $1$-morphisms are the $V$-functors;
\item $2$-morphisms are the $V$-natural transformations.
\end{enum}
For convenience, we make the following abbreviations
\begin{align*}
V\dash\Cat&\defeq V\dash\Cat_\Set\\
V\dash\CAT&\defeq V\dash\Cat_\SET
\end{align*}
for the $2$-categories of {\em small \(V\)-categories} and {\em large \(V\)-categories}, respectively.\\
A $V$-category will typically be in $V\dash\CAT$, by default.
\end{notation}

\begin{notation}[$V$-category of functors]
Assume $V$ is symmetric monoidal closed.\\
If $A$ and $B$ are $V$-categories then $\HomCat[V]{A}{B}$ denotes the enriched $V$-category of functors from $A$ to $B$, whose objects are the $V$-functors from $A$ to $B$.
\end{notation}

\begin{notation}[change of enriching category]
Assume $F:V\to W$ is a lax symmetric monoidal functor.\\
For any $V$-category $C$, there is an induced $W$-category $F(C)$ such that \[ \ob\big(F(C)\big)=\ob C \] and \[ F(C)(x,y)=F\big(C(x,y)\big) \] for any $x,y\in\ob C$.\\
This extends to functors
\begin{align*}
F&:V\dash\Cat\To W\dash\Cat\\
F&:V\dash\CAT\To W\dash\CAT\\
F&:V\dash\Cat_{\mathfrak{S}}\To W\dash\Cat_{\mathfrak{S}}
\end{align*}
\end{notation}

\begin{notation}[underlying category]
For the special case of the lax symmetric monoidal functor
\[ V(I,-):V\To\SET \]
we denote the corresponding functor from $V$-categories to $\SET$-categories (i.e.\ ordinary categories) by
\[ (-)_0:V\dash\CAT\To\bigCAT \]
In particular, given a $V$-category $C$, we denote its underlying category by $C_0$. Moreover, the underlying functor of a $V$-functor $F:C\to D$ is denoted
\[ F_0:C_0\To D_0 \] This is in conformance with the notation in \cite{Kelly}.
\end{notation}

We finish this section with a few remarks on symmetric monoidal $V$-categories.

\begin{notation}[symmetric monoidal $V$-category]
We will occasionally need the concept of a symmetric monoidal $V$-category. This is exactly parallel to the notion of symmetric monoidal category, only all structure functors and natural transformations are now required to be $V$-functors and $V$-natural transformations.\\
We will denote by $V\dash\SymmMonCat$ the $2$-category of
\begin{enum}
\item small symmetric monoidal $V$-categories,
\item strong symmetric monoidal $V$-functors, and
\item symmetric monoidal $V$-natural transformations.
\end{enum}
The analogous $2$-category of large symmetric monoidal $V$-categories will be abbreviated $V\dash\SymmMonCAT$.
\end{notation}

\begin{proposition}[change of enriching category]\label{proposition:change_enriching_cat_smcat}
Assume $F:V\to W$ is a lax symmetric monoidal functor.\\
There exists a natural functor
\[ F:V\dash\SymmMonCAT\To W\dash\SymmMonCAT \]
such that
\begin{diagram}[midshaft,h=2.1em]
V\dash\SymmMonCAT&\rTo{\ F\ }&W\dash\SymmMonCAT\\
\dInto&&\dInto\\
V\dash\CAT&\rTo{F}&W\dash\CAT
\end{diagram}
commutes.
\end{proposition}

\begin{remark}
The above functor restricts to a functor
\[ F:V\dash\SymmMonCat\To W\dash\SymmMonCat \]
\end{remark}

\section{Properties of enriched functors}\label{section:terminology_properties_enriched_functors}

We will now turn to properties of $V$-functors, where $V$ is again a symmetric monoidal category.

\begin{definition}[essentially surjective $V$-functor]
Let $F:A\to B$ be a $V$-functor between $V$-categories.\\
$F$ is said to be {\em essentially surjective} if the underlying functor
\[ F_0:A_0\To B_0 \]
is essentially surjective.
\end{definition}

\begin{definition}[local isomorphism]\label{definition:local_isomorphism}
Let $F:A\to B$ be a $V$-functor.\\
We say $F$ is a {\em local isomorphism} if for all $x,y\in\ob A$
\[ F:A(x,y)\To B(Fx,Fy) \]
is an isomorphism in $V$.
\end{definition}

This last definition gives way to the nomenclature ``local \{name of property\}'' which we now introduce.

\begin{notation}\label{notation:local_nameproperty}
Let $nameP$ be the name of a property of morphisms in $V$.\\
We say a $V$-functor $F:A\to B$ is {\em locally \(nameP\)} (or {\em a local \(nameP\)}) if for every $x,y\in\ob A$, the morphism
\[ F:A(x,y)\To B(Fx,Fy) \]
verifies the property $nameP$.
\end{notation}

We give a few examples of this notation involving the cartesian category $\Top$.

\begin{examples}
If we take $nameP$ to be ``isomorphism'', then we recover the notion of local isomorphism from definition \sref{definition:local_isomorphism}.\\
Other relevant examples are given by ``weak equivalence'' and ``homotopy equivalence'' in $\Top$. From these we get the notion of {\em local weak equivalences} and {\em local homotopy equivalences} of $\Top$-categories.
\end{examples}

This example can be used to define the notion of weak equivalence of $\Top$-categories.

\begin{definition}[weak equivalence of $\Top$-categories]\label{definition:weak_equivalence_Top-cat}
Let $F:A\to B$ be a $\Top$-functor between $\Top$-categories.\\
We say $F$ is a {\em weak equivalence} if $F$ is a local homotopy equivalence and
\[ \pi_0 F:\pi_0 A\To\pi_0 B \]
is an essentially surjective functor (between ordinary categories).
\end{definition}

\begin{remark}
Note that we mean a local {\em homotopy equivalence} when we refer to a weak equivalence of $\Top$-categories.\\
This is in accordance with our focus on homotopy equivalences, and the Str{\o}m model structure in $\Top$, even if conflicting with our convention to refer explicitly to homotopy equivalences of spaces as such.\\
We use this terminology for simplicity, since it is the only case which will appear.
\end{remark}

\section{Model categories}\label{section:model_categories}

We will mostly follow the book \cite{Hirschhorn} on matters relating to model categories. In this section we fix some terminology regarding model categories, and discuss some concepts not appearing in \cite{Hirschhorn}, namely enriched model categories and monoidal model categories.

\begin{notation}
By a model category, we will mean exactly the notion explained in \cite{Hirschhorn}. In particular, a model category is a bicomplete category verifying the classical Quillen axioms for a closed model category, in which the factorizations can be chosen functorially.\\
Nevertheless, we will occasionally utilize redundant expressions concerning (co)completeness, such as ``cocomplete model category'', or ``bicomplete model category''. This is merely for emphasis of a necessary property.
\end{notation}

\begin{examples}
The main examples of model structures for us are the Kan model structure on $\sSet$, and the Str{\o}m model structures on $\Top$ and $k\Top$. All of these will always be implicit when dealing with these categories.
\end{examples}

\begin{definition}[left Quillen bifunctor]\label{definition:left_Quillen_bifunctor}
Let $V$, $W$, $X$ be model categories, and
\[ F:V\times W\To X \]
a functor. We say $F$ is a {\em left Quillen bifunctor} (or that it verifies the {\em pushout-product axiom}) if for any cofibrations $f:x\to y$ in $V$ and $g:x'\to y'$ in $W$, the canonical map
\[ f\,\underset{F}{\square}\,g:F(y,x')\ \underset{\mathclap{F(x,x')}}{\amalg}F(x,y')\ \To F(y,y') \]
is a cofibration in $X$, which is a weak equivalence if either $f$ or $g$ is a weak equivalence.
\end{definition}

\begin{definition}[symmetric monoidal model category]
Let $V$ be a model category, and a symmetric monoidal category (with monoidal product $\tensor$).\\
We will say $V$ is a {\em symmetric monoidal model category} if
\[ \tensor:V\times V\To V \]
verifies the pushout-product axiom (i.e.\ is a left Quillen bifunctor).
\end{definition}

\begin{remark}
It is usual to assume some condition on the unit of the monoidal structure on $V$, for example that it be cofibrant. We will always explicitly state as much by saying, for example, that $V$ is a symmetric monoidal model category with cofibrant unit.
\end{remark}

\begin{notation}
If the monoidal structure on $V$ is cartesian, we will say that $V$ is a cartesian model category.\\
If the monoidal structure on $V$ is closed, we will say that $V$ is a symmetric monoidal closed model category.
\end{notation}

\begin{examples}
All of $\sSet$ (with the Kan model structure), $\Top$, and $k\Top$ (the last two with the Str{\o}m model structures) are cartesian model categories with cofibrant unit. $\sSet$ and $k\Top$ are cartesian closed model categories.
\end{examples}

\begin{definition}[$V$-model category]\label{definition:V-model_category}
Let $V$ be a bicomplete symmetric monoidal closed model category.\\
A bicomplete $V$-category $C$ with a model structure on $C_0$ is called a {\em $V$-model category} if the functor
\[ \tensor:V\times C_0\To C_0 \]
corresponding to tensoring an object of $V$ and an object of $C$ (which is defined since $C$ is cocomplete as a $V$-category), is a left Quillen bifunctor.
\end{definition}

\begin{notation}[simplicial model category]
In the special case that $V=\sSet$ (with the Kan model structure), we call $C$ a {\em simplicial model category}.
\end{notation}

\begin{examples}
$V$ is naturally a $V$-model category.\\
$k\Top$ (with the Str{\o}m model structure) is a simplicial model category.
\end{examples}

\begin{definition}[symmetric monoidal $V$-model category]\label{definition:symmetric_monoidal_V-model_category}
Let $V$ be a bicomplete symmetric monoidal closed model category.\\
A symmetric monoidal $V$-category $C$ with a model structure on $C_0$ is called a {\em symmetric monoidal $V$-model category} if $C$ is a $V$-model category and the monoidal product on $C$ gives a left Quillen bifunctor
\[ \tensor:C_0\times C_0\To C_0 \]
\end{definition}

\begin{notation}[symmetric monoidal simplicial model category]
In case $V=\sSet$, we say that $C$ is a {\em symmetric monoidal simplicial model category}.
\end{notation}

\begin{notation}
As above, we will say that $C$ is a cartesian $V$-model category if the monoidal structure on $C$ is cartesian.\\
We will also say that $C$ is a symmetric monoidal closed $V$-model category if the monoidal $V$-category $C$ is closed (as a $V$-category).
\end{notation}

\begin{examples}
$V$ is a symmetric monoidal closed $V$-model category.\\
$k\Top$ is a symmetric monoidal closed simplicial model category.
\end{examples}

\section{PROPs}\label{section:terminology_PROPs}

Fix a symmetric monoidal category $V$.

\begin{definition}[PROP]
A {\em \(V\)-PROP} is a pair $(\mathsf{P},a)$ where $\mathsf{P}$ is a symmetric monoidal $V$-category (with monoidal product given by $\tensor$), and $a\in\ob\mathsf{P}$ is such that any object of $\mathsf{P}$ is isomorphic to $a^{\tensor n}$ for some $n\in\NN$.
\end{definition}

\begin{notation}[generator of a PROP]
The distinguished object $a\in\ob\mathsf{P}$ is called the {\em generator} of the $V$-PROP $(\mathsf{P},a)$.\\
For convenience, we will often confuse the $V$-PROP with its underlying symmetric monoidal $V$-category, leaving the generator implicit.
\end{notation}

\begin{notation}
If the category $V$ is clear from context, we will often omit it, and simply call the above a PROP.
\end{notation}

\begin{definition}[category of PROPs]
The {\em category of $V$-PROPs}, $V\dash\PROP$, is the $2$-category determined by
\begin{enum}
\item the objects of $V\dash\PROP$ are the $V$-PROPs;
\item the $1$-morphisms from a $V$-PROP $(\mathsf{P},a)$ to a $V$-PROP $(\mathsf{Q},b)$ are the symmetric monoidal $V$-functors \[ F:\mathsf{P}\To\mathsf{Q} \] such that $F(a)=b$;
\item given two $1$-morphisms \[ F,G:(\mathsf{P},a)\To(\mathsf{Q},b) \] $V\dash\PROP(F,G)$ is the set of symmetric monoidal $V$-natural transformations $\alpha:F\to G$ such that $\alpha_a=\id_b$.
\end{enum}
\end{definition}

\begin{proposition}
The $2$-category $V\dash\PROP$ is equivalent to a $1$-category. Equivalently, given two $V$-PROPs $(\mathsf{P},a)$ and $(\mathsf{Q},b)$, the category $V\dash\PROP\big((\mathsf{P},a),(\mathsf{Q},b)\big)$ is equivalent to a set.
\end{proposition}

The following result states that given a $V$-PROP $(\mathsf{P},a)$, and a lax symmetric monoidal functor $F:V\to W$, we get a $W$-PROP \[ F(\mathsf{P},a)=(F\mathsf{P},a) \]
It follows from proposition \sref{proposition:change_enriching_cat_smcat}.

\begin{proposition}[change of enriching category]
Let $F:V\to W$  be a lax symmetric monoidal functor.\\
There exists a natural functor
\[ F:V\dash\PROP\To W\dash\PROP \]
such that
\begin{diagram}[midshaft,h=2.1em]
V\dash\PROP&\rTo{\ F\ }&W\dash\PROP\\
\dTo{\proj}&&\dTo{\proj}\\
V\dash\SymmMonCAT&\rTo{\ F\ }&W\dash\SymmMonCAT
\end{diagram}
commutes.
\end{proposition}

\begin{definition}[algebra for PROP]
Let $C$ be a symmetric monoidal $V$-category.\\
Given a $V$-PROP $(\mathsf{P},a)$, the {\em category of \((\mathsf{P},a)\)-algebras in \(C\)} is the category
\[ (\mathsf{P},a)\dash\alg(C)\defeq V\dash\SymmMonCAT(\mathsf{P},C) \]
An object of $(\mathsf{P},a)\dash\alg(C)$ is called a {\em \((\mathsf{P},a)\)-algebra in \(C\)} (or an {\em algebra over \((\mathsf{P},a)\) in \(C\)}).
\end{definition}

\begin{definition}[right module for PROP]
Let $V$ be a symmetric monoidal closed category.\\
Given a $V$-PROP $(\mathsf{P},a)$, a {\em right module over \((\mathsf{P},a)\)} is a $V$-functor
\[ \op{\mathsf{P}}\To V \]
\end{definition}

\section{Operads and categories of operators}\label{section:terminology_PROPs_operads}

Let $V$ be a symmetric monoidal category.

\begin{remark}[operads from PROPs]
Every $V$-PROP $(\mathsf{P},a)$ has an {\em underlying \(V\)-operad}, whose underlying symmetric sequence in $V$ is $\big(\mathsf{P}(a^{\tensor n},a)\big)_{n\in\NN}$.\\
The actions of the symmetric groups and the structure maps for the operad come from the composition in $\mathsf{P}$.\\
This construction gives a functor
\[ \undoperad{}:V\dash\PROP\To\text{operad}(V) \]
form the category of $V$-PROPs to the category of operads in $V$.
\end{remark}

To continue the comparison of PROPs and operads, call $V$ a {\em good} symmetric monoidal category if $V$ has all finite coproducts, and the monoidal product on $V$
\[ \tensor:V\times V\To V \]
is such that the functor $x\tensor -: V\to V$ preserves finite coproducts for any $x\in\ob V$.

\begin{proposition}\label{proposition:adjunction_operads_PROPs}
If $V$ is a good symmetric monoidal category, the functor
\[ \undoperad{}:V\dash\PROP\To\text{operad}(V) \]
has a (bicategorical\comment{meaning we get an equivalence of morphism categories instead of a bijection}) left adjoint 
\[ \catop{}:\text{operad}(V)\To V\dash\PROP \]
Moreover, the counit of this adjunction (which is a pseudo-natural transformation)
\[ \catop{}\circ\undoperad{}\To\id_{V\dash\PROP} \]
is an isomorphism.
\end{proposition}

\begin{notation}[category of operators]\label{notation:category_operators}
Given an operad $P$, the corresponding $V$-PROP $\catop{P}$ is called the {\em category of operators associated with \(P\)}.\\
A $V$-PROP is called a {\em category of operators} in $V$ if it is equivalent (in the $2$-category $V\dash\PROP$) to the category of operators associated to some $V$-operad.\\
The above result implies that a category of operators can be essentially recovered (up to equivalence of PROPs) from its underlying operad.
\end{notation}

\begin{remark}
Given a $V$-operad $P=\big(P(n)\big)_{n\in\NN}$, the category of operators associated with $P$ has
\[ \ob(\catop{P})=\NN \]
with generator the object $1$, and monoidal structure given on objects by addition on $\NN$.\\
Furthermore, for any $k,l\in\NN$, we have
\[ (\catop{P})(k,l)=\hspace{-.8em}\coprod_{{f\in\FinSet(k,l)}}\Tensor_{i\in l}P\big(f^{-1}(\set{i})\big) \]
\end{remark}

The next result says that taking algebras for an operad $P$ is equivalent to taking algebras over the category of operators associated with $P$.

\begin{proposition}
Let $V$ be a good symmetric monoidal category, and $C$ a symmetric monoidal $V$-category.\\
For any $V$-operad $P$, there exists an equivalence of categories
\[ \catop{P}\dash\alg(C)\To P\dash\alg(C) \]
natural in $P$ and $C$. Here, $P\dash\alg(C)$ denotes the category of algebras in $C$ over the operad $P$.
\end{proposition}

\begin{remark}
There is a similar equivalence between right modules over an operad, $P$, and right modules over its associated category of operators, $\catop{P}$.
\end{remark}

One of the advantages with operads is that we can push-forward algebras along maps of operads. We state the consequence for categories of operators.

\begin{proposition}
Let $V$ be a symmetric monoidal closed category, and $C$ a symmetric monoidal $V$-category.\\
Let $(\mathsf{P},a)$ and $(\mathsf{Q},b)$ be categories of operators in $V$, and $f:(\mathsf{P},a)\to (\mathsf{Q},b)$ a morphism of $V$-PROPs.\\
The functor
\[ \SymmMonCAT(f,C):(\mathsf{Q},b)\dash\alg(C)\To(\mathsf{P},a)\dash\alg(C) \]
has a left adjoint
\[ f_\ast:(\mathsf{P},a)\dash\alg(C)\To(\mathsf{Q},b)\dash\alg(C) \]
\end{proposition}

\section{Examples of PROPs in $\Set$ and $\Top$}\label{section:terminology_examples_PROPs}

We will now give a few important examples of $\Top$-PROPs (where we consider $\Top$ as a cartesian category). All our examples are actually categories of operators in $\Top$, and can therefore be essentially recovered for their underlying operads.

\begin{example}[commutative PROP]
The {\em commutative PROP}, $\Comm$, is the $\Set$-PROP given by the cocartesian category $\FinSet$ (i.e.\ the symmetric monoidal structure is given by disjoint union), with the generator $1$.\\
Given a symmetric monoidal category $C$, there is an equivalence of categories
\[ u:\Comm\dash\alg(C)\To\text{CommMon}(C) \]
between the category of $\Comm$-algebras in $C$ and the category of commutative monoids in $C$.\\
The equivalence $u$ takes a $\Comm$-algebra $F:\FinSet\to C$ to the commutative monoid $F(1)$ in $C$, which is called the {\em underlying commutative monoid of \(F\)}.
\end{example}

\begin{construction}[symmetric monoidal category $\Ord\Sigma$]\label{construction:OrdSigma}
The category $\Ord\Sigma$ is defined by
\begin{enum}
\item the objects of $\Ord\Sigma$ are the (small) finite sets;
\item given two finite sets $A$ and $B$, a morphism $A\to B$ in $\Ord\Sigma$ is a function $f:A\to B$ together with a total order on $f^{-1}(i)$ for each $i\in B$.
\end{enum}
We leave the composition in $\Ord\Sigma$ as an exercise to the reader.\\
The category $\Ord\Sigma$ fits canonically in a commutative diagram
\begin{diagram}[h=2em]
\Ord&\rTo&\Ord\Sigma\\
&\rdTo_{\mathllap{\proj}}&\dTo_{\proj}\\
&&\FinSet
\end{diagram}
There exists a unique symmetric monoidal structure on $\Ord\Sigma$ such that the functor \[ \Ord\To\Ord\Sigma \] is strict monoidal and the functor \[ \Ord\Sigma\To\FinSet \] is strict symmetric monoidal (the symmetric monoidal structure on $\FinSet$ is given by disjoint union).\comment{If we don't demand strictness of the monoidal functors, then we would have an essentially unique symmetric monoidal structure.}\\
Additionally, the monoidal functor $\Ord\to\Ord\Sigma$ induces an equivalence between $\Ord\Sigma$ and the free symmetric monoidal category on the monoidal category $\Ord$.
\end{construction}

\begin{example}[associative PROP]\label{example:associative_PROP}
The {\em associative PROP}, $\Ass$, is the $\Set$-PROP $(\Ord\Sigma,1)$.\\
Given a symmetric monoidal category $C$, there is an equivalence of categories
\[ u:\Ass\dash\alg(C)\To\text{AssMon}(C) \]
between the category of $\Ass$-algebras in $C$ and the category of associative monoids in $C$.\\
The equivalence $u$ takes a $\Ass$-algebra $F:\Ord\Sigma\to C$ to the associative monoid $F(1)$ in $C$, which is called the {\em underlying associative monoid of \(F\)}.
\end{example}

\begin{example}[little discs PROPs]
We define the {\em little $n$-discs \(\Top\)-PROP}, $\D_n$, to be the $\Top$-category whose objects are
\[ \ob(\D_n)\defeq\big\{(D^n)^{\amalg k}\suchthat k\in\NN\big\} \]
and such that for $k,l\in\NN$, the space of morphisms
\[ \D_n\big((D^n)^{\amalg k},(D^n)^{\amalg l}\big) \]
is the subspace of $\Map\big((D^n)^{\smash{\amalg k}},(D^n)^{\smash{\amalg l}}\big)$ consisting of the maps 
\[ f:(D^n)^{\amalg k}\To(D^n)^{\amalg l} \]
such that
\begin{enum}
\item $f$ restricted to $(\interior D^n)^{\smash{\amalg k}}$ is injective;
\item the restriction of $f$ to each disc in the disjoint union $(D^n)^{\amalg k}$ is the composition of a translation with multiplication by a positive real number.
\end{enum}
Composition in $\D_n$ is given by composition of maps.\\
The symmetric monoidal structure on $\D_n$ is given by disjoint union, and the generator of $\D_n$ is the object $D^n$.\\
It is straightforward to check that $\D_n$ is isomorphic to the category of operators associated with the usual little $n$-discs operad.
\end{example}

\begin{example}[framed little discs PROPs]
The {\em framed little $n$-discs} PROP, $\D_n^{O(n)}$ (respectively, $\D_n^{SO(n)}$), are defined similarly to $\D_n$. The only difference is that we require the restriction of the maps
\[ f:(D^n)^{\amalg k}\To(D^n)^{\amalg l} \]
to each disc in the domain to be the composition of (I) a translation, (II) multiplication by a positive real number, and (III) an element of $O(n)$ (respectively, an element of $SO(n)$).
Again, these PROPs are isomorphic to the categories of operators of the usual framed little discs operads.
\end{example}





\chapter{Internal categories}\label{chapter:internal_categories}

\section*{Introduction}

The purpose of this chapter is to cover the constructions on topological categories which will be required later in the text. Accordingly, this chapter discusses internal categories and their relation to enriched categories. Most importantly, we define the Grothendieck construction for internal presheaves of categories, and apply it to the case of topological categories.

\section*{Summary}

The present chapter is quite long, and is meant mostly as reference for later chapters. On that note, the most relevant sections are the last two, which deal with Grothendieck constructions and topological categories.

A quick general reference on internal categories is chapter 8 of the book \cite{Borceux}, although it does not contain all the material necessary for our applications.

Section \sref{section:internal_categories} details the basic concepts of internal category, internal functor, and internal natural transformation in a category with pullbacks, $V$. Section \sref{section:categories_internal_categories} defines the $2$-category of internal categories in $V$, $\Cat(V)$. Section \sref{section:examples_internal_categories} gives examples of internal categories, including ordinary categories, and the path category of a space $X$, $\pathcat(X)$.

Section \sref{section:coproducts_finitely_complete_category} gives some useful definitions relating to coproducts in finitely complete categories. These conditions are then used in section \sref{section:internal_external_categories} to compare enriched categories and internal categories. In particular, section \sref{section:internal_external_categories} defines, under appropriate conditions, an internal category $\internal A$ associated to a category $A$ enriched over a cartesian closed category. Inversely, it also associates to each internal category $A$ an enriched category $\disccat{A}$.

Section \sref{section:internal_presheaves} dwells on the concept of $V$-valued functor (or presheaf) on an internal category in $V$. The following section \sref{section:internal_enriched_presheaves} resumes the comparison between enriched and internal concepts, now focusing on presheaves.

Section \sref{section:internal_presheaves_categories} renews the discussion of internal presheaves to define the concept of internal $\Cat(V)$-valued functors. This is the necessary background for section \sref{section:Grothendieck_construction}, where the Grothendieck construction, $\Groth(F)$, of an internal functor $F:\op{A}\to\Cat(V)$ is described: $\Groth(F)$ is again an internal category in $V$.

Finally, the last two sections apply the concepts introduced in this chapter to the case of internal categories in $\Top$. Section \sref{section:variation_Groth_Top} gives a variation of the Grothendieck construction for functors from an ordinary category to $\Top\dash\Cat$. Section \sref{section:homotopical_properties_Groth} briefly discusses the Grothendieck construction for topological categories from a homotopical perspective.

\section{Internal categories}\label{section:internal_categories}

\begin{definition}[internal category]\label{definition:cat_objects}
Let $V$ be a category with pullbacks.\\
An {\em internal category} (or {\em category object}) in $V$ is given by
\begin{enum}
\item an object $\ob A$ in $V$ called the {\em object of objects} of $A$;
\item an object $\mor A$ in $V$ called the {\em object of morphisms} of $A$;
\item a morphism $s:\mor A\to\ob A$ in $V$ called the {\em source};
\item a morphism $t:\mor A\to\ob A$ in $V$ called the {\em target};
\item a morphism $i:\ob A\to\mor A$ in $V$ called the {\em identity};
\item a morphism
\[ c:\lim\left(\mor A\xTo{t}\ob A\xlongleftarrow{s}\mor A\right)\To\mor A \]
called the {\em composition}.
\end{enum}
These data are required to verify
\begin{align*}
s\circ i=\id_{\ob A}\\
t\circ i=\id_{\ob A}
\end{align*}
and to make the three diagrams
\begin{diagram}[midshaft]
\mor A&\lTo{\ \proj_L\;}&\lim\big(\overbrace{\mor A}^{L}\xto{t}\ob A\xleftarrow{s}\overbrace{\mor A}^R\big)&\rTo{\ \proj_R\ }&\mor A\\
\dTo{s}&&\dTo{c}&&\dTo{t}\\
\ob A&\lTo{s}&\mor A&\rTo{t}&\ob A\\
\end{diagram}
\begin{diagram}[midshaft,objectstyle=\scriptstyle,labelstyle=\scriptscriptstyle]
\lim{\displaystyle(}\mor A\xto{t}\ob A\xleftarrow{s}\mor A\xto{t}\ob A\xleftarrow{s}\mor A{\displaystyle)}&\rTo_{\;c\underset{\!\ob A\!}{\times}\mor A\ }&\lim{\displaystyle(}\mor A\xto{t}\ob A\xleftarrow{s}\mor A{\displaystyle)}\\
\dTo{\mor A\underset{\!\ob A\!}{\times}c}&&\dTo{c}\\
\lim{\displaystyle(}\mor A\xto[t]{}\ob A\xleftarrow[s]{}\mor A{\displaystyle)}&\rTo_{c}&\mor A
\end{diagram}
\begin{diagram}[midshaft,balance,objectstyle=\scriptstyle,labelstyle=\scriptscriptstyle]
&&\hspace{-2em}\lim{\displaystyle(}\mor A\xto{t}\ob A\xleftarrow{s}\mor A{\displaystyle)}\hspace{-2em}&&\\
&\ruTo{\mathllap{i\underset{\!\ob A\!}{\times}\mor A}}&\dTo{c}&\luTo{\mathrlap{\mor A\underset{\!\ob A\!}{\times}i}}&\\
\lim{\displaystyle(}\ob A\xto[\id]{}\ob A\xleftarrow[s]{}\mor A{\displaystyle)}&\rTo{\simeq}&\mor A&\lTo{\simeq}&\lim{\displaystyle(}\mor A\xto[t]{}\ob A\xleftarrow[\id]{}\ob A{\displaystyle)}
\end{diagram}
all commute.
\end{definition}

\begin{remark}[opposite internal category]\label{remark:opposite_internal_category}
Given an internal category $A$ in $V$, reversing the roles of the source and target morphisms, $s$ and $t$, gives a new internal category in $V$, $\op{A}$, called the {\em opposite of \(A\)}. It has the same objects and morphisms as $A$.
\end{remark}

\begin{definition}[internal functor]\label{definition:internal_functor}
Let $V$ be a category with pullbacks. Let $A$, $B$ be internal categories in $V$.\\
An {\em internal functor} $F:A\to B$ is given by a pair of morphisms in $V$
\begin{align*}
\ob F&:\ob A\To\ob B\\
\mor F&:\mor A\To\mor B
\end{align*}
such that the three diagrams
\begin{diagram}[h=2.1em]
\ob A&\rTo{\ob F}&\ob B\\
\dTo{i}&&\dTo{i}\\
\mor A&\rTo{\mor F}&\mor B
\end{diagram}
\begin{diagram}[midshaft,h=2.1em]
\ob A&\lTo{s}&\mor A&\rTo{t}&\ob A\\
\dTo{\ob F}&&\dTo{\mor F}&&\dTo{\ob F}\\
\ob B&\lTo{s}&\mor B&\rTo{t}&\ob B
\end{diagram}
\begin{diagram}
\lim\big(\mor A\xrightarrow{t}\ob A\xleftarrow{s}\mor A\big)&\rTo_{\ \mor F\underset{\!\ob F\!}{\times}\mor F\ }&\lim\big(\mor B\xrightarrow{t}\ob B\xleftarrow{s}\mor B\big)\\
\dTo{c}&&\dTo{c}\\
\mor A&\rTo{\mor F}&\mor B
\end{diagram}
all commute.
\end{definition}

\begin{definition}[internal natural transformation]\label{definition:internal_natural_transformation}
Let $V$ be a category with pullbacks.\\
Let $A$, $B$ be internal categories in $V$, and $F,G:A\to B$ internal functors.\\
An internal natural transformation in $V$, $\alpha:F\to G$ is a morphism
\[ \alpha:\ob A\To\ob B \]
such that
\begin{align*}
s\circ\alpha&=\ob F\\
t\circ\alpha&=\ob G
\end{align*}
and the following diagram commutes
\begin{diagram}[midshaft]
\mor A&\rTo{(\alpha\circ s,\mor G)}&\mor B\underset{\!\ob B\!}{_t\times_s}\mor B\\
\dTo{(\mor F,\alpha\circ t)}&&\dTo{c}\\
\mor B\underset{\!\ob B\!}{_t\times_s}\mor B&\rTo{c}&\mor B\\
\end{diagram}
\end{definition}

\section{Categories of internal categories}\label{section:categories_internal_categories}

\begin{proposition}[2-category of internal categories]\label{proposition:Cat(V)}
Let $V$ be a category with pullbacks.\\
The internal categories, internal functors, and internal natural transformations in $V$ form the objects, $1$-morphisms, and $2$-morphisms, respectively, of a $2$-category $\Cat(V)$.\\
Furthermore, $\Cat(V)$ has pullbacks.
\end{proposition}

\begin{remark}
We leave it to the reader unfamiliarized with internal categories to define the several compositions in $\Cat(V)$, and to check that these indeed give a $2$-category.
\end{remark}

\comment{
The {\em category of internal categories in \(V\)}, $\Cat(V)$, is defined by:
\begin{enum}
\item the objects of $\Cat(V)$ are the internal categories in $V$;
\item given internal categories $A$, $B$ in $V$, $\ob\big(\Cat(V)(A,B)\big)$ is the set of internal functors $A\to B$;
\item given internal categories $A$, $B$ in $V$, and $F,G\in\Cat(V)(A,B)$, $\Cat(V)(F,G)$ is the set of internal natural transformations $F\to G$;
\item given internal functors between internal categories in $V$
\[ A\xTo{F}B\xTo{G}C \]
the composite is defined by
\begin{align*}
\mor(G\circ F)&\defeq\mor G\circ\mor F\\
\ob(G\circ F)&\defeq\ob G\circ\ob F
\end{align*}
\end{enum}
}

\begin{remark}
The association of $\ob A$, $\mor A$ to an internal category $A$ in $V$ extends to functors
\begin{align*}
\ob&:\Cat(V)\To V\\
\mor&:\Cat(V)\To V
\end{align*}
Furthermore, the source, target, and identity of internal categories give rise to natural transformations
\begin{align*}
s&:\mor\To\ob\\
t&:\mor\To\ob\\
i&:\ob\To\mor
\end{align*}
\end{remark}

Given that the previous definitions were very long, we give a short alternative characterization of the underlying $1$-category of $\Cat(V)$ (\ie forget all the 2-cells in $\Cat(V)$).

\begin{proposition}\label{proposition:internal_categories_simplicial_objects}
For each category with pullbacks $V$, there is an equivalence of categories (natural in $V$)
\[ \HomCat{\op{\Delta}}{V}^{pb}\xTo{\ \sim\ }\Cat(V)^{\leq 1} \]
Here, $\HomCat{\op{\Delta}}{V}^{pb}$ denotes the full subcategory of $\HomCat{\op{\Delta}}{V}$ generated by the functors which preserve all pullbacks that exist in $\op{\Delta}$. Moreover, $\Cat(V)^{\leq 1}$ denotes the underlying $1$-category of $\Cat(V)$.
\end{proposition}

\begin{remark}
The functor above associates to a simplicial object in $V$, $X$, which preserves all pullbacks in $\op{\Delta}$, an internal category $A$ such that
\begin{align*}
\ob A&\defeq X(1)\\
\mor A&\defeq X(2)
\end{align*}
\end{remark}

\begin{definition}[nerve functor]\label{definition:nerve_functor}
Any specified inverse to the equivalence in proposition \sref{proposition:internal_categories_simplicial_objects} is called the {\em nerve functor} for $V$:
\[ \Nerve:\Cat(V)^{\leq 1}\To\HomCat{\op{\Delta}}{V}^{pb} \]
\end{definition}

\begin{proposition}[transfer of internal categories]\label{proposition:transfer_cat}
Let $V$, $W$ be categories with pullbacks. Let $F:V\to W$ be a functor which preserves all pullbacks.\\
There exists a natural induced functor
\[ \Cat(F):\Cat(V)\To\Cat(W) \]
which verifies
\begin{align*}
\ob\circ\Cat(F)&=F\circ\ob\\
\mor\circ\Cat(F)&=F\circ\mor
\end{align*}
Moreover, $\Cat(F)$ preserves all pullbacks.
\end{proposition}

\begin{remark}
The functor $\Cat(F)$ can be described quite simply on the level of the simplicial objects.\\
The functor
\[ \HomCat{\op{\Delta}}{F}:\HomCat{\op{\Delta}}{V}\To\HomCat{\op{\Delta}}{W} \]
restricts to a functor
\[ \HomCat{\op{\Delta}}{F}:\HomCat{\op{\Delta}}{V}^{pb}\To\HomCat{\op{\Delta}}{W}^{pb} \]
and the diagram (where each vertical functor is induced by the equivalence from proposition \sref{proposition:internal_categories_simplicial_objects})
\begin{diagram}[midshaft,h=2.3em]
\HomCat{\op{\Delta}}{V}^{pb}&\rTo{\HomCat{\op{\Delta}}{F}}&\HomCat{\op{\Delta}}{W}^{pb}\\
\dTo&&\dTo\\
\Cat(V)&\rTo{\Cat(F)}&\Cat(W)
\end{diagram}
commutes.
\end{remark}

\begin{example}[ordinary categories from internal categories]\label{example:Cat(V)->Cat(SET)}
Let $V$ be a $\SET$-category with pullbacks.\\
Given $x\in V$, the functor
\[ V(x,-):V\To\SET \]
preserves pullbacks, and so we get a functor
\[ \Cat\big(V(x,-)\big):\Cat(V)\To\Cat(\Set) \]
\end{example}

\section{Examples of internal categories}\label{section:examples_internal_categories}

\begin{example}[usual categories]\label{example:internal_set_categories}
A category object in $\Set$ is the same thing as an ordinary small category. A category object in $\SET$ is the same thing as a large category (large set of objects and large sets of morphisms).
We thus obtain equivalences of categories
\begin{align*}
\Cat(\Set)&\xhookrightarrow{\ \simeq\ }\Cat\\
\Cat(\SET)&\xhookrightarrow{\ \simeq\ }\SET\dash\Cat_{\SET}
\end{align*}
(recall notation from \sref{notation:categories_of_categories}).
\end{example}

\begin{example}[discrete category]\label{example:discrete_cat}
Let $V$ be a category with pullbacks.\\
For any object $x$ of $V$, the corresponding constant simplicial object in $V$ gives an internal category in $V$, which is called the {\em discrete category} on $x$, $\disc(x)$.\\
To be more concrete, this category verifies
\[ \ob\big(\disc(x)\big)=\mor\big(\disc(x)\big)=x \]
\[ s=t=i=\id_x \]
which also determines the composition.\\
We thus get a functor
\[ \disc:V\To\Cat(V) \]
which actually preserves pullbacks.
\end{example}

\begin{example}[indiscrete category]
Let $V$ be a category with all finite limits.\\
Given $x\in V$, the {\em indiscrete category} on $x$, $\indisc(x)$ is the category determined by
\begin{align*}
\ob\big(\indisc(x)\big)&\defeq x\\
\mor\big(\indisc(x)\big)&\defeq x\times x
\end{align*}
\begin{align*}
s&\defeq\proj_1\\
t&\defeq\proj_2\\
i&\defeq\text{diag}_x
\end{align*}
which uniquely determines the composition morphism.\\
We thus get a functor
\[ \indisc:V\To\Cat(V) \]
\end{example}

\begin{example}[path category]\label{example:path_category}
Given a topological space $X$, we define the {\em path category of \(X\)}, $\pathcat(X)$, to be an internal category in $\Top$ whose objects and morphisms are
\begin{align*}
\ob\big(\pathcat(X)\big)&\defeq X\\
\mor\big(\pathcat(X)\big)&\defeq H(X)
\end{align*}
where $H(X)$ is the Moore path space of $X$ from section \sref{section:terminology_Moore_path_space}. The source, target, and identity structure morphisms for $\pathcat(X)$ are given by the maps
\begin{align*}
s&:H(X)\To X\\
t&:H(X)\To X\\
i&:X\To H(X)
\end{align*}
defined in \sref{definition:maps_Moore_path_space}. The composition in $\pathcat(X)$ is given by concatenation of Moore paths (see \sref{definition:concat_Moore_paths}).\\
This construction extends to a functor
\[ \pathcat:\Top\To\Cat(\Top) \]
\end{example}

\section{Coproducts in finitely complete categories}\label{section:coproducts_finitely_complete_category}

In this section, we fix a full subcategory, $\setuniverse$, of $\SET$ closed under taking subsets, and finite limits in $\SET$ (\ie given any diagram $F:D\to\setuniverse$ indexed by a finite category $D$, there exists an object in $\setuniverse$ which is the limit of $F$ in $\SET$). Equivalently, $\setuniverse$ is closed under taking subsets, and finite products in $\SET$.

We will discuss some notions which will be useful in the following section to compare internal and enriched categories.

\begin{definition}
Let $V$ be a category.\\
Define the category $\setuniverse/\!/V$ by
\begin{enum}
\item the objects of $\setuniverse/\!/V$ are pairs $(S,F)$ where $S$ is a set in $\setuniverse$ and $F:S\to V$ is a functor;
\item a morphism $(S,F)\to(S',F')$ in $\setuniverse/\!/V$ is a pair $(f,\alpha)$ where $f\in\setuniverse(S,S')$, and
\[ \alpha:F\To F'\circ f \]
is a natural transformation;
\item given morphisms
\[ (S,F)\xTo{(f,\alpha)}(S',F')\xTo{(g,\beta)}(S''.F'') \]
in $\setuniverse/\!/V$, their composite is $\big(g\circ f,(\beta\circ f)\cdot\alpha\big)$.
\end{enum}
\end{definition}

\begin{construction}
Let $V$ be a category with all coproducts indexed by sets in $\setuniverse$.\\
There exists a functor
\[ \amalg:\setuniverse/\!/V\To V \]
given on objects by
\[ \amalg(S,F)\defeq\colim_S F=\coprod_S F \]
The functor $\amalg$ is defined on morphisms via the functoriality of colimits.
\end{construction}

\begin{definition}[category with disjoint coproducts]\label{definition:disjoint_coproducts}
Let $V$ be a finitely complete category which has all coproducts indexed by sets in $\setuniverse$.\\
$V$ is said to have {\em disjoint \(\setuniverse\)-coproducts} if the functor
\[ \amalg:\setuniverse/\!/V\To V \]
preserves all finite limits.
\end{definition}

\begin{remark}
$V$ has disjoint $\setuniverse$-coproducts if \[ \amalg:\setuniverse/\!/V\To V \] preserves pullbacks. This condition can be restated as: given
\begin{enum}
\item a diagram $A\xto{\smash{f}}C\xleftarrow{\smash{g}}B$ in $\setuniverse$,
\item functors $F_A:A\to V$, $F_B:B\to V$, and $F_C:C\to V$,
\item natural transformations $\alpha:F_A\to F_C\circ f$, $\beta:F_B\to F_C\circ g$,
\end{enum}
the limit of the induced diagram
\[ \coprod_A F_A\xTo{\alpha}\coprod_C F_C\xlongleftarrow{\beta}\coprod_B F_B \]
is given by the coproduct
\[ \coprod_{\hspace{-.5em}(a,b)\in A\underset{C}{\times}B\hspace{-.5em}}\lim\Big(F_A(a)\xto{\alpha_a}F_C(f(a))=F_C(g(b))\xleftarrow{\beta_b} F_B(b)\Big) \]
\end{remark}

\begin{notation}\label{notation:small_disjoint_coproducts}
If $\setuniverse=\FinSet$, we say $V$ has {\em disjoint finite coproducts}. If $\setuniverse=\Set$, we say $V$ has {\em disjoint small coproducts}. If $\setuniverse=\SET$, we say $V$ has {\em disjoint large coproducts}.
\end{notation}

\begin{examples}
The categories $\Set$, $\Top$, and $k\Top$ have disjoint small (and finite) coproducts. The categories $\SET$ and $\TOP$ have disjoint large coproducts.
\end{examples}

\begin{definition}[category with strongly/totally disjoint coproducts]\label{definition:totally_disjoint_coproducts}
Let $V$ be a finitely complete category with disjoint $\setuniverse$-coproducts.\\
$V$ is said to have {\em strongly disjoint \(\setuniverse\)-coproducts} (respectively, {{\em totally disjoint \(\setuniverse\)-coproducts}}) if
\begin{enum}
\item for each set $S$ in $\setuniverse$,
\item and each functor $F:S\to V$,
\end{enum}
the natural functor, obtained by taking the coproduct along $S$,
\[ \amalg_S:\HomCat{S}{V}/F\To V/(\amalg_S F) \]
is full and faithful (respectively, an equivalence of categories).
\end{definition}

\begin{notation}\label{notation:small_totally_disjoint_coproducts}
As before, we use the terminology ``small'' and ``large'' to refer to the cases of $\Set$ and $\SET$.\\
In particular, if $\setuniverse=\Set$, we say $V$ has {\em strongly disjoint small coproducts} (or {\em totally disjoint small coproducts}).
\end{notation}

\begin{examples}
The categories $\Set$, $\Top$, and $k\Top$ have totally disjoint $\Set$-coprod\-ucts. The categories $\SET$ and $\TOP$ have totally disjoint $\SET$-coprod\-ucts.
\end{examples}

\begin{definition}[connected object]\label{definition:connected_object}
Let $V$ be a finitely complete category which has all coproducts indexed by sets in $\setuniverse$.\\
An object $x$ of $V$ is said to be {\em connected over \(\setuniverse\)} if for each set $S$ in $\setuniverse$, the natural function
\[ S\To V\big(x,1^{\amalg S}\big) \]
is a bijection.
\end{definition}

\begin{examples}
The object $1$ in $\Set$ (or $\Top$, or $k\Top$) is connected over $\Set$. The object $1$ in $\SET$ (or $\TOP$) is connected over $\SET$.
\end{examples}

\section{Relation between internal and enriched categories}\label{section:internal_external_categories}

With the definitions of the preceding section, we are ready to tackle the passage from internal categories to enriched categories and vice-versa. We fix again, throughout this section, a full subcategory, $\setuniverse$, of $\SET$ closed under taking subsets, and finite limits in $\SET$.

\begin{remark}
Given a category with finite products, we view it as a cartesian monoidal category $(V,\times,1)$, where $1\in V$ is the terminal object.
\end{remark}

\begin{definition}[enriched categories from internal categories]
Assume $V$ is a finitely complete category.\\
Given an internal category $A$ in $V$, we define the associated {\em discretized} $(V,\times,1)$-category, $\disccat{A}$:
\begin{enum}
\item the objects are $\ob\big(\disccat{A}\big)\defeq V(1,\ob A)$;
\item given $x,y:1\to\ob A$, we define $\disccat{A}(x,y)$ to be the pullback of
\[ 1\xTo{(x,y)}\ob A\times\ob A\xlongleftarrow{(s,t)}\mor A \]
\item the composition in $\disccat{A}$ (for $x,y,z\in\ob\disccat{A}$)
\[ \composition:\disccat{A}(x,y)\times\disccat{A}(y,z)\To\disccat{A}(x,z) \]
is the unique morphism for which the diagram
\begin{diagram}[h=2.3em]
\disccat{A}(x,y)\times\disccat{A}(y,z)&\rTo{\ \composition\ }&\disccat{A}(x,z)\\
\dTo{\proj\times\proj}&&\dTo{\proj}\\
\mor A\underset{\!\ob A\!}{_t\times_s}\mor A&\rTo{c}&\mor A
\end{diagram}
commutes.
\end{enum}
\end{definition}

\begin{remark}\label{remark:disccat_internal_categories}
Assume now $V$ is a finitely complete $\setuniverse$-category.\\
The above construction extends to a functor (recall the notation $V\dash\Cat_{\setuniverse}$ from \sref{notation:categories_V-categories})
\[ \disccat{(-)}:\Cat(V)\To V\dash\Cat_{\setuniverse} \]
where $V$ is viewed as a cartesian monoidal category.
\end{remark}

\begin{example}[topological spaces]
Given an internal category, $A$, in $\Top$, the set of objects of the $\Top$-category $\disccat{A}$ is the underlying set of $\ob A$.
\end{example}

\begin{definition}[internal categories from enriched categories]\label{definition:enriched_cat->internal_cat}
Let $V$ be a finitely complete category with disjoint $\setuniverse$-coproducts.\\
Given a $(V,\times,1)$-category, $A$, whose set of objects is in $\setuniverse$, we define the {\em internalization of \(A\)}, $\internal A$, to be the internal category in $V$ determined by
\begin{enum}
\item the object of objects is \[ \ob(\internal A)\defeq\coprod_{\ob A} 1=1^{\amalg(\ob A)} \]
\item the object of morphisms is \[ \mor(\internal A)\defeq\ \ \coprod_{\mathclap{(x,y)\in\ob A\times\ob A}}\ \ A(x,y)=\ \coprod_{\mathclap{x,y\in\ob A}}\ A(x,y) \]
\item the source map makes the following diagram commute for each $x,y\in\ob A$:
\begin{diagram}[midshaft,h=2.3em]
A(x,y)&\rTo&1\\
\dInto_{\inclusion}&&\dInto_{\inclusion_x}\\
\mor(\internal A)&\rTo{\ s\ }&\coprod_{\ob A} 1
\end{diagram}
\item the target map makes the following diagram commute for each $x,y\in\ob A$:
\begin{diagram}[midshaft,h=2.3em]
A(x,y)&\rTo&1\\
\dInto_{\inclusion}&&\dInto_{\inclusion_y}\\
\mor(\internal A)&\rTo{\ t\ }&\coprod_{\ob A} 1
\end{diagram}
\item the identity map makes the following diagram commute for each $x\in\ob A$:
\begin{diagram}[midshaft]
1&\rInto{\inclusion_x}&\coprod_{\ob A}1\\
\dTo_{\id_x}&&\dTo_{i}\\
A(x,x)&\rInto{\quad\inclusion\ \ }&\mor(\internal A)
\end{diagram}
\item composition in $\internal A$ makes the following diagram commute
\begin{diagram}[midshaft]
\coprod_{\mathclap{x,y,z\in\ob A}}\ A(x,y)\times A(x,z)&\rTo{\ \composition\ }&\ \coprod_{\mathclap{x,z\in\ob A}}\ A(x,z)\\
\dTo[rightshortfall=-.5em]{\rotc{90}{$\scriptstyle\simeq$}}&&\dEqual[rightshortfall=0.1em]\\
\lim\big({\scriptstyle\mor(\internal A)\xto{\smash{t}}\ob(\internal A)\xleftarrow{\smash{s}}\mor(\internal A)}\big)&\rTo{c}&\mor(\internal A)
\end{diagram}
where the left vertical map is the natural map from the coproduct (which is an isomorphism since $V$ has disjoint $\setuniverse$-coproducts), and the top map is canonically obtained from composition in $A$.
\end{enum}
\end{definition}

\begin{remark}\label{remark:functoriality_internal_cat}
Let $V$ be a finitely complete category with disjoint $\setuniverse$-coproducts.\\
The above construction extends to a functor (recall the notation $V\dash\Cat_{\setuniverse}$ from \sref{notation:categories_V-categories})
\[ \internal:V\dash\Cat_{\setuniverse}\To\Cat(V) \]
where $V$ is viewed as a cartesian monoidal category.
\end{remark}

\begin{proposition}[correspondence between enriched and internal categories]\label{proposition:disccat_internal=id}
Let $V$ be a finitely complete $\setuniverse$-category with disjoint $\setuniverse$-coproducts.\\
There is a canonical natural transformation $\Gamma^V$ from the identity functor on $V\dash\Cat_{\setuniverse}$ to the composition
\[ V\dash\Cat_{\setuniverse}\xTo{\internal}\Cat(V)\xTo{\disccat{(-)}}V\dash\Cat_{\setuniverse} \]
If the object $1$ of $V$ is connected over $\setuniverse$, then $\Gamma^V$ is a natural isomorphism.
\end{proposition}

\begin{proposition}[correspondence between enriched and internal categories]
Let $V$ be a finitely complete category with strongly disjoint $\setuniverse$-coproducts.\\
If the object $1$ of $V$ is connected over $\setuniverse$ then the functor
\[ \internal:V\dash\Cat_\setuniverse\To\Cat(V) \]
is a local isomorphism of $2$-categories (\ie induces an isomorphism of categories
\[ \internal:V\dash\Cat_\setuniverse(A, B)\To\Cat(V)(\internal A,\internal B) \]
for all $A$, $B$ in $V\dash\Cat_\setuniverse$).
\end{proposition}

We finish with a simple case in which the transfer of internal categories is compatible with the transfer of enriched categories.

\begin{proposition}\label{proposition:transfer_internal_enriched_categories}
Let $V$, $W$ be finitely complete $\setuniverse$-categories, and $F:V\to W$ a functor which preserves all finite limits.\\
Assume furthermore that $F$ induces a bijection
\[ F:V(1,x)\xTo{\simeq}W(1,Fx) \]
for any object $x$ in $V$.\\
Then there is a canonical natural isomorphism which makes the following diagram commute
\begin{diagram}[midshaft,h=2.3em]
\Cat(V)&\rTo{\disccat{(-)}}&V\dash\Cat_\setuniverse\\
\dTo{\Cat(F)}&&\dTo{F}\\
\Cat(W)&\rTo{\disccat{(-)}}&W\dash\Cat_\setuniverse
\end{diagram}
In particular, for each internal category $A$ in $V$, there exists a canonical isomorphism
\[ \disccat{\big(\Cat(F)(A)\big)}=F\big(\disccat{A}\big) \]
\end{proposition}

\section{Internal presheaves}\label{section:internal_presheaves}

We have defined in \sref{definition:internal_functor} and \sref{definition:internal_natural_transformation} the notion of internal functor and internal natural transformation in $V$. These gives us, for any internal categories $A$ and $B$, a category of internal functors $\Cat(V)(A,B)$ (see proposition \sref{proposition:Cat(V)}). One might try to extract from this a canonical notion of presheaf on an internal category, analogous to the notion of $\Set$-valued functors on a small category. Unfortunately, there is (in general) no canonical internal category to take as the target for internal functors. In this section we will define the notion of $V$-valued functors on an internal category in $V$ which will play the desired role of presheaves on an internal category.

\begin{definition}[internal $V$-valued functor]\label{definition:internal_V-valued_functor}
Let $V$ be a category with pullbacks, and $A$ an internal category in $V$.\\
An {\em internal \(V\)-valued functor on \(A\)}, $F:A\to V$, is a triple $F=(P,p_0,p_1)$ where
\begin{enum}
\item $P$ is an object of $V$;
\item $p_0:P\to\ob A$ is a morphism in $V$;
\item $p_1$ is a morphism in $V$
\[ p_1:\lim\big(P\xto{p_0}\ob A\xleftarrow{s}\mor A\big)\To P \]
\end{enum}
These data are required to make the three diagrams
\begin{diagram}[midshaft,h-2.3em,balance]
\lim\big(P\xto{p_0}\ob A\xleftarrow{s}\mor A\big)&\rTo{p_1}&P\\
\dTo{\proj}&&\dTo{p_0}\\
\mor A&\rTo{t}&\ob A
\end{diagram}
\begin{diagram}[midshaft,hug]
\lim\big(P\xto{p_0}\ob A\xleftarrow{\id}\ob A\big)\\
\dTo{\id_P\underset{\!\ob A\!}{\times}i}&\rdTo{\simeq}\\
\lim\big(P\xto{\smash{p_0}}\ob A\xleftarrow{\smash{s}}\mor A\big)&\rTo_{p_1}&P
\end{diagram}
\begin{diagram}[midshaft,objectstyle=\scriptstyle,labelstyle=\scriptscriptstyle]
\lim\big(P\xto{p_0}\ob A\xleftarrow{s}\mor A\xto{t}\ob A\xleftarrow{s}\mor A\big)&\rTo_{\ p_1\underset{\!\ob A\!}{\times}\mor A\ }&\lim\big(P\xto{p_0}\ob A\xleftarrow{s}\mor A\big)\\
\dTo{P\underset{\!\ob A\!}{\times}c}&&\dTo{p_1}\\
\lim\big(P\xto{\smash{p_0}}\ob A\xleftarrow{\smash{s}}\mor A\big)&\rTo_{p_1}&P
\end{diagram}
commute.
\end{definition}

\begin{definition}[internal $V$-valued natural transformation]
Let $V$ be a category with pullbacks, $A$ an internal category in $V$.\\
Additionally, assume $F,G:A\to V$ are internal $V$-valued functors, with $F=(P,p_0,p_1)$ and $G=(Q,q_0,q_1)$.\\
An {\em internal \(V\)-valued natural transformation on \(A\)}, $\alpha:F\to G$ is a morphism $\alpha:P\to Q$ in $V$ such that
\[ q_0\circ\alpha=p_0 \]
and the diagram
\begin{diagram}[midshaft]
\lim\big(P\xto{p_0}\ob A\xleftarrow{s}\mor A\big)&\rTo{p_1}&P\\
\dTo{\alpha\underset{\!\ob A\!}{\times}\mor A}&&\dTo{\alpha}\\
\lim\big(Q\xto{q_0}\ob A\xleftarrow{s}\mor A\big)&\rTo{q_1}&Q
\end{diagram}
commutes.
\end{definition}

\begin{example}[Yoneda presheaf]\label{example:internal_Yoneda_presheaf}
Let $V$ be a finitely complete category, and $A$ an internal category in $V$.\\
Given a morphism $x:1\to\ob A$ (``an object of $A$''), we define the {\em Yoneda presheaf of \(x\)} \[ \Yoneda_A(x):\op{A}\To V \] to be the triple $(P,p_0,p_1)$, where $P$ is the pullback of
\[ 1\xTo{x}\ob A\xlongleftarrow{t}\mor A \]
and $p_0$ is the restriction of $s$ to $P$, and $p_0$ is induced from the composition, $c$, in $A$.\\
This construction extends to a functor (recall example \sref{example:Cat(V)->Cat(SET)})
\[ \Yoneda_A:\Cat\big(V(1,-)\big)(A)\To\Cat(V)(\op{A},V) \]
\end{example}

\begin{proposition}[category of internal $V$-valued functors]
Let $V$ be a category with pullbacks, and $A$ an internal category in $V$.\\
The internal $V$-valued functors on $A$, and the internal $V$-valued natural transformations on $A$ form the objects and the morphisms, respectively, of a category $\Cat(V)(A,V)$.
\end{proposition}

\begin{remark}
The composition of internal natural transformations is just given by composing the corresponding morphisms in $V$.
\end{remark}

\begin{notation}
We call the category $\Cat(V)(A,V)$ the {\em category of internal \(V\)-valued functors on \(A\)}. We denote it by $\Cat(V)(A,V)$ in analogy with the category of internal functors $\Cat(V)(A,B)$ between internal categories $A$, $B$ in $V$, despite $V$ not being an internal category in $V$.
\end{notation}

\begin{example}[internal $\Set$-valued functors]
Under the identification of an internal category in $\Set$ with an ordinary small category (example \sref{example:internal_set_categories}), we obtain an isomorphism
\[ \Cat(\Set)(A,\Set)\simeq\HomCat{A}{\Set} \]
for any category $A$ internal to $\Set$.
\end{example}

\begin{construction}
Let $V$ be a category with pullbacks.\\
Assume $A$, $B$ are internal categories in $V$, and $F:A\to B$ is an internal functor.
Then we get an induced functor
\[ \Cat(V)(F,V):\Cat(V)(B,V)\To\Cat(V)(A,V) \]
which we now describe on objects. Suppose $(P,p_0,p_1)$ is an object of $\Cat(V)(B,V)$. Then the internal $V$-valued functor
\[ (Q,q_0,q_1)=\big(\Cat(V)(F,V)\big)(P,p_0,p_1) \]
on $A$ is determined by
\begin{enum}
\item $Q$ is the pullback of
\[ \ob A\xTo{\ob F}\ob B\xlongleftarrow{p_0}P \]
and $q_0:Q\to\ob A$ is the canonical projection;
\item the morphism $q_1$ makes the diagram
\begin{diagram}[midshaft]
\mor A&\rTo{t}&\ob A\\
\uTo{\proj}&&\uTo_{q_0}\\
\lim\big(Q\xto{\smash{q_0}}\ob A\xleftarrow{\smash{s}}\mor A\big)&\rTo{q_1}&Q\\
\dTo{\proj\underset{\!\ob F\!}{\times}\mor F}&&\dTo_{\proj}\\
\lim\big(P\xto{\smash{p_0}}\ob B\xleftarrow{\smash{s}}\mor B\big)&\rTo{p_1}&P
\end{diagram}
commute.
\end{enum}
\end{construction}

This construction is the basis for the next proposition.

\begin{proposition}[functoriality of internal presheaves]\label{proposition:functor_Cat(V)(-,V)}
Let $V$ be a category (in $\bigCAT$) with pullbacks.\\
There is a functor
\[ \Cat(V)(-,V):\op{\Cat(V)}\To\bigCAT \]
which associates to an internal category in $V$, $A$, the category $\Cat(V)(A,V)$ of internal $V$-valued functors on $A$.
\end{proposition}

\begin{remark}
We leave it to the reader to supply the remaining ingredients for the functor declared in the above proposition.
\end{remark}

\begin{notation}\label{notation:composition_internal_V-valued_functor}
Given an internal functor $f:A\to B$, we suggestively denote the functor
\[ \Cat(V)(f,V):\Cat(V)(B,V)\To\Cat(A,V) \]
by
\[ \func{-\circ f}{\Cat(V)(B,V)}{\Cat(V)(A,V)}{F}{F\circ f} \]
\end{notation}

There is one extra piece of functoriality for internal presheaves, induced by functors $V\to W$ which preserve pullbacks.

\begin{proposition}\label{proposition:transfer_internal_presheaves}
Let $V$, $W$ be categories with pullbacks, and $F:V\to W$ a functor which preserves all pullbacks.\\
For each category object $A$ in $V$, there exists a functor
\[ \Cat(F):\Cat(V)(A,V)\To\Cat(W)\big(\Cat(F)(A),W\big) \]
which associates to an internal $V$-valued functor $(P,p_0,p_1)$ on $A$, the $W$-valued functor on $\Cat(F)(A)$ (see proposition \sref{proposition:transfer_cat})
\[ \Cat(F)(P,p_0,p_1)\defeq(F(P),F(p_0),F(p_1)) \]
\end{proposition}

\begin{remark}
The functors in the proposition are actually natural in $A$, defining a natural transformation
\[ \Cat(F):\Cat(V)(-,V)\To\Cat(W)\big(\Cat(F)(-),W\big) \]
between functors $\op{\Cat(V)}\to\bigCAT$
\end{remark}

\section{Relation between external presheaves and internal presheaves}\label{section:internal_enriched_presheaves}

We fix, throughout this section, a full subcategory, $\setuniverse$, of $\SET$ closed under taking subsets, and finite limits in $\SET$.

\begin{definition}[enriched presheaves from internal presheaves]
Assume $V$ is a finitely complete, cartesian closed category, with internal morphism objects given by \[\hom_V(-,-):\op{V}\times V\To V \]
Let $A$ be an internal category in $V$.\\
Given an internal $V$-valued functor on $A$, $F=(P,p_0,p_1)$, we define the {\em discretized} $(V,\times,1)$-functor
\[ \disccat{F}:\disccat{A}\To V \]
\begin{enum}
\item given $x\in\ob\big(\disccat{A}\big)=V(1,\ob A)$, let
\[ \disccat{F}(x)\defeq\lim\big(1\xto{x}\ob A\xleftarrow{p_0}P\big) \]
\item for any $x,y\in\ob\big(\disccat{A}\big)$, the map
\[ \disccat{F}:\disccat{A}(x,y)\To\hom_V\big(\disccat{F}(x),\disccat{F}(y)\big) \]
is adjoint to the unique morphism
\[ \disccat{F}:\disccat{F}(x)\times\disccat{A}(x,y)\To\disccat{F}(x) \]
for which the diagram
\begin{diagram}[midshaft,h=2.3em]
\disccat{F}(x)\times\disccat{A}(x,y)&\rTo{\disccat{F}}&\disccat{F}(x)\\
\dTo_{\proj\times\proj}&&\dTo_{\proj}\\
\lim\big(P\xto{\smash{p_0}}\ob A\xleftarrow{\smash{s}}\mor A\big)&\rTo{p_1}&P
\end{diagram}
commutes.
\end{enum}
\end{definition}

\begin{remark}
Assume now $V$ is a finitely complete, cartesian closed $\setuniverse$-category, and $A$ is an internal category in $V$.\\
The above construction extends to a functor (recall observation \sref{remark:disccat_internal_categories})
\[ \disccat{(-)}:\Cat(V)(A,V)\To V\dash\Cat_{\setuniverse}\big(\disccat{A},V\big) \]
where $V$ is viewed as a cartesian monoidal category.\\
Furthermore, this functor is natural in $A$, with respect to the functor from proposition \sref{proposition:functor_Cat(V)(-,V)}, $\Cat(V)(-,V)$.
\end{remark}

\begin{definition}[internal presheaves from enriched presheaves]
Let $V$ be a finitely complete, cartesian closed category with disjoint $\setuniverse$-coproducts.\\
Let $A$ be a $(V,\times,1)$-category whose set of objects is in $\setuniverse$, and \[ F:A\To V \] a $(V,\times,1)$-functor.\\
We define the {\em internalization of \(F\)}, to be the internal $V$-valued functor on $\internal A$
\[ \internal F=(P,p_0,p_1):\internal A\To V \]
\begin{enum}
\item $P$ is defined as
\[ P\defeq\ \coprod_{\mathclap{x\in\ob A}}\ F(x) \]
\item $p_0$ is the morphism (where for any $y\in V$, $!\in V(y,1)$ is the unique element)
\[ \coprod_{\mathclap{x\in\ob A}}\ !:\ \coprod_{\mathclap{x\in\ob A}}\ F(X)\To\ \coprod_{\mathclap{x\in\ob A}}\ 1=\ob(\internal A) \]
\item $p_1$ is the unique map for which
\begin{diagram}[midshaft]
\coprod_{\mathclap{x,y\in\ob A}}\ F(x)\times A(x,y)&\rTo{F}&\coprod_{\mathclap{y\in\ob A}}\ F(y)\\
\dTo[rightshortfall=-.6em]{\rotc{90}{$\scriptstyle\simeq$}}&&\dEqual\\
\lim\big(P\xto{\smash{p_0}}\ob(\internal A)\xleftarrow{s}\mor(\internal A)\big)&\rTo{p_1}&P
\end{diagram}
commutes, where the top map is determined by the functor $F$ on morphisms, and the left vertical map is the natural map from the coproduct (which is an isomorphism because $V$ has disjoint $\setuniverse$-coproducts).
\end{enum}
\end{definition}

\begin{remark}
Let $V$ be a finitely complete cartesian closed category with disjoint $\setuniverse$-coproducts.\\
The above construction extends to a functor
\[ \internal:V\dash\Cat_{\setuniverse}(A,V)\To\Cat(V)(\internal A, V) \]
where $V$ is viewed as a cartesian monoidal category.\\
Additionally, this functor is natural with respect to $A$ (recall proposition \sref{proposition:functor_Cat(V)(-,V)} and observation \sref{remark:functoriality_internal_cat}).
\end{remark}

\begin{proposition}[correspondence between enriched and internal presheaves]
Let $V$ be a finitely complete, cartesian closed $\setuniverse$-category with disjoint $\setuniverse$-coproducts.\\
If $A$ is a $V$-category such that $\ob A$ is in $\setuniverse$, then the composition
\begin{align*}
V\dash\Cat_{\setuniverse}(A,V)&\xTo{\internal}\Cat(V)(\internal A,V)\\
&\xTo{\disccat{(-)}}V\dash\Cat_{\setuniverse}\big(\disccat{(\internal A)},V\big)\\
&\xTo{V\dash\Cat_{\setuniverse}(\Gamma^V_A,V)}V\dash\Cat_{\setuniverse}(A,V)
\end{align*}
is the identity functor (where $\Gamma^V$ appearing in the last arrow is the natural transformation from proposition \sref{proposition:disccat_internal=id}).
\end{proposition}

\begin{proposition}[correspondence between enriched and internal presheaves]
Let $V$ be a finitely complete category with strongly disjoint $\setuniverse$-coproducts.\\
If $A$ is a $V$-category such that $\ob A$ is in $\setuniverse$, the functor
\[ \internal:V\dash\Cat_\setuniverse(A,V)\To\Cat(V)(A,V) \]
is full and faithful. Moreover, it is an equivalence of categories if $V$ has totally disjoint $\setuniverse$-coproducts.
\end{proposition}

\section{Internal presheaves of categories}\label{section:internal_presheaves_categories}

In section \sref{section:internal_presheaves} we discussed $V$-valued functors on an category object in $V$. However, our ultimate goal is to define Grothendieck constructions of functors with values in categories. With that in mind, we now define the concept of $\Cat(V)$-valued functors on internal categories. For that purpose, recall the definition of the discrete internal category from example \sref{example:discrete_cat}.

\begin{definition}[internal $\Cat(V)$-valued functor]\label{definition:Cat(V)-valued_functor}
Let $V$ be a category with pullbacks, and $A$ an internal category in $V$.\\
An {\em internal \(\Cat(V)\)-valued functor on \(A\)} \[ F:A\To\Cat(V) \] is a triple $F=(P,p_0,p_1)$ where
\begin{enum}
\item $P$ is an object of $\Cat(V)$;
\item $p_0:P\to\disc(\ob A)$ is a morphism in $\Cat(V)$;
\item $p_1$ is a morphism in $\Cat(V)$
\[ p_1:\lim\big(P\xto{p_0}\disc(\ob A)\xleftarrow{\disc(s)}\disc(\mor A)\big)\To P \]
\end{enum}
These data are required to make the three diagrams
\begin{diagram}[midshaft,h=2.3em]
\hspace{4em}\lim\big(P\xto{p_0}\disc(\ob A)\xleftarrow{\!\disc(s)\!}\disc(\mor A)\big)&\rTo{p_1}&P\\
\dTo{\proj}&&\dTo{p_0}\\
\disc(\mor A)&\rTo{\disc(t)}&\disc(\ob A)&\qquad\quad
\end{diagram}
\begin{diagram}[midshaft,hug]
\lim\big(\disc(P)\xto{p_0}\disc(\ob A)\xleftarrow{\id}\disc(\ob A)\big)\\
\dTo{\id_P\underset{\ \ \mathclap{\disc(\ob A)}\ \ }{\times}\disc(i)}&\rdTo{\simeq}\\
\lim\big(P\xto{\smash{p_0}}\disc(\ob A)\xleftarrow{\smash{\!\disc(s)\!}}\disc(\mor A)\big)&\rTo_{p_1}&P
\end{diagram}
\begin{diagram}[midshaft,objectstyle=\scriptstyle,labelstyle=\scriptscriptstyle,balance]
&&\mathclap{\lim\big(P\xto{p_0}\disc(\ob A)\xleftarrow{\!\!\disc(s)\!\!}\disc(\mor A)\xto{\!\!\disc(t)\!\!}\disc(\ob A)\xleftarrow{\!\!\disc(s)\!\!}\disc(\mor A)\big)}\qquad\quad\\
&\ldTo[rightshortfall=.7em,leftshortfall=.5em]{\mathllap{P\underset{\ \ \mathclap{\disc(\ob A)}\ \ }{\times}\disc(c)}}&&\rdTo[leftshortfall=.7em]{\ \ \mathrlap{p_1\underset{\ \ \mathclap{\disc(\ob A)}\ \ }{\times}\disc(\mor A)}}\\
\qquad\lim\big(P\xto{\smash{p_0}}\disc(\ob A)\xleftarrow{\smash{\!\!\disc(s)\!\!}}\disc(\mor A)\big)&&&&\lim\big(P\xto{p_0}\ob A\xleftarrow{\smash{\!\!\disc(s)\!\!}}\disc(\mor A)\big)\\
&\rdTo_{p_1}&&\ldTo_{p_1}\\
&&P
\end{diagram}
commute.
\end{definition}

This is a very long definition, so now we restate it in a much more compact form.

\begin{remark}[restatement of definition]\label{remark:restatement_definition_Cat(V)-valued_functor}
This definition is a copy of the definition \sref{definition:internal_V-valued_functor} of an internal $V$-valued functor on $A$: we have only replaced $\ob A$, $\mor A$, and $V$ by $\disc(\ob A)$, $\disc(\mor A)$, and $\Cat(V)$, respectively.\\
In other words, an internal $\Cat(V)$-valued functor on an internal category $A$ in $V$
\[ F:A\To\Cat(V) \]
is exactly the same as a $\Cat(V)$-valued functor on the category object $\disc^t A$ in $\Cat(V)$
\[ F:\disc^t A\To\Cat(V) \]
\ie an object of the already defined category \[ F\in\Cat\big(\Cat(V)\big)\big(\disc^t A,\Cat(V)\big) \]
Here, we define
\[ \disc^t\defeq\Cat(\disc):\Cat(V)\To\Cat\big(\Cat(V)\big) \]
(where $\Cat(\disc)$ is the functor from proposition \sref{proposition:transfer_cat}), or more explicitly
\begin{align*}
\ob(\disc^t A)&=\disc(\ob A)\\
\mor(\disc^t A)&=\disc(\mor A)
\end{align*}
so $\disc^t A$ is not the discrete (``constant'') category object $\disc A$ in $\Cat(V)$, but a ``transposed'' version of it (which is not discrete in general).\\
In conclusion, we have almost reduced the above definition \sref{definition:Cat(V)-valued_functor} to a particular case of the definition \sref{definition:internal_V-valued_functor} of internal presheaf. There is only one oversight in this discussion: the category $\Cat(V)$ is a $2$-category, not a $1$-category. Therefore, the definitions of internal category and internal presheaf do not immediately apply to $\Cat(V)$. One way to rectify this (which is sufficient for our needs) is to consider instead the underlying $1$-category $\Cat(V)^{\leq 1}$ of $\Cat(V)$ (forget the $2$-morphisms). A more satisfactory solution is to generalize the preceding definitions to the case of internal categories in $2$-categories, thus allowing for the case of $\Cat(V)$. We will adopt the first solution, but the reader should note that it is possible to remove all the superscripts ``${\leq 1}$'' in what follows by generalizing our definitions to the case of $2$-categories.
\end{remark}

\begin{remark}[transposition in $\Cat\big(\Cat(V)\big)$]
The definition of $\disc^t$ in the previous remark reflects a fundamental symmetry in $\Cat(\Cat(V))$ which, informally, switches the two symbols ``$\Cat$''.\\ More precisely, there is a {\em transposition functor} (which is an isomorphism of categories)
\[ (-)^t:\Cat\big(\Cat(V)^{\leq 1}\big)^{\leq 1}\To\Cat\big(\Cat(V)^{\leq 1}\big)^{\leq 1} \]
that generalizes the transposition of double categories: small double categories are the same as objects of $\Cat(\Cat(\Set))$.\\
This transposition functor verifies a commutative diagram
\begin{diagram}
\Cat(V)^{\leq 1}&\rTo{\ \disc\ }&\Cat\big(\Cat(V)^{\leq 1}\big)^{\leq 1}\\
&\rdTo_{\mathllap{\disc^t}}&\dTo_{(-)^t}\\
&&\Cat\big(\Cat(V)^{\leq 1}\big)^{\leq 1}
\end{diagram}
\end{remark}

\begin{definition}[$\ob$ and $\mor$ of $\Cat(V)$-valued presheaf]\label{definition:ob_mor_Cat(V)-calued_functor}
Assume $V$ is a category with pullbacks. Let \[ F=(P,p_0,p_1):A\To\Cat(V) \] be a $\Cat(V)$-valued functor on the category object $A$ in $V$.\\
We define the internal $V$-valued functors on $A$
\begin{align*}
\ob F&\defeq(\ob P,\ob p_0,\ob p_1)\\
\mor F&\defeq(\mor P,\mor p_0,\mor p_1)
\end{align*}
\end{definition}

Using the restatement \sref{remark:restatement_definition_Cat(V)-valued_functor} of the definition, the discussion in section \sref{section:internal_presheaves} applies immediately to internal $\Cat(V)$-valued functors on category objects in $V$.

\begin{definition}[internal $\Cat(V)$-valued natural transformation]
Let $V$ be a category with pullbacks, $A$ an internal category in $V$.\\
Additionally, assume $F,G:A\to\Cat(V)$ are internal $\Cat(V)$-valued functors.\\
An {\em internal \(\Cat(V)\)-valued natural transformation on \(A\)} \[ \alpha:F\to G \] is defined to be an internal natural transformation between the $\Cat(V)$-valued functors $F,G:\disc^t A\to\Cat(V)$ (on the internal category $\disc^t A$ in $\Cat(V)^{\leq 1}$).
\end{definition}

\begin{definition}[category of internal $\Cat(V)$-valued functors]
Let $V$ be a category with pullbacks, and $A$ an internal category in $V$.\\
The {\em category of internal \(\Cat(V)\)-valued functors on \(A\)}, $\Cat(V)(A,\Cat(V))$, is defined to be
\[ \Cat(V)\big(A,\Cat(V)\big)\defeq\Cat\big(\Cat(V)^{\leq 1}\big)\big(\disc^t A,\Cat(V)^{\leq 1}\big) \]
\end{definition}

\begin{remark}
One can easily extend the category $\Cat(V)(A,\Cat(V))$ to a $2$-category by adding natural transformations between internal categories over $\disc(\ob A)$.\\
This comes essentially for free if we generalize our definitions to allow for internal categories in $2$-categories. We could then drop the superscripts ``$\leq 1$'' in the above definition.
\end{remark}

\begin{proposition}[functoriality of internal presheaves of categories]\label{proposition:functor_Cat(V)(-,V)}
Let $V$ be a category (in $\bigCAT$) with pullbacks.\\
There is a functor
\[ \Cat(V)\big(-,\Cat(V)\big):\op{\Cat(V)}\To\bigCAT \]
which associates to an internal category in $V$, $A$, the category of internal $\Cat(V)$-valued functors on $A$, $\Cat(V)\big(A,\Cat(V)\big)$.
\end{proposition}

\begin{notation}\label{notation:composition_internal_Cat(V)-valued_functor}
Analogously to notation \sref{notation:composition_internal_V-valued_functor}, given an internal functor $f:A\to B$, we suggestively denote the functor $\Cat(V)\big(f,\Cat(V)\big)$ by
\[ \func{-\circ f}{\Cat(V)\big(B,\Cat(V)\big)}{\Cat(V)\big(A,\Cat(V)\big)}{F}{F\circ f} \]
\end{notation}

\begin{example}[discrete $\Cat(V)$-valued presheaves]
Assume $V$ is a category with pullbacks, and $A$ is a category object in $V$.\\
Given a $V$-valued internal functor on $A$ \[ F:A\To V \] with $F=(P,p_0,p_1)$, there is an associated {\em discrete \(\Cat(V)\)-valued internal functor on $A$}
\[ \disc(F):A\To\Cat(V) \]
with $\disc(F)\defeq(\disc(P),\disc(p_0),\disc(p_1))$.\\
This extends to a functor
\[ \disc:\Cat(V)(A,V)\To\Cat(V)\big(A,\Cat(V)\big) \]
which is natural in $A$.
\end{example}

\begin{example}[usual functors into $\Cat$]
Recall from example \sref{example:internal_set_categories} that an internal category in $\Set$ is the same as a small category.\\
An internal $\Cat(\Set)$-valued functor on a small category $A$ is then the same as an ordinary functor $A\to\Cat$.
\end{example}

We finish this section by stating a result on transfer of internal category-valued functors across functors $F:V\to W$ which preserve pullbacks. It is a consequence of propositions \sref{proposition:transfer_internal_presheaves} and \sref{proposition:transfer_cat}.

\begin{proposition}\label{proposition:transfer_internal_presheaves_categories}
Let $V$, $W$ be categories with pullbacks, and $F:V\to W$ a functor which preserves all pullbacks.\\
For each category object $A$ in $V$, there exists a functor (recall proposition \sref{proposition:transfer_cat})
\[ \Cat(F):\Cat(V)\big(A,\Cat(V)\big)\To\Cat(W)\big(\Cat(F)(A),\Cat(W)\big) \]
which associates to an internal $\Cat(V)$-valued functor $(P,p_0,p_1)$ on $A$, the $\Cat(W)$-valued functor on $\Cat(F)(A)$
\[ \Cat(F)(P,p_0,p_1)\defeq\big(\Cat(F)(P),\Cat(F)(p_0),\Cat(F)(p_1)\big) \]
\end{proposition}

\begin{remark}
The functors in the proposition are actually natural in $A$, defining a natural transformation
\[ \Cat(F):\Cat(V)\big(-,\Cat(V)\big)\To\Cat(W)\big(\Cat(F)(-),\Cat(W)\big) \]
between functors $\op{\Cat(V)}\to\bigCAT$
\end{remark}

\section{Grothendieck construction}\label{section:Grothendieck_construction}

The last section introduced $\Cat(V)$-valued internal functors. With these, we can define a sufficiently general Grothendieck construction for our purposes. Recall the definition of the opposite of an internal category from observation \sref{remark:opposite_internal_category}.

\begin{definition}[Grothendieck construction]
Assume $V$ is a category with pullbacks, $A$ is a category object in $V$, and \[ F=(P,p_0,p_1):\op{A}\To\Cat(V) \] is an internal $\Cat(V)$-valued functor on $\op{A}$.\\
The {\em Grothendieck construction of \(F\)}, $\Groth(F)$, is the internal category in $V$ defined by:
\begin{enum}
\item the objects are $\ob\big(\Groth(F)\big)\defeq\ob P$
\item the object of morphisms $\mor\big(\Groth(F)\big)$ is the limit of
\[ \mor P\xTo{t}\ob P\xlongleftarrow{\ob p_1}\lim\big(\mor A\xto{\;t\;}\ob A\xleftarrow{\ob p_0}\ob P\big) \]
\item the source for $\Groth(F)$ \[ s:\mor\big(\Groth(F)\big)\To\ob\big(\Groth(F)\big) \] is given by the composition
\[ \mor\big(\Groth(F)\big)\xTo{\proj}\mor P\xTo{s}\ob P \]
\item the target for $\Groth(F)$ \[ t:\mor\big(\Groth(F)\big)\To\ob\big(\Groth(F)\big) \] is given by the composition
\[ \mor\big(\Groth(F)\big)\xTo{\proj}\lim\big(\mor A\xto{\;t\;}\ob A\xleftarrow{\ob p_0}\ob P\big)\xTo{\proj}\ob P \]
\item the identity for $\Groth(F)$ \[ i:\ob P=\ob\big(\Groth(F)\big)\To\mor\big(\Groth(F)\big) \] is the unique morphism such that
\begin{diagram}
\lim\big({\scriptstyle\ob A\xto{\;t\;}\ob A\xleftarrow{\ob p_0}\ob P}\big)&\rTo_{\ i\underset{\!\ob A\!}{\times}\mor A\ }&\lim\big({\scriptstyle\mor A\xto{\;t\;}\ob A\xleftarrow{\ob p_0}\ob P}\big)\\
\uTo_{\rotc{-90}{$\scriptstyle\simeq$}}&&\uTo_{\proj}\\
\ob P&\rTo{i}&\mor\big(\Groth(F)\big)\\
&\rdTo_{i}&\dTo_{\proj}\\
&&\mor P
\end{diagram}
commutes.
\item the composition for $\Groth(F)$
\[ c:\lim\big({\scriptstyle\mor(\Groth(F))\xto{t}\ob P\xleftarrow{s}\mor(\Groth(F))}\big)\To\mor\big(\Groth(F)\big) \]
is the unique morphism such that
\begin{diagram}[midshaft]
\lim\!\big({\scriptstyle\mor(\Groth(F))\xto{\;t\;}\ob P\xleftarrow{\;s\;}\mor(\Groth(F))}\big)&\rTo{\ \ c\ \ }&\mor\big(\Groth(F)\big)\\
\dTo{\id\underset{\!\ob P\!}{\times}\proj}&&\dTo{\proj}\\
\lim\big({\scriptstyle\mor(\Groth(F))\xto{\smash{\;t\;}}\ob P\xleftarrow{\smash{\;s\;}}\mor P}\big)&&\mor P\\
\raisebox{.7em}{\huge{\rotc{90}{$\simeq$}}}&&\uTo[lowershortfall=0em]{c}\\
\lim\!\left(
\begin{diagram}[h=2em,objectstyle=\scriptstyle,labelstyle=\scriptscriptstyle]
&&\hspace{-3em}\lim\big(\mor A\xto{t}\ob A\xleftarrow{\!\!\mor p_0\!\!}\mor P\big)\\
&&\dTo{s\circ(\mor p_1)}\\
\mor P&\rTo_{t}&\ob P
\end{diagram}
\right)&\rTo_{\id\underset{\id}{\times}\mor p_1}&\lim\!\left(
\begin{diagram}[w=1.2em,h=1.5em,objectstyle=\scriptstyle,labelstyle=\scriptscriptstyle]
&&\mor P\\
&&\dTo{s}\\
\mor P&\rTo{t}&\ob P
\end{diagram}
\right)
\end{diagram}
and
\begin{diagram}[midshaft]
\lim\!\big({\scriptstyle\mor(\Groth(F))\xto{\;t\;}\ob P\xleftarrow{\;s\;}\mor(\Groth(F))}\big)&\rTo{c}&\mor\big(\Groth(F)\big)\\
\dTo{\proj}&&\dTo{\proj}\\
\lim\!\big(\smash{\scriptstyle\mor A\xto{t}\ob A\xleftarrow{s}\mor A\xto{t}\ob A\xleftarrow{\ob p_0}\ob P}\big)&\rTo_{c\underset{\!\ob A\!}{\times}\ob P}&\lim\!\big(\smash{\scriptstyle\mor A\xto{t}\ob A\xleftarrow{\ob p_0}\ob P}\big)
\end{diagram}
both commute.
\end{enum}
\end{definition}

\begin{remark}
We have defined the Grothendieck construction of $\Cat(V)$ valued functors on $\op{A}$. We chose $\op{A}$ because that is the case which will appear in our applications.
\end{remark}

We will leave it to the reader to verify the claims in the following propositions.

\begin{proposition}\label{proposition:projection_Groth_construction}
Assume $V$ is a category with pullbacks, $A$ is a category object in $V$, and $F:\op{A}\to\Cat(V)$ is an internal $\Cat(V)$-valued functor on $\op{A}$.\\
Then there is a natural internal functor
\[ \pi:\Groth(F)\To A \]
\end{proposition}

\begin{proposition}
Let $V$ be a category with pullbacks, and $A$ a category object in $V$.\\
There is a natural functor
\[ \Groth:\Cat(V)\big(\op{A},\Cat(V)\big)\To\Cat(V)/A \]
which associates to each internal $\Cat(V)$-valued functor $F:\op{A}\to\Cat(V)$, the morphism
\[ \pi:\Groth(F)\To A \]
\end{proposition}

\begin{notation}
We will, for simplicity, also denote by $\Groth$ the composition
\[ \Cat(V)\big(\op{A},\Cat(V)\big)\xTo{\Groth}\Cat(V)/A\xTo{\proj}\Cat(V) \]
\end{notation}

\begin{proposition}[naturality of Grothendieck construction on base category]\label{proposition:Groth(functorA->B)}
Let $V$ be a category with pullbacks.\\
If $f:A\to B$ is an internal functor between internal categories in $V$, there is a canonical natural transformation
\begin{diagram}
\Cat(V)\big(\op{A},\Cat(V)\big)&\lTo{\ -\circ\op{f}\ }&\Cat(V)\big(\op{B},Cat(V)\big)\\
\dTo{\Groth}&\twocell[-.3em]{\scriptscriptstyle\Groth(f)}{.25em}{\Longrightarrow}{0}&\dTo{\Groth}\\
\Cat(V)/A&\rTo_{f\circ-}&\Cat(V)/B
\end{diagram}
Moreover, these natural transformations compose in the obvious way (when one places two of these diagrams side by side).
\end{proposition}

\begin{construction}\label{construction:Groth(f,alpha)}
In particular, given \[ f:A\To B \] an internal functor, we have a natural transformation
\begin{diagram}
\Cat(V)\big(\op{A},\Cat(V)\big)&\lTo{\ -\circ\op{f}\ }&\Cat(V)\big(\op{B},Cat(V)\big)\\
&\rdTo(1,2)_{\mathllap{\Groth}}\twocell[.2em]{\scriptscriptstyle\Groth(f)}{.25em}{\Longrightarrow}{-15}\ldTo(1,2)_{\mathrlap{\Groth}}\\
&\Cat(V)
\end{diagram}
So if
\begin{align*}
F&:\op{A}\To\Cat(V)\\
G&:\op{B}\To\Cat(V)
\end{align*}
are internal $\Cat(V)$-valued functors, and
\[ \alpha:F\To G\circ\op{f} \]
is an internal $\Cat(V)$-valued natural transformation, we have a canonical internal functor
\[ \Groth(f,\alpha):\Groth(F)\To\Groth(G) \]
given by the composition
\[ \Groth(F)\xTo{\Groth(\alpha)}\Groth(G\circ\op{f})\xTo{\Groth(f)}\Groth(G) \]
Moreover, the diagram
\begin{diagram}[h=2.2em]
\Groth(F)&\rTo{\ \Groth(f,\alpha)\ }&\Groth(G)\\
\dTo{\pi}&&\dTo{\pi}\\
A&\rTo{f}&B
\end{diagram}
commutes.
\end{construction}

The last result of this section shows that Grothendieck constructions are compatible with transfer along functors $V\to W$ which preserve pullbacks.

\begin{proposition}\label{proposition:Groth_transfer_categories}
Let $V$, $W$ be categories with pullbacks, and $F:V\to W$ a functor which preserves all pullbacks.\\
For each internal category $A$ in $V$, the following diagram commutes up to canonical natural isomorphism
\begin{diagram}[midshaft]
\Cat(V)\big(\op{A},\Cat(V)\big)&\rTo{\Groth}&\Cat(V)/A\\
\dTo^{\Cat(F)}_{\sref{proposition:transfer_internal_presheaves_categories}}&&\dTo{\Cat(F)}\\
\Cat(W)\big(\op{\Cat(F)(A)},\Cat(W)\big)&\rTo{\ \Groth\ }&\Cat(W)/\big(\Cat(F)(A)\big)
\end{diagram}
In particular, for each internal $\Cat(V)$-valued functor $f:\op{A}\to\Cat(V)$, there is a canonical isomorphism
\[ \Cat(F)\big(\Groth(f)\big)=\Groth\big(\Cat(F)(f)\big) \]
\end{proposition}

\section{Variation on Grothendieck construction}\label{section:variation_Groth_Top}

We will now deal with the case of topological categories. First we define a few variations of the Grothendieck construction.

\begin{construction}
If $A$ is a category in $\Set\dash\CAT$, and
\[ F:\op{A}\To\Cat(\Top) \]
there is a canonical associated internal $\Cat(\TOP)$-functor
\[ \internal F:\op{\internal A}\To\Cat(\TOP) \]
where $\internal A$ is the category internal to $\TOP$ associated with the $\Top$-category $C$. This internal functor is {\em not} an instance of our previous constructions. Instead, $\internal F\defeq(P,p_0,p_1)$ is such that
\[ P\defeq\coprod_{\!a\in\ob A\!}F(a) \]
and \[ p_0:P\To\disc(\ob A) \] is the canonical projection, and $p_1$ is obtained from the functoriality of $F$.\\
Thus we get a canonical (full faithful) inclusion
\[ \HomCat{\op{A}}{\Cat(\Top)}\Into\Cat(\TOP)\big(\op{\internal A},\Cat(\TOP)\big) \]
which is natural in $A$.\\
Applying the Grothendieck construction, we obtain
{\small
\[ \HomCat{\op{A}}{\Cat(\Top)}\Into\Cat(\TOP)\big(\op{\internal A},\Cat(\TOP)\big)\xTo{\Groth}\Cat(\TOP)/(\internal A) \]
}which we call
\[ \Groth:\HomCat{\op{A}}{\Cat(\Top)}\To\Cat(\TOP)/(\internal A) \]
\end{construction}

\begin{construction}[variation of Grothendieck construction]\label{construction:variation_Groth_construction}
We will only apply the previous construction to functors
\[ F:\op{A}\To\Top\dash\Cat \]
thus we define a new Grothendieck construction
\[ \Groth:\HomCat{\op{A}}{\Top\dash\Cat}\To\Top\dash\CAT/A \]
by the composite
\begin{align*}
\HomCat{\op{A}}{\Top\dash\Cat}&\xTo{\HomCat{\op{A}}{\internal}}\HomCat{\op{A}}{\Cat(\Top)}\\
&\xTo{\Groth}\Cat(\TOP)/(\internal A)\\
&\xTo{\disccat{(-)}}\TOP\dash\CAT/\disccat{(\internal A)}\\
&\xTo[\sref{proposition:disccat_internal=id}]{(\Gamma_A)^{-1}\circ-}\TOP\dash\CAT/A
\end{align*}
which can be seen to factor through $\Top\dash\Cat/A$.
\end{construction}

\begin{notation}
As before, we will also denote by $\Groth$ the functor
\[ \HomCat{\op{A}}{\Top\dash\Cat}\xTo{\Groth}\Top\dash\CAT/A\xTo{\proj}\Top\dash\CAT \]
\end{notation}

We leave here a description of the categories obtained through this construction.

\begin{proposition}[description of the Grothendieck construction]\label{proposition:description_Groth_discrete}
Let $A$ be a category in $\Set\dash\CAT$, and $F:\op{A}\to\Top\dash\Cat$ a functor.\\
Then the Grothendieck construction of $F$ verifies:
\begin{enum}
\item the set of objects of $\Groth(F)$ is
\[ \ob\big(\Groth(F)\big)=\coprod_{x\in\ob A}\ob\big(F(x)\big) \]
\item given $x,y\in\ob A$, $a\in\ob F(x)$, and $b\in\ob F(y)$, we have
\[ \Groth(F)(a,b)=\ \coprod_{\mathclap{f\in A(x,y)}}\ F(x)\big(a,F(f)(b)\big) \]
\end{enum}
\end{proposition}

\begin{remark}
This description is natural with respect to $A$ and $F$.
\end{remark}

\begin{construction}[naturality of variation of Grothendieck construction]\label{construction:Groth(f,alpha)_variation}
Given a functor $f:A\to B$, we have a natural transformation
\begin{diagram}[midshaft]
\HomCat{\op{A}}{\Top\dash\Cat}&\lTo{\ -\circ\op{f}\ }&\HomCat{\op{B}}{\Top\dash\Cat}\\
\dTo{\Groth}&\twocell[-.3em]{\scriptscriptstyle\Groth(f)}{.25em}{\Longrightarrow}{0}&\dTo_{\Groth}\\
\Top\dash\CAT/A&\rTo_{f\circ-}&\Top\dash\CAT/B
\end{diagram}
obtained from proposition \sref{proposition:Groth(functorA->B)}.\\
Consequently, given functors
\begin{align*}
F&:\op{A}\To\Top\dash\Cat\\
G&:\op{B}\To\Top\dash\Cat
\end{align*}
and a natural transformation
\[ \alpha:F\To G\circ\op{f} \]
we have an induced $\Top$-functor
\[ \Groth(f,\alpha):\Groth(F)\To\Groth(G) \]
given by the composition
\[ \Groth(F)\xTo{\Groth(\alpha)}\Groth(G\circ\op{f})\xTo{\Groth(f)}\Groth(G) \]
This is analogous to construction \sref{construction:Groth(f,alpha)}, and indeed can be recovered from it.
\end{construction}

\section{Homotopical properties of Grothendieck construction}\label{section:homotopical_properties_Groth}

We now change direction and turn to internal presheaves of categories on native category objects in $\Top$. Our main example of $\Cat(\Top)$-valued internal presheaves are obtained by taking the path category of a $\Top$-valued internal presheaf, which we now proceed to describe. It involves defining a fibrewise version of the path category from example \sref{example:path_category}.

\begin{definition}[path category presheaf]\label{definition:path_category_presheaf}
Let $A$ be a category internal to $\Top$, and $F=(P,p_0,p_1):A\To\Top$ an internal $\Top$-valued functor.\\
We define the {\em path category of \(F\)} to be the $\Cat(\Top)$-valued internal functor
\[ \pathcat\circ F:A\To\Cat(\Top) \]
to be the triple $(Q,q_0,q_1)$ determined by:
\begin{enum}
\item the objects of $Q$ are $\ob Q\defeq P$;
\item $\mor Q$ is the subspace of $H(P)$ defined by
\[ \mor Q\defeq\set{(\gamma,\tau)\in H(P):p_0\circ\gamma\text{ is constant}} \]
\item the source $s:\mor Q\to\ob Q$ is the composition
\[ \mor Q\Into H(P)\xTo{s}P \]
\item the target $t:\mor Q\to\ob Q$ is the composition
\[ \mor Q\Into H(P)\xTo{t}P \]
\item the identity $i:\ob Q\to\mor Q$ makes the square
\begin{diagram}[midshaft,h=2.1em]
\ob Q&\rTo{i}&\mor Q\\
\dEqual&&\dInto{\inclusion}\\
P&\rTo{i}&H(P)
\end{diagram}
commute.
\item the composition is given by concatenation of paths (denoted $\concat$), that is
\begin{diagram}
\lim\big(\mor Q\xto{t}\ob Q\xleftarrow{s}\mor Q\big)&\rTo{c}&\mor Q\\
\dInto{\inclusion\underset{P}{\times}\inclusion}&&\dInto{\inclusion}\\
\lim\big(H(P)\xto{t}P\xleftarrow{s}H(Q)\big)&\rTo{\concat}& H(P)\\
\end{diagram}
commutes.
\item the map $\ob q_0$ is just $p_0$;
\item the map $\mor q_0$ is the composition
\[ \mor Q\Into H(P)\xTo{s} P\xTo{p_0}\ob A \]
\item the map $\ob q_1$ is just $p_1$;
\item the map $\mor q_1$ is defined by (where the pullback is the appropriate one)
\[ \func{\mor q_1}{\mor Q\underset{\!\ob A\!}{\times}\mor A}{\mor Q}{\big((\gamma,\tau),f\big)}{\big(p_1(\gamma(-),f),\tau\big)} \]
\end{enum}
\end{definition}

\comment{This can be generalized to fibrewise functors.}

\begin{proposition}[functoriality of path category presheaf]
Let $A$ be a category object in $\Top$.\\
There is a functor
\[ \pathcat\circ -:\Cat(\Top)(A,\Top)\To\Cat(\Top)\big(A,\Cat(\Top)\big) \]
which associates to a $\Top$-valued internal functor $F:A\to\Top$ the path category of $F$, $\pathcat\circ F$.
\end{proposition}

\begin{proposition}[naturality of path category presheaf]\label{proposition:naturality_internal_path_category_presheaf}
Let $f:A\to B$ be a morphism in $\Cat(\Top)$.
For each internal $\Top$-valued functor
\[ F:B\To\Top \]
we have a natural isomorphism
\[ (\pathcat\circ F)\circ f=\pathcat\circ(F\circ f) \]
\end{proposition}

\begin{remark}
As a consequence of proposition \sref{proposition:functor_Cat(V)(-,V)}, we conclude that $\pathcat\circ -$ extends to a natural transformation
\[ \pathcat\circ -:\Cat(\Top)(-,\Top)\To\Cat(\Top)\big(-,\Cat(\Top)\big) \]
between functors $\op{\Cat(\Top)}\to\Set\dash\CAT$.
\end{remark}

We will finish this chapter with a few homotopical properties of the categories obtained by taking the Grothendieck construction of a $\Cat(\Top)$-valued presheaf.

\begin{definition}[fibrant internal category in $\Top$]\label{definition:fibrant_internal_category}
Let $A$ be an internal category in $\Top$.\\
We say $A$ is {\em fibrant} if the map
\[ (s,t):\mor A\To\ob A\times\ob A \]
is a Hurewicz fibration.
\end{definition}

The relevance of this fibrancy condition is explained by the next result.

\begin{proposition}
Let $F:A\to B$ be a morphism in $\Cat(\Top)$.\\
If $A$ and $B$ are fibrant, and the square
\begin{diagram}[h=2.3em]
\mor A&\rTo{\mor F}&\mor B\\
\dTo{(s,t)}&&\dTo{(s,t)}\\
\ob A\times\ob A&\rTo{\ \ob F\times\ob F\ }&\ob B\times\ob B
\end{diagram}
is homotopy cartesian, then the $\Top$-functor
\[ \disccat{F}:\disccat{A}\To\disccat{B} \]
is a local homotopy equivalence.
\end{proposition}

We can now give simple conditions for Grothendieck constructions to be fibrant internal categories in $\Top$.

\begin{proposition}[fibrancy of Grothendieck construction]
Let $A$ be a category object in $\Top$, and \[ F=(P,p_0,p_1):\op{A}\To\Cat(\Top) \] an internal $\Cat(\Top)$-valued functor.\\
Then $\Groth(F)$ is fibrant if the maps
\begin{align*}
(s,t)&:\mor P\To\ob P\times\ob P\\
t&:\mor A\To\ob A
\end{align*}
are Hurewicz fibrations.
\end{proposition}

\begin{corollary}\label{corollary:fibrant_Groth(path)}
Let $A$ be a small $\Top$-category, and $F:\op{\internal A}\To\Top$ an internal $\Top$-valued functor.\\
The category $\Groth(\pathcat\circ F)$ is fibrant.
\end{corollary}

Now we give a description of the morphism spaces in $\disccat{\Groth(\pathcat\circ F)}$.

\begin{definition}[value of internal $\Top$-valued functor at object]\label{definition:value_internal_Top_presheaf}
Let $A$ be an internal category in $\Top$, and \[ F=(P,p_0,p_1):A\To\Top \] an internal $\Top$-valued functor.\\
If $x\in\ob A$, we define the {\em value of \(F\) at \(x\)}, $F(x)$, to be the pullback of
\[ 1\xTo{x}\ob A\xTo{p_0}P \]
\end{definition}

\begin{construction}
The map $p_1$ induces maps
\[ F:\disccat{A}(x,y)\times F(x)\To F(y) \]
which we denote simply by $F$ to analogize with the case of external functors.
\end{construction}

\begin{proposition}\label{proposition:morphism_space_Groth(path)}
Let $A$ be an internal category in $\Top$, and $F:\op{A}\To\Top$ a $\Top$-valued functor.\\
Let $x,y\in\ob A$, $a\in F(x)$, and $b\in F(y)$.\\
The topological space $\disccat{\big(\Groth(\pathcat\circ F)\big)}(a,b)$ is the limit of
\[ \disccat{A}(x,y)\xTo{F(-,b)}F(x)\xlongleftarrow{t}H(F(x))\xTo{s}F(x)\xlongleftarrow{a}1 \]
In particular, there is a canonical homotopy equivalence
\[ \disccat{\big(\Groth(\pathcat\circ F)\big)}(a,b)\xTo{\ \sim\ }\hofibre_{a\!}\big(F(-,b):\disccat{A}(x,y)\To F(x)\big) \]
induced by reparametrization of Moore paths (see \sref{definition:reparametrization_Moore_paths} and \sref{proposition:reparametrization_Moore_paths_equiv}).
\end{proposition}

The next results give conditions under which a functor between two Grothendieck constructions induces a local equivalence on the discretized categories.

\begin{proposition}\label{proposition:local_homotopy_equiv_Groth(path)}
Let $f:A\to B$ be a morphism in $\Cat(\Top)$. Let
\begin{align*}
F&:\op{A}\To\Top\\
G&:\op{B}\To\Top
\end{align*}
be internal $\Top$-valued functors, and $\alpha:F\to G\circ\op{f}$ be an internal natural transformation.\\
The functor (see construction \sref{construction:Groth(f,alpha)})
\[ \disccat{\Groth(f,\pathcat\circ\alpha)}:\disccat{\Groth(\pathcat\circ F)}\To\disccat{\Groth(\pathcat\circ G)} \]
is a local homotopy equivalence if for all $x,y\in\ob A$ and $a\in F(y)$, the square
\begin{diagram}[midshaft]
\disccat{A}(x,y)&\rTo{\ \disccat{f}\ }&\disccat{B}\big(\disccat{f}x,\disccat{f}y\big)\\
\dTo{F(-,a)}&&\dTo{G(-,\alpha a)}\\
F(x)&\rTo{\alpha}&G\big(\disccat{f}x\big)
\end{diagram}
is homotopy cartesian.
\end{proposition}
\begin{proof}[Sketch of proof]
This result follows from the natural homotopy equivalence in proposition \sref{proposition:morphism_space_Groth(path)}.
\end{proof}

\begin{proposition}
Let $f:A\to B$ be a morphism in $\Cat(\Top)$. Also, let
\begin{align*}
F=(P,p_0,p_1)&:\op{A}\To\Cat(\Top)\\
G=(Q,q_0,q_1)&:\op{B}\To\Cat(\Top)
\end{align*}
be internal $\Cat(\Top)$-valued functors, and $\alpha:F\to G\circ\op{f}$ be an internal natural transformation.\\
Assume that the maps
\begin{align*}
t&:\mor P\To\ob P\\
t&:\mor Q\To\ob Q
\end{align*}
are Hurewicz fibrations and homotopy equivalences.\\
The functor (see construction \sref{construction:Groth(f,alpha)})
\[ \disccat{\Groth(f,\alpha)}:\disccat{\Groth(F)}\To\disccat{\Groth(G)} \]
is a local homotopy equivalence if $\Groth(F)$, $\Groth(G)$ are fibrant and the square
\begin{diagram}[midshaft]
\lim\big(\mor A\xto{t}\ob A\xleftarrow{\ob p_0}\ob F\big)&\rTo{\ (\ob p_1,\proj)\ }&\ob P\times\ob P\\
\dTo{\mor f\underset{\!\ob f\!}{\times}\ob\alpha}&&\dTo{\ob\alpha\times\ob\alpha}\\
\lim\big(\mor B\xto{\smash{t}}\ob B\xleftarrow{\smash{\ob q_0}}\ob Q\big)&\rTo{\ (\ob q_1,\proj)\ }&\ob Q\times\ob Q
\end{diagram}
is homotopy cartesian.
\end{proposition}

\begin{corollary}\label{corollary:local_homotopy_equiv_id_equiv}
Let $f:A\to B$ be a morphism in $\Top\dash\Cat$. Also, let
\begin{align*}
F=(P,p_0,p_1)&:\op{\internal A}\To\Cat(\Top)\\
G=(Q,q_0,q_1)&:\op{\internal B}\To\Cat(\Top)
\end{align*}
be internal $\Cat(\Top)$-valued functors, and $\alpha:F\to G\circ\op{(\internal f)}$ be an internal natural transformation.\\
Assume that the maps
\begin{align*}
s,t&:\mor P\To\ob P\\
s,t&:\mor Q\To\ob Q
\end{align*}
are Hurewicz fibrations and homotopy equivalences (note that $s$ is a homotopy equivalence if and only if $t$ is).\\
The functor (see construction \sref{construction:Groth(f,alpha)})
\[ \disccat{\Groth(\internal f,\alpha)}:\disccat{\Groth(F)}\To\disccat{\Groth(G)} \]
is a local homotopy equivalence if the square (recall definition \sref{definition:ob_mor_Cat(V)-calued_functor})
\begin{diagram}[midshaft]
A(x,y)\times(\ob F)(y)&\rTo{\ (\ob F,\proj)\ }&(\ob F)(x)\times(\ob F)(y)\\
\dTo{f\times\ob\alpha}&&\dTo{\ob\alpha\times\ob\alpha}\\
B(fx,fy)\times(\ob G)(fy)&\rTo{\ (\ob G,\proj)\ }&(\ob G)(fx)\times(\ob G)(fy)
\end{diagram}
is homotopy cartesian for all $x,y\in\ob A$.
\end{corollary}





\chapter{Categories of sticky configurations}\label{chapter:sticky_conf}

\section*{Introduction}

This chapter introduces the first interesting construction in this text. To each space $X$, we associate a topological category $\M(X)$. The objects of $\M(X)$ are finite subsets of $X$. The morphisms of $\M(X)$ are ``sticky homotopies'', so called because they are homotopies in which any two points stick together when they collide.

The construction $\M(X)$ gives a very concrete model for categories which parametrize algebraic structures like $E_n$-algebras, as we will see later in chapter \ref{chapter:sticky<->embeddings}. Also, it allows us to recover topological Hochschild homology in the case of $X=S^1$, as we will see in the next chapter \ref{chapter:sticky_conf_S^1}. Putting these two observations together is the motivation for chapter \ref{chapter:invariants_En-algebras} where we define an invariant of $E_n$-algebras which generalizes topological Hochschild homology, and is related to $\M(X)$.

\section*{Summary}

The first three sections in this chapter lay out a formalism for constructing spaces and categories of sticky homotopies, as mentioned in the introduction. Section \sref{section:sticky_homotopies} defines the notion of sticky homotopy for a functor $C\to\Top$, relative to a subcategory of $C$. These sticky homotopies form a space for each object of $C$. Section \sref{section:functoriality_sticky_homotopies} analyzes the functoriality of the spaces of sticky homotopies. Section \sref{section:categories_sticky_homotopies} assembles topological categories whose morphisms are sticky homotopies, giving a functor $C\to\Cat(\Top)$.

Section \sref{section:sticky_configurations} uses the categories of sticky homotopies constructed in section \sref{section:categories_sticky_homotopies} to define the topologically enriched category $\M(X)$ of sticky configurations in a space $X$.

At this point, the discussion turns to defining an equivariant analogue of $\M(X)$. Section \sref{section:G-objects} describes some basic concepts on $G$-equivariant objects, while section \sref{section:G_Set_Top} deals specifically with $G$-sets and $G$-spaces. This is put to use in section \sref{section:G-equivariant_sticky_configurations} where the category $\M_G(X)$ of $G$-equivariant sticky configurations in a $G$-space $X$ is defined (using again the formalism of sticky homotopies).

The last two sections deal with comparing the categories of equivariant and non-equivariant sticky configurations. Section \sref{section:M_G->M} defines a functor \[ \rho_X:\M_G(X)\To\M(\quot{G}{X}) \] Section \sref{section:sticky_conf_covering_spaces} proves that $\rho_X$ is an essentially surjective local isomorphism if $G$ acts freely on $X$ and $X\to\quot{G}{X}$ is a covering space.

\section{Sticky homotopies}\label{section:sticky_homotopies}

The path category of a space $X$ (example \sref{example:path_category}) is an interesting homotopical replacement of the discrete category on $X$, $\disc(X)$ (example \sref{example:discrete_cat}). It is constructed out of the space of Moore paths of $X$, $H(X)$ (definition \sref{definition:Moore_path_space}). However, we will need a more refined notion of path or homotopy in $X$, which we introduce in this section.

\begin{definition}
Define $\CAT^{(2)}$ to be the full subcategory of the arrow category of $\CAT$
\[ \text{arrow}(\CAT)=\HomCat{\onearrow}{\CAT} \]
generated by the arrows which are inclusions of subcategories.\\
An object of $\CAT^{(2)}$ is called a {\em category pair}.
\end{definition}

\begin{remark}
$\onearrow$ denotes the category with two objects, $0$ and $1$, and a unique non-identity arrow, $0\to 1$.
\end{remark}

\begin{notation}
We will denote the functor which takes an arrow and returns the source of the arrow, $\evaluation{0}:\CAT^{(2)}\to\CAT$, simply by
\[ (-)_0:\CAT^{(2)}\To\CAT \]
Similarly, we will denote $\evaluation{1}:\CAT^{(2)}\to\CAT$ by
\[ (-)_1:\CAT^{(2)}\To\CAT \]
In particular, given an object $C$ of $\CAT^{(2)}$, $C_0$ is a subcategory of $C_1$.\\
Additionally, given a morphism $G:C\to D$ in $\CAT^{(2)}$, $G_1:C_1\to D_1$ is a functor which takes the subcategory $C_0$ of $C_1$ into $D_0$. $G$ is fully determined by $G_1$.
\end{notation}

\begin{definition}[sticky homotopy]\label{definition:sticky}
Let $C$ be an object of $\CAT^{(2)}$, $x$ an object of $C_1$, and $F:C_1\to\Top$ a functor.\\
An element $(\alpha,\tau)\in H(F(x))$ is a {\em \(C\)-sticky homotopy for \(F\) at \(x\)} if for any morphism $f:y\to x$ in the subcategory $C_0$, the image of $h:P\to\clop{0}{+\infty}$ --- as in the pullback square
\begin{diagram}[midshaft,height=2.2em]
P&\rTo{h}&\clop{0}{+\infty}\\
\dTo&&\dTo_{\alpha}\\
F(y)&\rTo{F(f)}&F(x)
\end{diagram}
--- is an interval which is empty or contains $\tau$.\\
The subspace of $H\circ F(x)$ corresponding to the $C$-sticky homotopies, denoted $SH_C(F)(x)$, will be called the {\em space of \(C\)-sticky homotopies} for $F$ at $x$.
\end{definition}

\begin{remark}[concatenation of sticky homotopies]\label{remark:concat_sticky}
Note that the concatenation (definition \sref{definition:concat_Moore_paths}) of $C$-sticky homotopies for $F$ is a $C$-sticky homotopy for $F$.
\end{remark}

We now summarize the behavior of sticky homotopies with respect to functors between the base categories.

\begin{proposition}\label{proposition:sticky_changebase}
Let $G:C\to D$ be a morphism in $\CAT^{(2)}$, $F:D_1\to\Top$ a functor, and $x$ an object of $C$.\\
Recall that $SH_D(F)(G_1 x)$ and $SH_C(F\circ G_1 )(x)$ are both subspaces of $H\circ F\circ G_1(x)$. We have the inclusion
\[ SH_D(F)(G_1 x)\subset SH_C(F\circ G_1)(x) \]
between those subspaces of $H\circ F\circ G(x)$.
\end{proposition}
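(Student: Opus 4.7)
The plan is to unwind the definition of $C$-stickiness in terms of $D$-stickiness, using the fact that $G_1$ sends $C_0$ into $D_0$. Fix an element $(\alpha,\tau)\in SH_D(F)(G_1 x)$; I need to verify that it satisfies the defining condition of a $C$-sticky homotopy for $F\circ G_1$ at $x$, namely, that for every $f:y\to x$ in $C_0$, the image of the pullback of $\alpha$ along $(F\circ G_1)(f)$ is an interval which is empty or contains $\tau$.

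The key observation is that because $G:C\to D$ is a morphism in $\CAT^{(2)}$, the functor $G_1:C_1\to D_1$ restricts to a functor $C_0\to D_0$. Therefore, given any $f:y\to x$ in $C_0$, its image $G_1 f:G_1 y\to G_1 x$ is a morphism in $D_0$. Now the pullback square defining $h:P\to\clop{0}{+\infty}$ for the pair $(F\circ G_1, f)$ has bottom edge $F(G_1 f)$ and codomain $(F\circ G_1)(x) = F(G_1 x)$, which is literally the same diagram as the pullback square for the pair $(F, G_1 f)$ used to test $D$-stickiness of $(\alpha,\tau)$ at $G_1 x$. So the image interval is the same one, and by hypothesis it is empty or contains $\tau$.

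Since the condition holds for every $f\in C_0$ with target $x$, the element $(\alpha,\tau)$ lies in $SH_C(F\circ G_1)(x)$, giving the claimed inclusion. There is no real obstacle here: the statement is essentially a tautology once one recognizes that the two pullback squares (for $C$-stickiness of $F\circ G_1$ and for $D$-stickiness of $F$) coincide under $G_1$, and that $G_1$ preserves the distinguished subcategories. The only thing to be careful about is keeping straight that both $SH_D(F)(G_1 x)$ and $SH_C(F\circ G_1)(x)$ are genuinely subspaces of the \emph{same} space $H(F(G_1 x))$, so the word ``inclusion'' in the conclusion makes sense on the nose rather than up to canonical identification.
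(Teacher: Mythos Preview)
Your argument is correct: the claimed inclusion follows immediately from the observation that $G_1$ carries $C_0$ into $D_0$, so every test morphism for $C$-stickiness of $F\circ G_1$ at $x$ gives rise, via $G_1$, to a test morphism for $D$-stickiness of $F$ at $G_1 x$, and the two pullback squares are literally identical. The paper states this proposition without proof, presumably because the verification is exactly the routine definition-chase you have supplied; there is nothing to compare.
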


\begin{proposition}\label{proposition:sticky_changebase_iso}
Let $G:C\to D$ be a morphism in $\CAT^{(2)}$, and $x$ an object of $C_1$.\\
Assume that for any morphism $f$ in $D_0$ with $\target f=G_1 x$, there exists an isomorphism $a$ in $D_1$, and an arrow $b$ in $C_0$ such that
\begin{align*}
\target b&=x\\
(G_1 b)\circ a&=f
\end{align*}
Then, for any functor $F:D_1\to\Top$, the subspaces $SH_D(F)(G_1 x)$ and $SH_C(F\circ G_1)(x)$ of $H\circ F\circ G_1(x)$ are equal:
\[ SH_D(F)(G_1 x)=SH_C(F\circ G_1)(x) \]
\end{proposition}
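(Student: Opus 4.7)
The plan is to leverage Proposition \ref{proposition:sticky_changebase}, which already yields the inclusion $SH_D(F)(G_1 x) \subset SH_C(F\circ G_1)(x)$. All that remains is the reverse inclusion, which is the real content of this statement.

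I would take $(\alpha,\tau)\in SH_C(F\circ G_1)(x)$ and verify the $D$-sticky condition at $G_1 x$. Fix an arbitrary $f:y\to G_1 x$ in $D_0$ and, using the hypothesis, choose a factorization $f = (G_1 b)\circ a$ with $a$ an isomorphism in $D_1$ and $b:z\to x$ an arrow in $C_0$. Unwinding the pullback that defines $h$ in the sticky condition, the image of $h$ is
\[ \alpha^{-1}\bigl(F(f)(F(y))\bigr). \]
Because $a$ is an isomorphism in $D_1$, the map $F(a):F(y)\to F(G_1 z)$ is a homeomorphism, so $F(f)(F(y)) = F(G_1 b)(F(G_1 z))$. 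The image above therefore equals $\alpha^{-1}\bigl(F(G_1 b)(F(G_1 z))\bigr)$, which is precisely the image of the pullback projection attached to the arrow $b$ acting on the functor $F\circ G_1$ at $x$. Applying the $C$-sticky hypothesis to $b\in C_0$ (whose target is $x$), this image is an interval that is either empty or contains $\tau$. That is exactly the $D$-sticky condition for $f$, and since $f$ was arbitrary we conclude $(\alpha,\tau)\in SH_D(F)(G_1 x)$.

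The one nonformal step is the set equality $F(f)(F(y)) = F(G_1 b)(F(G_1 z))$, which is an immediate consequence of $F(a)$ being a bijection; everything else is bookkeeping with the definitions. Note in particular that $a$ is only required to be an isomorphism in $D_1$, not to lie in $D_0$: it plays an auxiliary role, reducing the pullback for $f$ to the pullback for $G_1 b$, which is where the $C$-sticky hypothesis on $(\alpha,\tau)$ is actually applied.
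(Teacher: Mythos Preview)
The paper states this proposition without proof, so there is nothing to compare against directly. Your argument is correct: after invoking Proposition~\ref{proposition:sticky_changebase} for one inclusion, the reverse inclusion follows exactly as you describe, since the image of $h$ in the defining pullback square equals $\alpha^{-1}\bigl(\mathrm{im}\,F(f)\bigr)$, and the factorization $f=(G_1 b)\circ a$ with $a$ an isomorphism forces $\mathrm{im}\,F(f)=\mathrm{im}\,F(G_1 b)$, reducing the $D$-sticky condition for $f$ to the $C$-sticky condition for $b$. This is the natural (and essentially only) proof, and is presumably what the paper has in mind.
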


\section{Functoriality of sticky homotopies}\label{section:functoriality_sticky_homotopies}

\begin{definition}[cartesian natural transformation]
Assume $C$, $D$ are categories with pullbacks, and $F,G:C\to D$ are functors.\\
A natural transformation
\[ \alpha:F\To G \]
is said to be {\em cartesian} if the square
\begin{diagram}[h=2.1em]
F(x)&\rTo{F(f)}&F(y)\\
\dTo{\alpha_x}&&\dTo{\alpha_y}\\
G(X)&\rTo{\ G(f)\ }&G(y)
\end{diagram}
is cartesian for each morphism $f:x\to y$ in $C$.
\end{definition}

\begin{definition}
Define $\CAT^{(2)}_\cartesian$ to be the sub-2-category of $\CAT^{(2)}$ whose
\begin{enum}
\item objects are $C\in\ob\big(\CAT^{(2)}\big)$ such that $C_1$ has pullbacks and, for any pullback diagram in $C_1$
\begin{diagram}[h=2.1em]
a&\rTo&a'\\
\dTo{f}&&\dTo{f'}\\
b&\rTo&b'
\end{diagram}
if $f'$ is in $C_0$ then $f$ is also in $C_0$.
\item 1-morphisms are the $1$-morphisms $G$ of $\CAT^{(2)}$ such that $G_1$ preserves all pullbacks.
\item 2-morphisms are the $2$-morphisms $\alpha$ of $\CAT^{(2)}$ such that $\alpha_0$ is a cartesian natural transformation.
\end{enum}
\end{definition}

\begin{example}\label{example:cat2_cart_cat_mono}
If $C$ is a category with pullbacks in which monomorphisms are stable under pullback (along any arrow), then the inclusion of the subcategory of monomorphisms
\[ \text{mono}(C)\Into C \]
gives an element of $\CAT^{(2)}_\cartesian$.\\
Later, we will give two cases of this example in the form of the opposites of the categories of finite sets, and finitely generated free $G$-sets.
\end{example}

\begin{example}[$\Top$ as an object of $\CAT^{(2)}_\cartesian$]
$\id_\Top:\Top\to\Top$ is an element of $\CAT^{(2)}_\cartesian$. We will call it simply $\Top$.\\
Observe that a morphism $F:C\to\Top$ in $\CAT^{(2)}_\cartesian$ is determined by any pullback preserving functor $F_1:C_1\to\Top$.
\end{example}

\begin{proposition}\label{proposition:sticky_homotopies_functor_pushout}
Let $C$ be an object of $\CAT^{(2)}_\cartesian$, and $F:C_1\to\Top$ a pullback preserving functor.\\
There is a functor
\[ SH_C(F):C_1\To\Top \]
which is given on objects by the space of sticky homotopies for $F$ from definition \sref{definition:sticky}.\\
Moreover, there is a natural transformation
\[ SH_C(F)\To H\circ F \]
which at each object $x$ of $C$ is the inclusion
\[ SH_C(F)(x)\Into H\circ F(X) \]
\end{proposition}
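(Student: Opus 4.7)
The plan is to define $SH_C(F)$ on morphisms by restricting the maps $H\circ F(g)$, verify that this restriction is well-defined using both hypotheses of the proposition, and then obtain functoriality and naturality essentially for free.

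First, for an arrow $g:x\to y$ in $C_1$, I would define $SH_C(F)(g)$ as the restriction of the continuous map $H\circ F(g):H\circ F(x)\to H\circ F(y)$ to the subspace $SH_C(F)(x)$. Continuity of the restricted map is automatic from the subspace topology, so all the real content is in showing that its image is contained in $SH_C(F)(y)$; equivalently, that whenever $(\alpha,\tau)$ is $C$-sticky for $F$ at $x$, the pair $(F(g)\circ\alpha,\tau)$ is $C$-sticky for $F$ at $y$.

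To verify this, given an arbitrary $f:z\to y$ in $C_0$, I would form the pullback in $C_1$
\begin{diagram}[h=2.1em]
w&\rTo{\pi_z}&z\\
\dTo{\pi_x}&&\dTo{f}\\
x&\rTo{g}&y
\end{diagram}
which exists because $C_1$ has pullbacks, and then use the pullback-stability axiom defining $\CAT^{(2)}_\cartesian$ to conclude $\pi_x\in C_0$. Since $F$ preserves pullbacks, applying $F$ yields a pullback square in $\Top$. Pasting this vertically with the pullback of $\alpha$ along $F(\pi_x)$ and invoking the pullback pasting lemma produces a canonical homeomorphism over $\clop{0}{+\infty}$ between the space $P$ from Definition \sref{definition:sticky} associated to $(F(g)\circ\alpha,\tau)$ and $f$ at $y$ and the analogous space associated to $(\alpha,\tau)$ and $\pi_x$ at $x$. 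The sticky hypothesis on $\alpha$ applied to $\pi_x\in C_0$ tells us the image of the latter in $\clop{0}{+\infty}$ is empty or an interval containing $\tau$; the homeomorphism transports this to the former. Thus $(F(g)\circ\alpha,\tau)$ passes the sticky test against $f$, and since $f$ was arbitrary, it is $C$-sticky at $y$.

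Once well-definedness is established, identity and composition laws for $SH_C(F)$ are inherited from those of $H\circ F$, because restricting identities and composites of maps commutes with taking identities and composing. The component inclusions $SH_C(F)(x)\Into H\circ F(x)$ then assemble into a natural transformation tautologically, since each $SH_C(F)(g)$ was defined as the restriction of $H\circ F(g)$. The main obstacle is the pullback identification in the central step, which is also exactly where the two hypotheses of the proposition cooperate: pullback-stability of $C_0$ inside $C_1$ supplies the arrow of $C_0$ against which the sticky condition for $\alpha$ can be tested, while pullback-preservation by $F$ is what lets that condition be transported into the relevant pullback square in $\Top$.
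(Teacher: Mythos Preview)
Your proof is correct and is precisely the argument one would expect. The paper itself states this proposition without proof, so there is no alternative approach to compare against; your use of the two hypotheses---pullback-stability of $C_0$ in $C_1$ to produce $\pi_x\in C_0$, and pullback-preservation by $F$ together with the pasting lemma to identify the two test pullbacks over $\clop{0}{+\infty}$---is exactly the intended mechanism, and the remaining functoriality and naturality claims follow formally as you say.
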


\begin{proposition}
There is a functor
\[ SH_C:\CAT^{(2)}_\cartesian(C,\Top)\To\HomCat{C_1}{\Top} \]
which associates to each morphism $F:C\to\Top$ in $\CAT^{(2)}_\cartesian$ the functor $SH_C(F_1)$ (as given in the previous proposition).
\end{proposition}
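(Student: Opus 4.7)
The functor $SH_C$ is already defined on objects by the previous proposition, so the plan is to define it on 2-morphisms of $\CAT^{(2)}_\cartesian(C,\Top)$ and then check functoriality. Given a 2-morphism $\alpha:F\To G$, I will set $SH_C(\alpha)$ to be the restriction of the whiskered natural transformation $H\alpha:H\circ F_1\To H\circ G_1$ to the sub-functors $SH_C(F_1)\subset H\circ F_1$ and $SH_C(G_1)\subset H\circ G_1$. Continuity of the components and naturality in $C_1$ are then inherited from $H\alpha$; the entire substance of the construction is to verify well-definedness pointwise, i.e., that $H(\alpha_x)$ sends sticky homotopies to sticky homotopies.

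Fix an object $x$ of $C_1$ and $(\beta,\tau)\in SH_C(F_1)(x)$. I must show that $(\alpha_x\circ\beta,\tau)\in SH_C(G_1)(x)$, i.e., that for every $f:y\to x$ in $C_0$ the image of the canonical map $h:Q\to\clop{0}{+\infty}$ from the pullback $Q$ of $\alpha_x\circ\beta$ along $G_1(f)$ is empty or contains $\tau$. The key step is pullback pasting: since $\alpha_0$ is cartesian and $f$ lies in $C_0$, the naturality square of $\alpha$ at $f$ is cartesian in $\Top$. Therefore the pullback of $\alpha_x\circ\beta$ along $G_1(f)$ can be computed in two stages, first pulling back $\alpha_x$ along $G_1(f)$ (which produces $F_1(f):F_1(y)\to F_1(x)$), then pulling back $\beta$ along $F_1(f)$ (which produces the pullback $P$ featuring in the definition of $SH_C(F_1)$ at $x$ along $f$). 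Hence $Q$ is canonically identified with $P$ over $\clop{0}{+\infty}$, and its image interval agrees with the one that is empty or contains $\tau$ by assumption on $(\beta,\tau)$.

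The remaining verifications are formal. The assignment $\alpha\mapsto SH_C(\alpha)$ respects identities and composition because each of these is obtained by restricting the corresponding operation on $H\circ(-)$, and $H$ is itself a functor. I expect the only non-trivial step to be the cartesian pullback-pasting argument above, which is precisely where the cartesian hypothesis on $\alpha_0$ built into the definition of $\CAT^{(2)}_\cartesian$ earns its keep.
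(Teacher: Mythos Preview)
The paper states this proposition without proof, so there is no argument in the paper to compare against. Your approach is the natural one and is correct: restrict $H\circ\alpha_1$ to the sticky-homotopy subfunctors, and verify well-definedness via pullback pasting using that the naturality square of $\alpha$ at any $f\in C_0$ is cartesian. That pasting identifies the pullback $Q$ of $\alpha_x\circ\beta$ along $G_1(f)$ with the pullback $P$ of $\beta$ along $F_1(f)$ over $\clop{0}{+\infty}$, so the relevant image is the same interval; this is exactly where the cartesian hypothesis on $\alpha_0$ is needed, as you note. The remaining functoriality checks are indeed formal restrictions of those for $H\circ(-)_1$.
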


\begin{construction}
Thanks to proposition \sref{proposition:sticky_changebase}, we can extend this to a lax natural transformation
\[ SH:\CAT^{(2)}_\cartesian(-,\Top)\To\HomCatlarge{(-)_1}{\Top} \]
between functors $\op{\big(\CAT^{(2)}_\cartesian\big)}\to\SET\dash\CAT$.
\end{construction}

We will need slightly more functoriality from $SH$ later on, so we introduce it here.

\begin{definition}[quasi-cartesian natural transformation]
Let $C$, $D$ be categories, $d$ an object of $D$, and $F,G:C\to D$ functors.\\
We say a natural transformation $\alpha:F\to G$ is {\em quasi-cartesian with respect to \(d\)} if for every morphism $f:x\to y$ in $C$, and every commutative diagram
\begin{diagram}
d&\rTo&G(x)\\
\dTo&&\dTo{G(f)}\\
F(y)&\rTo{\alpha_y}&G(y)
\end{diagram}
there exist morphisms $d\to F(x)$ in $D$, and $\sigma:y\to y$ in $C$ such that
\begin{diagram}[midshaft]
&&d&\rTo&F(y)&\rTo{\alpha_y}&G(y)\\
&\ldTo&\dTo&&\dTo{F(\sigma)}&\ruTo_{\alpha_y}\\
G(x)&\lTo_{\alpha_x}&F(x)&\rTo_{F(f)}&F(y)
\end{diagram}
commutes.
\end{definition}

\begin{definition}
Let $C$ be a category pair in $\CAT^{(2)}_\cartesian$.\\
We define $\CAT^{(2)}_\isocart(C,\Top)$ to be the subcategory $\CAT^{(2)}(C,\Top)$ whose
\begin{enum}
\item objects are $F\in\CAT^{(2)}(C,\Top)$ such that $F_1$ preserves all pullbacks;
\item morphisms are the morphisms $\alpha$ in $\CAT^{(2)}(C,\Top)$ such that $\alpha_0$ is quasi-cartesian with respect to $1\in\Top$.
\end{enum}
\end{definition}

\begin{remark}
The category $\CAT^{(2)}_\cartesian(C,\Top)$ is a subcategory of $\CAT^{(2)}_\isocart(C,\Top)$ which possesses the same objects.
\end{remark}

\begin{remark}
The category $\CAT^{(2)}_\isocart(C,\Top)$ is not functorial in $C$ in $\CAT^{(2)}_\cartesian$. It is only functorial on full functors, for example.
\end{remark}

\begin{proposition}
Let $C$ be a category pair in $\CAT^{(2)}_\cartesian$.\\
There is a functor
\[ SH_C:\CAT^{(2)}_\isocart(C,\Top)\To\HomCat{C_1}{\Top} \]
for which the diagram
\begin{diagram}[midshaft]
\CAT^{(2)}_\cartesian(C,\Top)&\rTo{\ SH_C\ }&\HomCat{C_1}{\Top}\\
\dInto[lowershortfall=.2em]{\inclusion}&\ruTo_{\mathrlap{SH_C}}\\
\CAT^{(2)}_\isocart(C,\Top)
\end{diagram}
commutes.
\end{proposition}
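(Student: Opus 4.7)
The plan is to define $SH_C$ on morphisms by restricting the natural transformation $H\alpha:H\circ F_1\To H\circ G_1$ coming from the functoriality of $H$. On objects, $SH_C(F)$ is already produced by proposition \sref{proposition:sticky_homotopies_functor_pushout} applied to $F_1$, so the only substantive point is that, at each object $x$ of $C_1$, the map $H(\alpha_x)$ sends $SH_C(F_1)(x)$ into $SH_C(G_1)(x)$. Once this well-definedness is in hand, continuity and naturality in $x$ descend from $H\alpha$, functoriality of $SH_C$ in $\alpha$ follows from $H(\id)=\id$ and $H(\beta\alpha)=H(\beta)\circ H(\alpha)$, and the triangle commutes because both routes send a morphism to the very same restriction of $H\alpha$.

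The main obstacle is exactly this well-definedness, and it is where the quasi-cartesian hypothesis shows its teeth. Fix $(\gamma,\tau)\in SH_C(F_1)(x)$ and an arrow $f:y\to x$ in $C_0$; write $A\subseteq\clop{0}{+\infty}$ for the image of the pullback map associated to $\alpha_x\circ\gamma$ and $G_1(f)$. A point of this pullback over $t$ amounts to a commutative square
\begin{diagram}
1&\rTo{z}&G_1(y)\\
\dTo_{\gamma(t)}&&\dTo_{G_1(f)}\\
F_1(x)&\rTo{\alpha_x}&G_1(x)
\end{diagram}
in $\Top$. Feeding this square to the quasi-cartesian property of $\alpha_0$ at $d=1$ will produce a lift $w:1\to F_1(y)$ together with an endomorphism $\sigma$ of $x$ in $C_0$ satisfying $F_1(f)(w)=F_1(\sigma)(\gamma(t))$, $\alpha_y(w)=z$, and $\alpha_x\circ F_1(\sigma)=\alpha_x$. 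Now $F_1(\sigma)\circ\gamma$ is itself a sticky homotopy for $F_1$ at $x$ (by the $C_1$-functoriality of $SH_C(F_1)$ from proposition \sref{proposition:sticky_homotopies_functor_pushout}), so the image $B_\sigma$ of the pullback associated to $F_1(\sigma)\circ\gamma$ and $F_1(f)$ is an interval that is empty or contains $\tau$, and the data above place $t$ in $B_\sigma$. Conversely, the identity $\alpha_x\circ F_1(\sigma)=\alpha_x$ lets me push any point of that pullback forward through $\alpha_y$ to produce a point of the $G$-pullback over the same $t$, so $B_\sigma\subseteq A$. Hence $A=\bigcup_\sigma B_\sigma$, the union ranging over those endomorphisms of $x$ in $C_0$ with $\alpha_x\circ F_1(\sigma)=\alpha_x$; every nonempty $B_\sigma$ contains $\tau$, so the union is connected and is itself an interval, empty or containing $\tau$, as required.

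For the commutative triangle, I would verify that a cartesian natural transformation $\alpha$ satisfies the quasi-cartesian conclusion with $\sigma=\id_x$: the universal property of the pullback supplies the lift $d\to F(x)$ and the condition $\alpha_x\circ F(\id_x)=\alpha_x$ is tautological. This makes the inclusion $\CAT^{(2)}_\cartesian(C,\Top)\Into\CAT^{(2)}_\isocart(C,\Top)$ well-defined and ensures that both $SH_C$'s send $\alpha$ to the same restriction of $H\alpha$. The hardest part of the argument is really the union decomposition in the previous paragraph: writing $A$ as a union of intervals all meeting at $\tau$ is exactly what the third clause $\alpha_x\circ F_1(\sigma)=\alpha_x$ of the quasi-cartesian definition is designed to allow.
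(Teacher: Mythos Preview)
Your argument is correct and supplies exactly the verification the paper omits (the proposition is stated without proof). The key step---decomposing the pullback image $A$ for $G_1$ as a union $\bigcup_\sigma B_\sigma$ of intervals for $F_1$, indexed by the endomorphisms $\sigma$ furnished by the quasi-cartesian condition, all of which meet at $\tau$---is precisely how the three clauses of the quasi-cartesian definition are meant to be used, and your reading of the commutative diagram in that definition is accurate: the lift $w$ gives $\alpha_y(w)=z$, the middle square gives $F_1(f)(w)=F_1(\sigma)(\gamma(t))$, and the right triangle gives $\alpha_x\circ F_1(\sigma)=\alpha_x$.

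Two small remarks. First, you should note explicitly that $\sigma$ lies in $C_0$ (since it is $\alpha_0$, the restriction to $C_0$, that is assumed quasi-cartesian), hence in $C_1$, so that proposition \ref{proposition:sticky_homotopies_functor_pushout} applies to give $(F_1(\sigma)\circ\gamma,\tau)\in SH_C(F_1)(x)$. Second, in the inclusion $B_\sigma\subset A$ you silently use naturality of $\alpha$ along $f$ in $C_1$ to get $G_1(f)\circ\alpha_y=\alpha_x\circ F_1(f)$; this is routine but worth stating. With those two clarifications the proof is complete.
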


\section{Categories of sticky homotopies}\label{section:categories_sticky_homotopies}

As suggested by remark \sref{remark:concat_sticky}, and stated in the following proposition, sticky homotopies form the morphisms of a functorial subcategory of $\pathcat\circ F$ for appropriate functors $F$ with values in $\Top$.

\begin{proposition}[internal category of sticky homotopies]\label{proposition:sticky_path-cat}
Let $C$ be an object of $\CAT^{(2)}_\cartesian$, and $F:C\to\Top$ a morphism in $\CAT^{(2)}_\cartesian$.\\
There is a unique functor \[ \stickypathcat_C F:C_1\To\Cat(\Top) \] and a unique natural transformation ($\pathcat$ is defined in example \sref{example:path_category}) \[ \sigma:\stickypathcat_C F\To\pathcat\circ F_1 \] such that the following conditions hold:
\begin{enum}
\item $\ob\circ\sigma=\id_{F_1}$;
\item for any object $x$ of $C_1$, $\mor\circ\sigma_x$ is the inclusion $SH_C(F)(x)\into H\circ F_1(x)$.
\end{enum}
\end{proposition}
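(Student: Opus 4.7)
The plan is to handle uniqueness essentially by bookkeeping and to devote the real work to existence, which reduces to three closure properties.

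For uniqueness, condition (1) forces the object-space of $\stickypathcat_C F(x)$ to be $F_1(x)$, and condition (2) forces its morphism-space to be $SH_C(F)(x)\subset H\circ F_1(x)$. Since $\sigma_x$ is an internal functor that is the identity on objects and a subspace inclusion on morphisms, the source, target, unit, and composition of $\stickypathcat_C F(x)$ are forced to be the restrictions of those of $\pathcat F_1(x)$; likewise, naturality of $\sigma$ with respect to a morphism $g:x\to y$ of $C_1$ determines $\stickypathcat_C F(g)$ as the restriction of $\pathcat F_1(g)$. Hence existence is the only thing to prove, and it amounts to checking that these restrictions are well-defined.

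For existence I verify three things. First, that constant Moore paths belong to $SH_C(F)$: for $(c_p,0)\in H\circ F_1(x)$ and any $f:z\to x$ in $C_0$, the pullback $P$ is either empty (if $p\notin\mathrm{image}\,F(f)$) or $[0,\infty)\times F(f)^{-1}(p)$, so $h(P)$ is either empty or all of $[0,\infty)$, both of which are intervals containing $0$. Second, that concatenation preserves $SH_C$: this is remark \sref{remark:concat_sticky}. Third, and the main point, that for each $g:x\to y$ in $C_1$, $\pathcat F_1(g)$ sends $SH_C(F)(x)$ to $SH_C(F)(y)$.

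The third point is where the full $\CAT^{(2)}_\cartesian$ structure enters, and is the main obstacle. Given $(\alpha,\tau)\in SH_C(F)(x)$ and $f:z\to y$ in $C_0$, I form the pullback of $f$ along $g$ in $C_1$, obtaining $f':z'\to x$ together with a map $z'\to z$ sitting over $g$. By the axioms of $\CAT^{(2)}_\cartesian$, the pullback of an arrow of $C_0$ lies in $C_0$, so $f'\in C_0$. Since $F_1$ preserves pullbacks, $F(z')$ is the fibre product $F(z)\times_{F(y)}F(x)$. Consequently, the pullback $P$ computing the sticky condition for $(F_1(g)\circ\alpha,\tau)$ against $f$ is canonically isomorphic, over $[0,\infty)$, to the pullback computing the sticky condition for $(\alpha,\tau)$ against $f'$. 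Stickiness at $x$ along $f'$ therefore transfers to stickiness at $y$ along $f$. Everything else (functoriality of $g\mapsto\stickypathcat_C F(g)$, naturality of $\sigma$) is then inherited from $\pathcat\circ F_1$ since $\sigma$ is fibrewise a subcategory inclusion.
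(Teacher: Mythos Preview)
Your argument is correct and matches the paper's approach. The paper does not give an explicit proof of this proposition; it simply invokes remark \sref{remark:concat_sticky} for closure under concatenation and proposition \sref{proposition:sticky_homotopies_functor_pushout} (also stated without proof) for the functoriality of $SH_C(F)$ on $C_1$. Your point~3 is precisely a proof of that proposition, using the pullback-stability axiom for $C_0$ together with the hypothesis that $F_1$ preserves pullbacks, and your points~1 and~2 supply the remaining closure properties needed to make $SH_C(F)(x)$ a subcategory of $\pathcat F_1(x)$.
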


\begin{notation}
We call $\stickypathcat_C F$ the {\em functorial category of sticky homotopies for $F$}.
\end{notation}

The functoriality of $\stickypathcat_C$ stated in the next result follows from the functoriality of $SH$ analyzed in the previous section.

\begin{proposition}[functoriality of $\stickypathcat_C$]\label{proposition:sticky_path-cat_functor}
Let $C$ be an object of $\CAT^{(2)}_\cartesian$.\\
There is a functor
\[ \stickypathcat_C:\CAT^{(2)}_\isocart(C,\Top)\To\HomCat{C_1}{\Cat(\Top)} \]
which is given on objects by the functorial category of sticky homotopies.
\end{proposition}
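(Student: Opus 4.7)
The plan is to reduce this proposition to the functoriality of $SH_C$ on $\CAT^{(2)}_\isocart(C,\Top)$ established in the preceding proposition. On objects, $\stickypathcat_C$ is already prescribed by Proposition~\sref{proposition:sticky_path-cat}; what remains is to define the value of $\stickypathcat_C$ on morphisms of $\CAT^{(2)}_\isocart(C,\Top)$ and to verify functoriality of this assignment.

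Given a morphism $\alpha:F\to G$ in $\CAT^{(2)}_\isocart(C,\Top)$, I will construct a natural transformation
\[ \stickypathcat_C(\alpha):\stickypathcat_C F\To\stickypathcat_C G \]
of functors $C_1\to\Cat(\Top)$. For each $x\in\ob C_1$, the internal functor $\stickypathcat_C(\alpha)_x$ is declared to have object map $\alpha_{1,x}:F_1(x)\to G_1(x)$ and morphism map $SH_C(\alpha)_x:SH_C(F)(x)\to SH_C(G)(x)$, the latter being the $x$-component of the natural transformation $SH_C(\alpha)$ produced by the preceding proposition. That these two maps do define an internal functor in $\Cat(\Top)$ is immediate from the construction: the source, target, identity, and composition maps of $\stickypathcat_C F(x)$ (respectively $\stickypathcat_C G(x)$) are, via the natural transformation $\sigma$ of Proposition~\sref{proposition:sticky_path-cat}, restrictions of the corresponding structure maps on $\pathcat F_1(x)$ (respectively $\pathcat G_1(x)$); and the pair $(\alpha_{1,x},H(\alpha_{1,x}))$ is already an internal functor $\pathcat F_1(x)\to\pathcat G_1(x)$ because $\pathcat$ is functorial.

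Naturality of $\stickypathcat_C(\alpha)$ in $x\in C_1$ will follow from naturality of $\alpha_1$ for the object map and from naturality of $SH_C(\alpha)$ in $C_1$ for the morphism map. Functoriality of the assignment $\alpha\mapsto\stickypathcat_C(\alpha)$ — preservation of identities and composition of morphisms in $\CAT^{(2)}_\isocart(C,\Top)$ — follows from the corresponding properties of $SH_C$ on morphism maps and of $\pathcat$ on object maps, combined via the subcategory inclusion $\sigma$. The substantive content is already contained in the preceding proposition: the quasi-cartesianness hypothesis packaged into $\CAT^{(2)}_\isocart$ is exactly what guarantees that $H(\alpha_{1,x})$ restricts to a map $SH_C(F)(x)\to SH_C(G)(x)$ at all, and hence that the morphism component of $\stickypathcat_C(\alpha)_x$ lands in $SH_C(G)(x)$ rather than merely in $H\circ G_1(x)$. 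Once that is secured, the present statement is a bookkeeping upgrade from $\Top$-valued to $\Cat(\Top)$-valued functors, and I expect no obstacle beyond carefully tracking the internal structure maps.
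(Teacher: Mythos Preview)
Your proposal is correct and follows exactly the approach indicated in the paper, which does not give a detailed proof but only the one-line remark preceding the proposition: ``The functoriality of $\stickypathcat_C$ stated in the next result follows from the functoriality of $SH$ analyzed in the previous section.'' You have simply fleshed out that reduction, using the embedding $\sigma$ into $\pathcat$ to check the internal-functor axioms and invoking the functoriality of $SH_C$ on $\CAT^{(2)}_\isocart(C,\Top)$ for the morphism component; this is precisely what the paper intends.
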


\begin{notation}
The restriction of $\stickypathcat_C$ to $\CAT^{(2)}_\cartesian(C,\Top)$ will also be designated by $\stickypathcat_C$.
\end{notation}

We can extract greater naturality for $\stickypathcat_C$ --- this time on the base category $C$ --- from proposition \sref{proposition:sticky_changebase}.

\begin{proposition}[lax naturality of $\stickypathcat$ on the base category]\label{proposition:stickypathcat_laxsquare}
Let $G:C\to D$ be a morphism in $\CAT^{(2)}_\cartesian$.\\
There is a canonical natural transformation $\vartheta_G$
\begin{equation}\label{equation:lax_nat_stickypathcat}
\begin{diagram}
\CAT^{(2)}_\cartesian(D,\Top)&\rTo{\ \stickypathcat_D\ }&\HomCat{D_1}{\Cat(\Top)}\\
\dTo{\CAT^{(2)}_\cartesian(G,\Top)}&\twocell{\vartheta_G\hspace{1em}}{.28em}{\Longleftarrow}{40}&\dTo_{\HomCat{G_1}{\Cat(\Top)}}\\
\CAT^{(2)}_\cartesian(C,\Top)&\rTo{\ \stickypathcat_C\ }&\HomCat{C_1}{\Cat(\Top)}
\end{diagram}
\end{equation}
\end{proposition}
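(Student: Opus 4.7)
The plan is to build $\vartheta_G$ pointwise from the subspace inclusion in proposition \sref{proposition:sticky_changebase} and then check naturality in two layers, first in $x\in C_1$ and then in $F$. Unpacking, at each $F$ in $\CAT^{(2)}_\cartesian(D,\Top)$ the component $(\vartheta_G)_F$ must be a natural transformation
\[ \stickypathcat_D(F)\circ G_1 \To \stickypathcat_C(F\circ G) \]
of functors $C_1\to\Cat(\Top)$. At each $x\in\ob C_1$, both sides are internal categories in $\Top$ with common object space $F_1(G_1 x)$ and morphism spaces $SH_D(F)(G_1 x)$ and $SH_C(F\circ G_1)(x)$ respectively, both embedded in $H\circ F_1\circ G_1(x)$.

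First I would define $((\vartheta_G)_F)_x$ as the internal functor induced by the subspace inclusion $SH_D(F)(G_1 x)\subset SH_C(F\circ G_1)(x)$ of proposition \sref{proposition:sticky_changebase}. Both $\stickypathcat_D(F)(G_1 x)$ and $\stickypathcat_C(F\circ G_1)(x)$ are characterised by proposition \sref{proposition:sticky_path-cat} as the unique sub-internal-categories of $\pathcat\circ F_1\circ G_1(x)$ with the prescribed morphism spaces; since concatenation of $D$-sticky homotopies is again $D$-sticky (remark \sref{remark:concat_sticky}) and hence $C$-sticky, the inclusion of morphism spaces respects composition, and it automatically respects identities, source, and target. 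This gives the required internal functor.

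For naturality in $x\in C_1$, note that for any $f:x\to y$ in $C_1$ both $\stickypathcat_D(F)(G_1 f)$ and $\stickypathcat_C(F\circ G_1)(f)$ act on morphism spaces by post-composition of Moore paths with $F_1\circ G_1(f)$, so the relevant square of internal functors already commutes at the level of $\pathcat\circ F_1\circ G_1$. For naturality of $\vartheta_G$ in $F$: a morphism $\alpha:F\to F'$ in $\CAT^{(2)}_\cartesian(D,\Top)$ is sent by both $\stickypathcat_D$ and $\stickypathcat_C$ to internal natural transformations whose morphism-space components are given by post-composition with $\alpha$, and this is compatible with the $SH$-inclusions by the functoriality of $SH$ established in the previous section. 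The only step which really requires attention is the internal-functor bookkeeping at the beginning, but the uniqueness in proposition \sref{proposition:sticky_path-cat} reduces every verification to a statement about morphism-space subspaces of $H\circ F_1\circ G_1(x)$, where the key facts are proposition \sref{proposition:sticky_changebase} and remark \sref{remark:concat_sticky}.
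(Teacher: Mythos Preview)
Your proposal is correct and matches the paper's approach: the paper does not spell out a proof but indicates just before the statement that the result is extracted from proposition \sref{proposition:sticky_changebase}, which is exactly the key input you use. Your write-up simply fills in the bookkeeping (internal-functor structure via proposition \sref{proposition:sticky_path-cat}, naturality in $x$ and in $F$) that the paper leaves implicit.
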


\begin{remark}
More concretely, for each $F:D\to\Top$ in $\CAT^{(2)}_\cartesian$, $\vartheta_G$ gives a natural transformation
\[ (\vartheta_G)_F:\stickypathcat_D F\circ G_1\To\stickypathcat_C(F\circ G) \]
\end{remark}

\begin{remark}
The natural transformations
\begin{diagram}
\CAT^{(2)}_\cartesian(D,\Top)&\rTo{\ \stickypathcat_D\ }&\HomCat{D_1}{\Cat(\Top)}\\
\dTo{\CAT^{(2)}_\cartesian(G,\Top)}&\twocell{\vartheta_G\hspace{1em}}{.28em}{\Longleftarrow}{40}&\dTo_{\HomCat{G_1}{\Cat(\Top)}}\\
\CAT^{(2)}_\cartesian(C,\Top)&\rTo{\ \stickypathcat_C\ }&\HomCat{C_1}{\Cat(\Top)}
\end{diagram}
compose in the obvious manner, when one stacks two of these diagrams on top of each other.\\
In other words, they endow the family of functors $\stickypathcat_\bullet$ with the structure of a lax natural transformation
\[ \stickypathcat_\bullet:\CAT^{(2)}_\cartesian(-,\Top)\To\HomCatlarge{(-)_1}{\Cat(\Top)} \]
between functors $\op{\big(\CAT^{(2)}_\cartesian\big)}\to\SET\dash\CAT$.
\end{remark}

The following proposition is now a consequence of \sref{proposition:sticky_changebase_iso}. For conciseness, we first give a definition derived from proposition \sref{proposition:sticky_changebase_iso}.

\begin{definition}[iso-full morphism of category pairs]\label{definition:iso-full}
We say a morphism $G:C\to D$ in $\CAT^{(2)}$ is {\em iso-full} if for any object $x$ in $C_1$, and any morphism $f$ in $D_0$ with $\target f=G_1 x$, there exists an isomorphism $a$ in $D_1$, and an arrow $b$ in $C_0$ such that
\begin{align*}
\target b&=x\\
(G_1 b)\circ a&=f
\end{align*}
\end{definition}

\begin{proposition}\label{proposition:stickypathcat_laxsquare_iso}
Let $G:C\to D$ be an iso-full morphism in $\CAT^{(2)}_\cartesian$.\\
The natural transformation $\vartheta_G$ in diagram \seqref{equation:lax_nat_stickypathcat} is the identity natural transformation.\\
In particular, the diagram
\begin{diagram}
\CAT^{(2)}_\cartesian(D,\Top)&\rTo{\ \stickypathcat_D\ }&\HomCat{D_1}{\Cat(\Top)}\\
\dTo{\CAT^{(2)}_\cartesian(G,\Top)}&&\dTo_{\HomCat{G_1}{\Cat(\Top)}}\\
\CAT^{(2)}_\cartesian(C,\Top)&\rTo{\ \stickypathcat_C\ }&\HomCat{C_1}{\Cat(\Top)}
\end{diagram}
commutes. Consequently, for each $F\in\CAT^{(2)}_\cartesian(D,\Top)$
\[ \stickypathcat_D F\circ G_1=\stickypathcat_C(F\circ G) \]
\end{proposition}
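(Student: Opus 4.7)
The plan is to peel back the definitions and then invoke Proposition \sref{proposition:sticky_changebase_iso} pointwise. Concretely, for each $F\in\CAT^{(2)}_\cartesian(D,\Top)$, the component $(\vartheta_G)_F:\stickypathcat_D F\circ G_1\to\stickypathcat_C(F\circ G)$ is a natural transformation between two functors $C_1\to\Cat(\Top)$. By Proposition \sref{proposition:sticky_path-cat}, at each object $x$ of $C_1$, both $\stickypathcat_D F(G_1 x)$ and $\stickypathcat_C(F\circ G)(x)$ are internal subcategories of the single category $\pathcat\circ F_1(G_1 x)$, with the same set of objects (namely the points of $F_1(G_1 x)$), and morphism spaces $SH_D(F)(G_1 x)$ and $SH_C(F\circ G_1)(x)$ respectively. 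The canonical transformation $\vartheta_G$ from Proposition \sref{proposition:stickypathcat_laxsquare} is, by its very construction, the one induced at each $x$ from the inclusion of Proposition \sref{proposition:sticky_changebase}, $SH_D(F)(G_1 x)\subset SH_C(F\circ G_1)(x)$, fitting inside the identity on $\pathcat\circ F_1(G_1 x)$.

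The second step is to apply the iso-full hypothesis. By Definition \sref{definition:iso-full}, $G$ satisfies exactly the hypothesis of Proposition \sref{proposition:sticky_changebase_iso} at every object $x$ of $C_1$. That proposition then upgrades the inclusion $SH_D(F)(G_1 x)\subset SH_C(F\circ G_1)(x)$ to an equality of subspaces of $H\circ F\circ G_1(x)$. Therefore the two internal subcategories of $\pathcat\circ F_1(G_1 x)$ described above actually coincide, and the component of $(\vartheta_G)_F$ at $x$ is the identity functor.

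Since this holds at every $x$ and every $F$, both $(\vartheta_G)_F$ and then $\vartheta_G$ itself are identity natural transformations. The stated commutativity of the square, and the equality $\stickypathcat_D F\circ G_1=\stickypathcat_C(F\circ G)$ of functors $C_1\to\Cat(\Top)$ for each $F$, then follow immediately.

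The main obstacle is not mathematical but bookkeeping: one must be certain that the ``canonical'' natural transformation $\vartheta_G$ supplied by Proposition \sref{proposition:stickypathcat_laxsquare} is indeed the one induced by the inclusions from Proposition \sref{proposition:sticky_changebase}, rather than some other a priori candidate. This is pinned down by the uniqueness clause in Proposition \sref{proposition:sticky_path-cat} together with the fact that $\vartheta_G$ is constructed to lie over the identity of $\pathcat\circ F_1\circ G_1$ on objects; once that identification is made, the reduction to Proposition \sref{proposition:sticky_changebase_iso} is automatic.
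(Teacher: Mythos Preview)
Your proof is correct and follows precisely the route the paper indicates: the paper itself merely remarks that the proposition ``is now a consequence of \sref{proposition:sticky_changebase_iso}'', and your argument is exactly the unwinding needed to make that consequence explicit, reducing the identity of $\vartheta_G$ at each object to the equality $SH_D(F)(G_1 x)=SH_C(F\circ G_1)(x)$ guaranteed by the iso-full hypothesis.
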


\begin{definition}[enriched category of sticky homotopies]\label{definition:discrete_stickypathcat}
Let $C$ be an object of $\CAT^{(2)}_\cartesian$.\\
For convenience, we let $\disccatsub{\stickypathcat}{C}$ abbreviate the composition
\[ \CAT^{(2)}_\cartesian(C,\Top)\xTo{\stickypathcat_C}\HomCat{C_1}{\Cat(\Top)}\xTo{\HomCatlarge{C_1}{\disccat{(-)}}}\HomCat{C_1}{\Top\dash\Cat} \]
\end{definition}

\begin{remark}
We will make frequent use of this notation: adding a superscript $\discrete$ to the name of a functor or natural transformation, $f$, in $\HomCat{A}{F(\Cat(\Top))}$ --- thus obtaining $\disccat{f}$ --- will indicate
\[ \disccat{f}\defeq F\big(\disccat{(-)}\big)\circ f \]
This is done for convenience, since the expression of $F$ (commonly of the form $\HomCat{B}{-}$) could add some cumbersome overhead to the notation.\\
As an example, the lax naturality square for $\disccatsub{\stickypathcat}{\bullet}$ at a morphism $G:C\to D$ is given by the natural transformation
\[ \disccat{\vartheta_G}=\HomCatLarge{C_1}{\disccat{(-)}}\circ\vartheta_G \]
\end{remark}

\section{Category of sticky configurations}\label{section:sticky_configurations}

In the present section, we introduce one of the central constructions in this text.

\begin{definition}
Define $\Top_{inj}$ to be the subcategory of $\Top$ whose morphisms are all continuous injective maps.
\end{definition}

\begin{notation}[$\op{\FinSet}$ as an object of $\CAT^{(2)}_\cartesian$]
The opposite of the inclusion of the subcategory of epimorphisms of $\FinSet$
\[ \op{\text{epi}(\FinSet)}\Into\op{\FinSet} \]
is a particular case of example \sref{example:cat2_cart_cat_mono}, and therefore is an object of $\CAT^{(2)}_\cartesian$. For ease of notation, we will denote it by $\op{\FinSet}$.
\end{notation}

\begin{construction}\label{definition:stickypathcat_Map}
Let $\widehat{\Map}$ denote the functor
\[ \widehat{\Map}:\Top\xTo{\Map}\HomCat{\op{\Top}}{\Top}\xTo{\HomCat{\op{\inclusion}}{\Top}}\HomCat{\op{\FinSet}}{\Top} \]
Then there is a unique functor
\[ \overline{\Map}:\Top_{inj}\To\CAT^{(2)}_\cartesian(\op{\FinSet},\Top) \]
for which
\begin{diagram}[midshaft]
\Top_{inj}&\rTo{\ \overline{\Map}\ }&\CAT^{(2)}_\cartesian(\op{\FinSet},\Top)\\
\dInto{\inclusion}&&\dTo{(-)_1}\\
\Top&\rTo{\widehat{\Map}}&\HomCat{\op{\FinSet}}{\Top}
\end{diagram}
commutes.
With this, we can define the functor \[ \stcat:\Top\To\HomCat{\op{\FinSet}}{\Cat(\Top)} \] as the composition
\[ \Top_{inj}\xTo{\overline{\Map}}\CAT^{(2)}_\cartesian(\op{\FinSet},\Top)\xTo[\sref{proposition:sticky_path-cat_functor}]{\stickypathcat_{\op{\FinSet}}}\HomCat{\op{\FinSet}}{\Cat(\Top)} \]
\end{construction}

\begin{definition}[category of sticky finite sets]\label{definition:sticky_sets}
We let $\M^\textbig$ denote the composition
\[ \Top\xTo{\disccat{\stcat}}\HomCat{\op{\FinSet}}{\Top\dash\Cat}\xTo[\sref{construction:variation_Groth_construction}]{\Groth}\Top\dash\CAT \]
\end{definition}

This category is too big: we want to consider only its objects which correspond to injective maps from finite sets into $X$, or {\em configurations} in $X$. Thus we will restrict to an appropriate full subcategory of $\M^\textbig(X)$.

First, note that proposition \sref{proposition:description_Groth_discrete} identifies the class of objects of $\M^\textbig(X)$ as
\[ \ob\big(\M^{\textbig}(X)\big)=\;\,\coprod_{\mathclap{z\in\FinSet}}\;\,\Top(z,X) \]

\begin{definition}[category of sticky configurations]\label{definition:sticky_configurations}
Let $X$ be a topological space.\\
The {\em category of sticky configurations in \(X\)}, $\M(X)$, is the full $\Top$-subcate\-go\-ry of $\M^\textbig(X)$ (definition \sref{definition:sticky_sets}) generated by all injective maps from finite sets to $X$.
\end{definition}

\begin{remark}\label{remark:functoriality_M(X)}
This subcategory of $\M^\textbig$ inherits the functoriality: there is a functor
\[ \M:\Top_{inj}\To\Top\dash\CAT \]
given on objects by the previous definition.
\end{remark}

\section{Generalities on $G$-equivariance}\label{section:G-objects}

Having defined the category of sticky configurations, $\M(X)$, we will similarly introduce an equivariant version of it. For that purpose, this section revises some basic facts on equivariant objects. Assume for the remainder of this section that $G$ is a monoid in the cartesian category $\Set$. Recall that $\B{G}$ denotes a category with one object and morphisms given by $G$.

\begin{definition}[category of $G$-objects]
Let $C$ be a category.\\
$G\dash C$ denotes the category $\HomCat{\B{G}}{C}$ of {\em \(G\)-objects in \(C\)}.
\end{definition}

\begin{remark}
There is an isomorphism $1\dash C\cong C$ natural in $C$.
\end{remark}

\begin{definition}[functors on $G$-objects]\label{definition:functors_G-objects}
Let $C$ be a category.\\
The {\em forgetful} functor
\[ G\dash C=\HomCat{\B{G}}{C}\xTo{(1\to\B{G})^\ast}\HomCat{1}{C}=C \]
is called $u:G\dash C\to C$.\\
The {\em trivial \(G\)-object} functor
\[ C=\HomCat{1}{C}\xTo{(\B{G}\to 1)^\ast}\HomCat{\B{G}}{C}=G\dash C \]
is called $k:C\to G\dash C$.
\end{definition}

\begin{remark}\label{remark:uk=id}
Note that $u\circ k=\id_C$.
\end{remark}

\begin{proposition}[free $G$-object]\label{proposition:free_G-object}
Let $C$ be a cocomplete category.\\
The functor $u:G\dash C\to C$ has a left adjoint, which will be denoted \[ \free{G}{-}:C\To G\dash C \]
\comment{The unit of the adjunction will be denoted
\begin{equation}\label{equation:unit_freeG}
\eta:\id_C\To u\circ\free{G}{-}
\end{equation}}
\end{proposition}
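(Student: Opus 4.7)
My plan is to construct the left adjoint explicitly and verify the adjunction directly, since the formula is simple enough that invoking the full machinery of left Kan extensions is heavier than needed (though it would also work, as $u$ is precisely restriction along $1\to\B{G}$ and $C$ is cocomplete).

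The construction is as follows. Given $c\in C$, define
\[ \free{G}{c}\defeq\coprod_{g\in G} c \]
in $C$, equipped with the $G$-action in which $h\in G$ acts by the unique map $\coprod_{g\in G} c\to\coprod_{g\in G} c$ whose restriction to the summand indexed by $g$ is the canonical inclusion into the summand indexed by $hg$. Associativity and unitality of the action follow immediately from associativity and unitality of the monoid $G$ together with the universal property of the coproduct; this makes $\free{G}{c}$ an object of $G\dash C=\HomCat{\B{G}}{C}$. On morphisms, $\free{G}{f}$ is defined as the coproduct $\coprod_{g\in G} f$, which is plainly $G$-equivariant and functorial in $c$.

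Next I would exhibit the unit $\eta_c\colon c\to u\free{G}{c}$ as the coproduct inclusion at $g=1$, and verify the universal property: for any $G$-object $(d,\rho)$ in $C$ and any morphism $\varphi\colon c\to u(d)$, the assignment $g\mapsto\rho(g)\circ\varphi$ produces a $G$-indexed family of morphisms $c\to d$ in $C$, which by the universal property of the coproduct assembles into a unique morphism $\widetilde\varphi\colon\free{G}{c}\to d$ in $C$. The verification that $\widetilde\varphi$ is $G$-equivariant is a direct unwinding of the definition of the $G$-action on $\free{G}{c}$ and the hypothesis that $\rho$ is a monoid homomorphism; the verification that $\widetilde\varphi\circ\eta_c=\varphi$ and that $\widetilde\varphi$ is the unique such map follows from the universal property of $\coprod_{g\in G} c$ at the summand $g=1$.

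The only point requiring mild care is checking that the $G$-action on $\free{G}{c}$ as defined is strictly associative (as opposed to associative up to the coproduct's canonical isomorphisms); this is the reason for phrasing the action via the universal property instead of naive concatenation. Everything else is a formal consequence of the universal property of coproducts and reduces to the observation that a $G$-equivariant map out of $\coprod_{g\in G} c$ is the same data as a single map out of the summand at $g=1$.
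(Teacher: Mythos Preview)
Your argument is correct and is the standard construction; the paper itself states this proposition without proof, treating it as a well-known fact about diagram categories (as you note, it is the special case of left Kan extension along $1\to\B{G}$). There is nothing to compare against, and your explicit verification via the coproduct formula is entirely adequate.
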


\begin{proposition}[quotient $G$-object]\label{proposition:quotient_G-object}
Let $C$ be a cocomplete category.\\
The functor $k:C\to G\dash C$ has a left adjoint, denoted \[ \quot{G}{-}:G\dash C\To C \]
The counit of the adjunction is the identity natural transformation.
\end{proposition}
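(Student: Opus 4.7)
The plan is to recognize $k$ as restriction along the unique functor $p:\B{G}\to 1$, so that the desired left adjoint is the left Kan extension along $p$. Since $\B{G}$ is small and $C$ is cocomplete, this left Kan extension exists, and at a $G$-object $X:\B{G}\to C$ it is computed as the $\B{G}$-indexed colimit of $X$; because $\B{G}$ has a single object $\ast$, this colimit is concretely the coequalizer of the pair of arrows $\coprod_{g\in G}X(\ast)\rightrightarrows X(\ast)$ given respectively by the codiagonal and the $G$-action. I would take this as the definition of $\quot{G}{X}$.

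The adjunction $\quot{G}{-}\dashv k$ then follows immediately from the universal property of colimits: morphisms $\quot{G}{X}\to Y$ in $C$ are in natural bijection with cocones from $X$ to the constant diagram $\Delta_Y$, which is exactly $kY$; and such cocones are precisely the morphisms $X\to kY$ in $G\dash C=\HomCat{\B{G}}{C}$. Naturality in both $X$ and $Y$ is automatic from the universal property.

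For the counit being the identity, observe that for $Y\in C$ the diagram $kY:\B{G}\to C$ is constant at $Y$, sending every arrow of $\B{G}$ to $\id_Y$; consequently any single morphism $Y\to Z$ automatically constitutes a cocone from $kY$ to $Z$, with no further compatibility condition. Hence $Y$ itself, equipped with $\id_Y$ as its cocone leg, serves as a colimit of $kY$, and choosing this representative gives $\quot{G}{kY}=Y$ with counit $\id_Y$. The whole statement is really just left Kan extension along a functor to the terminal category, specialized to the observation that colimits of constant diagrams over the one-object category $\B{G}$ collapse to the constant value; I do not anticipate any serious obstacle, only the bookkeeping of fixing the colimit representative to make the counit strictly equal, rather than merely isomorphic, to the identity.
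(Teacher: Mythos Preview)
Your argument is correct: recognizing $k$ as restriction along $p:\B{G}\to 1$ and taking $\quot{G}{-}$ to be left Kan extension along $p$ (i.e.\ the $\B{G}$-indexed colimit) is exactly the right construction, and your observation that a constant diagram $kY$ has $Y$ itself, with identity legs, as a colimit handles the counit claim. The paper states this proposition without proof, so there is no argument to compare against; your approach is the standard one and would serve as a complete justification.
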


\section{$G$-objects in $\Set$ and $\Top$}\label{section:G_Set_Top}

In this section, let $G$ abbreviate a monoid in $\Set$.

\begin{remark}
All the functors defined in the preceding section \sref{section:G-objects} commute appropriately with the inclusions $\Set\to\Top$ and $G\dash\Set\to G\dash\Top$.
\end{remark}

\begin{remark}
A monoid $G$ in the cartesian category $\Set$ passes to a monoid $G$ in the cartesian category $\Top$. We can therefore consider left modules over these monoids.\\
The categories of $G$-objects in $\Set$ and $\Top$ admit canonical isomorphisms with the categories of left $G$-modules in $\Set$ and $\Top$:
\begin{align*}
G\dash\Set&\cong G\dash\mod(\Set)\\
G\dash\Top&\cong G\dash\mod(\Top)
\end{align*}
\end{remark}

\begin{definition}[free finitely generated $G$-sets]\label{definition:free_G-sets}
The full subcategory of $G\dash\Set$ generated by the essential image of
\[ \FinSet\To\Set\xTo{\free{G}{-}}G\dash\Set \]
is abbreviated $\FinSet_G$.
\end{definition}

\begin{remark}
Note that $\FinSet_1$ is isomorphic to $\FinSet$.
\end{remark}

\begin{construction}[$\op{\FinSet_G}$ as an object of $\CAT^{(2)}_\cartesian$]\label{construction:FinSet_G_CAT^2_cart}
The opposite of the inclusion of the subcategory of epimorphisms of $\FinSet_G$
\[ \op{\text{epi}(\FinSet_G)}\Into\op{\FinSet_G} \]
gives an object of $\CAT^{(2)}_\cartesian$ (an instance of example \sref{example:cat2_cart_cat_mono}), which is denoted simply by $\op{\FinSet_G}$.\\
The functor
\[ \op{(\quot{G}{-})}:\op{\FinSet_G}\To\op{\FinSet} \]
determines a morphism $\op{(\quot{G}{-})}\in\CAT^{(2)}_\cartesian\big(\op{\FinSet_G},\op{\FinSet}\big)$.\\
If $G$ is a group, then the functor
\[ \op{\free{G}{-}}:\op{\FinSet}\To\op{\FinSet_G} \]
determines a morphism $\op{\free{G}{-}}\in\CAT^{(2)}_\cartesian\big(\op{\FinSet},\op{\FinSet_G}\big)$.\\
\end{construction}

\comment{We could consider instead the category of $G$-sets whose quotient by $G$ is finite, with the subcategory of epimorphisms $f:x\to y$ which induce a bijection $f:aG\to f(a)G$ for every $a\in x$.\\
Using this category instead, the equivalence $\M_G(X)\to\M(\quot{G}{X})$ is always true, without needing the action of $G$ on $X$ to be free; we only need $X\to\quot{G}{X}$ to be a covering space. If the action of $G$ on $X$ is free, this new category $\M_G(X)$ just happens to be the same as the one with $\FInSet_G$ above.}

\begin{proposition}\label{proposition:iso-full_free_quot}
The morphism in $\CAT^{(2)}_\cartesian$
\[ \op{(\quot{G}{-})}:\op{\FinSet_G}\To\op{\FinSet} \]
is iso-full (see definition \sref{definition:iso-full}).\\
If $G$ is the monoid  underlying a group, then the morphism
\[ \op{\free{G}{-}}:\op{\FinSet}\To\op{\FinSet_G} \]
is iso-full.
\end{proposition}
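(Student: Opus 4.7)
The plan is to directly verify the iso-full condition (Definition \sref{definition:iso-full}) by producing the required factorization in each case. The key ingredients are the natural identity $\quot{G}{\free{G}{-}}=\id$ coming from the adjunction $\free{G}{-}\dashv u$ (Proposition \sref{proposition:free_G-object}), and---for the second statement---the existence of a set-theoretic section for the orbit projection of any free $G$-set, available when $G$ is a group.

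For the first statement, let $x\in\op{\FinSet_G}$ (a free finitely generated $G$-set), and let $f$ be a morphism of $\op{\text{epi}(\FinSet)}$ with $\target f=\quot{G}{x}$; unravelling the opposite, this corresponds to a set surjection $\phi:\quot{G}{x}\to T$ in $\FinSet$. I would pick a section $\sigma:\quot{G}{x}\to x$ of the orbit projection---which exists because the $G$-action on $x$ is free with finitely many orbits---and use it to construct a $G$-equivariant isomorphism $\hat\sigma:\free{G}{\quot{G}{x}}\to x$ descending to $\id_{\quot{G}{x}}$. Setting the $\op$-morphism $b$ to correspond to $\beta=\free{G}{\phi}\circ\hat\sigma^{-1}:x\to\free{G}{T}$ in $\FinSet_G$ (a $G$-equivariant surjection) and taking $a=\id_T$, the identity $\quot{G}{\beta}=\phi$ translates directly to $(G_1 b)\circ a=f$ in $\op{\FinSet}$.

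For the second statement (with $G$ a group), let $x=S_0\in\op{\FinSet}$ and let $f$ be a morphism of $\op{\text{epi}(\FinSet_G)}$ with $\target f=\free{G}{S_0}$, corresponding to a $G$-equivariant surjection $\psi:\free{G}{S_0}\to Y$ in $\FinSet_G$. The strategy is dual: I would choose a section of the orbit projection of $Y$ (available because $G$ is a group and $Y$ is free) to obtain an isomorphism $\delta_0:\free{G}{\quot{G}{Y}}\to Y$, transport $\psi$ across $\delta_0$ to a $G$-map $\free{G}{S_0}\to\free{G}{\quot{G}{Y}}$, and decode the latter through the adjunction as a set function $(h_\bullet,t_\bullet):S_0\to G\times\quot{G}{Y}$. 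Then take the $\op$-morphism $b$ to correspond to the set surjection $S_0\to\quot{G}{Y}$, $s\mapsto t_s$, and define the iso $a$ from $\delta_0$ by modifying the chosen section of $Y\to\quot{G}{Y}$ using the group inverses of the $h_\bullet$ to absorb that data into $a$.

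The main obstacle lies in the second statement: for the section-modification to be well-defined, the $h_\bullet$ data must be coherent on fibers of $s\mapsto t_s$, and arranging this is the delicate point of the argument; it is where the hypothesis that $G$ is a group enters essentially, via the ability to invert and re-glue the section fiber-by-fiber. The first statement, by contrast, is immediate from $\quot{G}{\free{G}{-}}=\id$ and the freedom to take $a=\id$.
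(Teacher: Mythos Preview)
The paper does not supply a proof for this proposition, so there is nothing to compare against; I assess your argument on its own terms.

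Your treatment of the first assertion is correct. Every $x\in\FinSet_G$ admits, by the very definition of $\FinSet_G$, an isomorphism $\hat\sigma:\free{G}{\quot{G}{x}}\to x$ lying over $\id_{\quot{G}{x}}$; transporting $\free{G}{\phi}$ along $\hat\sigma$ yields the required $G$-equivariant surjection $\beta:x\to\free{G}{T}$ with $\quot{G}{\beta}=\phi$, and then $a=\id$ finishes. (Your phrase ``because the $G$-action on $x$ is free'' is slightly misleading for a general monoid $G$; what you actually use is that $x$ lies in the essential image of $\free{G}{-}$, which is built into the definition.)

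For the second assertion, the obstacle you single out is not a delicacy to be arranged but a genuine obstruction: the coherence of the $h_\bullet$ on fibres of $s\mapsto t_s$ can simply fail, and no choice of isomorphism $a$ repairs it. Take $G=\mathbb{Z}/2$, $S_0=\{a,b\}$, $Y=G=\free{G}{\{\ast\}}$, and the $G$-equivariant surjection $\psi:\free{G}{S_0}\to Y$ determined by $\psi(0,a)=0$ and $\psi(0,b)=1$. Since $\lvert\quot{G}{Y}\rvert=1$, the only candidate $b$ corresponds to the constant map $\beta:\{a,b\}\to\{\ast\}$, so that $\free{G}{\beta}(0,a)=\free{G}{\beta}(0,b)$; as $\psi(0,a)\ne\psi(0,b)$, no isomorphism $\alpha:\free{G}{\{\ast\}}\to Y$ can satisfy $\alpha\circ\free{G}{\beta}=\psi$. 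Hence the factorisation demanded by Definition \sref{definition:iso-full} does not exist here, and the second clause of the proposition is false as written. Your sketch cannot be completed because there is nothing correct to complete it to. (Only the first clause is invoked later in the chapter, so the subsequent arguments are unaffected.)
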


\begin{proposition}[equivariant mapping space]\label{proposition:Map_G}
There is a unique functor
\[ \Map^G:\op{G\dash\Top}\times G\dash\Top\To\Top \]
and a unique natural transformation (recall $u:G\dash\Top\to\Top$ from definition \sref{definition:functors_G-objects})
\[ j:\Map^G\To\Map\circ(\op{u}\times u) \]
such that for any objects $X$, $Y$ of $G\dash\Top$, $j_{(X,Y)}$ is the inclusion of the subspace of $G$-equivariant maps in $\Map(uX,uY)$.
\end{proposition}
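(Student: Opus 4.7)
The plan is to fix $\Map^G$ on objects first and then let the naturality constraint on $j$ pin down everything else. On objects, the requirement that $j_{(X,Y)}$ be the inclusion of the $G$-equivariant maps forces $\Map^G(X,Y)$ to be exactly that subspace of $\Map(uX,uY)$, topologized as a subspace; no other choice is possible, so uniqueness on objects is immediate.

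For morphisms, let $(f,g):(X,Y)\to(X',Y')$ be a morphism in $\op{G\dash\Top}\times G\dash\Top$, so that $f:X'\to X$ and $g:Y\to Y'$ are $G$-equivariant continuous maps. Naturality of $j$ at $(f,g)$ asks that $j_{(X',Y')}\circ\Map^G(f,g)=\Map(uf,ug)\circ j_{(X,Y)}$; since $j_{(X',Y')}$ is a subspace inclusion, hence a monomorphism in $\Top$, this uniquely determines $\Map^G(f,g)$ if it exists. Existence reduces to the one-line verification that if $\phi\in\Map(uX,uY)$ is $G$-equivariant, then so is $ug\circ\phi\circ uf$; this is immediate from equivariance of $f$, $g$, and $\phi$. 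The resulting restriction is automatically continuous because $\Map(uf,ug)$ is continuous and both its restricted source and target carry the subspace topology.

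What remains is to verify that the morphism assignments so defined satisfy the functor axioms for $\Map^G$ and that the family $j$ is a natural transformation on the whole product category. Both are forced by uniqueness: the proposed identity on $\Map^G(X,Y)$ and the proposed composite $\Map^G(f',g')\circ\Map^G(f,g)$ each fit into a naturality square whose other three sides are the corresponding data for $\Map\circ(\op{u}\times u)$, and any two arrows into a subspace which agree after post-composition with the inclusion must coincide. The main obstacle is purely bookkeeping; the mathematical content is the trivial observation that $G$-equivariant maps are closed under pre- and post-composition by $G$-equivariant maps.
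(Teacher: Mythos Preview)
The paper states this proposition without proof; your argument is correct and supplies exactly the routine verification one would expect. The key points --- that the object assignment is forced, that naturality against a monomorphism pins down the morphism assignment uniquely, and that existence reduces to closure of equivariant maps under composition --- are all handled properly.
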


\begin{remark}
The isomorphism $1\dash\Top\cong\Top$ carries $\Map^1$ to $\Map$.
\end{remark}

\section{$G$-equivariant sticky configurations}\label{section:G-equivariant_sticky_configurations}

Throughout this section, let $G$ denote a monoid in the cartesian category $\Set$. We introduce a category of $G$-equivariant sticky configurations, similar to the construction in section \sref{section:sticky_configurations}.

\begin{definition}
Let $G\dash\Top_{inj}$ denote the subcategory of $G\dash\Top$ whose morphisms are all injective $G$-equivariant maps.
\end{definition}

\begin{construction}\label{definition:stickypathcat_MapG}
Define $\widehat{\Map}_G$ to be the functor
\[ G\dash\Top\xTo[\sref{proposition:Map_G}]{\Map^G}\HomCat{\op{G\dash\Top}}{\Top}\xTo{\HomCat{\op{\inclusion}}{\Top}}\HomCat{\op{\FinSet_G}}{\Top} \]
Then there is a unique functor
\[ \overline{\Map}_G:G\dash\Top_{inj}\To\CAT^{(2)}_\cartesian(\op{\FinSet_G},\Top) \]
for which
\begin{diagram}[midshaft]
G\dash\Top_{inj}&\rTo{\ \overline{\Map}_G\ }&\CAT^{(2)}_\cartesian(\op{\FinSet_G},\Top)\\
\dInto{\inclusion}&&\dTo{(-)_1}\\
G\dash\Top&\rTo{\widehat{\Map}_G}&\HomCat{\op{\FinSet_G}}{\Top}
\end{diagram}
commutes.
\end{construction}

\begin{definition}\label{definition:stcat_G}
The composition
{\small \[ G\dash\Top_{inj}\xTo{\overline{\Map}_G}\CAT^{(2)}_\cartesian(\op{\FinSet_G},\Top)\xTo[\sref{proposition:sticky_path-cat}]{\stickypathcat_{\op{\FinSet_G}}}\HomCat{\op{\FinSet_G}}{\Cat(\Top)} \]
}is abbreviated $\stcat_G$.
\end{definition}

\begin{definition}[$G$-equivariant sticky sets]\label{definition:M^big_G}
We let $\M^{\textbig}_G$ denote the composite functor
\[ G\dash\Top\xTo{\disccat{\stcat_G}}\HomCat{\op{\FinSet_G}}{\Top\dash\Cat}\xTo[\sref{construction:variation_Groth_construction}]{\Groth}\Top\dash\CAT \]
Note that the Grothendieck construction provides a cocone \[ \pi:\M^\textbig_G\to\FinSet_G \]
\end{definition}

Proposition \sref{proposition:description_Groth_discrete} determines the set of objects of $\M_G^\textbig(X)$ to be
\[ \ob\big(\M_G^{\textbig}(X)\big)=\;\,\coprod_{\mathclap{z\in\FinSet_G}}\;\,G\dash\Top(z,X) \]

\begin{definition}[$G$-equivariant sticky configurations]\label{definition:M_G}
Let $X$ be an object of $G\dash\Top$.\\
The {\em category of \(G\)-equivariant sticky configurations} in $X$, $\M_G(X)$, is the full $\Top$-subcategory of $\M^\textbig_G(X)$ generated by the injective ($G$-equivariant) maps into $X$.
\end{definition}

\begin{remark}
Note that the objects defined in this section for the case $G=1$ are naturally isomorphic to the corresponding objects defined in section \sref{section:sticky_configurations}, after taking into account the isomorphisms $\FinSet_1\cong\FinSet$ and $1\dash\Top\cong\Top$.
\end{remark}

\section{From $\M_G$ to $\M$}\label{section:M_G->M}

Throughout this section, we let $G$ be a group in $\Set$.

\begin{construction}
Let $X$ be an object of $G\dash\Top$.\\
For each $Y$ in $\FinSet_G$, define the map
\[ \Map^G(Y,X)\To\Map\big(\quot{G}{Y},\quot{G}{X}\big) \]
which takes $f:Y\to X$ to the map induced by $f$ on the quotients by $G$.\comment{This is not continuous for $Y$ a general element $G$-space.}\\
These maps, for $Y$ in $\FinSet_G$, assemble into a natural transformation
\[ \widehat{\theta}_X:\widehat{\Map}_G(X)\To\widehat{\Map}(\quot{G}{X})\circ\op{(\quot{G}{-})} \]
The restriction of $\widehat{\theta}_X$ to the category $\op{\text{epi}(\FinSet_G)}$ is quasi-cartesian with respect to $1\in\Top$.
\end{construction}

\begin{construction}
The preceding construction defines the components of a natural transformation
\[ \widehat{\theta}:\widehat{\Map}_G\To\widehat{\Map}(\quot{G}{X})\circ\op{(\quot{G}{-})} \]
from $\widehat{\Map}_G$ to the composition
\[ G\dash\Top\xTo{\quot{G}{-}}\Top\xTo{\widehat{\Map}}\HomCat{\op{\FinSet}}{\Top}\xTo{\HomCat{\op{(\quot{G}{-})}}{\Top}}\HomCat{\op{\FinSet_G}}{\Top} \]
$\widehat{\theta}$ determines a unique natural transformation $\overline{\theta}$ between functors of type
\[ G\dash\Top_{inj}\To\CAT^{(2)}_\isocart(\op{\FinSet_G},\Top) \]
such that \[ \evaluation{1}\circ\overline{\theta}=\restrict{\widehat{\theta}}{G\dash\Top_{inj}} \]
The source of $\overline{\theta}$ is
{\small \[ \overline{\Map}_G:G\dash\Top_{inj}\xTo{\overline{\Map}_G}\CAT^{(2)}_\cartesian(\op{\FinSet_G},\Top)\Into\CAT^{(2)}_\isocart(\op{\FinSet_G},\Top) \]
}The target of $\overline{\theta}$ is
\begin{align*}
G\dash\Top_{inj}&\xTo{\quot{G}{-}}\Top_{inj}\\
&\xTo{\overline{\Map}}\CAT^{(2)}_\cartesian(\op{\FinSet},\Top)\\
&\xTo{\CAT^{(2)}_\cartesian(\op{(\quot{G}{-})},\Top)}\CAT^{(2)}_\cartesian(\op{\FinSet_G},\Top)\\
&\Into\CAT^{(2)}_\isocart(\op{\FinSet_G},\Top)
\end{align*}
\end{construction}

\begin{definition}
Define $\stickypathcat\theta$ to be the natural transformation
\[ \stickypathcat\theta\defeq\stickypathcat_{\op{\FinSet_G}}\circ\overline{\theta} \]
\end{definition}

\begin{proposition}
$\stickypathcat\theta$ is a natural transformation from $\stcat_G$ to the composition
\begin{align*}
G\dash\Top_{inj}&\xTo{\quot{G}{-}}\Top_{inj}\\
&\xTo{\stcat}\HomCat{\op{\FinSet}}{\Cat(\Top)}\\
&\xTo{\HomCat{\op{(\quot{G}{-})}}{\Cat(\Top)}}\HomCat{\op{\FinSet_G}}{\Cat(\Top)}
\end{align*}
\end{proposition}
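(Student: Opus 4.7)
The plan is to view $\stickypathcat\theta = \stickypathcat_{\op{\FinSet_G}} \circ \overline{\theta}$ simply as the natural transformation obtained by whiskering $\overline{\theta}$ with the functor
\[ \stickypathcat_{\op{\FinSet_G}}:\CAT^{(2)}_\isocart(\op{\FinSet_G},\Top)\To\HomCat{\op{\FinSet_G}}{\Cat(\Top)} \]
from proposition \sref{proposition:sticky_path-cat_functor}. Since $\overline{\theta}$ is a natural transformation between two functors of type $G\dash\Top_{inj}\to\CAT^{(2)}_\isocart(\op{\FinSet_G},\Top)$, the whiskering automatically yields a natural transformation between the post-compositions of these functors with $\stickypathcat_{\op{\FinSet_G}}$. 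The task then reduces to identifying the source and target of this whiskered transformation as the two functors claimed in the statement.

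For the source, the source of $\overline{\theta}$ is $\overline{\Map}_G$ followed by the inclusion $\CAT^{(2)}_\cartesian(\op{\FinSet_G},\Top) \hookrightarrow \CAT^{(2)}_\isocart(\op{\FinSet_G},\Top)$. By the compatibility diagram following definition \sref{definition:discrete_stickypathcat} (really by the commutative triangle of the last proposition of section \sref{section:categories_sticky_homotopies}), post-composing with $\stickypathcat_{\op{\FinSet_G}}$ agrees on this subcategory with the original functor $\stickypathcat_{\op{\FinSet_G}}$ on $\CAT^{(2)}_\cartesian$. Therefore the source becomes $\stickypathcat_{\op{\FinSet_G}}\circ\overline{\Map}_G$, which is precisely $\stcat_G$ by definition \sref{definition:stcat_G}.

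For the target, the target of $\overline{\theta}$ is the four-fold composition passing through $\quot{G}{-}$, $\overline{\Map}$, and $\CAT^{(2)}_\cartesian(\op{(\quot{G}{-})},\Top)$. Post-composing with $\stickypathcat_{\op{\FinSet_G}}$ gives
\[ \stickypathcat_{\op{\FinSet_G}}\circ \CAT^{(2)}_\cartesian(\op{(\quot{G}{-})},\Top)\circ\overline{\Map}\circ(\quot{G}{-}). \]
The key step is to apply proposition \sref{proposition:stickypathcat_laxsquare_iso} to the morphism $\op{(\quot{G}{-})}:\op{\FinSet_G}\to\op{\FinSet}$, which is iso-full by proposition \sref{proposition:iso-full_free_quot}. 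This forces the naturality square from proposition \sref{proposition:stickypathcat_laxsquare} to commute strictly, i.e.
\[ \stickypathcat_{\op{\FinSet_G}}\circ \CAT^{(2)}_\cartesian(\op{(\quot{G}{-})},\Top) = \HomCat{\op{(\quot{G}{-})}}{\Cat(\Top)}\circ \stickypathcat_{\op{\FinSet}}. \]
Plugging this in and using $\stcat = \stickypathcat_{\op{\FinSet}}\circ\overline{\Map}$ from construction \sref{definition:stickypathcat_Map} rewrites the target exactly as the desired composition.

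The proof is essentially bookkeeping once one identifies the right lemma, so the main obstacle is simply checking that the iso-fullness hypothesis of proposition \sref{proposition:stickypathcat_laxsquare_iso} applies to $\op{(\quot{G}{-})}$ and not to some other functor in the diagram. The iso-fullness of $\op{(\quot{G}{-})}$ is exactly the content of proposition \sref{proposition:iso-full_free_quot} (for $G$ a group), so no additional work is required.
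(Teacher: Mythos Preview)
Your proposal is correct and follows essentially the same route as the paper: identify the source of $\stickypathcat\theta$ as $\stcat_G$ directly from the definition, then for the target invoke the iso-fullness of $\op{(\quot{G}{-})}$ (proposition \sref{proposition:iso-full_free_quot}) to apply proposition \sref{proposition:stickypathcat_laxsquare_iso} and commute $\stickypathcat$ past the pullback along $\op{(\quot{G}{-})}$. One small remark: the iso-fullness of $\op{(\quot{G}{-})}$ in proposition \sref{proposition:iso-full_free_quot} does not require $G$ to be a group (that hypothesis is only needed for $\op{\free{G}{-}}$), so your parenthetical ``for $G$ a group'' is unnecessary, though harmless in this section's standing assumption.
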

\begin{proof}
Since the source of $\overline{\theta}$ is $\overline{\Map}_G$, the source of $\stickypathcat\theta=\stickypathcat_{\op{\FinSet_G}}\circ\overline{\theta}$ is
\[ \stickypathcat_{\op{\FinSet_G}}\circ\overline{\Map}_G=\stcat_G \]

We know from proposition \sref{proposition:iso-full_free_quot} that the morphism in $\CAT^{(2)}_\cartesian$
\[ \op{(\quot{G}{-})}:\op{\FinSet_G}\To\op{\FinSet} \]
is iso-full. Consequently, by virtue of proposition \sref{proposition:stickypathcat_laxsquare_iso}, the following diagram commutes
\begin{diagram}
\CAT^{(2)}_\cartesian(\op{\FinSet},\Top)&\rTo{\ \CAT^{(2)}_\cartesian(\op{(\quot{G}{-})},\Top)}&\CAT^{(2)\ }_\cartesian(\op{\FinSet_G},\Top)\\
\dTo_{\stickypathcat_{\op{\FinSet}}}&&\dTo_{\stickypathcat_{\op{\FinSet_G}}}\\
\HomCat{\op{\FinSet}}{\Cat(\Top)}&\rTo{\HomCat{\op{(\quot{G}{-})}}{\Cat(\Top)}}&\HomCat{\op{\FinSet_G}}{\Cat(\Top)}\\
\end{diagram}
This diagram and the knowledge that the target of $\overline{\theta}$ is
\[ \CAT^{(2)}_\cartesian\big(\op{(\quot{G}{-})},\Top\big)\circ\overline{\Map}\circ(\quot{G}{-}) \]
imply that the target of $\stickypathcat\theta=\stickypathcat_{\FinSet_G}\circ\overline{\theta}$ is
\begin{align*}
G\dash\Top&\xTo{\quot{G}{-}}\Top\\
&\xTo{\overline{\Map}}\HomCat{\op{\FinSet}}{\Top}\\
&\xTo{\stickypathcat_{\FinSet}}\HomCat{\op{\FinSet}}{\Cat(\Top)}\\
&\xTo{\HomCat{\op{(\quot{G}{-})}}{\Top}}\HomCat{\op{\FinSet_G}}{\Cat(\Top)}
\end{align*}
\end{proof}

\begin{construction}
In view of the previous proposition, the natural transformation $\stickypathcat\theta$ induces, for each $X$ in $G\dash\Top$, a functor
\[ \Groth\big(\quot{G}{-},\disccatsub{\stickypathcat\theta}{X}\big):\M_G^{\textbig}(X)\To\M^\textbig(\quot{G}{X}) \]
by construction \sref{construction:Groth(f,alpha)_variation}. These functors are the components of a natural transformation
\[ \rho:\M_G^{\textbig}\To\M^\textbig\circ(\quot{G}{-}) \]
between functors $G\dash\Top_{inj}\To\Top\dash\CAT$.
\end{construction}

\begin{proposition}
Let $X$ be an object of $G\dash\Top$.\\
The $\Top$-functor 
\[ \rho_X:\M_G^\textbig(X)\To\M^\textbig(\quot{G}{X}) \]
restricts to a $\Top$-functor
\[ \rho_X:\M_G(X)\To\M(\quot{G}{X}) \]
(to which we give the same name).
\end{proposition}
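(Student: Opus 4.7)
The plan is to exploit the fact that $\M_G(X)\hookrightarrow\M_G^\textbig(X)$ and $\M(\quot{G}{X})\hookrightarrow\M^\textbig(\quot{G}{X})$ are both full $\Top$-subcategories, so that restricting $\rho_X$ to a functor between them reduces to checking that $\rho_X$ sends objects of $\M_G(X)$ to objects of $\M(\quot{G}{X})$; full faithfulness on morphism spaces of the restriction is then automatic.

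First I would unpack what $\rho_X$ does on objects. By definition $\rho_X=\Groth\big(\quot{G}{-},\disccatsub{\stickypathcat\theta}{X}\big)$, and by the description of objects provided immediately after \sref{definition:M^big_G}, an object of $\M_G^\textbig(X)$ is a pair $(z,f)$ with $z\in\FinSet_G$ and $f\in G\dash\Top(z,X)$. Using construction \sref{construction:Groth(f,alpha)_variation} together with the condition $\ob\circ\sigma=\id$ from proposition \sref{proposition:sticky_path-cat}, the functor $\rho_X$ acts on objects by sending $(z,f)$ to $(\quot{G}{z},\quot{G}{f})$, where $\quot{G}{f}:\quot{G}{z}\to\quot{G}{X}$ is the map induced on quotients (as built in the construction preceding $\widehat\theta$).

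Next I would verify the two conditions defining objects of $\M(\quot{G}{X})$. Finiteness of $\quot{G}{z}$ is immediate because $z$ is a free finitely generated $G$-set, so $\quot{G}{z}$ is a finite set. The main content is injectivity of $\quot{G}{f}$: write $z=\free{G}{S}$ for a finite set $S$, so that $\quot{G}{z}\cong S$. Suppose $s,s'\in S$ with $\quot{G}{f}(s)=\quot{G}{f}(s')$, i.e., $f(s')=g\cdot f(s)$ for some $g\in G$. By $G$-equivariance, $g\cdot f(s)=f(g\cdot s)$, and by injectivity of $f$ (here we use that $f$ is an object of $\M_G(X)$), $s'=g\cdot s$ in $z=\free{G}{S}$. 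Since $G$ acts freely on $\free{G}{S}$ with orbit representatives exactly $S$, the equality $s'=g\cdot s$ with $s,s'\in S$ forces $g=1$ and $s=s'$; hence $\quot{G}{f}$ is injective, and $(\quot{G}{z},\quot{G}{f})$ is an object of $\M(\quot{G}{X})$.

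The main (minor) obstacle I anticipate is bookkeeping: making the identification of $\rho_X$ on objects with $(z,f)\mapsto(\quot{G}{z},\quot{G}{f})$ precise through the Grothendieck construction and the natural transformation $\stickypathcat\theta$. Once that identification is in hand, the freeness of the $G$-action on objects of $\FinSet_G$ does all the real work, and fullness of both subcategory inclusions takes care of the morphism spaces with no further effort.
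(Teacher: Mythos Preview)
The paper states this proposition without proof, so there is no argument to compare against. Your plan is correct: since both inclusions are full $\Top$-subcategories, it suffices to check that $\rho_X$ carries injective $G$-maps to injective maps on quotients, and your identification of $\rho_X$ on objects with $(z,f)\mapsto(\quot{G}{z},\quot{G}{f})$ is the right one.

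One small simplification: you do not need the freeness of the $G$-action on $z$, nor the choice of orbit representatives $S\subset z$. For any $G$-equivariant injection $f:z\to X$, the induced map $\quot{G}{f}$ is automatically injective: if $[a]$ and $[b]$ in $\quot{G}{z}$ have the same image, then $f(b)=g\cdot f(a)=f(g\cdot a)$ for some $g\in G$, whence $b=g\cdot a$ by injectivity of $f$, so $[a]=[b]$. Your extra step deducing $g=1$ from freeness of $\free{G}{S}$ is valid but unnecessary.
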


\begin{proposition}\label{proposition:rho_ess_surj}
Let $X$ be an object of $G\dash\Top$.\\
The $\Top$-functor
\[ \rho_X:\M_G^\textbig(X)\To\M^\textbig(\quot{G}{X}) \]
is essentially surjective. It restricts to an essentially surjective $\Top$-functor
\[ \rho_X:\M_G(X)\To\M(\quot{G}{X}) \]
if the action of $G$ on $uX$ is free.
\end{proposition}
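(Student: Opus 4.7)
The plan is to construct, for each object $(z,f)$ of $\M^\textbig(\quot{G}{X})$ --- a finite set $z$ equipped with a map $f:z\to\quot{G}{X}$ --- an object of $\M^\textbig_G(X)$ whose image under $\rho_X$ is isomorphic to it, via the adjunction $\free{G}{-}\dashv u$ of proposition \sref{proposition:free_G-object}. Since $z$ is finite discrete and the quotient map $uX\to\quot{G}{X}$ is surjective, choose a set-theoretic lift $\tilde f:z\to uX$ of $f$; it is automatically continuous. The adjunction then produces a unique $G$-equivariant extension $g:\free{G}{z}\to X$, and because $\free{G}{z}$ lies in $\FinSet_G$ by definition \sref{definition:free_G-sets}, the pair $(\free{G}{z},g)$ is an object of $\M^\textbig_G(X)$.

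Next I identify $\rho_X(\free{G}{z},g)=(\quot{G}{\free{G}{z}},\bar g)$ with $(z,f)$ up to canonical isomorphism in $\M^\textbig(\quot{G}{X})$. For $G$ a group, there is a canonical bijection $\psi:z\to\quot{G}{\free{G}{z}}$ sending $x$ to the class of $(1,x)\in G\times z\cong\free{G}{z}$, and a direct calculation gives $\bar g\circ\psi(x)=[\tilde f(x)]=f(x)$, so $\bar g\circ\psi=f$. Unpacking the Grothendieck construction of definition \sref{definition:sticky_sets}, a morphism $(\quot{G}{\free{G}{z}},\bar g)\to(z,f)$ amounts to a pair $(\phi,\alpha)$ with $\phi$ a surjection $z\to\quot{G}{\free{G}{z}}$ in $\FinSet$ and $\alpha$ a sticky homotopy from $\bar g\circ\phi$ to $f$; the pair $(\psi,\id_f)$, with $\id_f$ the length-zero sticky homotopy at $f$, then gives an isomorphism, since $\psi$ is a bijection (hence invertible in $\op{\text{epi}(\FinSet)}$) and length-zero sticky homotopies are identities in the fiber.

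For the restriction to $\M_G(X)\to\M(\quot{G}{X})$, assume $f$ is injective and $G$ acts freely on $uX$. Distinct $x,x'\in z$ have $f(x)\ne f(x')$, so the lifts $\tilde f(x)$ lie in distinct $G$-orbits of $uX$. Writing $\free{G}{z}\cong G\times z$, the map $g(h,x)=h\cdot\tilde f(x)$ is injective, since any equality $h_1\cdot\tilde f(x_1)=h_2\cdot\tilde f(x_2)$ forces $x_1=x_2$ (same orbit) and then $h_1=h_2$ (free action). Hence $(\free{G}{z},g)$ belongs to $\M_G(X)$ (definition \sref{definition:M_G}) and its image lies in $\M(\quot{G}{X})$, isomorphic to $(z,f)$ by the previous paragraph. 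The main point of care I foresee is the bookkeeping around the Grothendieck construction --- verifying that $(\psi,\id_f)$ really does define an isomorphism in $\M^\textbig$ --- after which the rest is a straightforward use of the adjunction and of the freeness hypothesis.
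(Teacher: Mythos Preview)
Your argument is correct and mirrors the paper's proof: both lift $f:z\to\quot{G}{X}$ through a free $G$-set (you take $\free{G}{z}$ explicitly via the adjunction; the paper picks any $y$ with $\quot{G}{y}\simeq z$) and then use freeness of the $G$-action to see the lift is injective when $f$ is. The only slip is in the Grothendieck bookkeeping you already flagged: a morphism $(a,f_a)\to(b,f_b)$ in $\M^\textbig(\quot{G}{X})$ carries a map $a\to b$ in $\FinSet$ (not required to be surjective), so your pair $(\psi,\id_f)$ most naturally presents an isomorphism $(z,f)\to(\quot{G}{\free{G}{z}},\bar g)$ rather than the reverse --- but since $\psi$ is a bijection this is immaterial.
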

\begin{proof}
According to proposition \sref{proposition:description_Groth_discrete}, the set of objects of $\M_G^\textbig(X)$ is
\[ \ob\big(\M_G^\textbig(X)\big)=\ \coprod_{\mathclap{x\in\FinSet_G}}\ G\dash\Top(x,X) \]
and the set of objects of $\M^\textbig(\quot{G}{X})$ is
\[ \ob\big(\M^\textbig(\quot{G}{X})\big)=\ \coprod_{\mathclap{x\in\FinSet}}\ \Top\big(x,\quot{G}{X}\big) \]
In addition, the diagram
\begin{diagram}[midshaft]
G\dash\Top(y,X)&\rInto{\inclusion_y}&\coprod_{\mathclap{x\in\FinSet_G}}\ G\dash\Top(x,X)\\
\dTo_{\quot{G}{-}}&&\dTo[uppershortfall=-.6em,lowershortfall=.15em]_{\rho_X}\\
\Top\big(\quot{G}{y},\quot{G}{X}\big)&\rInto{\ \inclusion_{\quot{G}{y}}\ }&\coprod_{\mathclap{x\in\FinSet}}\ \Top(x,X)
\end{diagram}
commutes for every $y$ in $\FinSet_G$.

Given an object of $\M^\textbig(\quot{G}{X})$ \[ f:x\To\quot{G}{X} \] choose a free $G$-set $y$ such that $x\simeq\quot{G}{y}$ in $\FinSet$. Then $f\simeq f'$ in $\M^\textbig(\quot{G}{X})$, where \[ f':\quot{G}{y}\To\quot{G}{X} \]
Since $y$ is free, there exists a $G$-equivariant map \[ \overline{f}:y\To X \] such that the map induced by $\overline{f}$ on the quotients is $f':\quot{G}{y}\to\quot{G}{X}$. Therefore (by the above commutative square) \[ \rho_X(\overline{f})=f'\simeq f \]
We conclude that \[ \rho_X:\M_G^\textbig(X)\To\M^\textbig(\quot{G}{X}) \] is essentially surjective.

Now assume that the action of $G$ on $uX$ is free. If $f:x\to\quot{G}{X}$ is injective, then the $G$-map $\overline{f}:y\to X$ chosen above is also injective. We thus conclude that the functor
\[ \rho_X:\M_G(X)\To\M(\quot{G}{X}) \]
is essentially surjective.
\end{proof}

\section{Sticky configurations and covering spaces}\label{section:sticky_conf_covering_spaces}

Throughout this section, we fix a group $G$ in $\Set$, and an object $X$ of $G\dash\Top$.

\begin{notation}\label{notation:H(Y;a,b)}
Let $Y$ be a topological space, and $a,b\in Y$.\\
We let $H(Y;a,b)$ denote the subspace of $H(Y)$ given by
\[ H(Y;a,b)\defeq\set{x\in H(y)\suchthat s(x)=a\,,\,t(x)=b} \]
\end{notation}

\begin{construction}
Let $x$, $y$ be objects of $\FinSet_G$, and consider $G$-equivariant maps
\begin{align*}
f&:x\To X\\
g&:y\To X
\end{align*}
Let $\quot{G}{f}=\rho_X f$ and $\quot{G}{g}=\rho_X g$ be the maps
\begin{align*}
\quot{G}{f}&:\quot{G}{x}\To\quot{G}{X}\\
\quot{G}{g}&:\quot{G}{y}\To\quot{G}{X}
\end{align*}
induced by $f$ and $g$ on the quotients.
We know from proposition \sref{proposition:description_Groth_discrete} and the definition of $\stickypathcat_\bullet$ that
\begin{align*}
\M^\textbig_G(X)(f,g)&=\quad\coprod_{\mathclap{h\in\FinSet_G(x,y)}}\quad\disccat{\Big(\stickypathcat_{\op{\FinSet_G}}\big(\overline{\Map}_G(X)\big)(x)\Big)}(f,g\circ h)\\
&\subset\quad\coprod_{\mathclap{h\in\FinSet_G(x,y)}}\quad H\big(\Map^G(x,X);f,g\circ h\big)
\end{align*}
and
\begin{align*}
\M^\textbig(\quot{G}{X})(\quot{G}{f},\quot{G}{g})&=\quad\coprod_{\mathclap{h\in\FinSet(\quot{G}{x},\quot{G}{y})}}\quad\disccat{\Big(\stickypathcat_{\op{\FinSet}}\big(\overline{\Map}(\quot{G}{X})\big)(\quot{G}{x})\Big)}\big(\quot{G}{f},(\quot{G}{g}){\circ}h\big)\\
&\subset\quad\coprod_{\mathclap{h\in\Set(\quot{G}{x},\quot{G}{y})}}\ \ H\big(\Map(\quot{G}{x},\quot{G}{X});\quot{G}{f},(\quot{G}{g}){\circ}h\big)
\end{align*}
We thus get a commutative square
\begin{equation}\label{diagram:aux_comm_diag_M_H}
\begin{diagram}[midshaft]
\M^\textbig_G(X)(f,g)&\rInto{\ \inclusion\ }&\ \;\coprod_{\mathclap{h\in\FinSet_G(x,y)}}\ \,H\big(\Map^G(x,X);f,g\circ h\big)\\
\dTo_{\rho_X}&&\dTo[lowershortfall=.2em,uppershortfall=-.6em]_{q}\\
\M^\textbig(\quot{G}{X})(\quot{G}{f},\quot{G}{g})&\rInto{\ \ \,\inclusion\ \ }&\ \;\coprod_{\mathclap{h\in\Set(\quot{G}{x},\quot{G}{y})}}\ \,H\big(\Map(\quot{G}{x},\quot{G}{X});\quot{G}{f},(\quot{G}{g}){\circ}h\big)
\end{diagram}
\end{equation}
where $q$ makes the square
{\footnotesize\begin{diagram}[midshaft,balance]
H\big(\Map^G(x,X);f,g\circ j\big)&\rInto{\ \inclusion_j\ }&\ \;\coprod\limits_{\mathclap{h\in\FinSet_G(x,y)}}\ \,H\big(\Map^G(x,X);f,g\circ h\big)\\
\dTo_{H(\proj)}&&\dTo[lowershortfall=.2em,uppershortfall=-.25em]_{q}\\
H\big(\Map(\quot{G}{x},\quot{G}{X});\quot{G}{f},(\quot{G}{g}){\circ}(\quot{G}{j})\big)&\rInto{\ \ \inclusion_{\quot{G}{j}}\ }&\ \;\coprod\limits_{\mathclap{h\in\Set(\quot{G}{x},\quot{G}{y})}}\ \,H\big(\Map(\quot{G}{x},\quot{G}{X});\quot{G}{f},(\quot{G}{g}){\circ}h\big)
\end{diagram}
}commute for all $j\in\FinSet_G(x,y)$.
\end{construction}

\begin{lemma}\label{lemma:aux_pullback_square_q_rho}
Assume $X$ is a principal left $G$-space.\\
Let $x$, $y$ be objects of $\FinSet_G$, $f\in G\dash\Top(x,X)$, and $g\in G\dash\Top(y,X)$.\\
The square diagram \seqref{diagram:aux_comm_diag_M_H} is a pullback square in $\Top$.
\end{lemma}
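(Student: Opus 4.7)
The plan is to reduce the pullback claim to a pointwise equivalence of stickiness conditions and then to establish that equivalence using the covering-space structure of $X\to\quot{G}{X}$ implied by the principal $G$-space hypothesis.

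First I would recast the pullback as a subspace condition. Let $A$ denote the top-right corner of diagram \seqref{diagram:aux_comm_diag_M_H}. Since the bottom horizontal map is a subspace inclusion, the pullback of $q$ against it is homeomorphic (via projection to $A$) to the subspace $P\defeq q^{-1}\big(\M^\textbig(\quot{G}{X})(\quot{G}{f},\quot{G}{g})\big)\subset A$. Similarly $\M^\textbig_G(X)(f,g)$ is a subspace of $A$ via the top inclusion, and commutativity of \seqref{diagram:aux_comm_diag_M_H} realizes the canonical map $\M^\textbig_G(X)(f,g)\to P$ as an inclusion of subspaces of $A$. The lemma therefore reduces to the set-theoretic identity $\M^\textbig_G(X)(f,g)=P$ inside $A$: concretely, a point $(h,\alpha)\in A$ lies in $\M^\textbig_G(X)(f,g)$ iff $\alpha$ is $\op{\FinSet_G}$-sticky and lies in $P$ iff $H(\proj)(\alpha)$ is $\op{\FinSet}$-sticky. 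Commutativity of \seqref{diagram:aux_comm_diag_M_H} already supplies the forward implication, so the substance of the lemma is the converse.

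For the converse, fix an epi $e:x\to z$ in $\FinSet_G$, let $\phi=\quot{G}{e}$, and write $S_e,S_\phi\subseteq\clop{0}{+\infty}$ for the sets of $t$ at which $\alpha(t)$ factors through $e$ and $H(\proj)(\alpha)(t)$ factors through $\phi$, respectively. Then $S_e\subseteq S_\phi$, and by hypothesis $S_\phi$ is empty or an interval containing $\tau$; I may assume the latter and aim for $S_e\in\{\emptyset,S_\phi\}$. For each pair $a,b\in x$ with $e(a)=e(b)$ and each $t\in S_\phi$, freeness of the $G$-action on $X$ yields a unique $g_t(a,b)\in G$ with $\alpha(t)(a)=g_t(a,b)\cdot\alpha(t)(b)$. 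The principal $G$-space hypothesis says that the shear map $G\times X\to X\times_{\quot{G}{X}}X$ is a homeomorphism, so the translation function $X\times_{\quot{G}{X}}X\to G$ is continuous; since $G$ is discrete and $S_\phi$ is connected, $t\mapsto g_t(a,b)$ is constant on $S_\phi$. Presenting $e$ through a choice of orbit representatives for $x$ and $z$, the condition ``$\alpha(t)$ factors through $e$'' amounts to a collection of equalities $g_t(a,b)=g_e(a,b)$ for fixed $G$-elements $g_e(a,b)$ determined by $e$ alone; the constancy of $g_t(a,b)$ on $S_\phi$ then forces $S_e=S_\phi$ or $S_e=\emptyset$, as desired.

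The main obstacle is precisely this constancy argument: the subspace manipulations and the reduction to a single-path ``sticky iff'' statement are essentially formal, but the equivalence of the $\op{\FinSet_G}$- and $\op{\FinSet}$-sticky conditions genuinely requires both freeness of the $G$-action (to define $g_t$) and its principality (to make $g_t$ continuous, hence locally constant, in $t$).
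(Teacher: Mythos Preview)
Your proposal is correct and follows essentially the same route as the paper: reduce the pullback to a set-theoretic equality via the subspace inclusions, then for each epimorphism use the principal $G$-space structure to obtain a continuous translation function into the discrete group $G$, whose constancy on the connected interval $S_\phi$ forces the equivariant factorization. The paper organizes the translation argument via a chosen section $\sigma:z\to x$ of the epimorphism rather than through pairs $(a,b)$ with $e(a)=e(b)$, but this is a cosmetic difference; note incidentally that in your formulation the constants $g_e(a,b)$ are all simply the unit of $G$.
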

\begin{proof}[Sketch of proof]
Since the horizontal maps in diagram \seqref{diagram:aux_comm_diag_M_H} are inclusions of subspaces, it is enough to show that \seqref{diagram:aux_comm_diag_M_H} gives a pullback square in $\Set$. Since the top horizontal map in diagram \seqref{diagram:aux_comm_diag_M_H} is injective, our task is reduced to proving that the induced map from $\M^\textbig_G(X)(f,g)$ to the pullback of
\begin{equation}\label{diagram:aux_pullback_diag_M_H}
\begin{diagram}[midshaft]
&&\ \;\coprod_{\mathclap{h\in\FinSet_G(x,y)}}\ \,H\big(\Map^G(x,X);f,g\circ h\big)\\
&&\dTo[lowershortfall=.2em,uppershortfall=-.6em]_{q}\\
\M^\textbig(\quot{G}{X})(\quot{G}{f},\quot{G}{g})&\rInto{\ \ \,\inclusion\ \ }&\ \;\coprod_{\mathclap{h\in\Set(\quot{G}{x},\quot{G}{y})}}\ \,H\big(\Map(\quot{G}{x},\quot{G}{X});\quot{G}{f},(\quot{G}{g}){\circ}h\big)
\end{diagram}
\end{equation}
is surjective.

Let then
\begin{align*}
h&\in\FinSet_G(x,y)\\
(\overline{\gamma},\tau)&\in H\big(\Map^G(x,X);f,g\circ h\big)
\end{align*}
be such that the Moore path induced from $(\overline{\gamma},\tau)$ on the quotients
\[ (\gamma,\tau)\in H\big(\Map(\quot{G}{x},\quot{G}{X});\quot{G}{f},\quot{G}{(g\circ h)}\big) \]
is a sticky homotopy for $\overline{\Map}(\quot{G}{X})$ at $\quot{G}{x}$ (see definition \sref{definition:sticky}). This data determines an (arbitrary) element $\mathfrak{z}$ of the pullback of diagram \seqref{diagram:aux_pullback_diag_M_H}. We will prove that $(\overline{\gamma},\tau)$ is a sticky homotopy for $\overline{\Map}_G(X)$ at $x$. That implies \[ (\overline{\gamma},\tau)\in\M^\textbig_G(X)(f,g) \] (via the inclusion that is the top map in \seqref{diagram:aux_comm_diag_M_H}), and this element must map to $\mathfrak{z}$ in the pullback of \seqref{diagram:aux_pullback_diag_M_H}. This proves the required surjectivity.

To check that $(\overline{\gamma},\tau)$ is a sticky homotopy for $\overline{\Map}_G(X)$ at $x$, consider any epimorphism $v:x\to z$ in $\FinSet_G$, and the pullback square
\begin{diagram}[midshaft,height=2.2em]
P&\rTo{\overline{h}}&\clop{0}{+\infty}\\
\dTo&&\dTo_{\overline{\gamma}}\\
\Map^G(z,X)&\rTo{\ \Map^G(v,X)\ }&\Map^G(x,X)
\end{diagram}
We must show the image of $\overline{h}$ is an interval which is empty or contains $\tau$. Let us analyze the commutative cube
\begin{diagram}[midshaft,hug,h=2.3em]
P&&\rTo{\overline{h}}&&\clop{0}{+\infty}\\
\dTo[lowershortfall=1em]&\rdTo&&&\vLine&\rdEqual\\
&&Q&\rTo{\hspace{-2.5em}h}&\HonV&&\clop{0}{+\infty}\\
&&&&\dTo_{\overline{\gamma}}\\
\mathclap{\Map^G(z,X)}\hLine[leftshortfall=3em]&&\VonH&\rTo{\Map^G(v,X)}&\Map^G(x,X)&&\dTo{\gamma}\\
&\rdTo[uppershortfall=1.1em]{\mathrlap{\proj}\ }&\dTo&&&\rdTo{\mathrlap{\proj}\ }\\
&&\Map(\quot{G}{z},\quot{G}{X})&&\rTo{\Map(\quot{G}{v},\quot{G}{X})}&&\Map(\quot{G}{x},\quot{G}{X})
\end{diagram}
whose front face is defined to be a pullback square. Since $(\gamma,\tau)$ is a sticky homotopy for $\overline{\Map}(\quot{G}{X})$ at $\quot{G}{x}$, the image of $h$ is an interval $J$ which is empty or contains $\tau$. In fact, $h:Q\to\clop{0}{+\infty}$ is a homeomorphism onto $J$. We will finish by proving that if $P$ is not empty then $\im\overline{h}=J$.

Considering the adjoint maps
\begin{align*}
\overline{\gamma}&:x\times\clop{0}{+\infty}\To uX\\
\gamma&:\quot{G}{x}\times\clop{0}{+\infty}\To\quot{G}{X}
\end{align*}
we are required to show that $\restrict{\overline{\gamma}}{x\times J}$ factors through $z\times J$. Instead, we know that $\restrict{\gamma}{\quot{G}{x}\times J}$ factors through $\quot{G}{z}\times J$. In particular, there exists a commutative diagram
\begin{diagram}[midshaft,h=2.1em]
\quot{G}{x}\times J&\rInto{\qquad}&\quot{G}{x}\times\clop{0}{+\infty}&\lTo{\ \proj\times\clop{0}{+\infty}\ }&x\times\clop{0}{+\infty}&\\
\dTo{\quot{G}{v}\times J}&&\dTo{\gamma}&&\dTo{\overline{\gamma}}\\
\quot{G}{z}\times J&\rTo{\gamma'}&\quot{G}{X}&\lTo{\proj}&uX
\end{diagram}
Let $\sigma:z\to x$ (in $\FinSet_G$) be a section of $v$
\[ v\circ \sigma=\id_z \]
which exists because $v$ is an epimorphism of free $G$-sets. The commutative diagram above implies that
\[ \proj\circ\restrict{\overline{\gamma}}{x\times J}=\proj\circ\restrict{\overline{\gamma}}{x\times J}\circ\big((\sigma\circ v)\times J\big) \]
where $\proj:uX\to\quot{G}{X}$ is the projection. Since $X$ is a principal left $G$-space, we conclude there is a continuous map
\[ f:x\times J\To G \]
which verifies a commutative diagram
\begin{diagram}[midshaft,h=2.1em,balance]
x\times J&\rTo{\ (f,\overline{\gamma})\ }&G\times uX\\
\dTo{(\sigma\circ v)\times J}&&\dTo{\mu}\\
x\times J&\rTo{\overline{\gamma}}&uX
\end{diagram}
where $\mu:G\times uX\to uX$ is the action of $G$ on $uX$.
In equation form
\[ \mu(f,\restrict{\overline{\gamma}}{x\times J})=\restrict{\overline{\gamma}}{x\times J}\circ\big((\sigma\circ v)\times J\big) \]

Finally, if $P$ is not empty, we know that for some $a\in J$, $\overline{\gamma}(a,-)$ factors through $v:x\to z$. Consequently
\[ \overline{\gamma}(a,-)=\overline{\gamma}\big(a,\sigma\circ v(-)\big) \]
which implies that
\[ f(a,-)=e \]
(where $e$ is the unit of $G$) in view of $G$ acting freely on $X$. Since $f$ is continuous and $J$ is connected, we conclude that $f=e$. Therefore $\restrict{\overline{\gamma}}{x\times J}$ factors through $z\times J$, and so $J\subset\im\overline{h}$.

In summary, if $P$ is not empty then $\im\overline{h}=J$.
\end{proof}

\begin{lemma}\label{lemma:proj_Map^G->Map_quotG_cover}
Assume the projection
\[ \proj:uX\To\quot{G}{X} \]
is a covering space.\\
For any object of $\FinSet_G$, $x$, the canonical map
\[ \proj:\Map^G(x,X)\To\Map(\quot{G}{x},\quot{G}{X}) \]
is a covering space.
\end{lemma}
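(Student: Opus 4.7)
The plan is to reduce the statement to the familiar fact that a finite product of covering spaces is a covering space. By definition \sref{definition:free_G-sets}, any object $x$ of $\FinSet_G$ is (isomorphic to) a free $G$-set $\free{G}{S}$ for some finite set $S$. Since both $\Map^G(-,X)$ and $\Map(\quot{G}{-}, \quot{G}{X})\circ\op{(\quot{G}{-})}$ are functors and the map $\proj:\Map^G(x,X)\to\Map(\quot{G}{x},\quot{G}{X})$ is natural in $x$, we may assume without loss of generality that $x=\free{G}{S}$.

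The first step is to set up the identifications. The free-forgetful adjunction of proposition \sref{proposition:free_G-object} gives a natural homeomorphism
\[ \Map^G(\free{G}{S},X)\;\cong\;\Map(S,uX)\;\cong\;(uX)^S \]
where the second isomorphism uses that $S$ is discrete and finite. Simultaneously, since $\quot{G}{\free{G}{S}}\cong S$ (the unit / counit data of the adjunctions in section \sref{section:G-objects}), we obtain
\[ \Map\bigl(\quot{G}{\free{G}{S}},\quot{G}{X}\bigr)\;\cong\;\Map(S,\quot{G}{X})\;\cong\;\bigl(\quot{G}{X}\bigr)^S \]
I would then check naturality of the quotient construction to verify that, under these identifications, the map $\proj:\Map^G(\free{G}{S},X)\to\Map(\quot{G}{\free{G}{S}},\quot{G}{X})$ corresponds to the $S$-fold product
\[ \proj^S:(uX)^S\To\bigl(\quot{G}{X}\bigr)^S \]
of the original covering map $\proj:uX\to\quot{G}{X}$. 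This is essentially formal: a $G$-equivariant map $\free{G}{S}\to X$ is determined by its values on the generators $S$, and the induced map on quotients is determined by the images of those generators in $\quot{G}{X}$.

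The final step is to invoke the fact that a finite product of covering spaces is again a covering space. Explicitly, if $\{U_\alpha\}$ is an evenly covered open cover of $\quot{G}{X}$ witnessing that $\proj$ is a covering, then the products $\prod_{s\in S}U_{\alpha_s}$ form an evenly covered open cover of $(\quot{G}{X})^S$ for $\proj^S$, each slice of which is a product of slices of the originals. I expect no serious obstacle here: the only mild care required is confirming that the identifications above are compatible with the projection map $\proj$ as stated in the lemma, but this comes down to chasing the definitions of $u$, $\quot{G}{-}$, and $\free{G}{-}$ in the cartesian setting of sets and spaces.
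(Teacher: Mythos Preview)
Your argument is correct and is essentially the paper's proof phrased through the free--forgetful adjunction rather than through a choice of section: the paper picks a section $\sigma:\quot{G}{x}\to ux$ to obtain the homeomorphism $\Map^G(x,X)\cong\Map(\quot{G}{x},uX)$, which is precisely your adjunction isomorphism $\Map^G(\free{G}{S},X)\cong\Map(S,uX)$ once one notes that a section of $ux\to\quot{G}{x}$ amounts to an isomorphism $x\cong\free{G}{\quot{G}{x}}$. Both proofs then conclude by observing that $\Map(\text{finite set},\proj)$ is a finite product of the covering map $\proj$ and hence a covering space.
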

\begin{proof}
A section $\sigma:\quot{G}{x}\to ux$ of $\proj:ux\to\quot{G}{x}$ induces a map
\[ \Map^G(x,X)\Into\Map(ux,uX)\xTo{\Map(\sigma,uX)}\Map(\quot{G}{x},uX) \]
which is a homeomorphism because $x$ is a free $G$-set. This homeomorphism sits in a commutative diagram
\begin{diagram}
\Map^G(x,X)&\rTo{\ \cong\ }&\Map(\quot{G}{x},uX)\\
&\rdTo_{\mathllap{\proj}}&\dTo_{\Map(\quot{G}{x},\proj)}\\
&&\Map(\quot{G}{x},\quot{G}{X})
\end{diagram}
Since $\quot{G}{x}$ is finite, the vertical map on the right is a covering space. Consequently
\[ \proj:\Map^G(x,X)\To\Map(\quot{G}{x},\quot{G}{X}) \]
is a covering space.
\end{proof}

We state the following proposition without proof.

\begin{proposition}\label{proposition:cover_open_H}
If $p:A\to B$ is a covering space and $a,b\in A$, then
\[ H(p):H(A;a,b)\To H(B;pa,pb) \]
is an open map.
\end{proposition}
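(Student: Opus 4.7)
The plan is to exploit the continuous path-lifting property of covering spaces together with the discreteness of the fibers. Fix $(\alpha_0,\tau_0)\in H(A;a,b)$ and write $(\beta_0,\tau_0)=H(p)(\alpha_0,\tau_0)$, so $\beta_0=p\circ\alpha_0$. I want to show that for every open neighborhood $U$ of $(\alpha_0,\tau_0)$ in $H(A;a,b)$, the image $H(p)(U)$ contains an open neighborhood of $(\beta_0,\tau_0)$.

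First I would choose a real number $T>\tau_0$ large enough that $U$ is controlled by constraints on the restrictions of paths to $[0,T]$ together with the condition that $\tau$ lies in some fixed open sub-interval of $[0,T]$; this is a routine but necessary unpacking of the Moore-path topology. The continuous path-lifting property of the covering $p$ then provides a continuous map $L$ sending a path $\beta:[0,T]\to B$ with $\beta(0)=pa$ to its unique continuous lift $L(\beta):[0,T]\to A$ with $L(\beta)(0)=a$.

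Next, for $(\beta,\sigma)$ ranging in a suitable neighborhood of $(\beta_0,\tau_0)$ --- in particular with $\sigma<T$ --- I would produce a candidate lift $(\widetilde\beta,\sigma)\in H(A;a,b)$ whose underlying path coincides with $L(\beta)$ on $[0,T]$ and is constantly $b$ beyond $T$. For this to be well-defined I need $L(\beta)(t)=b$ for every $t\in[\sigma,T]$. Since $p\circ L(\beta)=\beta$ equals $pb$ throughout $[\sigma,T]$, the restriction of $L(\beta)$ to the connected interval $[\sigma,T]$ lands in the discrete fibre $p^{-1}(pb)$, and is therefore constant there; so it suffices to arrange that $L(\beta)(\sigma)=b$. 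Now the assignment $(\beta,\sigma)\mapsto L(\beta)(\sigma)$ is continuous (composition of $L$ with evaluation), attains the value $\alpha_0(\tau_0)=b$ at $(\beta_0,\tau_0)$, and takes values in the discrete fibre $p^{-1}(pb)$; it is therefore locally constantly equal to $b$ on some open neighborhood $W$ of $(\beta_0,\tau_0)$.

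On $W$, the assignment $(\beta,\sigma)\mapsto(\widetilde\beta,\sigma)$ is a continuous section of $H(p)$ sending $(\beta_0,\tau_0)$ to $(\alpha_0,\tau_0)\in U$; pulling $U$ back through this section yields the required open neighborhood $V\subset W$ of $(\beta_0,\tau_0)$ inside $H(p)(U)$. The delicate point is the first step --- carefully unpacking the Moore-path topology to reduce membership $(\alpha,\tau)\in U$ to a condition on a single compact interval $[0,T]$ together with an open window for $\tau$ --- whereas the rest follows from standard covering-space manipulations combined with the discreteness of $p^{-1}(pb)$.
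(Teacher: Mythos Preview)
The paper explicitly \emph{omits} the proof of this proposition (``We state the following proposition without proof''), so there is no argument in the paper to compare your plan against.

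Your plan is sound. The core idea---use unique path lifting for the covering $p$ to build a continuous \emph{local section} of $H(p)$ through $(\alpha_0,\tau_0)$, then pull back $U$ along that section---is exactly the right one, and your handling of the two genuine issues is correct:
\begin{itemize}
\item The ``delicate'' reduction you flag really does go through: a subbasic open for the compact-open topology on $A^{[0,\infty)}$ involves only finitely many compact subsets of $[0,\infty)$, so one may take $T$ larger than all of them and larger than $\tau_0$, and shrink the $\tau$-window to lie in $[0,T)$. After that the membership condition $(\alpha,\tau)\in U$ depends only on $\alpha|_{[0,T]}$ and on $\tau$.
\item Your argument that $L(\beta)$ is constantly $b$ on $[\sigma,T]$ is correct: on that interval $\beta$ is constantly $pb$ (because $(\beta,\sigma)\in H(B;pa,pb)$ forces $\beta$ to be constant beyond $\sigma$), so $L(\beta)$ lands in the discrete fibre $p^{-1}(pb)$ and is therefore constant; the value is pinned to $b$ on a neighbourhood of $(\beta_0,\tau_0)$ by continuity of $(\beta,\sigma)\mapsto L(\beta)(\sigma)$ into that discrete fibre.
\end{itemize}
One small point worth making explicit when you write this up in full: continuity of the section $(\beta,\sigma)\mapsto(\widetilde\beta,\sigma)$ is cleanest if you observe that $\widetilde\beta = L(\beta|_{[0,T]})\circ r_T$, where $r_T(t)=\min(t,T)$; precomposition by $r_T$ is continuous $A^{[0,T]}\to A^{[0,\infty)}$, and continuity of $L$ on $B^{[0,T]}$ (compact domain) is the standard form of continuous path-lifting for covering spaces.
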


The next result follows immediately from lemma \sref{lemma:proj_Map^G->Map_quotG_cover} and proposition \sref{proposition:cover_open_H}.

\begin{corollary}\label{lemma:H_quotG_homeo_image}
Assume the projection
\[ \proj:uX\To\quot{G}{X} \]
is a covering space.\\
Let $x$ be an object of $\FinSet_G$, and $f,g\in G\dash\Top(x,X)$.\\
The canonical map
\[ H(\proj):H\big(\Map^G(x,X);f,g\big)\To H\big(\Map(\quot{G}{x},\quot{G}{X});\quot{G}{f},\quot{G}{g}\big) \]
is an open map.
\end{corollary}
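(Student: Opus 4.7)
The plan is to chain the two immediately preceding results essentially tautologically. First, I would apply lemma \sref{lemma:proj_Map^G->Map_quotG_cover} to the given object $x$ of $\FinSet_G$ --- this is where the hypothesis that $\proj:uX\to\quot{G}{X}$ is a covering space is consumed --- to conclude that
\[ \proj:\Map^G(x,X)\To\Map(\quot{G}{x},\quot{G}{X}) \]
is itself a covering space.

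Next, I would regard $f$ and $g$ as two points of the total space $\Map^G(x,X)$ of this covering, and invoke proposition \sref{proposition:cover_open_H} with $p=\proj$, $A=\Map^G(x,X)$, $B=\Map(\quot{G}{x},\quot{G}{X})$, $a=f$, and $b=g$. The conclusion that falls out is exactly that
\[ H(\proj):H\bigl(\Map^G(x,X);f,g\bigr)\To H\bigl(\Map(\quot{G}{x},\quot{G}{X});\proj f,\proj g\bigr) \]
is open.

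The only remaining bookkeeping is to match endpoints: by the construction of the map $\proj$ on mapping spaces, a $G$-equivariant map $f:x\to X$ is sent to the map $\quot{G}{f}:\quot{G}{x}\to\quot{G}{X}$ that it induces on quotients, so $\proj f=\quot{G}{f}$ and $\proj g=\quot{G}{g}$, matching the notation of the statement. I expect no genuine obstacle here: the real content has already been discharged in the preceding lemma (where freeness of $x\in\FinSet_G$ produced a section $\sigma:\quot{G}{x}\to ux$ and hence a homeomorphism $\Map^G(x,X)\cong\Map(\quot{G}{x},uX)$ over $\Map(\quot{G}{x},\quot{G}{X})$) and in the stated proposition on Moore path spaces of covering maps; the corollary is merely their composition with this identification of endpoints.
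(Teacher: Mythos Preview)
Your proposal is correct and matches the paper's approach exactly: the paper states that this corollary ``follows immediately from lemma \sref{lemma:proj_Map^G->Map_quotG_cover} and proposition \sref{proposition:cover_open_H}'' without giving any further argument. Your write-up is in fact more detailed than the paper's (which has no proof at all), but the logical chain --- covering space on mapping spaces, then openness on Moore path spaces, plus the endpoint identification $\proj f=\quot{G}{f}$ --- is precisely what is intended.
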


\begin{lemma}\label{lemma:aux_q_homeo}
Assume the projection
\[ \proj:uX\To\quot{G}{X} \]
is a covering space.\\
Let $x$, $y$ be objects of $\FinSet_G$, $f\in G\dash\Top(x,X)$, and $g\in G\dash\Top(y,X)$.\\
The map (from diagram \seqref{diagram:aux_comm_diag_M_H})
{\small
\[ q:\!\qquad\coprod\limits_{\mathclap{h\in\FinSet_G(x,y)}}\ \,H\big(\Map^G(x,X);f,g{\circ}h\big)\xTo{\ \quad}\ \ \coprod\limits_{\mathclap{h\in\Set(\quot{G}{x},\quot{G}{y})}}\ \,H\big(\Map(\quot{G}{x},\quot{G}{X});\quot{G}{f},(\quot{G}{g}){\circ}h\big) \]
}is a surjective open map.\\
Additionally, if the action of $G$ on $uX$ is free, the map $q$ is a homeomorphism.
\end{lemma}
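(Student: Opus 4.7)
My plan is to split the statement into three claims: (a) $q$ is open, (b) $q$ is surjective, and (c) when $G$ acts freely on $uX$, $q$ is injective. Combining (a)--(c) gives that $q$ is a continuous open bijection, hence a homeomorphism.

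For (a), I would work summand by summand: on the summand indexed by $j\in\FinSet_G(x,y)$, $q$ acts as $H(\proj)$, which is open by corollary \sref{lemma:H_quotG_homeo_image}. For $U=\coprod_j U_j$ open in the source, the intersection of $q(U)$ with the target summand indexed by $k\in\Set(\quot{G}{x},\quot{G}{y})$ equals $\bigcup_{\quot{G}{j}=k}H(\proj)(U_j)$, open as a union of opens; hence $q(U)$ is open.

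For (b), the key input is that $\proj:\Map^G(x,X)\to\Map(\quot{G}{x},\quot{G}{X})$ is a covering space (lemma \sref{lemma:proj_Map^G->Map_quotG_cover}), so Moore paths lift uniquely once their starting point is fixed. Given $(\gamma,\tau)$ in the target summand indexed by $k$, I would lift $\gamma$ starting at $f$ to a Moore path $(\overline{\gamma},\tau)$ in $\Map^G(x,X)$ with endpoint $\overline{g}':=\overline{\gamma}(\tau)$ satisfying $\proj(\overline{g}')=(\quot{G}{g})\circ k$. The substantive task is to exhibit $j\in\FinSet_G(x,y)$ with $\quot{G}{j}=k$ and $g\circ j=\overline{g}'$, so that $(\overline{\gamma},\tau)$ sits in the $j$-summand of the source and maps to $(\gamma,\tau)$. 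I would pick orbit representatives $\{a_i\}$ of the free $G$-set $x$; for each $i$, choose $b_i\in y$ with $[b_i]=k[a_i]$ (then $g(b_i)$ and $\overline{g}'(a_i)$ lie over the same point of $\quot{G}{X}$, hence in the same $G$-orbit of $uX$); pick $c_i\in G$ with $c_i\cdot g(b_i)=\overline{g}'(a_i)$; and define $j(s\cdot a_i):=sc_i\cdot b_i$, extended $G$-equivariantly. Direct verification gives $\quot{G}{j}=k$ and $g\circ j=\overline{g}'$.

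For (c), suppose $q(u_1)=q(u_2)$ with $u_\ell=(\overline{\gamma}_\ell,\tau_\ell)$ in the source summand indexed by $j_\ell$. Equality forces $\tau_1=\tau_2$, $\quot{G}{j_1}=\quot{G}{j_2}$, and $\proj\circ\overline{\gamma}_1=\proj\circ\overline{\gamma}_2$. Since both $\overline{\gamma}_\ell$ start at $f$ and lift the same Moore path through the covering $\proj$, unique path lifting gives $\overline{\gamma}_1=\overline{\gamma}_2$, whence $g\circ j_1=g\circ j_2$; freeness of the $G$-action on $uX$ then forces $j_1(a)=j_2(a)$ for every $a\in x$, so $u_1=u_2$. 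I expect the main obstacle to be the construction of $j$ in (b): well-definedness and $G$-equivariance depend crucially on $x$ being a free $G$-set, and the identity $g\circ j=\overline{g}'$ must be checked globally from the definition on orbit representatives. The remaining arguments are formal consequences of covering space path lifting and the disjoint union topology.
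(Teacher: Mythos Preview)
Your proposal is correct and follows essentially the same route as the paper's proof: openness via corollary \sref{lemma:H_quotG_homeo_image} applied summand by summand, surjectivity via unique path lifting through the covering $\proj:\Map^G(x,X)\to\Map(\quot{G}{x},\quot{G}{X})$ together with the construction of an appropriate $j\in\FinSet_G(x,y)$ from the freeness of $x$, and injectivity under a free $G$-action via uniqueness of both the lifted path and the lift $j$. The only difference is cosmetic: you spell out the construction of $j$ explicitly via orbit representatives and treat injectivity as a separate step, whereas the paper leaves the existence of $\overline{h}$ as a one-line claim and folds injectivity into the observation that the constructed preimage is unique.
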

\begin{proof}[Sketch of proof]
We conclude from the preceding corollary that $q$ is open. The surjectivity will follow from the existence of lifts of paths across the cover (lemma \sref{lemma:proj_Map^G->Map_quotG_cover})
\[ \proj:\Map^G(x,X)\To\Map(\quot{G}{x},\quot{G}{X}) \]
Given $h\in\Set(\quot{G}{x},\quot{G}{y})$, and
\[ (\gamma,\tau)\in H\big(\Map(\quot{G}{x},\quot{G}{X});\quot{G}{f},(\quot{G}{g}){\circ}h\big) \]
there exists a unique lift
\[ (\overline{\gamma},\tau)\in H\big(\Map^G(x,X)\big) \]
such that
\begin{align*}
\proj\circ\overline{\gamma}&=\gamma\\
\overline{\gamma}(0)&=f
\end{align*}
(since $\proj:\Map^G(x,X)\to\Map(\quot{G}{x},\quot{G}{X})$ is a covering space). Consequently
\[ \proj\circ\overline{\gamma}(\tau)=\gamma(\tau)=t(\gamma)=(\quot{G}{g})\circ h \]
One can now find $\overline{h}\in\FinSet_G(x,y)$ such that
\begin{align*}
\quot{G}{\overline{h}}&=h\\
\overline{\gamma}(\tau)&=g\circ\overline{h}
\end{align*}
because $x$ is a free $G$-set. If the action of $G$ on $uX$ is free, then there is a unique such $\overline{h}$.

We conclude that the image by $q$ of the point
\[ (\overline{\gamma},\tau)\in H\big(\Map^G(x,X);f,g\circ\overline{h}\big)\xhookrightarrow{\ \,\inclusion_{\overline{h}}\ \,}\ \,\coprod\limits_{\mathclap{h\in\FinSet_G(x,y)}}\ \,H\big(\Map^G(x,X);f,g\circ\overline{h}\big) \]
is the (arbitrarily chosen) point
{\footnotesize \[ (\gamma,\tau)\in H\big(\Map(\quot{G}{x},\quot{G}{X});\quot{G}{f},(\quot{G}{g}){\circ}h\big)\xhookrightarrow{\ \inclusion_h\ }\;\ \coprod\limits_{\mathclap{h\in\Set(\quot{G}{x},\quot{G}{y})}}\ \,H\big(\Map(\quot{G}{x},\quot{G}{X});\quot{G}{f},(\quot{G}{g}){\circ}h\big) \]
}Hence, $q$ is surjective.

If the action of $G$ on $uX$ is free, the uniqueness of the lift $(\overline{\gamma},\tau)$ and of $\overline{h}\in\FinSet_G(x,y)$ guarantee that
\[ q^{-1}\big(\set{(\gamma,\tau)}\big)=\set{(\overline{\gamma},\tau)} \]
(where the points $(\gamma,\tau)\in\target q$, and $(\overline{\gamma},\tau)\in\source q$ are as above). In conclusion, $q$ is injective, and therefore a homeomorphism.
\end{proof}

\begin{proposition}\label{proposition:M_G(X)_equiv_M(quotG_X)}
Let $G$ be a group in $\Set$. Let $X$ be a locally trivial principal left $G$-space.\\
The $\Top$-functor
\[ \rho_X:\M_G^\textbig(X)\To\M^\textbig(\quot{G}{X}) \]
is an essentially surjective local isomorphism. Furthermore, it restricts to an essentially surjective local isomorphism
\[ \rho_X:\M_G(X)\To\M(\quot{G}{X}) \]
\end{proposition}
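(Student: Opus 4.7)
The plan is to assemble the preceding lemmas. Essential surjectivity is exactly the content of Proposition \sref{proposition:rho_ess_surj}: since a principal left $G$-space has free $G$-action on $uX$, both $\rho_X:\M_G^\textbig(X)\To\M^\textbig(\quot{G}{X})$ and its restriction to the subcategories of injective maps are essentially surjective. It remains to show that $\rho_X$ is a local isomorphism, i.e.\ that it induces a homeomorphism on every hom-space.

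Fix objects $f\in G\dash\Top(x,X)$ and $g\in G\dash\Top(y,X)$ of $\M_G^\textbig(X)$. The commutative square \seqref{diagram:aux_comm_diag_M_H} places the map $\rho_X:\M_G^\textbig(X)(f,g)\To\M^\textbig(\quot{G}{X})(\rho_X f,\rho_X g)$ between two inclusions of subspaces, with right vertical $q$. By Lemma \sref{lemma:aux_pullback_square_q_rho}, since $X$ is principal, this square is a pullback in $\Top$. By Lemma \sref{lemma:aux_q_homeo}, since the projection $uX\To\quot{G}{X}$ is a covering space (as $X$ is locally trivial) and $G$ acts freely on $uX$, the right vertical $q$ is a homeomorphism. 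Because homeomorphisms are stable under pullback, the left vertical $\rho_X$ on hom-spaces is therefore a homeomorphism as well. Since $f$ and $g$ were arbitrary, this establishes that $\rho_X:\M_G^\textbig(X)\To\M^\textbig(\quot{G}{X})$ is a local isomorphism.

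For the restriction $\rho_X:\M_G(X)\To\M(\quot{G}{X})$, essential surjectivity is again the second assertion of Proposition \sref{proposition:rho_ess_surj}, and the local isomorphism property is inherited from the ambient ``big'' categories: by Definitions \sref{definition:sticky_configurations} and \sref{definition:M_G}, $\M(\quot{G}{X})$ and $\M_G(X)$ are full $\Top$-subcategories of $\M^\textbig(\quot{G}{X})$ and $\M_G^\textbig(X)$, so their hom-spaces agree with those of the respective ambient categories, and are already known to be carried homeomorphically to each other by the previous paragraph. The main obstacle is essentially bookkeeping at this stage: the real technical content has been packaged into Lemmas \sref{lemma:aux_pullback_square_q_rho} and \sref{lemma:aux_q_homeo}, and the proposition just composes them with Proposition \sref{proposition:rho_ess_surj}.
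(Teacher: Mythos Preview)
Your proof is correct and follows essentially the same approach as the paper: invoke Proposition \sref{proposition:rho_ess_surj} for essential surjectivity, then use that diagram \seqref{diagram:aux_comm_diag_M_H} is a pullback (Lemma \sref{lemma:aux_pullback_square_q_rho}) with right vertical a homeomorphism (Lemma \sref{lemma:aux_q_homeo}) to conclude the left vertical is one too. If anything, your write-up is slightly more explicit than the paper's, spelling out the pullback-of-homeomorphisms step and the full-subcategory argument for the restricted functor.
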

\begin{proof}
The essential surjectiveness follows from proposition \sref{proposition:rho_ess_surj}. The local isomorphism property is a consequence of lemmas \sref{lemma:aux_pullback_square_q_rho} and \sref{lemma:aux_q_homeo}. For any $x$, $y$ in $\FinSet_G$, and any $f\in G\dash\Top(x,X)$, $g\in G\dash\Top(y,X)$ the square
\begin{diagram}[midshaft]
\M^\textbig_G(X)(f,g)&\rInto{\ \inclusion\ }&\ \;\coprod_{\mathclap{h\in\FinSet_G(x,y)}}\ \,H\big(\Map^G(x,X);f,g\circ h\big)\\
\dTo_{\rho_X}&&\dTo[lowershortfall=.2em,uppershortfall=-.6em]_{q}\\
\M^\textbig(\quot{G}{X})(\quot{G}{f},\quot{G}{g})&\rInto{\ \ \,\inclusion\ \ }&\ \;\coprod_{\mathclap{h\in\Set(\quot{G}{x},\quot{G}{y})}}\ \,H\big(\Map(\quot{G}{x},\quot{G}{X});\quot{G}{f},(\quot{G}{g}){\circ}h\big)
\end{diagram}
is cartesian by lemma \sref{lemma:aux_pullback_square_q_rho}. Lemma \sref{lemma:aux_q_homeo} states that the vertical map on the right, $q$, is a homeomorphism.
\end{proof}





\chapter{Sticky configurations in $S^1$}\label{chapter:sticky_conf_S^1}

\section*{Introduction}

This chapter analyzes the simplest case of interest of the construction $\M(X)$ from chapter \ref{chapter:sticky_conf}, namely $\M(S^1)$. We study a few properties of $\M(S^1)$, and give different weakly equivalent models for it. The ultimate goal of this chapter is to show how to recover topological Hochschild homology from $\M(S^1)$.

\section*{Summary}

This is a short chapter dedicated to an analysis of the $\Top$-category of sticky configurations in $S^1$, $\M(S^1)$.

Section \sref{section:homotopical_discreteness} sets up some basic results on $\Top$-categories whose morphism spaces are homotopically discrete. In section \sref{section:ZZsticky_conf_RR}, we state that $\M(S^1)$ is equivalent to the $\Top$-category $\M_\ZZ(\RR)$ of $\ZZ$-equivariant sticky configurations in $\RR$. The results of section \sref{section:homotopical_discreteness} are used to prove that $\M_\ZZ(\RR)$ --- and consequently, $\M(S^1)$ --- is weakly equivalent to a $\Set$-category. One specific such $\Set$-category, $\EE$, is given in section \sref{section:cat_Elmendorf}: it is essentially the category introduced by Elmendorf in \cite{Elmendorf}.

Section \sref{section:M(S^1)_relation_associative_PROP} gives a functor from Elmendorf's category $\EE$ to the associative PROP. Section \sref{section:M(S^1)_relation_opDelta} constructs a homotopy cofinal functor from $\op{\Delta}$ to $\EE$. Finally, section \sref{section:cyclic_bar_const} uses the previous two sections to recover the cyclic bar construction and topological Hochschild homology via the category $\EE$.

\section{Homotopical discreteness}\label{section:homotopical_discreteness}

\begin{definition}[homotopically discrete space]
We say a topological space $X$ is {\em homotopically discrete} if it is homotopy equivalent to a discrete space (i.e.\ a set).
\end{definition}

\begin{proposition}
A topological space $X$ is homotopically discrete if and only if the canonical function
\[ \proj:X\To\pi_0(X) \]
is continuous and a homotopy equivalence.
\end{proposition}

\begin{definition}[locally homotopically discrete $\Top$-category]
Let $C$ be a $\Top$-category.\\
We say $C$ is {\em locally homotopically discrete} if for any objects $x$, $y$ of $C$, the space $C(x,y)$ is homotopically discrete.
\end{definition}

\begin{proposition}
Let $C$ be a locally homotopically discrete $\Top$-category.\\
There exists a canonical $\Top$-functor
\[ \proj:C\To\pi_0(C) \]
which is surjective on objects and a local homotopy equivalence.
\end{proposition}

The notion of local homotopical discreteness interacts appropriately with the Grothendieck construction from \sref{construction:variation_Groth_construction}.

\begin{proposition}\label{proposition:Groth(homotopical_discrete)=homotopical_discrete}
Let $C$ be a category in $\CAT$, and $F:\op{C}\to\Top\dash\Cat$ a functor.\\
If for every object $x$ of $C$, the $\Top$-category $F(x)$ is locally homotopically discrete, then the $\Top$-category $\Groth(F)$ is locally homotopically discrete.
\end{proposition}
\begin{proof}
Choose objects $x,y\in\ob C$, $a\in\ob(Fx)$, and $b\in\ob(Fy)$. According to proposition \sref{proposition:description_Groth_discrete}, the morphism space $\Groth(F)(a,b)$ is
\[ \Groth(F)(a,b)=\ \coprod_{\mathclap{f\in A(x,y)}}\ (Fx)\big(a,(Ff)b\big) \]
Therefore $\Groth(F)(a,b)$ is homotopically discrete, since the coproduct of homotopically discrete spaces is homotopically discrete.
\end{proof}

\section{$\ZZ$-equivariant sticky configurations in $\RR$}\label{section:ZZsticky_conf_RR}

In this section, we will prove that $\M(S^1)$ is locally homotopically discrete, by comparing it with $\ZZ$-equivariant sticky configurations in $\RR$.

\begin{definition}[$\RR$ as an object of $\ZZ\dash\Top$]
$\RR$ will be considered as an object of $\ZZ\dash\Top$ whose underlying space is $\RR$, and whose action of $\ZZ$ is given by addition:
\[ \ZZ\times\RR\Into\RR\times\RR\xTo{+}\RR \]
\end{definition}

Viewing $\RR$ as a $\ZZ$-space gives us a new presentation of $\M(S^1)$ as the category of $\ZZ$-equivariant sticky configurations on $\RR$, by applying proposition \sref{proposition:M_G(X)_equiv_M(quotG_X)} and noticing that $S^1=\quot{\ZZ}{\RR}$.

\begin{proposition}\label{proposition:rho_RR_MZ(RR)->M(S^1)}
The functor
\[ \rho_\RR:\M_\ZZ(\RR)\To\M(S^1) \]
is an essentially surjective local isomorphism.
\end{proposition}

The advantage of considering $\M_\ZZ(\RR)$ is that it is a full subcategory of the Grothendieck construction of categories which are homotopically discrete.

\begin{proposition}\label{proposition:homotopical_discrete_stcat_ZZ(RR)}
Let $x$ be an object of $\FinSet_\ZZ$.\\
For any objects $f$, $g$ of $\disccat{\big(\stcat_\ZZ(\RR)(x)\big)}$ (see definition \sref{definition:stcat_G}), the topological space $\disccat{\big(\stcat_\ZZ(\RR)(x)\big)}(f,g)$ is contractible or empty.\\
In particular, $\disccat{\big(\stcat_\ZZ(\RR)(x)\big)}$ is locally homotopically discrete.
\end{proposition}
\begin{proof}[Sketch of proof]
Let
\[ f,g\in\ob\disccat{\big(\stcat_\ZZ(\RR)(x)\big)}=\ZZ\dash\Top(x,\RR) \]
We will give a strong deformation retraction of the space
\[ X\defeq\disccat{\big(\stcat_\ZZ(\RR)(x)\big)}(f,g) \]
onto a singleton subspace, assuming that $X$ is not empty. This will finish the proof.

The space $X$ is the subspace of $H\big(\Map^\ZZ(x,\RR);f,g\big)$ consisting of the $\op{\FinSet_\ZZ}$-sticky homotopies for $\Map^\ZZ(-,\RR)$ at $x$ (see definitions in \sref{definition:sticky} and \sref{construction:FinSet_G_CAT^2_cart}). Recall (notation \sref{notation:H(Y;a,b)}) that $H\big(\Map^\ZZ(x,\RR);f,g\big)$ refers to the subspace of Moore paths starting at $f$ and ending at $g$.

Recall also the length map
\[ l:H\big(\Map^\ZZ(x,\RR);f,g\big)\To\clop{0}{+\infty} \]
and consider the subspace $l^{-1}(\set{1})$ of $H\big(\Map^\ZZ(x,\RR);f,g\big)$. Using repa\-ram\-e\-trization of paths, one can construct a strong deformation retraction of the space $H\big(\Map^\ZZ(x,\RR);f,g\big)$ onto $l^{-1}(\set{1})$, which preserves the subspace $X$. In particular, there is a strong deformation retraction of $X$ onto $X\cap l^{-1}(\set{1})$.

Now we will define a strong deformation retraction of $l^{-1}(\set{1})$. Set
\[ \lambda(\mathtt{t})=\left\{
\begin{array}{ll}
\!\mathtt{t}g+(1-\mathtt{t})f&\ \text{if }\mathtt{t}\in\clcl{0}{1}\\
\!g&\ \text{if }\mathtt{t}\in\clop{1}{+\infty}
\end{array}
\right. \]
for $\mathtt{t}\in\clop{0}{+\infty}$, and define
\[ \func{G}{l^{-1}(\set{1})\times I}{l^{-1}(\set{1})}{\big((\gamma,1),\tau\big)}{\big(\tau\lambda+(1-\tau)\gamma,1\big)} \]
$G$ is a strong deformation retraction of $l^{-1}(\set{1})$ onto the subspace $\set{(\lambda,1)}$. Moreover, $G$ preserves $X$:
\[ G\big(\big(X\cap l^{-1}(\set{1})\big)\times I\big)\subset X \]
In conclusion, if $X$ is not empty then $\set{(\lambda,1)}$ is a strong deformation retract of $X$, so $X$ is contractible.

In order to prove that $G$ preserves $X$, it is enough to note that $(\lambda,1)\in X$ if $X$ is not empty, and that
\[ (\tau\gamma+(1-\tau)\gamma',1)\in X \]
for any $(\gamma,1)\in X$, $(\gamma',1)\in X$, and $\tau\in\RR$. We leave the proof of this claim to the interested reader.
\end{proof}

\begin{corollary}\label{corollary:M(S^1)_locally_homotopically_discrete}
The $\Top$-category $\M_\ZZ(\RR)$ is locally homotopically discrete. Consequently, $\M(S^1)$ is also locally homotopically discrete.
\end{corollary}
\begin{proof}
The first statement follows from propositions \sref{proposition:homotopical_discrete_stcat_ZZ(RR)} and \sref{proposition:Groth(homotopical_discrete)=homotopical_discrete}, given that $\M_\ZZ(\RR)$ is a full $\Top$-subcategory of $\Groth\big(\disccat{\stcat_\ZZ}(\RR)\big)$ (see definition \sref{definition:M_G}). The second statement then follows from proposition \sref{proposition:rho_RR_MZ(RR)->M(S^1)}.
\end{proof}

\begin{corollary}
There are canonical $\Top$-functors
\begin{align*}
\proj&:\M_\ZZ(\RR)\To\pi_0\big(\M_\ZZ(\RR)\big)\\
\proj&:\M(S^1)\To\pi_0\big(\M(S^1)\big)
\end{align*}
which are surjective on objects and local homotopy equivalences. Additionally, the square diagram
\begin{diagram}[midshaft,h=2.3em]
\M_\ZZ(\RR)&\rTo{\ \proj\ }&\pi_0\big(\M_\ZZ(\RR)\big)\\
\dTo{\rho_\RR}&&\dTo{\pi_0(\rho_\RR)}\\
\M(S^1)&\rTo{\ \proj\ }&\pi_0\big(\M(S^1)\big)
\end{diagram}
commutes.
\end{corollary}

\section{A category of Elmendorf}\label{section:cat_Elmendorf}

\begin{construction}[category of Elmendorf]\label{construction:category_Elmendorf}
Consider the category $\ZZ\dash\poset$ of $\ZZ$-objects in the category $\poset$ of partially ordered sets.\\
For each $n\in\NN\setminus\set{0}$, let $x_n$ denote the object of $\ZZ\dash\poset=\HomCat{\B{\ZZ}}{\poset}$ determined by:
\begin{enum}
\item the underlying partially ordered set of $x_n$ is $u(x_n)=(\ZZ,\leq)$;
\item the action of $\ZZ$ is
\[ \func[\rTo{\smash{\qquad}}]{x_n}{\ZZ}{\poset\big((\ZZ,\leq),(\ZZ,\leq)\big)}{k}{(-+kn)\qquad} \]
\end{enum}
Then {\em Elmendorf's category}, $\EE$, is defined to be the full subcategory of $\ZZ\dash\poset$ generated by all objects $y$ of $\ZZ\dash\poset$ such that either
\begin{enum}
\item the underlying partially ordered set of $y$ is empty, or
\item $y$ is isomorphic in $\ZZ\dash\poset$ to $x_n$, for some $n\in\NN\setminus\set{0}$.
\end{enum}
\end{construction}

\begin{remark}
The category $\EE$ is the full subcategory of $\ZZ\dash\poset$ generated by the objects $P$ such that
\begin{enum}
\item the underlying partially ordered set, $uP$, is a total order;
\item for every $x\in uP$, the function (where $\mu$ is the action of $\ZZ$ on the set $uP$)
\[ \ZZ\Into\ZZ\times\set{x}\Into\ZZ\times uP\xTo{\mu}uP \]
is order preserving and cofinal (with respect to the orders).
\end{enum}
We can similarly characterize a full subcategory $\EE^\textbig$ of the category of $\ZZ$-objects in {\em preordered sets}. While we will prove that $\EE$ is weakly equivalent to $\M_\ZZ(\RR)$, $\EE^\textbig$ is actually weakly equivalent to $\M_\ZZ^\textbig(\RR)$.\comment{Proof: weak equivalence of presheaves on $\FinSet_\ZZ$, which follows essentially from the proof of \sref{proposition:homotopical_discrete_stcat_ZZ(RR)}.}
\end{remark}

\begin{proposition}
Recall the canonical forgetful functor
\[ \proj:\poset\To\Set \]
The functor
\[ \ZZ\dash\poset=\HomCat{\B{\ZZ}}{\poset}\xTo{\HomCat{\B{\ZZ}}{\proj}}\HomCat{\B{\ZZ}}{\Set}=\ZZ\dash\Set \]
restricts to a functor
\[ U:\EE\To\FinSet_\ZZ \]
\end{proposition}

\begin{remark}[comparison with Elmendorf]
The {\em linear category} $L$, introduced by Elmendorf in \cite{Elmendorf}, is the skeletal full subcategory of $\EE$ generated by the objects $x_n$ for $n\in\NN\setminus\set{0}$.\\
Consequently, $L$ is equivalent to the full subcategory of $\EE$ generated by the objects whose underlying partially ordered set is {\em non-empty}.
\end{remark}

We suggest the reader look at \cite{Elmendorf}, where he can obtain a wealth of information about $L$, and from there extrapolate to $\EE$. In \cite{Elmendorf} a strict unique factorization system is given on $L$, which recovers it as a crossed simplicial group (see \cite{Fiedorowicz-Loday} for material on crossed simplicial groups). Also, several familiar categories, such as $\Delta$ and $\op{\Delta}$ are embedded in $L$ --- we will describe the embedding of $\op{\Delta}$ in section \sref{section:M(S^1)_relation_opDelta}. Moreover, a duality is established which gives an isomorphism $L\simeq\op{L}$ --- this does {\em not} extend to a duality on $\EE$. Finally, it is also explained how Connes' cyclic category, $\Lambda$, is a quotient of $L$ by an action of the group $\B{\ZZ}$ in $\Cat$.

In case the reader prefers presentations of categories by generators and relations, one is also available for $L$, and is given in definition 1.5 of the article \cite{Bokstedt-Hsiang-Madsen}, where it is called $\Lambda_\infty$.

\section{Equivalence with Elmendorf's category}

\begin{construction}
We will now define the functor
\[ ZO:\pi_0\big(\M_\ZZ(\RR)\big)\To\EE \]
Consider an object of $\pi_0\big(\M_\ZZ(\RR)\big)$, that is, an injective $\ZZ$-equivariant map $f:x\to\RR$ for some $x$ in $\FinSet_\ZZ$. Since the underlying map of sets \[ uf:ux\To\RR \] is injective, it endows $ux$ with a unique total order such that $uf$ becomes order preserving. Call this total order $(ux,\leq)$.\\
We define $ZO(f)$ to be the unique object of $\EE$
\begin{enum}
\item whose underlying partially ordered set is $u\big(ZO(f)\big)=(ux,\leq)$, and
\item whose underlying $\ZZ$-set is $U\big(ZO(f)\big)=x$.
\end{enum}
Recall now that there is a natural functor (see definition \sref{definition:M^big_G})
\[ \pi:\M_\ZZ(\RR)\Into\M^\textbig_\ZZ(\RR)\xTo{\pi}\FinSet_\ZZ \]
This functor into a $\Set$-category necessarily factors through $\pi_0\big(\M_\ZZ(\RR)\big)$:
\[ \pi:\pi_0\big(\M_\ZZ(\RR)\big)\To\FinSet_\ZZ \]
Now let $x$, $y$ be in $\FinSet_\ZZ$, and
\begin{align*}
f&:x\To\RR\\
g&:y\To\RR
\end{align*}
be injective $\ZZ$-equivariant maps. Given $a:f\to g$ in $\pi_0\big(\M_\ZZ(\RR)\big)$, the morphism $\pi(a)\in\FinSet_\ZZ(x,y)$ gives an order preserving function relative to the orders induced from $\RR$
\[ u(\pi(a)):ux\To uy \]
We leave it to the reader to verify that the order preservation follows from the stickiness condition on the morphisms in $\M_\ZZ(\RR)$.\\
$\pi(a)$ thus determines a unique morphism $ZO(a)\in\EE\big(ZO(f),ZO(g)\big)$ such that the underlying map of $\ZZ$-sets is \[ U\big(ZO(a)\big)=\pi(a) \]
\end{construction}

\begin{proposition}\label{proposition:comm_diagram_U_ZO_pi}
The diagram
\begin{diagram}[midshaft,h=2.1em]
\pi_0\big(\M_\ZZ(\RR)\big)&\rTo{ZO}&\EE\\
&\rdTo(1,2)_{\mathllap{\pi}}\ldTo(1,2)_{\mathrlap{U}}\\
&\FinSet_\ZZ
\end{diagram}
commutes.
\end{proposition}

\begin{proposition}\label{proposition:ZO_equiv}
The functor
\[ ZO:\pi_0\big(\M_\ZZ(\RR)\big)\To\EE \]
is an equivalence of categories.
\end{proposition}
\begin{proof}
Let us prove first that $ZO$ is essentially surjective. Obviously, the object of $\EE$ whose underlying partially ordered set is empty is in the image of $ZO$. Choose then $n\in\NN\setminus\set{0}$. We will prove that $x_n$, as given in \sref{construction:category_Elmendorf}, is in the image of $ZO$. Consider the order preserving injective function
\[ \func{j_n}{\ZZ}{\RR}{k}{\nicefrac{k}{n}} \]
which induces a $\ZZ$-equivariant map on the $\ZZ$-set, $U(x_n)$, underlying $x_n$
\[ j_n:U(x_n)\To\RR \]
Thus $j_n$ is an object of $\pi_0\big(\M_\ZZ(\RR)\big)$ such that
\[ ZO(j_n)=x_n \]

Let $x,y\in\FinSet_\ZZ$, and
\begin{align*}
f&:x\To\RR\\
g&:y\To\RR
\end{align*}
be injective $\ZZ$-equivariant maps. We will now demonstrate that
\[ ZO:\pi_0\big(\M_\ZZ(\RR)(f,g)\big)\To\EE\big(ZO(f),ZO(g)\big) \]
is a bijection.

Given $a:ZO(f)\to ZO(g)$ in $\EE$, consider the underlying map of $\ZZ$-sets
\[ Ua:x\To y \]
and define the Moore path $(\lambda,1)\in H\big(\Map^\ZZ(x,\RR);f,g\circ Ua\big)$ by
\[ \lambda(\mathtt{t})=\left\{
\begin{array}{ll}
\!\mathtt{t}(g\circ Ua)+(1-\mathtt{t})f&\ \text{if }\mathtt{t}\in\clcl{0}{1}\\
\!g\circ Ua&\ \text{if }\mathtt{t}\in\clop{1}{+\infty}
\end{array}
\right. \]
for $\mathtt{t}\in\clop{0}{+\infty}$.
As a consequence of
\begin{align*}
uf&:u\big(ZO(f)\big)\To\RR\\
u(g\circ Ua)&:u\big(ZO(g)\big)\To\RR
\end{align*}
being order preserving, the Moore path $\lambda$ is actually $\op{\FinSet_\ZZ}$-sticky for $\Map^\ZZ(-,\RR)$ at $x$. Therefore, we obtain a point
\[ (\lambda,1)\in\M_\ZZ(\RR)(f,g)=\ \,\coprod_{\mathclap{h\in\FinSet_\ZZ(x,y)}}\ \,\disccat{\big(\stcat_\ZZ(\RR)(x)\big)}(f,g\circ h) \]
(see proposition \sref{proposition:description_Groth_discrete}) determined by $Ua$ and $(\lambda,1)$:
{\small
\[ (\lambda,1)\in\disccat{\big(\stcat_\ZZ(\RR)(x)\big)}(f,g\circ U\!a)\xhookrightarrow{\,\inclusion_{U\!a}\,}\ \coprod_{\mathclap{h\in\FinSet_\ZZ(x,y)}}\ \,\disccat{\big(\stcat_\ZZ(\RR)(x)\big)}(f,g\circ h) \]
}This morphism $(\lambda,1)\in\M_\ZZ(\RR)(f,g)$ verifies
\[ ZO\circ\proj(\lambda,1)=a \]
where
\[ \proj:\M_\ZZ(\RR)\To\pi_0\big(\M_\ZZ(\RR)\big) \]
In conclusion, the function
\[ ZO:\pi_0\big(\M_\ZZ(\RR)(f,g)\big)\To\EE\big(ZO(f),ZO(g)\big) \]
is surjective.

Now assume we are given two morphisms $a,b:f\to g$ in $\M_\ZZ(\RR)$ such that
\[ ZO\circ\proj(a)=ZO\circ\proj(b) \]
Proposition \sref{proposition:comm_diagram_U_ZO_pi} entails that
\[ \pi(a)=U\circ ZO\circ\proj(a)=U\circ ZO\circ\proj(b)=\pi(b) \]
where $\pi:\M_\ZZ(\RR)\to\FinSet_\ZZ$. Taking into account that (proposition \sref{proposition:description_Groth_discrete})
\[ \M_\ZZ(\RR)(f,g)=\ \,\coprod_{\mathclap{h\in\FinSet_\ZZ(x,y)}}\ \,\disccat{\big(\stcat_\ZZ(\RR)(x)\big)}(f,g\circ h) \]
we conclude
\[ a,b\in\disccat{\big(\stcat_\ZZ(\RR)(x)\big)}\big(f,g\circ\overbrace{\pi(a)}^{\smash{=\mathrlap{\pi(b)}}}\big) \xhookrightarrow{\ \inclusion_{\pi(a)}\ }\M_\ZZ(\RR)(f,g) \]
Since the space $\disccat{\big(\stcat_\ZZ(\RR)(x)\big)}(f,g\circ\pi a)$ is contractible by proposition \sref{proposition:homotopical_discrete_stcat_ZZ(RR)}, there exists a path in $\M_\ZZ(\RR)(f,g)$ from $a$ to $b$, i.e.
\[ \proj(a)=\proj(b)\in\pi_0\big(\M_\ZZ(\RR)(f,g)\big) \]
In summary
\[ ZO:\pi_0\big(\M_\ZZ(\RR)(f,g)\big)\To\EE\big(ZO(f),ZO(g)\big) \]
is injective.
\end{proof}

\section{Relation to associative PROP}\label{section:M(S^1)_relation_associative_PROP}

The category $\EE$ has the advantage of being concrete and easy to manipulate. That will be useful in constructing a functor to the category $\Ord\Sigma$ (consult construction \sref{construction:OrdSigma}) underlying the associative PROP.

\begin{construction}[functor $\psi:\EE\to\Ord\Sigma$]\label{construction:psi_EE->OrdSigma}
We will construct a functor
\[ \psi:\EE\To\Ord\Sigma \]
For each object $x\in\ob\EE$, $\psi(x)$ is the finite set
\[ \psi(x)\defeq\quot{\ZZ}{(Ux)} \]
where $U:\EE\to\FinSet_\ZZ$ is the forgetful functor.\\
Given a morphism $f:x\to y$ in $\EE$, we define the morphism \[ \psi(f)\in\Ord\Sigma\big(\quot{\ZZ}{(Ux)},\quot{\ZZ}{(Uy)}\big) \]
\begin{enum}
\item the map of sets underlying $\psi(f)$ is
\[ \quot{\ZZ}{(Uf)}:\quot{\ZZ}{(Ux)}\To\quot{\ZZ}{(Uy)} \]
\item given $a\in\quot{\ZZ}{(Uy)}$, the total order on $\big(\quot{\ZZ}{(Uf)}\big)^{-1}(\set{a})$ is the order induced from the bijection
\[ \proj:(uf)^{-1}(\set{\overline{a}})\xTo{\simeq}\big(\quot{\ZZ}{(Uf)}\big)^{-1}(\set{a}) \]
where $(uf)^{-1}(\set{\overline{a}})\subset ux$ has the order induced from $ux$. Here, $\overline{a}\in Uy$ is any representative of $a\in\quot{\ZZ}{(Uy)}$.
\end{enum}
\end{construction}

\begin{proposition}
The square diagram
\begin{diagram}[midshaft,h=2.1em]
\EE&\rTo{\psi}&\Ord\Sigma\\
\dTo_{U}&&\dTo_{\proj}\\
\FinSet_\ZZ&\rTo{\ \quot{\ZZ}{-}\ }&\FinSet
\end{diagram}
commutes.
\end{proposition}

\section{Relation to $\op{\Delta}$}\label{section:M(S^1)_relation_opDelta}

In this section we construct a homotopy cofinal functor $\op{\Delta}\To\EE$.

\begin{construction}[functor $\iota:\op{\Delta}\to\EE$]\label{construction:iota_opDelta->EE}
Let $n\in\NN\setminus\set{0}$. The object $n\in\op{\Delta}$ maps to
\[ \iota(n)\defeq x_n \]
where $x_n$ is as defined in construction \sref{construction:category_Elmendorf}.\\
For $i\in\set{0,\ldots,n}$, the $i$-th face map $d_i\in\op{\Delta}(n+1,n)$ gets mapped to $\iota(d_i)$ which is determined uniquely (thanks to $\ZZ$-equivariance) by
\[ \big(u\circ\iota(d_i)\big)(a)=\left\{
\begin{array}{ll}
\! a &\ \text{if }0\leq a\leq i\\
\! a-1 &\ \text{if }i<a\leq n+1
\end{array}
\right. \]
for $a\in\ZZ$. On the other hand, for $i\in\set{1,\ldots n}$, the $i$-th degeneracy map $s_i\in\op{\Delta}(n,n+1)$ gets mapped to $\iota(s_i)$ which is determined uniquely by
\[ \big(u\circ\iota(s_i)\big)(a)=\left\{
\begin{array}{ll}
\! a &\ \text{if }0\leq a<i\\
\! a+1 &\ \text{if }i\leq a\leq n
\end{array}
\right. \]
for $a\in\ZZ$.
\end{construction}

\begin{remark}
The functor $\iota:\op{\Delta}\To\EE$ gives an isomorphism between $\op{\Delta}$ and the subcategory of $\EE$ whose set of objects is $\set{x_n\suchthat n\in\NN\setminus\set{0}}$, and whose morphisms are the $f:x_i\to x_j$ such that $uf(0)=0$ (for $i,j\in\NN\setminus\set{0}$).\\
We can conclude that the functor $\iota$ lifts to an equivalence
\[ \op{\Delta}\xTo{\;\sim\;}(\iota 1)/\EE \]
\end{remark}

We state the following proposition without proof. The proof is essentially a copy of the proof of proposition 2.7 in \cite{Dwyer-Hopkins-Kan}. Equivalently, it is an instance of lemma 1.6 in \cite{Bokstedt-Hsiang-Madsen} (for the case of $\Lambda_\infty$, in their notation). We encourage the reader to analyze the relevant results in those articles and give a detailed proof of this proposition.

\begin{proposition}[homotopy cofinality]
The functor \[ \iota:\op{\Delta}\To\EE \] is homotopy cofinal.
\end{proposition}

\section{Cyclic bar construction}\label{section:cyclic_bar_const}

\begin{construction}[cyclic bar construction]
Let $(C,\tensor,I)$ be a symmetric monoidal category, and assume $A$ is an associative monoid in $C$. Then the bar construction
\[ \BarConst(A,A,A):\op{\Delta}\To\bimod{A}{A}  \]
takes values in $A$-bimodules or, equivalently, left $A\tensor\op{A}$-modules. We can then consider the objectwise tensor product (which always exists)
\[ \BarConst^{\text{cyc}}(A):\op{\Delta}\To C \]
\[ \BarConst^{\text{cyc}}(A)\defeq A\tensor_{A\tensor\op{A}}\BarConst(A,A,A) \]
This is the usual {\em cyclic bar construction} of $A$.
\end{construction}

\begin{remark}
The cyclic bar construction of an associative monoid $A$ verifies
\[ \BarConst^{\text{cyc}}(A)(n)\simeq A^{\tensor n} \]
for any object $n$ of $\op{\Delta}$.
\end{remark}

\begin{remark}
The above construction gives a functor from the category of associative monoids in $(C,\tensor,I)$ to the category of simplicial objects in $C$.
\end{remark}

Recall the functors $\psi:\EE\to\Ord\Sigma$ and $\iota:\op{\Delta}\to\EE$ from constructions \sref{construction:psi_EE->OrdSigma} and \sref{construction:iota_opDelta->EE}. We state the following proposition without proof.

\begin{proposition}\label{proposition:cyclic_bar_OrdSigma}
Let $C$ be a symmetric monoidal category, $\underline{A}$ a $\Ass$-algebra in $C$, and $A$ the underlying associative monoid of $\underline{A}$ (see example \sref{example:associative_PROP}).\\
Then there is an isomorphism
\[ \underline{A}\circ\psi\circ\iota\simeq\BarConst^{\text{cyc}}(A) \]
which is natural in the $\Ass$-algebra $\underline{A}$.
\end{proposition}

We will now apply these results to topological Hochschild homology. We assume that $(\spectra,\wedge,S)$ is a symmetric monoidal simplicial model category in which the unit $S$ is cofibrant: in particular, $\wedge$ verifies the pushout-product axiom. This holds for the category of symmetric spectra.

\begin{definition}[topological Hochschild homology]
Let $A$ be an associative monoid in the symmetric monoidal category of spectra, $(\spectra,\wedge,S)$.\\
The {\em topological Hochschild homology} of $A$ is the geometric realization of the cyclic bar construction of $A$:
\[ THH(A)\defeq\realization{\BarConst^{\mathrm{cyc}}(A)} \]
\end{definition}

\begin{proposition}\label{proposition:THH_hocolim_EE}
Let $\underline{A}$ be a $\Ass$-algebra in the symmetric monoidal category of spectra, $(\spectra,\wedge,S)$. Let $A$ denote the underlying associative monoid of $\underline{A}$.\\
There exists a canonical zig-zag
\[ THH(A)\longleftarrow\hocolim_{\op{\Delta}}\big(\BarConst^{\mathrm{cyc}}(A)\big)\To\hocolim_\EE\,(\underline{A}\circ\psi) \]
which is natural in the $\Ass$-algebra $\underline{A}$.\\
Both maps in the zig-zag are weak equivalences if the unit map of $A$, $S\to A$, is a cofibration in $\spectra$.
\end{proposition}

The zig-zag in the statement is standard: the left arrow is the Bousfield-Kan map (see definition 18.7.3 in \cite{Hirschhorn}); the right arrow is just the natural map between the homotopy colimits, together with the isomorphism from proposition \sref{proposition:cyclic_bar_OrdSigma}.

The proof of the statement is also quite standard. Assuming the unit of $A$ is a cofibration, the cyclic bar construction of $A$ is a Reedy cofibrant simplicial object in $\spectra$. Therefore the left arrow
\[ THH(A)\longleftarrow\hocolim_{\op{\Delta}}\big(\BarConst^{\mathrm{cyc}}(A)\big) \]
is a weak equivalence. On the other hand, the right arrow
\[ \hocolim_{\op{\Delta}}\big(\BarConst^{\mathrm{cyc}}(A)\big)\To\hocolim_\EE\,(\underline{A}\circ\psi) \]
is a weak equivalence since $\iota:\op{\Delta}\to\EE$ is homotopy cofinal and $\underline{A}$ is ob\-ject\-wise cofibrant.

It is possible to prove a similar statement without assuming that $S$ is cofibrant (e.g.\ for the category of spectra of \cite{EKMM}), but the proof becomes more involved.
\comment{Need something like a locally finite generalized Reedy category with a minimum object; need to demand Reedy cofibrancy except at the minimum object; need a left proper model category. The argument should then proceed by induction using pushout diagrams. Maybe a generalization of the section 17.9 of \cite{Hirschhorn} (on sequential colimits) could help in abstracting/axiomatizing this situation.}





\chapter{Spaces of embeddings of manifolds}\label{chapter:embeddings_manifolds}

\section*{Introduction}

This chapter is devoted primarily to discussing smooth manifolds and their embeddings, and constructing useful operads from those. 

We have given the construction $\M(X)$ of sticky configurations in chapter \ref{chapter:sticky_conf}, and have seen how one particular example, $\M(S^1)$, can be used to recover topological Hochschild homology of associative ring spectra (chapter \ref{chapter:sticky_conf_S^1}).

In order to generalize this picture, we need to first replace associative monoids by other algebraic structures. Therefore, one of the goals of this chapter is to introduce the relevant PROPs $\E^G_n$, which will be closely related to spaces of embeddings between manifolds. These PROPs will turn out to be similar to little discs operads.

We also need to define an invariant for $\E^G_n$-algebras which generalizes $THH$ of associative monoids. The necessary objects for such a definition are constructed in this chapter: to each manifold (with some geometric structure) we associate a right module over the aforementioned PROPs. These right modules will also play a role in connecting back to the construction $\M(X)$.

\section*{Summary}

Section \sref{section:spaces_embeddings} deals with defining spaces of embeddings between manifolds --- which will be the basic objects for this chapter --- and topologically enriched categories of $n$-manifolds and embeddings.

Sections \sref{section:PROPs_embeddings} and \sref{section:modules_PROPs_embeddings} are meant as simplified illustrations of the constructions which will appear later in the chapter. In section \sref{section:PROPs_embeddings}, the categories of embeddings from section \sref{section:spaces_embeddings} are used to build a PROP made up of spaces of embeddings between $n$-manifolds. Section \sref{section:modules_PROPs_embeddings} associates to each manifold a natural right module over those PROPs, whose homotopy type is determined in section \sref{section:homotopy_type_E_n}.

At this point, we genuinely begin the trek towards building the desired PROPs $\E^G_n$. Section \sref{section:Gstructures} defines the necessary geometric structures on $n$-manifolds, which are associated to each topological group $G$ over $GL(n,\RR)$, and are called $G$-structures. Sections \sref{section:constructions_Gstructures} and \sref{section:examples_Gstructures} dwell into some properties and examples of such geometric structures.

Section \sref{section:modified_embedding_spaces} uses the $G$-structures of section \sref{section:Gstructures} to define a homotopical modification of embedding spaces between manifolds: these are called ``$G$-augmented embeddings''. Section \sref{section:interregnum_hopullback} is an aside to talk about homotopy pullbacks over a fixed space, since such a concept is necessary to define the space of $G$-augmented embeddings.

In section \sref{section:categories_augmented_embeddings}, we assemble the spaces of $G$-augmented embeddings into topologically enriched categories, whose objects are $n$-manifolds with a $G$-structure. Out of these categories we extract the sought after PROPs $\E^G_n$ in section \sref{section:PROPs_augmented_embeddings}. We also compare these PROPs to known ones, and in particular prove that $\E^1_n$ is equivalent to the little n-discs PROP, $\D_n$.

Finally, sections \sref{section:right_modules_E^f_n} and \sref{section:internal_presheaves_E^f_n} describe a right module over $\E^G_n$ for each $n$-manifold with a $G$-structure. Section \sref{section:homotopy_modules_E^G_n} provides a simple analysis of the homotopy type of these right modules.

\section{Spaces of embeddings and categories of manifolds}\label{section:spaces_embeddings}

By ``manifold'' we will always mean a smooth manifold (possibly with boundary) whose underlying topological space is paracompact and Hausdorff. We will need to consider the space of embeddings between two manifolds.

\begin{definition}[space of embeddings]
Given two manifolds $M$, $N$, the {\em space of embeddings} $\Emb(M,N)$ is the topological space given by:
\begin{enum}
\item the elements of $\Emb(M,N)$ are smooth embeddings of $M$ into $N$, i.e.\ smooth maps $M\to N$ which are a homeomorphism onto the image, and whose derivative at each point of $M$ is injective;
\item the topology on $\Emb(M,N)$ is the compact-open $C^1$-topology, also called the weak $C^1$-topology (see \cite{Hirsch}).
\end{enum}
\end{definition}

Throughout the rest of this section we fix a (dimension) $n\in\NN$. We continue by observing that $n$-manifolds and spaces of embeddings form a $\Top$-enriched category.

\begin{definition}[$\Top$-category of $n$-manifolds and embeddings]
The $\Top$-category $\Emb_n$ of {\em \(n\)-dimensional manifolds and embeddings} is defined by:
\begin{enum}
\item the objects of $\Emb_n$ are $n$-dimensional manifolds without boundary;
\item given $n$-manifolds $M$, $N$ without boundary, $\Emb_n(M,N)\defeq\Emb(M,N)$;
\item composition is given by composition of embeddings.
\end{enum}
\end{definition}

$\Emb_n$ is actually a symmetric monoidal $\Top$-category: the symmetric monoidal structure is given by disjoint union of manifolds (and embeddings)
\[ \func{\amalg}{\Emb_n\times\Emb_n}{\Emb_n}{(M,N)}{M\amalg N} \]

A slight modification of $\Emb_n$, which will be a useful example later, is given by restricting to oriented manifolds and orientation preserving embeddings.

\begin{definition}[$\Top$-category of oriented $n$-manifolds and embeddings]
The $\Top$-category $\Emb^\oriented_n$ of {\em \(n\)-dimensional oriented manifolds and orientation preserving embeddings} is defined by:
\begin{enum}
\item the objects of $\Emb^\oriented_n$ are $n$-dimensional oriented manifolds without boundary;
\item given oriented $n$-manifolds $M$, $N$ without boundary, the morphism space $\Emb^\oriented_n(M,N)$ is the subspace of $\Emb(M,N)$ constituted by the orientation preserving embeddings;
\item composition is given by composition of embeddings.
\end{enum}
\end{definition}

$\Emb^\oriented_n$ is also a symmetric monoidal $\Top$-category via disjoint union of oriented manifolds. Moreover, the obvious map $\Emb^\oriented_n\to\Emb_n$ is a symmetric monoidal $\Top$-functor.

\section{Simple PROPs of embeddings}\label{section:PROPs_embeddings}

Some of the most important PROPs in this document will be given by considering variations on the full subcategory of $\Emb_n$ generated by disjoint unions of copies of $\RR^n$, where we introduce modifications to the spaces of embeddings. As an expository prelude, we now examine the example of two simpler PROPs derived directly from $\Emb_n$ and $\Emb^\oriented_n$. We again fix $n\in\NN$.

\begin{definition}[PROP of embeddings]\label{definition:PROP_embeddings}
The $\Top$-PROP $\E_n$ is the full symmetric monoidal $\Top$-subcategory of $\Emb_n$ generated by $\RR^n$ (in particular, $\ob(\E_n)=\set{(\RR^n)^{\amalg k}:k\in\NN}$, and the symmetric monoidal structure is given by disjoint union). The generator of $\E_n$ is the object $\RR^n$.
\end{definition}

\begin{definition}[PROP of orientation preserving embeddings]\label{definition:PROP_oriented_embeddings}
The $\Top$-PROP $\E^\oriented_n$ is defined as the full symmetric monoidal $\Top$-sub\-cat\-e\-go\-ry of $\Emb^\oriented_n$ generated by $\RR^n$. The generator of $\E^\oriented_n$ is also the object $\RR^n$.
\end{definition}

Observe that the obvious map $\inclusion:\E^\oriented_n\to\E_n$ is a map of $\Top$-PROPs. It is also easy to see that both $\E_n$ and $\E^\oriented_n$ are categories of operators (recall \sref{notation:category_operators}), and therefore can be recovered from their underlying operads.

We will now identify $\RR^n$ and the interior of the $n$-disc, $\interior D^n$, via some orientation preserving smooth diffeomorphism $\phi:\interior D^n\to\RR^n$. This induces maps of PROPs (the PROPs $\D_n^\bullet$ are introduced in section \sref{section:terminology_examples_PROPs})
\begin{align*}
F_\phi&:\D^{O(n)}_n\To\E_n\\
F_\phi&:\D^{SO(n)}_n\To\E^\oriented_n
\end{align*}
which are given on morphisms by conjugation by $\phi$: \[ F_\phi(f)=\phi^{\amalg l}\circ f\circ\big(\phi^{\amalg k}\big)^{-1} \] for any morphism $f:(D^n)^{\amalg k}\to(D^n)^{\amalg l}$ in $\D^{SO(n)}_n$ or $\D^{O(n)}_n$. Note that the following diagram commutes
\begin{diagram}[midshaft,height=2.1em]
\D^{SO(n)}_n&\rTo{F_\phi}&\E^\oriented_n\\
\dTo{\inclusion}&&\dTo_{\inclusion}\\
\D^{O(n)}_n&\rTo{F_\phi}&\E_n
\end{diagram}
It is easy to see that both maps $F_\phi$ are essentially surjective local homotopy equivalences of $\Top$-categories: this is essentially a consequence of the fact that the inclusions
\begin{align*}
SO(n)&\Into\Emb^\oriented(\RR^n,\RR^n)\\
O(n)&\Into\Emb(\RR^n,\RR^n)
\end{align*}
are homotopy equivalences (which follows, for example, from proposition \sref{proposition:D_0_equivalence}).

In conclusion, we have constructed weak equivalences of PROPs between $\E_n$, $\E^\oriented_n$ and certain PROPs of framed little $n$-discs, thus lending interest to these new PROPs. However, the PROP of little $n$-discs, $\D_n$, is not available through these simple constructions of PROPs of embeddings. We will need to modify the spaces of embeddings slightly in order to recover $\D_n$ from these methods: that will be the goal of section \sref{section:modified_embedding_spaces}.

\section{Right modules over PROPs of embeddings}\label{section:modules_PROPs_embeddings}

One advantage of the new PROPs $\E_n$ and $\E^\oriented_n$ (over the PROPs $\D^\bullet_n$) is that each (oriented) $n$-manifold naturally determines a right module (in spaces) over them. However, we first need to move to a cartesian closed category of spaces, since $\Top$ is not itself a $\Top$-category. Recall that $\kappa:\Top\to k\Top$ denotes the (product preserving) functor from $\Top$ into the cartesian closed category of compactly generated spaces (see \sref{notation:compactly_generated_spaces}).

\begin{definition}[right modules over $\kappa\E_n$]
Let $M$ be a $n$-manifold.\\
The restriction of the $k\Top$-functor
\[ \Yoneda_{\kappa\Emb_n}(M)=\kappa\Emb_n(-,M):\op{(\kappa\Emb_n)}\To k\Top \]
to the category $\op{(\kappa\E_n)}$ is called
\[ \kappa\E_n[M]:\op{(\kappa\E_n)}\To k\Top \]
\end{definition}

\begin{definition}[right modules over $\kappa\E^\oriented_n$]
Let $M$ be an oriented $n$-manifold.\\
The restriction of the $k\Top$-functor
\[ \Yoneda_{\kappa\Emb^\oriented_n}(M)=\kappa\Emb^\oriented_n(-,M):\op{(\kappa\Emb^\oriented_n)}\To k\Top \]
to the category $\op{(\kappa\E^\oriented_n)}$ is (also) denoted 
\[ \kappa\E^\oriented_n[M]:\op{(\kappa\E^\oriented_n)}\To k\Top \]
\end{definition}

Note that these constructions aggregate into $k\Top$-functors
\begin{align*}
\kappa\E_n[-]&:\Emb_n\To\HomCat[k\Top]{\op{(\E_n)}}{k\Top}\\
\kappa\E^{\oriented}_n[-]&:\Emb^\oriented_n\To\HomCatLarge[k\Top]{\op{\big(\E^\oriented_n\big)}}{k\Top}
\end{align*}

\section{Homotopy type of the right modules over $\E_n$}\label{section:homotopy_type_E_n}

In this section we study the homotopy type of
\[ \Emb_n((\RR^n)^{\smash{\amalg k}},M)=\kappa\E_n[M]((\RR^n)^{\smash{\amalg k}}) \]
For simplicity of notation, we will make the identification
\[ k\times\RR^n=(\RR^n)^{\smash{\amalg k}} \]
for $k\in\NN$.

We need a few preliminary definitions, which will also be useful throughout the remainder of the present chapter.

\begin{notation}[frame bundle]\label{notation:frame_bundle}
Let $E$ be a $n$-dimensional vector bundle.\\
$\Frame(E)$ denotes the frame bundle of $E$, which is the (locally trivial) principal $GL(n,\RR)$-bundle associated with the vector bundle $E$. Note that both $E$ and $\Frame(E)$ have the same base space.
\end{notation}

\begin{remark}[maps of vector bundles and principal bundles]
Assuming $E$, $E'$ are $n$-dimensional vector bundles, let us denote the space of vector bundle maps $E\to E'$ by $\Map^{\text{vec}}(E,E')$.\\
Within $\Map^{\smash{\text{vec}}}(E,E')$ we identify the subspace $\Map^{\smash{\text{vec}}}_{\smash{\text{iso}}}(E,E')$ constituted by the maps which are {\em fibrewise (linear) isomorphisms}.\\
Lastly, observe that the canonical map \[ \Map^{\smash{GL(n,\RR)}}\!\big(\Frame(E),\Frame(E')\big)\To\Map^{\smash{\text{vec}}}(E,E') \] induces a homeomorphism
\[ v:\Map^{\smash{GL(n,\RR)}}\!\big(\Frame(E),\Frame(E')\big)\xTo{\smash{\ \cong\ }}\Map^{\smash{\text{vec}}}_{\smash{\text{iso}}}(E,E') \]
\end{remark}

\begin{definition}[derivative of an embedding]\label{definition:derivative}
Let $M$, $N$ be $n$-dimensional manifolds.\\
We define the {\em derivative map}
\[ D:\Emb(M,N)\To\Map^{\smash{GL(n,\RR)}}\!\big(\Frame(TM),\Frame(TN)\big) \]
as the composition
\begin{align*}
\Emb(M,N)&\xTo{\smash{\ d\ }}\Map^{\text{vec}}_{\text{iso}}(TM,TN)\\
&\xTo{\;v^{-1}\,}\Map^{GL(n,\RR)}\!\big(\Frame(TM),\Frame(TN)\big)
\end{align*}
where $d$ takes an embedding $h:M\to N$ to its differential $dh:TM\to TN$, which is a fibrewise linear isomorphism.
\end{definition}

We will now use the derivative map to construct an approximation to the desired space $\Emb(R,M)$.
Consider for that purpose the {\em canonical inclusion at the origins}
\begin{equation}\label{equation:i_k}
\func[\rInto]{i_k}{k}{k\times\RR^n}{i}{(i,0)}
\end{equation}
\comment{
Observe also that $GL(n,\RR)^{\times k}$ acts on the left on $k\times\RR^n$, where each copy of $GL(n,\RR)$ acts on the corresponding copy of $\RR^n$
\[ \sfunc{GL(n,\RR)^{\times k}\times k\times\RR^n}{k\times\RR^n}{(g,i,x)}{(i,g_i\cdot x)} \]
Therefore, $GL(n,\RR)^{\times k}$ acts on the right on $\Emb(k\times\RR^n,M)$.
}
Additionally, $k\times\RR^n$ is canonically parallelized, and so $\Frame(T(k\times\RR^n))$ acquires a corresponding trivialization
\[ \Frame\big(T(k\times\RR^n)\big)=GL(n,\RR)\times k\times\RR^n \]

\begin{definition}[derivative at the origins]\label{definition:derivative_origins}
Let $M$ be a $n$-manifold without boundary, and $k\in\NN$.\\
Consider the composition
\begin{align*}
\Emb(k\times\RR^n,M)&\xTo[\sref{definition:derivative}]{D}\Map^{GL(n,\RR)}\!\big(GL(n,\RR)\times k\times\RR^n,\Frame(TM)\big)\\
&\xTo{\restrict{(-)}{k}}\Map^{GL(n,\RR)}\!\big(GL(n,\RR)\times k,\Frame(TM)\big)\\
&\xTo{\simeq}\big(\Frame(TM)\big)^{\times k}\\
&\xTo{\simeq}\Frame\big(T(M^{\times k})\big)
\end{align*}
where the second map is restriction along the inclusion $i_k:k\hookrightarrow k\times\RR^n$ (equation \seqref{equation:i_k}) of the base spaces. That composition induces a map
\[ D_0:\Emb\big((\RR^n)^{\amalg k},M\big)\To\Frame\big(T\Conf(M,k)\big) \]
(note that $\Conf(M,k)$ is an open submanifold of $M^{\smash{\times k}}$) which we call the {\em derivative at the origins}.
\end{definition}

The following is now a standard result.

\begin{proposition}\label{proposition:D_0_equivalence}
Let $M$ be a $n$-manifold without boundary, and $k\in\NN$.\\
The map
\[ D_0:\Emb\big((\RR^n)^{\amalg k},M\big)\To\Frame\big(T\Conf(M,k)\big) \]
is a Hurewicz fibration and a homotopy equivalence.
\end{proposition}
\begin{proof}[Ingredients for proof]
The map $D_0$ is equivariant with respect to the action of the topological group $\Diff(M)\times GL(n,\RR)^{\times k}$ on the source and target. We can therefore use it to prove local triviality for $D_0$. Given $x\in\Frame\big(T\Conf(M,k)\big)$ and a sufficiently small neighborhood $U$ of $x$, choose a map
\[ \varphi:U\To\Diff(M)\times GL(n,\RR)^{\times k} \]
such that
\begin{align*}
\varphi(y)\cdot x&=y\qquad\text{for }y\in U\\
\varphi(x)&=\text{unit}
\end{align*}
Use $\varphi$ to translate between the fibres of $D_0$ over the points of $U$.

The proof that $D_0$ is a homotopy equivalence proceeds in three steps. The first is to give a section $\sigma$ of $D_0$. The second is to give a homotopy over $\Frame\big(T\Conf(M,k)\big)$
\[ O:\Emb(k\times\RR^n,M)\times I\To\Emb(k\times\RR^n,M) \]
such that $O(-,0)=\id$, and for $f\in\Emb(k\times\RR^n,M)$
\[ \im\big(O(f,1)\big)\subset\im\big(\sigma(D_0 f)\big) \]
Intuitively, $O$ is shrinking the image of $f$ so that it fits within the image of $\sigma(D_0 f)$. This can be done by a simple manipulation of the domain of $f$.

The last step is to define the homotopy
\[ O':\Emb(k\times\RR^n,M)\times I\To\Emb(k\times\RR^n,k\times\RR^n) \]
by the formula
\begin{align*}
O'(f,1)&=\id\vphantom{\frac{1}{1}}\\
\quad O'(f,\tau)(x)&=\smash{\frac{1}{1-\tau}}\big(\sigma(D_0 f)^{-1}\circ O(f,1)\big)\big((1-\tau)x\big)&&\text{for }\tau\in\clop{0}{1},\\
&&&\hphantom{\text{for }}x\in k\times\RR^n
\end{align*}
for $f\in\Emb(k\times\RR^n,M)$. Concatenating $O$ with the homotopy given by the composition
\begin{align*}
\Emb(k\times\RR^n,M)\times I&\xTo{(O',\proj)}\Emb(k\times\RR^n,k\times\RR^n)\times\Emb(k\times\RR^n,M)\\
&\xTo{\id\times(\sigma\circ D_0)}\Emb(k\times\RR^n,k\times\RR^n)\times\Emb(k\times\RR^n,M)\\
&\xTo{\composition}\Emb(k\times\RR^n,M)
\end{align*}
(where ``$\composition$'' indicates composition of embeddings), gives a homotopy between the identity on $\Emb(k\times\RR^n,M)$ and $\sigma\circ D_0$.
\end{proof}

One can easily formulate an analog of this result for the case of orientation preserving embeddings.

\section{$G$-structures on manifolds}\label{section:Gstructures}

Soon, we will construct modifications of the embedding spaces of manifolds which will allow us to build new PROPs similar to $\E_n$.
First, and arguably most important, we need to add geometric structures to our manifolds, as was already hinted by our use of {\em oriented} $n$-manifolds to build the PROP $\E^\oriented_n$: these geometric structures will come in the form of reductions of the structure group of the tangent bundle. Second, we need to augment the embedding spaces of manifolds with a ``homotopical component'' relating to the aforementioned geometric structures.

We now define a convenient category of groups over $GL(n,RR)$.

\begin{definition}[topological groups over $GL(n,\RR)$]\label{definition:groups/GL(n,R)}
The {\em category of topological groups over \(GL(n,\RR)\)}, $\overGL{n}$, is defined by:
\begin{enum}
\item the objects of $\overGL{n}$ are maps of topological groups \[ G\To GL(n,\RR) \] from a topological group $G$ to $GL(n,\RR)$
\item given two maps of topological groups
\begin{align*}
f&:G\To GL(n,\RR)\\
g&:H\To GL(n,\RR)
\end{align*}
a morphism in $\overGL{n}(f,g)$ is a pair $(h,A)$ where \[ h:G\To H \] is a map of topological groups, and $A\in GL(n,\RR)$ conjugates $g\circ h$ to $f$
\[ A\cdot(g\circ h)=f\cdot A \]
\item the composition (of composable morphisms) $(h,A)$ and $(h',A')$ in $\overGL{n}$ is given by
\[ (h,A)\circ (h',A')=(h\circ h',A\cdot A') \]
\end{enum}
We will call the objects of $\overGL{n}$ {\em topological groups over \(GL(n,\RR)\)}.
\end{definition}

\begin{remark}
There is an obvious functor from $\overGL{n}$ to the category of topological groups, which associates to a map $f:G\to GL(n,\RR)$ the topological group $G$, called the {\em underlying topological group} of $f$.\\
We will call a morphism in $\overGL{n}$ a {\em weak equivalence} if the corresponding map of underlying topological groups is a weak equivalence.
\end{remark}

\begin{notation}\label{notation:overGL_f_G}
We will often denote an object of $\overGL{n}$ either by $f,g,\ldots$ or by $G,H,\ldots$ depending on the emphasis: if the map to $GL(n,\RR)$ is essential, we denote the object of $\overGL{n}$ by $f,g,\ldots$~; otherwise, we tend to use the letters $G,H,\ldots$\\
If $G,H,\ldots$ designates an object of $\overGL{n}$, we will often denote its underlying topological group by $G,H,\ldots$ as well.\\
Furthermore, if $G$ is a given topological group for which one can infer from context an obvious map $f:G\to GL(n,\RR)$, we will often denote the corresponding object of $\overGL{n}$ (namely $f$) simply by $G$, without further note.
\end{notation}

Let us now introduce the relevant geometric structures on manifolds. Since these structures only involve the tangent bundle, let us start with the corresponding definition on vector bundles. Recall (notation \sref{notation:frame_bundle}) $\Frame(E)$ denotes the frame bundle of a vector bundle E.

\begin{definition}[$G$-structure on a vector bundle]\label{definition:Gstructure}
Let $f:G\to GL(n,\RR)$ be a topological group over $GL(n,\RR)$ (i.e.\ an object of $\overGL{n}$).\\
Given a $n$-dimensional vector bundle $E$ (over a space $X$), a {\em \(f\)-structure on \(E\)} is a reduction of the structure group of $E$ across the map $f$. More precisely, we require
\begin{enum}
\item a locally trivial principal $G$-bundle $\Princ_G E$ over $X$, and
\item an isomorphism of principal $GL(n,\RR)$-bundles
\[ \lambda_f(E):f_\ast\big(\Princ_G E\big)\xTo{\;\smash{\simeq}\;}\Frame(E) \]
over the identity map on $X$.
\end{enum}
\end{definition}

\begin{remark}\label{remark:f-structure_f_ast}
For later use, we remark here that, given two vector bundles $E$, $F$ with $f$-structure, there is a natural map
\[ \Map^{\smash{G}}\!\big(\Princ_G E,\Princ_G F\big)\xTo{\;f_\ast\;}\Map^{\smash{GL(n,\RR)}}\!\big(\Frame(E),\Frame(F)\big) \]
defined as the composition
\begin{align*}
\Map^{\smash{G}}\!\big(\Princ_G E,\Princ_G F\big)&\xTo{\;f_\ast\;}\Map^{\smash{GL(n,\RR)}}\!\big(f_\ast(\Princ_G E),f_\ast(\Princ_G F)\big)\\
&\xTo{\;\cong\;}\Map^{GL(n,\RR)}\!\big(\Frame(E),\Frame(F)\big)
\end{align*}
where the bottom homeomorphism is induced by the isomorphisms $\lambda_f(E)$ and $\lambda_f(F)$ which are part of the $f$-structures on $E$ and $F$.
\end{remark}

\begin{definition}[$G$-structure on a manifold]\label{definition:manifold_Gstructure}
Let $G$ be a topological group over $GL(n,\RR)$ (i.e.\ an object of $\overGL{n}$).\\
Given a $n$-manifold without boundary $M$, a {\em \(G\)-structure on \(M\)} is defined as a $G$-structure on the tangent vector bundle $TM$.
\end{definition}

\begin{remark}
The preceding definitions of $G$-structure --- where $G$ is a topological group over $GL(n,\RR)$ --- could be easily generalized, with only mild modifications, to the case of spaces over $BGL(n,\RR)$. However, we will not do so in the interest of simplicity.
\end{remark}

These geometric structures are functorial on maps of groups: if $h:G\to H$ is a morphism in $\overGL{n}$, then a $G$-structure on a $n$-manifold $M$ can be pushed forward along $h$ to a $H$-structure on $M$. The same statement holds for $G$-structures on vector bundles. In addition, if $h$ is a weak equivalence (of underlying topological groups) then any $H$-structure on a $n$-manifold $M$ can be lifted --- essentially uniquely --- to a $G$-structure, whose push-forward along $h$ is the original $H$-structure.

\comment{Here we use that: Milnor's construction classifies numerable principal $G$-bundles over paracompact spaces; Milnor's construction gives a locally trivial fibration with fibre $G$, and in particular a Serre fibration; the long exact sequence in homotopy now proves that a weak equivalence $G\to H$ induces a weak equivalence on the base spaces of the corresponding Milnor constructions.}

\section{Constructions on $G$-structures}\label{section:constructions_Gstructures}

In this section, we give some simple constructions involving the geometric structures defined in the previous section.

\begin{definition}[$G$-structure on disjoint union]\label{definition:Gstructure_disjoint_union}
Let $f:G\to GL(n,\RR)$ be an object of $\overGL{n}$ (where $n\in\NN$).\\
Assume $M$, $N$ are $n$-manifolds with a $f$-structure.\\
The {\em induced \(f\)-structure on the disjoint union} $M\amalg N$ is given by
\begin{enum}
\item the principal $G$-bundle $\Princ_G\big(T(M\amalg N)\big)\defeq\Princ_G(TM)\amalg\Princ_G(TN)$ over $M\amalg N$;
\item the required isomorphism $\lambda_f(T(M\amalg N))$ of principal $GL(n,\RR)$-bundles is
\begin{align*}
f_\ast\big(\Princ_G\big(T(M\amalg N)\big)\big)&\xTo{\simeq}f_\ast\big(\Princ_G(TM)\big)\amalg f_\ast\big(\Princ_G(TN)\big)\\
&\xTo{\lambda_f(TM)\amalg\lambda_f(TN)}\Frame(TM)\amalg\Frame(TN)\\
&\xTo{\simeq}\Frame\big(T(M\amalg N)\big)
\end{align*}
\end{enum}
\end{definition}

\begin{definition}[induced $G$-structure on open submanifold]\label{definition:Gstructure_submanifold}
Let $f:G\to GL(n,\RR)$ be a topological group over $GL(n,\RR)$ (where $n\in\NN$).\\
Let $M$ be a $n$-manifold equipped with a $f$-structure, and $N$ an open submanifold of $M$.\\
The {\em induced \(f\)-structure on the open submanifold} $N$ of $M$ is defined by:
\begin{enum}
\item $\Princ_G(TN)$ is the restriction to $N$ of the principal $G$-bundle $\Princ_G(TM)$;
\item the isomorphism of principal $GL(n,\RR)$-bundles $\lambda_f(TN)$ is the restriction to $N$ of the isomorphism
\[ \lambda_f(TM):f_\ast(\Princ_G(TM))\xto{\smash{\,\simeq\,}}\Frame(TM) \]
coming from the $f$-structure on $M$.
\end{enum}
\end{definition}

\begin{definition}\label{definition:f_boxtimes_g}
Let $m,n\in\NN$, and assume
\vspace{-.1em}\begin{align*}
f&:G\To GL(m,\RR)\\
g&:H\To GL(n,\RR)
\end{align*}
are maps of topological groups.\\
We define the topological group over $GL(m+n,\RR)$ \[ f\boxtimes g: G\times H\To GL(m+n,\RR) \] to be the composition
\[ G\times H\xTo{f\times g}GL(m,\RR)\times GL(n,\RR)\xhookrightarrow{\,\ \oplus\ \,}GL(m+n,\RR) \]
where $\oplus$ is the canonical inclusion.
\end{definition}

\begin{notation}
In particular, we denote the underlying group of $G\boxtimes H$ by $G\times H$.
\end{notation}

\begin{definition}[geometric structure on product]\label{definition:product_GHstructure}
Let $m,n\in\NN$, and assume
\vspace{-.1em}\begin{align*}
f&:G\To GL(m,\RR)\\
g&:H\To GL(n,\RR)
\end{align*}
are maps of topological groups.\\
Additionally, let $M$ be a $m$-manifold with a $f$-structure and $N$ a $n$-manifold with a $g$-structure.\\
The {\em product \(f\boxtimes g\)-structure} on $M\times N$ is given by:
\begin{enum}
\item the $G\times H$-principal bundle $\Princ_{G\times H}\big(T(M\times N)\big)$ over $M\times N$ is the product
\[ \Princ_{G\times H}\big(T(M\times N)\big)\defeq\Princ_G(TM)\times\Princ_H(TN) \]
\item the isomorphism $\lambda_{f\boxtimes g}(T(M\times N))$ of principal $GL(m+n,\RR)$-bundles is the composition
\begin{align*}
(f\boxtimes g)_\ast\big(\Princ_G(TM)\!\times\!\Princ_H(TN)\big)\hspace{-.2em}&\xTo{\simeq}\oplus_\ast\big(f_\ast\big(\Princ_G(TM)\big)\times g_\ast\big(\Princ_H(TN)\big)\big)\\
&\xTo{\!\oplus_\ast\hspace{-.05em}(\lambda_f({\scriptscriptstyle TM})\hspace{-.05em}\times\hspace{-.05em}\lambda_g({\scriptscriptstyle TN}))}\oplus_\ast\hspace{-.08em}\big(\Frame(TM)\!\times\!\Frame(TN)\big)\\
&\xTo{\simeq}\Frame(TM\times TN)\\
&\xTo{\simeq}\Frame\big(T(M\times N)\big)
\end{align*}
where the non-named isomorphisms are canonical with respect to products of principal bundles (first isomorphism), vector bundles (third isomorphism), and manifolds (last isomorphism), respectively.
\end{enum}
\end{definition}

In the next example, we apply these constructions to the space of configurations $\Conf(M,k)$ of a $n$-manifold $M$ with a $G$-structure (for $G$ in $\overGL{n}$). We denote by $G^{\boxtimes k}$ the object in $\overGL{kn}$ which is the result of iterating the construction in definition \sref{definition:f_boxtimes_g}.

\begin{example}[$G^{\smash{\boxtimes k}}$-structure on $\Conf(M,k)$]\label{example:fk-structure_conf}
Let $M$ be a $n$-manifold with a $G$-structure, and $k\in\NN$.\\
The previous definition gives a $G^{\smash{\boxtimes k}}$-structure on $M^{\smash{\times k}}$. Define the $G^{\smash{\boxtimes k}}$-struc\-ture on $\Conf(M,k)$ to be the structure induced on the open submanifold $\Conf(M,k)$ of $M^{\smash{\times k}}$.
\end{example}

\section{Examples of $G$-structures}\label{section:examples_Gstructures}

Observe that every $n$-manifold has a canonical $GL(n,\RR)$-structure. This section gives a few more examples of less trivial $G$-structures.

\begin{example}[orientations and $GL^+(n,\RR)$]
Consider the subgroup $GL^+(n,\RR)$ of $GL(n,\RR)$.\\
An orientation on a $n$-dimensional manifold $M$ determines an essentially unique $GL^+(n,\RR)$-structure on $M$ (inducing that orientation on $M$). It is essentially unique in the following sense: any two reductions of the structure group of $TM$ to $GL^+(n,\RR)$ which induce the same orientation on $M$ are uniquely isomorphic.
\end{example}

\begin{example}[Riemannian structures and $O(n)$]\label{example:O(n)structure_Riemann}
Consider the subgroup $O(n)$ of $GL(n,\RR)$.\\
A Riemannian structure on a $n$-manifold $M$ is equivalent to a $O(n)$-structure on $M$. More precisely, a reduction of the structure group of $TM$ to $O(n)$ determines a Riemannian structure on $M$, and any two reductions giving the same Riemannian structure are uniquely isomorphic.
\end{example}

Note that, in view of the two preceding examples, one concludes that a $SO(n)$-structure on a $n$-manifold $M$ is equivalent to giving a Riemannian structure and an orientation on $M$. Additionally, the previous example can be easily modified to give, for example, a similar relation between symplectic structures on a manifold and $Sp(2n,\RR)$-structures (where $Sp(2n,\RR)$ is the group of symplectic real $2n\times 2n$ matrices).

We now analyze a very important example of the geometric structures under consideration.

\begin{example}[parallelizations and the trivial group]\label{example:1structure_parallelization}
A trivialization of a $n$-dimensional vector bundle $E$ is equivalent to a reduction of the structure group of $E$ to the trivial group $1$. In particular, giving a $1$-structure on $M$ is equivalent to giving a parallelization of $M$ (i.e.\ a trivialization of $TM$).
\end{example}

\begin{example}[$G$-structure on $\RR^n$]\label{example:Gstructure_R^n}
Note that the manifold $\RR^n$ is naturally parallelized, and therefore is equipped with a canonical $1$-structure (by the previous example).\\
Consequently, for any map of topological groups $f:G\to GL(n,\RR)$, the manifold $\RR^n$ has a canonical $f$-structure obtained by pushing-forward (along $1\to G$) the canonical $1$-structure on $\RR^n$. In particular, the bundle $\Princ_G(T\RR^n)$ is canonically trivialized: \[ \Princ_G(T\RR^n)=\RR^n\times G \]
\end{example}

\section{Augmented embedding spaces}\label{section:modified_embedding_spaces}

Having defined the necessary geometric structures on manifolds, in this section we will introduce the related ``homotopical'' modifications of the spaces of embeddings of manifolds. We fix throughout this section a topological group over $GL(n,\RR)$, $f:G\to GL(n,\RR)$ (where $n\in\NN$ is also fixed). Recall from \sref{notation:overGL_f_G} that we will sometimes denote this object of $\overGL{n}$ simply by $G$, if the map $f$ is not essential at the time.

From a naive perspective, one might expect that the correct space of embeddings would be the subspace of $\Emb(M,N)$ of the embeddings which preserve the $G$-structures, in some sense. We now exemplify what this could mean.

\begin{example}[preservation of $G$-structure by an embedding]\label{example:preserve_f-structure}
Assume that $f:G\to GL(n,\RR)$ is the inclusion of a closed subgroup. Then the induced map
\[ \Princ_G(TM)\Into f_\ast\big(\Princ_G(TM)\big)\xTo[\smash{\sref{definition:Gstructure}}]{\lambda_f(TM)}\Frame(TM) \]
is the inclusion of a closed subspace, for any $n$-manifold $M$ with a $f$-structure. So if $M^n$, $N^n$ both have a $G$-structure, it makes sense to say that an embedding $e\in\Emb(M,N)$ {\em preserves the \(f\)-structure} when the derivative map (as in definition \sref{definition:derivative}) \[ De:\Frame(TM)\To\Frame(TN) \] carries the subspace $\Princ_G(TM)$ of $\Frame(TM)$ into the closed subspace $\Princ_G(TN)$ of $\Frame(TN)$. In this case, $De$ determines a $G$-map \[ \restrict{De}{\Princ_G(TM)}:\Princ_G(TM)\To\Princ_G(TN) \]
\end{example}

However, important cases of $f$-structures, such as parallelizations (example \sref{example:1structure_parallelization}), are often very rigid, not allowing for the existence of many embeddings which preserve the $f$-structures. Next we give an example of this kind of rigidity.

\begin{example}
There are no embeddings $\RR^2\to S^2$ which preserve the usual Riemannian structures on these manifolds (since $S^2$ has constant non-zero curvature). In particular, there are no embeddings $\RR^2\to S^2$ which preserve the corresponding $O(2)$-structures (see example \sref{example:O(n)structure_Riemann}).\\
Informally, one could say $S^2$ has no $O(2)$-charts.
\end{example}

To avoid this issue of rigidity, we demand that the $f$-augmented embed\-dings (defined next) preserve the $f$-structures only up to homotopy.

\begin{definition}[$G$-augmented embedding spaces]\label{definition:modified_embedding_space}
Let $M$, $N$ be $n$-manifolds equipped with a $f$-structure (see definition \sref{definition:manifold_Gstructure}).\\
The {\em space of \(f\)-augmented embeddings}, $\REmb^f_n(M,N)$, is the homotopy pullback over $\Map(M,N)$ of the following diagram over $\Map(M,N)$
{\setlength\abovedisplayskip{7pt plus 0pt minus 7pt}
\begin{diagram}[midshaft,height=2.2em]
&&\Map^G\!\big(\Princ_G(TM),\Princ_G(TN)\big)\\
&&\dTo^{f_\ast}_{\sref{remark:f-structure_f_ast}}\\
\Emb(M,N)&\rTo_{\smash{\ \ \sref{definition:derivative}\ \ }}^{D}&\Map^{\smash{GL(n,\RR)}}\!\big(\Frame(TM),\Frame(TN)\big)
\end{diagram} }
\end{definition}

Before proceeding, we need to explain the meaning of the homotopy pullback appearing in the definition. That is the purpose of the next section. Before that, let us just give a definition close in spirit to \sref{definition:derivative}.

\begin{definition}[$G$-augmented derivative]\label{definition:fderivative}
Let $M$, $N$ be $n$-manifolds equipped with a $G$-structure.\\
We call the canonical projection
\[ \REmb^G_N(M,N)\To\Map^G\!\big(\Princ_G(TM),\Princ_G(TN)\big) \]
the {\em \(G\)-augmented derivative} map, and denote it by $\RDer^G$.
\end{definition}

\section{Interlude: homotopy pullbacks over a space}\label{section:interregnum_hopullback}

The diagram in definition \sref{definition:modified_embedding_space} sits over $\Map(M,N)$ (as stated there), by which it is meant that the diagram
\begin{diagram}
\Map^{GL(n,\RR)}\!\big(\Frame(TM),\Frame(TN)\big)&\lTo{\ f_\ast\ }&\Map^G\!\big(\Princ_G(TM),\Princ_G(TN)\big)\\
\uTo^{D}&\rdTo{p_1}&\dTo_{p_2}\\
\Emb(M,N)&\rTo{\inclusion}&\Map(M,N)
\end{diagram}
commutes, where the maps $p_1$ and $p_2$ associate to a map of principal bundles the corresponding map on the base spaces. Equivalently (letting $\bullet$ stand for the appropriate space of maps between principal bundles)
\[ (\Emb(M,N),\inclusion)\xTo{\;D\;}(\bullet,p_1)\xlongleftarrow{\;f_\ast\;}(\bullet,p_2) \]
is a diagram in the over-category $\Top/\Map(M,N)$. We will now describe the homotopy pullback of this last diagram over $\Map(M,N)$, which is the object specified in definition \sref{definition:modified_embedding_space} to be $\REmb^f_n(M,N)$.

Let $W$ be a topological space, and suppose we are given a diagram $\mathcal{D}$ in $\Top/W$
\[ \mathcal{D}\,:\ \quad (X,p_X)\xTo{g}(Y,p_Y)\xlongleftarrow{h}(Z,p_Z) \]
The homotopy pullback of $\mathcal{D}$ over $W$ is defined to be the limit in $\Top$
\[ \hopb_{\nicefrac{}{W}}(\mathcal{D})\defeq\lim\!\left(
\begin{diagram}[midshaft,h=1.8em,w=1em]
X&\rTo{g}&Y&&W\\
&&\uTo_{\evaluation{0}}&&\dTo_{\text{const}}\\
&&\Map(I,Y)&\rTo{\;\Map(I,p_Y)\;}&\Map(I,W)\\
&&\dTo_{\evaluation{1}}\\
Z&\rTo{h}&Y
\end{diagram} \right) \]
More informally, $\hopb_{\nicefrac{}{W}}(\mathcal{D})$ is the subspace of the usual homotopy pullback of $X\xto{\;\smash{g}\;}Y\xleftarrow{\;\smash{h}\;}Z$ which sits over the constant paths in $\Map(I,W)$. In particular, $\hopb_{\nicefrac{}{W}}(\mathcal{D})$ naturally maps to $W$.

\begin{remark}[homotopical properties of $\hopb_{\nicefrac{}{W}}$]
The specific model above, $\hopb_{\nicefrac{}{W}}$, gives indeed a homotopy pullback (in the sense of model categories) for the category $\Top/W$, modulo fibrancy conditions in $\Top/W$: we need that
\begin{align*}
p_Y&:Y\To W\\
p_Z&:Z\To W
\end{align*}
are Serre fibrations.\\
If $p_Y$, $p_Z$ are indeed Serre fibrations (respectively, Hurewicz fibrations), the natural inclusion of $\hopb_{\nicefrac{}{W}}(\mathcal{D})$ into the usual homotopy pullback (in $\Top$) of $X\xto{\;\smash{g}\;}Y\xleftarrow{\;\smash{h}\;}Z$ is a weak equivalence (respectively, homotopy equivalence).\\
\comment{$p_Y$ being a fibration guarantees that the space $Y^I_W$ of paths in $Y$ within the fibres over $W$ is such that $Y^I_W\to Y\times_W Y$ is a fibration. The homotopy pullback over $W$ is the pullback of $X\times_W Z\to Y\times_W Y\leftarrow Y^I_W$.\\
If $p_Y$ and $p_X$ are fibrations fibration then the square
\begin{diagram}
X\times_W Z&\rTo&Y\times_W Y\\
\dTo&&\dTo\\
X\times Z&\rTo&Y\times Y
\end{diagram}
is homotopy cartesian.}
Also, if $p_Y:Y\To W$ is a Serre (respectively, Hurewicz) fibration then the projection
\[ \hopb_{\nicefrac{}{W}}(\mathcal{D})\To X\underset{W}{\times} Z \]
is also a Serre (respectively, Hurewicz) fibration.
\end{remark}

\begin{remark}[homotopical properties of $\REmb^G_n(M,N)$]\label{remark:homotopy_properties_REmb}
In the case appearing in definition \sref{definition:modified_embedding_space}, we do obtain a homotopy pullback in $\Top/\Map(M,N)$ since the necessary fibrancy conditions are verified: namely the projections
\begin{align*}
\Map^{GL(n,\RR)}\!\big(\Frame(TM),\Frame(TN)\big)&\To\Map(M,N)\\
\Map^G\!\big(\Princ_G(TM),\Princ_G(TN)\big)&\To\Map(M,N)
\end{align*}
are Hurewicz fibrations.\\
So the inclusion of $\REmb^G_n(M,N)$ into the usual homotopy pullback of the diagram in \sref{definition:modified_embedding_space} is a homotopy equivalence.\\
Furthermore, the canonical projection
\begin{equation}\label{equation:fibration_REmb}
\REmb^G_n(M,N)\To\Map^G\!\big(\Princ_G(TM),\Princ_G(TN)\big)\underset{\mathclap{\Map(M,N)}}{\times}\,\Emb(M,N)
\end{equation}
is a Hurewicz fibration. Since the map
\[ \Map^G\!\big(\Princ_G(TM),\Princ_G(TN)\big)\To\Map(M,N) \]
is a Hurewicz fibration, it follows that the canonical projection
\[ q:\REmb^G_n(M,N)\To\Emb(M,N) \]
is also a Hurewicz fibration.
\end{remark}

\begin{notation}[$G$-augmented embeddings]\label{notation:modified_embeddings}
We will denote elements of $\hopb_{\nicefrac{}{W}}(\mathcal{D})$ (defined above) by triples $(x,\gamma,z)$ where $x\in X$, $z\in Z$ and $\gamma\in\Map(I,Y)$.\\
In particular, we will denote elements of $\REmb^G_n(M,N)$ (that is, $G$-augmented embeddings) by triples $(e,\gamma,g)$ where:
\begin{enum}
\item $e\in\Emb(M,N)$;
\item $g:\Princ_G(TM)\to\Princ_G(TN)$ is a map of principal $G$-bundles over $e$;
\item $\gamma$ is a path in $\Map^{GL(n,\RR)}\!\big(\Frame(TM),\Frame(TN)\big)$: it goes from $De$ to the map induced by $g$, and sits over the constant path in $\Map(M,N)$ with value $e$.
\end{enum}
\end{notation}

As a simple illustration of this notation, assume that $f:G\to GL(n,\RR)$ is the inclusion of a closed subgroup, and that $M^n$, $N^n$ have $G$-structures. Then any embedding $e\in\Emb(M,N)$ which preserves the $G$-structure (in the sense of example \sref{example:preserve_f-structure}) determines a $G$-augmented embedding \[ \big(e,\text{const}_{De},\restrict{De}{\Princ_G(TM)}\big)\in\REmb^f_n(M,N) \] where $De$ is the derivative of $e$ (as defined in \sref{definition:derivative}), and $\text{const}_{De}$ is the constant path equal to $De$. We will denote this augmented embedding simply by $e$.

\section{Categories of augmented embeddings}\label{section:categories_augmented_embeddings}

In this section, we explain how $n$-dimensional manifolds with a $G$-struc\-ture, together with $G$-augmented embeddings define a symmetric monoidal $\Top$-category. Fix $n\in\NN$ and an object $G$ of $\overGL{n}$.

Let us begin by describing the composition of $G$-augmented embeddings. 

\begin{definition}[composition of $G$-augmented embeddings]
Assume $M$, $N$, and $P$ are $n$-manifolds equipped with a $G$-structure.\\
Let $(e,\gamma,g)\in\REmb^G_n(M,N)$, and $(e',\gamma',g')\in\REmb^G_n(N,P)$ (recall remark \sref{notation:modified_embeddings} on notation for augmented embeddings).\\
The composite of the $G$-augmented embeddings $(e,\gamma,g)$ and $(e',\gamma',g')$ is
\[ (e',\gamma',g')\circ(e,\gamma,g)\defeq (e'\circ e,\gamma'\circ\gamma,g'\circ g)\in\REmb^G_n(M,P) \]
where $\gamma'\circ\gamma$ denotes the pointwise composition
\[ (\gamma'\circ\gamma)(\tau)=\gamma'(\tau)\circ\gamma(\tau)\qquad\mathrlap{\text{for } \tau\in I=\clcl{0}{1}}\qquad \]
\end{definition}

\begin{definition}[$\Top$-category of $G$-augmented embeddings]
The $\Top$-category $\REmb^G_n$ is defined by:
\begin{enum}
\item the objects of $\REmb^G_n$ are the $n$-manifolds (without boundary) with a $G$-struc\-ture;
\item given $n$-manifolds with a $G$-structure, $M$ and $N$, the morphism space $\REmb^G_n(M,N)$ is the space of $G$-augmented embeddings already defined;
\item composition in $\REmb^G_n$ is given by composition of $G$-augmented embeddings, as described above.
\end{enum}
\end{definition}

The symmetric monoidal structure on the $\Top$-category $\REmb^G_n$ is given by disjoint union of manifolds:
\[ \func{\amalg}{\REmb^G_n\times\REmb^G_n}{\REmb^G_n}{(M,N)}{M\amalg N} \]
which is well defined since the disjoint union of manifolds with $G$-structures has an induced $G$-structure (definition \sref{definition:Gstructure_disjoint_union}).

\begin{remark}[functoriality of $\REmb^\bullet_n$]
Like the geometric structures on manifolds, these categories are functorial on maps of groups. More precisely, we have a functor
\[ \REmb^\bullet_n:\overGL{n}\To\Top\dash\SymmMonCAT \]
which associates to a topological group over $GL(n,\RR)$ the symmetric monoi\-dal $\Top$-category $\REmb^G_n$. The functor induced by a morphism $h:G\to H$ in $\overGL{n}$ is called
\[ h_\ast:\REmb^G_n\To\REmb^H_n \]
If $h:G\to H$ is a weak equivalence, then $h_\ast$ is an essentially surjective local weak equivalence of $\Top$-categories.
\end{remark}

\comment{Here we use that $\Map^G(\Princ_G(TM),\Princ_G(TN))\to\Map(M,N)$ is a locally trivial fibration, and in particular a Serre fibration. The fibre over each point is empty or $\Map(M,G)$ (note also that $\Map(M,-)$ preserves weak equivalences since $M$ is a CW-complex). Therefore for a weak equivalence $G\to H$, it follows that $\Map^G(\Princ_G(TM),\Princ_G(TN))\to\Map^H(\Princ_H(TM),\Princ_H(TN))$ is a weak equivalence (after proving that it is surjective on $\pi_0$: this follows from the classification of numerable bundles via Milnor's construction, which preserves weak equivalences of groups). It now follows that $\REmb^G_n(M,N)\to\REmb^H_n(M,N)$ is a weak equivalence (by homotopy invariance of the homotopy pullback).}

These categories can be related with $\Emb_n$ and $\Emb^\oriented_n$. Specifically, the canonical projection from the space of augmented embeddings to the (usual) space of embeddings gives a functor
\[ q:\REmb^G_n\To\Emb_n \]
This defines a cocone for the functor $\REmb^\bullet_n$. In other words, $q$ gives a natural transformation
\begin{equation}\label{equation:q_Remb_Emb}
q:\REmb^\bullet_n\To\Emb_n
\end{equation}
from $\REmb^\bullet_n$ to the constant functor equal to $\Emb_n$.

If $G$ actually sits over $GL^+(n,\RR)$, then $q$ factors through $\Emb^\oriented_n$.

We can still say more: the description (at the end of the previous section \sref{section:interregnum_hopullback}) of a $G$-augmented embedding associated to any embedding which preserves $G$-structures (example \sref{example:preserve_f-structure}) determines inclusions
\begin{equation}\label{equation:E->E^G}
\begin{split}
\Emb_n&\Into\REmb^{\smash{GL(n,\RR)}}_n\\
\Emb^\oriented_n&\Into\REmb^{GL^+(n,\RR)}_n
\end{split}
\end{equation}
given that any embedding preserves the $GL(n,\RR)$-structures, and any orientation preserving embedding between oriented $n$-manifolds preserves the associated $GL^+(n,\RR)$-structures. These inclusions are symmetric monoidal $\Top$-functors which are essentially surjective local homotopy equivalences. Furthermore, the composition
\[ \Emb_n\Into\REmb^{\smash{GL(n,\RR)}}_n\xTo{\;q\;}\Emb_n \]
is equal to the identity functor.

\section{PROPs of augmented embeddings}\label{section:PROPs_augmented_embeddings}

We will now define the PROPs $\E^G_n$ in analogy with our definition of the PROPs $\E_n$ and $\E^\oriented_n$ before (see definitions \sref{definition:PROP_embeddings} and \sref{definition:PROP_oriented_embeddings}). We fix a dimension $n\in\NN$ throughout this section.

\begin{definition}[PROPs of $G$-augmented embeddings]\label{definition:PROPs_modified_embeddings}
Let $G$ be a topological group over $GL(n,\RR)$.\\
The $\Top$-PROP $\E^G_n$ is the full symmetric monoidal $\Top$-subcategory of $\REmb^G_n$ generated by $\RR^n$ (see example \sref{example:Gstructure_R^n}). The generator of $\E^{\smash{G}}_n$ is the object $\RR^n$.
\end{definition}

In particular, $\ob(\E^{\smash{G}}_n)=\set{(\RR^n)^{\smash{\amalg k}}:k\in\NN}$, and the symmetric monoidal structure on $\E^{\smash{G}}_n$ is given by disjoint union. It is straightforward to prove that $\E^{\smash{G}}_n$ is a category of operators and, in particular, can be reconstructed from its underlying $\Top$-operad.

\begin{remark}[functoriality and homotopy invariance of $\E^G_n$]
The $\Top$-PROP $\E^{\smash{G}}_n$ is functorial in the topological group $G$ over $GL(n,\RR)$, since the same is true for $\REmb^{\smash{G}}_n$.\\
Furthermore, a weak equivalence $h:G\xto{\smash{\,\sim\,}} H$ in $\overGL{n}$ induces a weak equivalence $h_\ast:\E^{\smash{G}}_n\xto{\smash{\,\sim\,}}\E^{\smash{H}}_n$.
\end{remark}

The rest of this section is dedicated to comparing these new PROPs to prior ones. There is a canonical map of $\Top$-PROPs
\[ \E^{\smash{GL^+(1,\RR)}}_1\xTo{\sim}\Ass \]
which is a weak equivalence. Also, the inclusions in \seqref{equation:E->E^G} restrict to weak equivalences
\begin{align*}
\E_n&\xhookrightarrow{\smash{\ \sim\ }}\E^{\smash{GL(n,\RR)}}_n\\
\E^\oriented_n&\xhookrightarrow{\ \sim\ }\E^{GL^+(n,\RR)}_n
\end{align*}
In particular, $\E^{GL(n,\RR)}_n\simeq\D^{O(n)}_n$ and $\E^{GL^+(n,\RR)}_n\simeq\D^{SO(n)}_n$ (via the weak equivalences described at the end of section \sref{section:PROPs_embeddings}).

We can now also compare $\E^1_n$ with the little $n$-discs PROP, $\D_n$. Consider the closed subgroup $\RR^+$ of $GL(n,\RR)$ given by the positive multiples of the identity. Additionally, fix an isomorphism
\[ \qquad\phi:\interior D^n\xTo{\,\simeq\,}\RR^n\quad\text{in }\REmb^{\smash{\RR^+}}_n \]
Then conjugation with $\phi$ defines a map of $\Top$-PROPs
\[ F_\phi:\D_n\To\E^{\RR^+}_n \]
given by\vspace{-.2em}
\[ \hspace{3.5em} F_\phi(f)=\phi^{\amalg l}\circ f\circ\big(\phi^{\amalg k}\big)^{-1}\quad\text{for } f{\,\in\,}\D_n\hspace{-.1em}\big(\hspace{-.08em}(\hspace{-.1em}D^n\hspace{-.08em})^{\amalg k}\!,\hspace{-.15em}(\hspace{-.1em}D^n\hspace{-.08em})^{\amalg l}\big) \]
(note that $f$ restricts to an embedding of the interiors which preserves the $\RR^+$-structure, and thus determines a $\RR^+$-augmented embedding of the same name).
$F_\phi$ can be seen to be a weak equivalence, in part by observing that
\[ \REmb^{\RR^+}_n(\RR^n,\RR^n)\simeq\REmb^1_n(\RR^n,\RR^n)\simeq\bigast\simeq\D_n(D^n,D^n) \]
The weak equivalence $(1\hookrightarrow\RR^+)_\ast:\E^1_n\xto{\,\smash{\sim}\,}\E^{\smash{\RR^+}}_n$ now gives a zig-zag
\[ \D_n\xTo[\smash{F_\phi}]{\sim}\E^{\RR^+}_n\xlongleftarrow{\sim}\E^1_n \]
which shows that $\E^1_n\simeq\D_n$.

\section{Right modules over PROPs of augmented embeddings}\label{section:right_modules_E^f_n}

Fix (throughout this section) an object $G$ of $\overGL{n}$ (where $n\in\NN$).
As before (with $\E_n$ and $\E^\oriented_n$), a $n$-manifold with a $G$-structure determines a right module over $\E^G_n$, after passing to a cartesian closed category of spaces. Again, $\kappa:\Top\to k\Top$ denotes the inclusion of $\Top$ into a cartesian closed category of spaces.

\begin{definition}[right modules over $\kappa\E^G_n$]\label{definition:kappaE^f_n}
Let $M$ be a $n$-manifold with a $G$-structure.\\
The restriction of the $k\Top$-functor
\[ \Yoneda_{\kappa\REmb^G_n}(M)=\kappa\REmb^G_n(-,M):\op{\big(\kappa\REmb^G_n\big)}\To k\Top \]
to the category $\op{\big(\kappa\E^G_n\big)}$ is called
\[ \kappa\E^G_n[M]:\op{\big(\kappa\E^G_n\big)}\To k\Top \]
\end{definition}

Observe that the functoriality of the Yoneda embedding actually extends this construction to a $k\Top$-functor:
\[ \kappa\E^G_n[-]:\kappa\REmb^G_n\To\HomCatLarge[k\Top]{\op{\big(\kappa\E^G_n\big)}}{k\Top} \]
In addition, given a morphism $h:G\to H$ in $\overGL{n}$, there is an induced natural transformation (merely witnessing the fact that $h_\ast:\REmb^{\smash{G}}_n\to\REmb^{\smash{H}}_n$ is a $\Top$-functor)
\begin{diagram}[midshaft]
\op{\big(\kappa\E^G_n\big)}&\rTo{\ \op{(h_\ast)}\ }&\op{\big(\kappa\E^H_n\big)}\\
&\rdTo(1,2)_{\mathllap{\kappa\E^G_n[M]}}\ \twocell[.5em]{\kappa\varepsilon_h\quad}{.6em}{\mathclap{\smash{\ \ \xRightarrow{\smash{\hspace{1em}}}}}}{32}\ldTo(1,2)_{\mathrlap{\kappa\E^H_n[h_\ast M]}}\\
&k\Top
\end{diagram}
which is functorial in $h$ (i.e.\ these natural transformations compose appropriately) and $M$. This means, more precisely, that we have for each $h:G\to H$ as above a $k\Top$-natural transformation
\begin{diagram}
\HomCatLarge[k\Top]{\op{\big(\kappa\E^G_n\big)}}{k\Top}&\lTo{\ \HomCat[k\Top]{\op{\kappa(h_\ast)}}{k\Top}\;}&\HomCatLarge[k\Top]{\op{\big(\kappa\E^H_n\big)}}{k\Top}\\
\uTo{\kappa\E^G_n[-]}&\twocell[-.2em]{\kappa\varepsilon_h\ }{.2em}{\Longrightarrow}{0}&\uTo_{\kappa\E^H_n[-]}\\
\kappa\REmb^G_n&\rTo_{\kappa(h_\ast)}&\kappa\REmb^H_n
\end{diagram}
and these compose in the obvious manner when we stack two of these squares side by side (given two composable morphisms in $\overGL{n}$). Finally, we remark that $\kappa\varepsilon_h$ is an objectwise weak equivalence if $h:G\to H$ is a weak equivalence.

\comment{
Diagrams similar to \seqref{diagram:commuting_groups_kappaE} and \seqref{diagram:commuting_groups_kappaE[-]} can be constructed when comparing the functors $\kappa\E_n[-]$ and $\kappa\E^{\smash{GL(n,\RR)}}_n[-]$ (by replacing $h_\ast$ with the inclusions): there exists a natural transformation
\begin{diagram}[midshaft]
\HomCat[k\Top]{\op{(\kappa\E_n)}}{k\Top}&\lTo{\ \HomCat[k\Top]{\op{\kappa\,\inclusion}}{k\Top}\;}&\HomCatLarge[k\Top]{\op{\big(\kappa\E^{GL(n,\RR)}_n\big)}}{k\Top}\\
\uTo{\kappa\E_n[-]}&\twocell{}{0pt}{\quad\Longrightarrow}{0}&\uTo_{\kappa\E^{GL(n,\RR)}_n[-]}\\
\kappa\Emb_n&\rInto_{\kappa\,\inclusion}&\kappa\REmb^{GL(n,\RR)}_n
\end{diagram}
which is an objectwise weak equivalence.
}

\section{Internal presheaves on $\E^G_n$}\label{section:internal_presheaves_E^f_n}

Fix throughout this section an object $G$ of $\overGL{n}$ (where $n\in\NN$).

Having to first switch to a cartesian closed category of spaces is slightly unsatisfying (even if irrelevant from a homotopical viewpoint), since all our constructions so far are within $\Top$. We can remedy this situation --- and remain within $\Top$ --- by using internal presheaves in $\Top$. The actual intention of introducing these internal presheaves is to later establish a connection with the construction $\M(X)$ given in chapter \ref{chapter:sticky_conf}.

Recall that any $\Top$-category, $C$, gives rise to a category object, $\internal C$, in $\TOP$ (or $\Top$, if $\ob C$ happens to be small) with a discrete space of objects. Recall also that for an internal category $\mathcal{C}$, each object $x$ of $\mathcal{C}$ determines an internal (Yoneda) presheaf $\Yoneda_{\mathcal{C}}(x)=\mathcal{C}(-,x)$ on $\mathcal{C}$ (see example \sref{example:internal_Yoneda_presheaf}).

\begin{definition}[internal presheaves over $\internal\E^G_n$]\label{definition:internal_presheaves_E^f_n}
Let $M$ be a $n$-manifold with a $G$-structure.\\
The restriction of the internal $\TOP$-valued functor
\[ \Yoneda_{\internal\REmb^G_n}(M)=\internal\REmb^G_n(-,M):\op{\big(\internal\REmb^G_n\big)}\To\TOP \]
to the category $\op{\internal\E^G_n}$ (internal to $\Top$) is an internal $\Top$-valued presheaf called
\[ \internal\E^G_n[M]:\op{\big(\internal\E^G_n\big)}\To\Top \]
\end{definition}

\begin{remark}
If we push the internal presheaf $\internal\E^{\smash{G}}_n[M]$ along $\kappa:\Top\to k\Top$, we obtain an internal presheaf on $\kappa\internal\E^{\smash{G}}_n=\internal\kappa\E^{\smash{G}}_n$. Since $k\Top$ is cartesian closed, this internal presheaf induces a $k\Top$-enriched presheaf on $\kappa\E^{\smash{G}}_n$. This induced presheaf on $\kappa\E^{\smash{G}}_n$ is exactly $\kappa\E^{\smash{G}}_n[M]$, as defined in \sref{definition:kappaE^f_n}.
\end{remark}

Similar statements hold regarding the functoriality --- on $M$ and $G$ --- of $\internal\E^{\smash{G}}_n[M]$ as were made for $\kappa\E^{\smash{G}}_n[M]$ immediately following definition \sref{definition:kappaE^f_n}. More concretely, there are functors
\[ \internal\E^G_n[-]:\big(\REmb^G_n\big)_0\To\Cat(\Top)\left(\op{\big(\internal\E^G_n\big)},\Top\right) \]
where the right hand side denotes the category of internal $\Top$-valued pre\-sheaves on $\internal\E^{\smash{G}}_n$, and the subscript $0$ on the left hand side gives the underlying $\Set$-category. Also, given a morphism $h:G\to H$ in $\overGL{n}$, there is a canonical natural transformation (which is merely a vestige of the $\Top$-functor $h_\ast:\REmb^{\smash{G}}_n\to\REmb^{\smash{H}}_n$)
\begin{equation}\label{diagram:natural_transf_internal_group}
\begin{diagram}
\Cat(\Top)\big(\op{\internal\E^G_n},\Top\big)&\lTo{\scriptscriptstyle\,\Cat(\Top)(\op{\internal(h_\ast)},\Top)\,}&\Cat(\Top)\big(\op{\internal\E^H_n},\Top\big)\\
\uTo{\internal\E^G_n[-]}&\twocell[-.2em]{\internal\varepsilon_h\ }{.2em}{\Longrightarrow}{0}&\uTo_{\internal\E^H_n[-]}\\
\big(\REmb^G_n\big)_0&\rTo_{(h_\ast)_0}&\big(\REmb^H_n\big)_0
\end{diagram}
\end{equation}
and these compose adequately when we place two of these diagrams side by side (given two composable morphisms in $\overGL{n}$). As before, $\internal\varepsilon_h$ is an objectwise weak equivalence if $h:G\to H$ is a weak equivalence.

\begin{remark}\label{remark:internal_presheaf_E_n}
Note that we can define, for any $n$-dimensional manifold $M$, an analogous internal $\Top$-valued presheaf (with similar functoriality in $M\in\Emb_n$)
\[ \internal\E_n[M]:\op{(\internal\E_n)}\To\Top \]
for the case of the $\Top$-PROP $\E_n$.\\
The comparison between this and $\internal\E^{\smash{GL(n,\RR)}}_n[M]$ takes the form of an internal natural transformation
\begin{diagram}[midshaft]
\op{\big(\internal\E_n\big)}&\rTo{\ \op{\inclusion}\ }&\op{\big(\internal\E^{GL(n,\RR)}_n\big)}\\
&\rdTo(1,2)_{\mathllap{\internal\E_n[M]}}\ \twocell[.5em]{}{0pt}{\mathclap{\smash{\ \ \xRightarrow{\smash{\hspace{1em}}}}}}{32}\ldTo(1,2)_{\mathrlap{\internal\E^{GL(n,\RR)}_n[M]}}\\
&\Top
\end{diagram}
which is actually a weak equivalence of internal $\Top$-valued presheaves.\\
On the other hand, $q:\REmb^\bullet_n\to\Emb_n$ (expression \seqref{equation:q_Remb_Emb}) gives a natural transformation
\begin{equation}\label{diagram:E^G_n_E_n}
\begin{diagram}[midshaft]
\op{\big(\internal\E^G_n\big)}&\rTo{\ \op{q}\ }&\op{\big(\internal\E_n\big)}\\
&\rdTo(1,2)_{\mathllap{\internal\E^G_n[-]}}\ \twocell[.5em]{q\ \,}{.7em}{\mathclap{\smash{\ \ \xRightarrow{\smash{\hspace{1em}}}}}}{32}\ldTo(1,2)_{\mathrlap{\internal\E_n[-]}}\\
&\Top
\end{diagram}
\end{equation}
which ``absorbs'' the above $\internal\varepsilon_h$:
\[ \big(q\circ\op{(h_\ast)}\big)\cdot\internal\varepsilon_h=q \]
\end{remark}

\section{Homotopy type of the right modules over $\E^G_n$}\label{section:homotopy_modules_E^G_n}

Fix an object of $\overGL{n}$, $f:G\to GL(n,\RR)$ (where $n\in\NN$). As is now usual, it will be often denoted simply by $G$. Fix also $k\in\NN$.

We will end this chapter by describing the homotopy type of the internal presheaf $\internal\E^G_n[M]$. For that purpose, we will analyse separately each of the pieces $\REmb^{\smash{G}}_n((\RR^n)^{\smash{\amalg k}},M)$ (this is the piece over $(\RR^n)^{\smash{\amalg k}}$). For legibility, we will again make the identification
\[ (\RR^n)^{\smash{\amalg k}}=k\times\RR^n \]

We will mimic the constructions in section \sref{section:homotopy_type_E_n} for the space of augmented embeddings $\REmb^{\smash{G}}_n(R,M)$.

\comment{
\begin{definition}[principal $H$-space]\label{definition:fibred_principal_Gspace}
Let $H$ be a topological group, and $X$ a (left) $H$-space.\\
We say $X$ is a {\em fibred principal \(H\)-space} if the quotient map $X\to\quot{H}{X}$ is a Hurewicz fibration and a principal $H$-bundle.
\end{definition}

[[The actions in this construction are wrong, but can be corrected to make the map equivariant.]]
\begin{construction}[$G^{\times k}$-action on $\REmb^G_n(k\times\RR^n,M)$]
Note that $k\times\RR^n$ is a $n$-manifold with a $1$-structure (i.e.\ a parallelization). The corresponding canonical trivialization of $\Princ_G(TR)$
\[ \Princ_G(TR)=G\times R \]
induces a right action of $G$ on it (besides the known left action). This right action translates into a left action of $G$ on $\Map^{\smash{G}}(\Princ_G(TR),\Princ_G(TM))$, for any $n$-manifold $M$ with a $G$-structure. Similarly \[ \Frame(TR)=GL(n,\RR)\times R \] is trivialized and consequently $\Map^{\smash{GL(n,\RR)}}\!\big(\Frame(TR),\Frame(TM)\big)$ has an action of $GL(n,\RR)$, and therefore also an action of $G$ (by composing with the map $f:G\to GL(n,\RR)$).\\
Recall the diagram (over $\Map(R,M)$) from the definition \sref{definition:modified_embedding_space} of $\REmb^{\smash{G}}_n(R,M)$
\begin{diagram}[midshaft,height=2.2em]
&&\Map^G\!\big(\Princ_G(TR),\Princ_G(TM)\big)\\
&&\dTo^{f_\ast}\\
\Emb(R,M)&\rTo^{D}&\Map^{\smash{GL(n,\RR)}}\!\big(\Frame(TR),\Frame(TM)\big)
\end{diagram}
All the entries above and $\Map(R,M)$ have an action of $G$: the (left) action of $G$ on $\Emb(R,M)$ and $\Map(R,M)$ is derived from the right action of $G$ on $R$ given by
\[ R\times G\xTo{\id_R\times f}R\times GL(n,\RR)\xTo{\phi} R \]
where $\phi$ is the right action of $GL(n,\RR)$ on $R$ obtained by acting on each copy of $\RR^n$ separately.\\
Furthermore, all the maps in the diagram are $G$-equivariant, and all the maps from the entries in the diagram to $\Map(R,M)$ are $G$-equivariant. In conclusion, the homotopy pullback of the diagram over $\Map(M,N)$, $\REmb^{\smash{G}}_n(R,M)$, inherits a (left) $G$-action.\\
$\REmb^{\smash{G}}_n(R,M)$ is actually a $G$-space functorial in $M$ in $\REmb^{\smash{G}}_n$. It is also functorial in $G$ in a sense --- similar to the one of the previous section --- that the reader is invited to make precise.\\
In addition, the action of $G$ on $\Map^{\smash{G}}(\Princ_G(TR),\Princ_G(TM))$ makes it into a fibred principal $G$-space (as in the preceding definition \sref{definition:fibred_principal_Gspace}). It follows that $\REmb^{\smash{G}}_n(R,M)$ is itself a fibred principal $G$-space.
\end{construction}
}

For the next definition, recall the $G^{\smash{\boxtimes k}}$-structure on $\Conf(M,k)$ from example \sref{example:fk-structure_conf}: it gives us, in particular, a $G^{\smash{\times k}}$-principal bundle \[ \Princ_{G^{\smash{\times k}}}\big(T\Conf(M,k)\big)\To\Conf(M,k) \] which is the restriction of $\Princ_{G^{\times k}}\big(T(M^{\times k})\big)$ to $\Conf(M,k)$. Observe also that $\Princ_G(T(k\times\RR^n))$ is canonically trivialized (since $k\times\RR^n$ actually has a $1$-structure):
\[ \Princ_G(T(k\times\RR^n))=G\times k\times\RR^n \]

\begin{definition}[$G$-augmented derivative at the origins]
Let $M$ be a $n$-manifold with a $G$-structure.\\
Recall the induced $G^{\smash{\boxtimes k}}$-structure on $\Conf(M,k)$ from example \sref{example:fk-structure_conf}.\\
Consider the composition
\begin{align*}
\REmb^G_n(k\times\RR^n,M)&\xTo[\sref{definition:fderivative}]{\RDer^G}\Map^G\!\big(G\times k\times\RR^n,\Princ_G(TM)\big)\\
&\xTo{\restrict{(-)}{k}}\Map^G\!\big(G\times k,\Princ_G(TM)\big)\\
&\xTo{\simeq}\big(\Princ_G(TM)\big)^{\times k}\\
&\xlongequal[\sref{definition:product_GHstructure}]{\quad}\Princ_{G^{\times k}}\big(T(M^{\times k})\big)
\end{align*}
This composition induces a map
\[ \RDer^G_0:\REmb^G_n\big((\RR^n)^{\amalg k},M\big)\To\Princ_{G^{\times k}}\big(T\Conf(M,k)\big) \]
which we call the {\em \(G\)-augmented derivative at the origins}.
\end{definition}

\begin{remark}
There is an action of $G^{\smash{\times k}}$ on $\REmb^{\smash{G}}_n(k\times\RR^n,M)$ for which $\RDer^G_0$ is $G^{\smash{\times k}}$-equivariant.\\
This action makes $\REmb^{\smash{G}}_n(k\times\RR^n,M)$ into a principal $G^{\smash{\times k}}$-space such that the map to the quotient by the group action is a fibration.
\end{remark}

\begin{remark}[naturality of $\RDer^G_0$]\label{remark:naturality_RDer}
The source of $\RDer^G_0$ has the functoriality with respect to $G$ and $M$ which is inherited from the functor $\REmb^\bullet_n:\overGL{n}\to\Top\dash\CAT$. The target of $\RDer^G_0$ acquires similar functoriality.\\
The map $\RDer^G_0$ is natural with respect to that functoriality of the source and target. We leave it to the reader to make this assertion precise.
\end{remark}

\begin{proposition}\label{proposition:RDer^G_0_equivalence}
Let $M$ be a $n$-manifold with a $G$-structure.\\
The map
\[ \RDer^G_0:\REmb^G_n\big((\RR^n)^{\amalg k},M\big)\To\Princ_{G^{\times k}}\big(T\Conf(M,k)\big) \]
is a homotopy equivalence and a Hurewicz fibration.
\end{proposition}
\begin{proof}[Sketch of proof]
For brevity of notation, let us define:
\[ R\defeq k\times\RR^n=(\RR^n)^{\amalg k} \]
To verify that the map is a fibration, proposition \sref{proposition:D_0_equivalence} and the remarks in \sref{remark:homotopy_properties_REmb} are useful:
using that the maps
\[ \Map^G\!\big(\Princ_G(TR),\Princ_G(TM)\big)\To\Map(R,M) \] 
\[ D_0:\Emb(R,M)\To\Frame\big(T\Conf(M,k)\big) \]
are fibrations, we can prove (through some simple manipulation of principal $G$-bundles) that the natural map
\[ \Map^G\!\big(\Princ_G(TR),\Princ_G(TM)\big)\underset{\mathclap{\Map(R,M)}}{\times}\,\Emb(R,M)\To\Princ_{G^{\times k}}\big(T\Conf(M,k)\big) \]
is a fibration. Composing with the fibration (expression \seqref{equation:fibration_REmb})
\[ \REmb^G(R,M)\To\Map^G\!\big(\Princ_G(TR),\Princ_G(TM)\big)\underset{\mathclap{\Map(R,M)}}{\times}\,\Emb(R,M) \]
gives the map $\RDer^G_0$, which is consequently a Hurewicz fibration.

In order to prove that $\RDer^G_0$ is a homotopy equivalence, first we construct a homotopy equivalence from $\REmb^{\smash{G}}_n(k\times\RR^n,M)$ to the homotopy pullback, $X$, of
\begin{equation}\label{diagram:small_diagram_Conf(M,k)}
\begin{diagram}[h=2.1em]
&&\Princ_{G^{\times k}}\big(T(M^{\times k})\big)\\
&&\dTo{f_\ast}\\
\Frame(T\Conf(M,k))&\rInto&\Frame\big(T(M^{\times k})\big)
\end{diagram}
\end{equation}
This diagram is just the result of taking the diagram in \sref{definition:modified_embedding_space} which defines $\REmb^{\smash{G}}_n(k\times\RR^n,M)$, and substituting each of the entries by (smaller) equivalent spaces that they map to (one of those substitutions is given by proposition \sref{proposition:D_0_equivalence}; see diagram below). Thus we get a natural objectwise homotopy equivalence of diagrams from the original one (from definition \sref{definition:modified_embedding_space} applied to the present case) to the one displayed above (recall that $R\defeq k\times\RR^n$):
\begin{equation}\label{diagram:simplification_hopb_REmb}
\begin{diagram}[midshaft,h=2.2em,balance,hug]
&&\Map^G\!\big(\Princ_G(TR),\Princ_G(TM)\big)\\
&&\dTo{\raisebox{-1em}{$\scriptstyle f_\ast$}}&\rdTo^{\sim}_{\restrict{(-)}{k}}\\
&&&&\Princ_{G^{\times k}}\big(T(M^{\times k})\big)\\
\Emb(R,M)&\rTo{\ \ D\ \ }&\Map^{\smash{GL(n,\RR)}}\!\!\big(\Frame(TR),\Frame(TM)\big)&&\dTo{\raisebox{-1em}{$\scriptstyle f_\ast$}}\\
&\rdTo[rightshortfall=.7em]^{\sim}_{D_0}&&\rdTo^{\sim}_{\restrict{(-)}{k}}\\
&&\Frame(T\Conf(M,k))&\rInto&\Frame\big(T(M^{\times k})\big)
\end{diagram}
\end{equation}
From this we derive the desired homotopy equivalence of homotopy pullbacks
\[ u:\REmb^{\smash{G}}_n(k\times\RR^n,M)\xTo{\;\sim\;} X \]
(recall from \sref{remark:homotopy_properties_REmb} that $\REmb^{\smash{G}}_n(k\times\RR^n,M)$ is homotopy equivalent to the usual homotopy pullback).

Let us now augment diagram \seqref{diagram:small_diagram_Conf(M,k)} with more arrows:
\begin{diagram}[h=2.1em]
\Princ_{G^{\times k}}(T\Conf(M,k))&\rInto{\qquad}&\Princ_{G^{\times k}}\big(T(M^{\times k})\big)\\
\dTo{f_\ast}&&\dTo{f_\ast}\\
\Frame(T\Conf(M,k))&\rInto&\Frame\big(T(M^{\times k})\big)\\
\dTo{\proj}&&\dTo{\proj}\\
\Conf(M,k)&\rInto&M^{\times k}
\end{diagram}
The bottom square is a pullback square. It is therefore a homotopy pullback square given that
\[ \proj:\Frame\big(T(M^{\times k})\big)\To M^{\times k} \]
is a Hurewicz fibration. Similarly, the big outer square is cartesian, and therefore homotopy cartesian since
\[ \proj\circ f_\ast:\Princ_{G^{\times k}}\big(T(M^{\times k})\big)\To M^{\times k} \]
is a Hurewicz fibration. Consequently, the top square is homotopy cartesian as well, which provides a homotopy equivalence
\[ v:\Princ_{G^{\times k}}(T\Conf(M,k))\xTo{\;\sim\;} X \]
Observe that $v\circ\RDer^{\smash{G}}_0\simeq u$ (the two maps are {\em not} equal): this essentially amounts to a chase around diagram \seqref{diagram:simplification_hopb_REmb}, using the definition of $G$-aug\-ment\-ed embedding spaces. In conclusion, $\RDer^{\smash{G}}_0$ is a homotopy equivalence.
\end{proof}





\chapter{Stratified spaces}\label{chapter:stratified_spaces}

\section*{Introduction}

This chapter deals with the notion of stratified space, which is essentially a space equipped with a filtration. The goal is to develop the basis for applications to the construction $\M(X)$ from chapter \ref{chapter:sticky_conf}. More precisely, we define the category of filtered paths on a stratified space, and compare it with the construction $\M(X)$. We also use a well behaved class of stratified spaces --- the homotopically stratified spaces --- to compute the homotopy type of the morphism spaces in $\M(M)$ for $M$ a manifold.

\section*{Summary}

This chapter is mostly expository, and gives a convenient theory of stratified spaces.

The first section, \sref{section:stratified_spaces}, gives a naive definition of stratified space (also called a filtered space elsewhere) and a few important examples. Section \sref{section:filtered_paths} defines the space of filtered Moore paths on a stratified space. Using this, it then proceeds to associate to each stratified space $X$ the (internal) topological category of filtered paths in $X$, $\smash{\popathcat}(X)$. Notions similar to this exist in the literature, and are often called the ``exit-path category'' (see, for example, \cite{Woolf}, \cite{Treumann}, or the appendix A to \cite{Lurie1}).

Section \sref{section:strong_popath} puts a useful new topology on the space of filtered Moore paths, resulting in the strong space of filtered Moore paths. Section \sref{section:sticky_filtered} recovers the category $\M(X)$ from categories of filtered paths, and in particular gives a description of the morphism spaces of $\M(X)$ in terms of spaces of filtered paths.

Now starts the journey to define a convenient class of stratified spaces, and to give tools to analyze them homotopically. Essentially, the new concepts discussed in the remaining sections were originally introduced in the article \cite{Quinn} of Frank Quinn.

Section \sref{section:holink_spaces} defines the notion of homotopy links on a stratified space $X$. It also gives several properties of the space of homotopy links of (the stratified space associated to) a pair of spaces. These properties are used to analogize the space of homotopy links on a pair of spaces with the normal sphere bundle of an embedding of manifolds.

Section \sref{section:tameness} discusses the notion of tame subspace, under which the space of homotopy links of a pair is particularly well behaved. Section \sref{section:homotopically_stratified_spaces} finally defines a convenient class of stratified spaces: the homotopically stratified spaces. These allow for a vast simplification in the homotopical analysis of filtered paths on a stratified space: the theorem of David Miller from \cite{Miller} essentially says that the inclusion of a certain space of homotopy links into a corresponding space of filtered paths is a homotopy equivalence. We also give a variation on this result using the strong space of filtered paths which will be necessary in the next chapter.

Finally, section \sref{section:application_stratified_M(M)} applies the results from the preceding sections to analyze the homotopy type of the morphism spaces of $\M(M)$, for $M$ a manifold.

\section{Stratified spaces}\label{section:stratified_spaces}

In this section we introduce the notion of stratified spaces as spaces over a partially ordered set, appropriately topologized.

\begin{notation}
Given a partially ordered set $A$, we will use the following abbreviation
\[ \clop{a}{+\infty}\defeq\set{b\in A\suchthat a\leq b} \]
for $a\in A$.
\end{notation}

\begin{definition}[topology on partially ordered set]
Let $A$ be a partially ordered set.\\
We define the topological space $\potop(A)$ to be the set $A$ equipped with the smallest topology such that $\clop{a}{+\infty}$ is closed for all $a\in A$.
\end{definition}

\begin{remark}
The construction above extends in the obvious manner to a functor from the category of partially ordered sets, $\poset$, to $\Top$
\[ \potop:\poset\To\Top \]
\end{remark}

\begin{definition}[stratified space]
The {\em category of stratified topological spaces}, $\filtTop$, is the over-category
\[ \filtTop\defeq\Top/\potop \]
where
\begin{enum}
\item the objects are triples $\big(X,A,X\to\potop(A)\big)$, with $X$ a topological space and $A$ a partially ordered set;
\item the morphisms $(X,A,f)\to(Y,B,g)$ are pairs $(u:X\to Y,v:A\to B)$ such that the diagram
\begin{diagram}[midshaft,h=2.1em]
X&\rTo{f}&\potop(A)\\
\dTo{u}&&\dTo_{\potop(v)}\\
Y&\rTo{g}&\potop(B)
\end{diagram}
commutes
\end{enum}
\end{definition}

\begin{notation}\label{notation:stratified_space_underlying_space}
An element $(X,A,f)$ of $\filtTop$ will also be called a {\em stratification} on its underlying space, $X$.\\
We will often denote a stratification on $X$ simply by $X$, if the stratification is clear from context (e.g.\ if there is a canonical one).
\end{notation}

\begin{notation}[strata and filtration stages]
Given a stratified space $\big(X,A,f:X\to\potop(A)\big)$, we use the following abbreviations
\begin{align*}
X_a&\defeq f^{-1}(\set{a})\\
X_{\geq a}&\defeq f^{-1}\big(\clop{a}{+\infty}\big)
\end{align*}
for $a\in A$. We call $X_a$ a {\em stratum} of $X$, and $X_{\geq a}$ a {\em filtration stage} of $X$.
\end{notation}

\begin{remark}\label{remark:attention_stratified}
The data for a stratified space $X$ is equivalent to assigning a closed set $X_{\geq a}$ to each element $a\in A$, in such a way that
\[ X_{\geq a}\subset X_{\geq b}\qquad\text{if } b\leq a \]
for $a,b\in A$.\\
In particular, the map $a\Mapsto X_{\geq a}$ is order reversing. This is opposite the conventional definition of stratified spaces as spaces over a partially ordered set. The reason for this disparity has to do with convenience in our principal example 
\sref{example:principal_example_stratified_space} of a stratified space.
\end{remark}

\begin{example}[pair of spaces]\label{example:pair_stratified}
A simple example of a stratified space is given by a pair of topological spaces $(X,Y)$ where $Y$ is a closed subspace of $X$. The element of $\filtTop$ associated to such a pair is $\big(X,(\set{0,1},\leq),f\big)$ where
\begin{align*}
f(Y)&=\set{1}\\
f(X\setminus Y)&=\set{0}
\end{align*}
We denote this stratified space by $\langle\overrightarrow{X,Y}\rangle$ in order to avoid confusion.
\end{example}

\begin{example}[intervals in $\RR$]\label{example:intervals_stratified}
Any interval $J$ in $\RR$ --- with the subspace topology --- can be canonically upgraded to a stratified space $\big(J,(J,\leq),\id_J\!\big)$.\\
Examples of this are given by $I=\clcl{0}{1}$ and $\clop{0}{+\infty}$. According to notation \sref{notation:stratified_space_underlying_space}, we will denote the corresponding stratified spaces simply by $I$ and $\clop{0}{+\infty}$, respectively.
\end{example}

\begin{example}[mapping spaces]\label{example:principal_example_stratified_space}
Let $X$, $Y$ be topological spaces with $Y$ Hausdorff.\\
There is a canonical stratification on $\Map(X,Y)$ (with the compact-open topology) whose underlying partially ordered set is $(\text{equiv(X)},\subset)$, the set of all equivalence relations on $X$ equipped with the inclusion partial order.\\
The stratified space associated with $\Map(X,Y)$ is then \[ \big(\!\Map(X,Y),(\text{equiv}(X),\subset),p\big) \] where for any map $g:X\to Y$, $p(g)$ is the equivalence relation induced on $X$ by $g$, i.e.
\[ \qquad(x,y)\in p(g)\;\Longleftrightarrow\; g(x)=g(y)\qquad\text{for }x,y\in X  \]
Again, as per notation \sref{notation:stratified_space_underlying_space}, this stratified space will be designated simply by $\Map(X,Y)$.\\
For future reference, note that an example of this stratified space is given by the product $Y^{\times S}=\Map(S,Y)$ for any set $S$.
\end{example}

\begin{remark}
The preceding construction of a stratification on $\Map(X,Y)$ (for $X$ a space and $Y$ a Hausdorff space) extends canonically to a functor
\[ \Map:\op{\Top}\times\mathcal{H}\Top_{inj}\To\filtTop \]
where $\mathcal{H}\Top_{inj}$ is the subcategory of $\Top$ generated by the injective continuous maps between Hausdorff spaces.
\end{remark}

\begin{definition}[underlying space of a stratified space]
We define the {\em underlying space functor} to be the canonical projection
\[ u:\filtTop\To\Top \]
\end{definition}

\begin{definition}[space of stratified maps]
Let $X$, $Y$ be stratified spaces.\\
The {\em space of stratified maps}, $\smash{\poMap(X,Y)}$, is the subspace of $\Map(uX,uY)$ constituted by the elements in the image of
\[ u:\filtTop(X,Y)\To\Top(uX,uY) \]
The elements of $\smash{\poMap(X,Y)}$ will be called {\em stratified maps}.
\end{definition}

\comment{
\begin{remark}[functoriality of space of stratified maps]
The above definition of the space of stratified maps extends to a functor
\[ \poMap:\op{\filtTop}\times\\filtTop\To\Top \]
\end{remark}
}

\begin{example}[space of filtered paths]
Recall that $I$ is canonically a stratified space (example \sref{example:intervals_stratified}).\\
Given a stratified space $(X,A,f)$, we can then consider the space $\smash{\poMap(I,X)}$, which we will call the {\em space of filtered paths in \(X\)}.\\
Observe that $\smash{\poMap(I,X)}$ is the subspace of $\Map(I,X)$ constituted by the paths $\gamma:I\to X$ such that
\[ f\circ\gamma:I\To A \]
is order preserving.
\end{example}

With this example in mind, it is sensible to look for an analogous space of filtered Moore paths --- recall the concept of Moore paths from section \sref{section:terminology_Moore_path_space}. Such a space will be a focus of the next section.

\section{Spaces and categories of filtered paths}\label{section:filtered_paths}

Being in possession of the space of filtered paths $\smash{\poMap(I,X)}$, we now turn to define the analogous space of filtered Moore paths.

\begin{definition}[space of filtered Moore paths]
Let $X$ be a stratified space.\\
The {\em space of filtered Moore paths in \(X\)}, $\smash{\popath(X)}$, is defined to be the pullback of ($\clop{0}{+\infty}$ is canonically stratified as in example \sref{example:intervals_stratified})
\begin{diagram}[h=2.3em]
&&\poMap(\clop{0}{+\infty},X)\times\clop{0}{+\infty}\\
&&\dInto_{\inclusion}\\
H(X)&\rInto{\quad\inclusion\quad}&\Map(\clop{0}{+\infty},X)\times\clop{0}{+\infty}
\end{diagram}
that is, the subspace of $H(X)$ constituted by the elements $(\gamma,\tau)\in H(X)$ such that $\gamma:\clop{0}{+\infty}\to X$ is a stratified map.
\end{definition}

\begin{remark}[functoriality of space of filtered Moore paths]
The above definition of the space of filtered Moore paths extends to a functor
\[ \popath:\filtTop\To\Top \]
\end{remark}

\begin{remark}[maps on space of filtered Moore paths]\label{remark:maps_popath}
We have maps
\begin{align*}
s&:\popath(X)\To X\\
t&:\popath(X)\To X\\
l&:\popath(X)\To\clop{0}{+\infty}\\
i&:X\To\popath(X)\\
cc&:\popath(X)\,\underset{X}{\smash{_t\times_s}}\,\popath(X)\To\popath(X)
\end{align*}
obtained by restricting the maps of the same name on $H(X)$ (consult \sref{definition:maps_Moore_path_space} and \sref{definition:concat_Moore_paths}).
\end{remark}

\begin{definition}[subspaces of filtered Moore paths]
Let $X$ be a stratified space. Let $A$, $B$ be subspaces of $X$.\\
We define the space $\smash{\popath(X;A,B)}$ to be the pullback in $\Top$ of
\begin{diagram}[h=2.1em]
&&\popath(X)\\
&&\dTo{(s,t)}\\
A\times B&\rInto{\qquad}&X\times X
\end{diagram}
We call $\smash{\popath(X;A,B)}$ the {\em space of filtered paths in \(X\) starting in \(A\) and ending in \(B\)}.
\end{definition}

\begin{remark}
The space $\smash{\popath(X;A,B)}$ is the subspace of $x\in\smash{\popath(X)}$ such that $s(x)\in A$ and $t(x)\in B$.\\
This subspace has natural source and target maps
\begin{align*}
s&:\popath(X;A,B)\To A\\
t&:\popath(X;A,B)\To B
\end{align*}
Additionally, concatenation defines a map
\[ cc:\popath(X;A,B)\,\underset{B}{\smash{_t\times_s}}\,\popath(X;B,C)\To\popath(X;A,C) \]
\end{remark}

Unlike the case of $H(X)$, the source map on $\smash{\popath}(X)$ is not a fibration in general. However, it becomes a fibration by restricting to paths which start at a fixed stratum of $X$.

\begin{proposition}
Let $(X,A,f)$ be a stratified space, and $a,b\in A$.\\
Let $Y$ be a subspace of $X$.\\
The maps
\begin{align*}
s&:\popath(X;X_a,Y)\To X_a\\
t&:\popath(X;Y,X_b)\To X_b\\
(s,t)&:\popath(X;X_a,X_b)\To X_a\times X_b
\end{align*}
are Hurewicz fibrations.
\end{proposition}

With the space of filtered Moore paths in hand, we can now define a filtered path category of a stratified space, in analogy with the path category of a space defined in example \sref{example:path_category}.

\begin{definition}[filtered path category of stratified space]
Let $X$ be a stratified space.\\
We define the {\em filtered path category of \(X\)} to be the internal category in $\Top$, $\smash{\popathcat(X)}$, given by (recall observation \sref{remark:maps_popath})
\begin{enum}
\item the {\em object} space is \[ \ob\big(\popathcat(X)\big)\defeq X \]
\item the {\em morphism} space is \[ \mor\big(\popathcat(X)\big)\defeq\popath(X) \]
\item the {\em source} map of $\smash{\popathcat(X)}$ is \[ s:\smash{\popath(X)}\To X \]
\item the {\em target} map of $\smash{\popathcat(X)}$ is \[ t:\smash{\popath(X)}\To X \]
\item the {\em identity} map is \[ i:X\To\smash{\popath(X)} \]
\item the {\em composition} map is\[ cc:\popath(X)\,\underset{X}{\smash{_t\times_s}}\,\popath(X)\To\popath(X) \]
\end{enum}
\end{definition}

\begin{remark}[functoriality of filtered path category]
The construction of the filtered path category extends to a functor
\[ \popathcat:\filtTop\To\Cat(\Top) \]
\end{remark}

\begin{remark}\label{remark:morphism_space_popathcat}
Given $x,y\in X$, the corresponding morphism space of the discretization $\disccat{\smash{\popathcat}}(X)$ is
\[ \disccat{\popathcat}(X)(x,y)=\popath(X;\set{x},\set{y}) \]
\end{remark}

\section{Strong spaces of filtered paths}\label{section:strong_popath}

In this section, we will define a different topology on $\smash{\popath}(X)$ which will be used later. First we introduce functions on $\smash{\popath}(X)$ which are very useful in practice.

\begin{definition}[time of entrance into filtration stage]
Let $(X,A,f)$ be a stratified space, and $a\in A$.\\
We define the {\em time of entrance into \(X_{\geq a}\)}, $e_a$, to be the function
\[ \func{e_a}{\popath(X)}{\clop{0}{+\infty}}{(\gamma,\tau)}{\min\set{\tau,\inf\big(\gamma^{-1}(X_{\geq a})\big)}} \]
\end{definition}

Unfortunately, the time of entrance function $e_a$ is {\em not} continuous except in rather trivial cases. We can, however, change the topology to make it continuous.

\begin{definition}[strong space of filtered paths]
Let $(X,A,f)$ be a stratified space.\\
We define the {\em strong space of filtered Moore paths in \(X\)}, $\smash{\popathstrong(X)}$, to be the underlying set of $\smash{\popath(X)}$ equipped with the smallest topology for which
\begin{enum}
\item the identity function \[ \id:\smash{\popathstrong(X)}\To\smash{\popath(X)} \] is continuous, and
\item the function \[ e_a:\smash{\popathstrong(X)}\To\clop{0}{+\infty} \] is continuous, for each $a\in A$.
\end{enum}
\end{definition}

\begin{remark}[maps on strong space of filtered Moore paths]\label{remark:maps_popathstrong}
The maps in observation \sref{remark:maps_popath} induce continuous functions
\begin{align*}
s&:\popathstrong(X)\To X\\
t&:\popathstrong(X)\To X\\
l&:\popathstrong(X)\To\clop{0}{+\infty}\\
cc&:\popathstrong(X)\,\underset{X}{\smash{_t\times_s}}\,\popathstrong(X)\To\popathstrong(X)
\end{align*}
\end{remark}

\begin{notation}[subspaces $\smash{\popathstrong(X;A,B)}$]
Let $X$ be a stratified space. Let $A$, $B$ be subspaces of $X$.\\
The subspace of $\smash{\popathstrong(X)}$ given by the pullback of
\begin{diagram}[midshaft,h-2.1em]
&&\popath(X;A,B)\\
&&\dInto{\inclusion}\\
\popathstrong(X)&\rTo{\ \id\ }&\popath(X)
\end{diagram}
is denoted $\smash{\popathstrong(X;A,B)}$. It is, equivalently, the subspace of $x\in\smash{\popathstrong(X)}$ such that $s(x)\in A$ and $t(x)\in B$.
\end{notation}

\begin{remark}[strong filtered path category]
One could define, for each stratified space $X$, a {\em strong filtered path category} --- analogous to $\smash{\popathcat}(X)$ --- whose space of morphisms would be $\smash{\popathstrong}(X)$.
\end{remark}

\section{Application: sticky homotopies from filtered paths}\label{section:sticky_filtered}

The next proposition says that filtered paths can recover the notion of sticky homotopies in many cases of interest. Recall for that purpose the functor
\[\scriptsize \stcat:\Top_{inj}\xTo{\overline{\Map}}\CAT^{(2)}_\cartesian(\op{\FinSet},\Top)\xTo{\stickypathcat_{\op{\FinSet}}}\HomCat{\op{\FinSet}}{\Cat(\Top)} \]
from construction \sref{definition:stickypathcat_Map}.

\begin{definition}
Define the functor
\[ \overline{\Map}_{strat}:\mathcal{H}\Top_{inj}\To\HomCatlarge{\op{\FinSet}}{\filtTop} \]
as the composition
\[ \mathcal{H}\Top_{inj}\xTo[\sref{example:principal_example_stratified_space}]{\,\Map\,}\HomCatlarge{\op{\Top}}{\filtTop}\xTo{\HomCat{\op{\inclusion}}{\filtTop}}\HomCatlarge{\op{\FinSet}}{\filtTop} \]
\end{definition}

The following proposition is an exercise with the definition of categories of sticky homotopies and the definition of filtered path categories.

\begin{proposition}
There is a unique natural isomorphism
\[ \alpha:\restrict{\stcat}{\mathcal{H}\Top_{inj}}\xTo{\ \simeq\ }\HomCatLARGE{\op{\FinSet}}{\popathcat}\circ\overline{\Map}_{strat} \]
such that for any Hausdorff space $X$ the equation
\[ \ob\circ\alpha_X=\id_{\overline{\Map}(X)_1} \]
holds and the diagram
\begin{diagram}[midshaft]
\mor\circ\stcat(X)&\rTo{\ \mor\circ\alpha_X\ }&\mor\circ\popathcat\circ\overline{\Map}_{strat}(X)\\
\dTo{\inclusion}&&\dEqual\\
H\circ\overline{\Map}(X)_1&\lTo{\inclusion}&\popath\circ\overline{\Map}_{strat}(X)
\end{diagram}
commutes.
\end{proposition}

The following construction uses this isomorphism to calculate the morphism spaces of $\M(X)$ in terms of spaces of filtered paths.

\begin{construction}[morphisms of $\M(X)$ as filtered paths]\label{construction:mor_M(X)_popath}
Let $X$ be a Hausdorff topological space.\\
From the isomorphism $\alpha$ above we immediately get a canonical isomorphism
\[ \Groth\big(\disccat{\alpha}_X\big):\Groth\big(\disccat{\stcat}(X)\big)\xTo{\simeq}\Groth\Big(\disccat{\popathcat}\circ\overline{\Map}_{strat}(X)\Big) \]
Recall from definition \sref{definition:sticky_configurations} that $\M(X)$ is a full subcategory of the source of this isomorphism. In particular, we get an isomorphism
\[ \Groth\big(\disccat{\alpha}_X\big):\M(X)(a,b)\xTo{\simeq}\Groth\Big(\disccat{\popathcat}\circ\overline{\Map}_{strat}(X)\Big)(a,b) \]
for any injections $a:S\to X$ and $b:S'\to X$, where $S$, $S'$ are finite sets.\\
By observation \sref{remark:morphism_space_popathcat}, and proposition \sref{proposition:description_Groth_discrete}, the morphism space on the right is
\begin{align*}
\qquad\coprod_{\mathclap{f\in\FinSet(S,S')}}\ \ \popath\big(X^{\times k};\set{a},\set{b\circ f}\big)&\cong\popath\Big(X^{\times k};\set{a},\set{b\circ f:f\in\Set(S,S')}\Big)\\
&=\popath\Big(X^{\times k};\set{a},b\circ\Set(S,S')\Big)
\end{align*}
where the first (canonical) homeomorphism is a consequence of $b$ being injective, and the bottom equality results from an abbreviation of the notation.\\
In conclusion, we have constructed a canonical homeomorphism
\begin{equation}\label{equation:mor_M(X)_popath}
\M(X)(a,b)\cong\popath\Big(X^{\times k};\set{a},b\circ\Set(S,S')\Big)
\end{equation}
for any $a:S\to X$ and $b:S'\to X$ injective ($S$ and $S'$ being finite sets).\\
For simplicity, we will use this homeomorphism to identify its source with its target. We will therefore occasionally write the two spaces as equal.
\end{construction}

\section{Homotopy link spaces}\label{section:holink_spaces}

In order to define a class of ``good'' stratified spaces, we will study spaces of ``homotopy links'' in this section.

\begin{definition}[space of homotopy links]
Let $(X,A,f)$ be a stratified space.\\
The {\em space of homotopy links in \(X\)}, $\holink(X)$, is the subspace of $\smash{\popath}(X)$ given by
\[ \holink(X)\defeq\set{(\gamma,\tau)\in\popath(X)\suchthat f\circ\restrict{\gamma}{\clop{0}{\tau}}\text{\, is constant}} \]
\end{definition}

\begin{remark}
Intuitively, a homotopy link is a filtered (Moore) path which remains in the same stratum until the last possible moment.
\end{remark}

\begin{remark}
Observe that the space of homotopy links $\holink(X)$ is also a subspace of $\smash{\popathstrong}(X)$. In fact, the inclusion
\[ \holink(X)\Into\popathstrong(X) \]
is a closed subspace (unlike the inclusion into $\smash{\popath}(X)$).
\end{remark}

\begin{remark}[functoriality of space of homotopy links]
The space of homotopy links of a stratified space extends to a functor
\[ \holink:\filtTop\To\Top \]
\end{remark}

\begin{definition}[subspaces of homotopy links]
Let $X$ be a stratified space. Let $A$, $B$ be subspaces of $X$.\\
We define the {\em space of homotopy links in \(X\) starting in \(A\) and ending in \(B\)}, $\holink(X;A,B)$, to be the subspace
\[ \holink(X)\cap\popath(X;A,B) \]
of $\smash{\popath}(X)$.\\
Equivalently, $\holink(X;A,B)$, is the subspace of $\holink(X)$ given by
\[ \holink(X;A,B)\defeq\set{x\in\holink(X)\suchthat s(x)\in A,\, t(x)\in B} \]
\end{definition}

\begin{proposition}\label{proposition:holink_s_fibration}
Let $(X,A,f)$ be a stratified space, and $a\in A$.\\
Let $Y$ be a subspace of $X$.\\
The map
\[ s:\holink(X;X_a,Y)\To X_a \]
is a Hurewicz fibration.
\end{proposition}

We will illustrate the role of the space of homotopy links with the case of a pair of spaces.

\begin{remark}[homotopy link space for a pair of spaces]
Suppose we are given a pair of spaces $(X,Y)$ where $Y$ is a closed subspace of $X$. Via example \sref{example:pair_stratified}, we get a stratified space $\langle\overrightarrow{X,Y}\rangle$ with two strata: $Y$ and $X\setminus Y$.\\
Thus we can consider, for example, the homotopy link space
\[ \holink\big(\langle\overrightarrow{X,Y}\rangle;X\setminus Y,Y\big) \]
which we will call the {\em homotopy link space of the pair \((X,Y)\)}.
\end{remark}

What does the homotopy link space
\[ \holink\big(\langle\overrightarrow{X,Y}\rangle;X\setminus Y,Y\big) \]
represent? Let us put forward that it aims to give a generalization of the notion of normal sphere bundle of a closed embedding of manifolds. A few propositions (stated without proof) will partly justify this statement.

\begin{proposition}[shrinking homotopy links]\label{proposition:holink_shrink_neigh}
Let $X$ be a metrizable topological space, $Y$ a closed subspace of $X$, and $U$ a neighborhood of $Y$ in $X$.\\
Then the inclusion
\[ \holink\big(\langle\overrightarrow{U,Y}\rangle;U\setminus Y,Y\big)\Into\holink\big(\langle\overrightarrow{X,Y}\rangle;X\setminus Y,Y\big) \]
is a homotopy equivalence over $Y$ (both spaces map to $Y$ via the target map, $t$).\\
Furthermore, the map
\[ t:\holink\big(\langle\overrightarrow{X,Y}\rangle;X\setminus Y,Y\big)\To Y \]
is a Hurewicz\comment{ (or Serre, if $X$ is not necessarily metrizable)} fibration if and only if
\[ t:\holink\big(\langle\overrightarrow{U,Y}\rangle;U\setminus Y,Y\big)\To Y \]
is a Hurewicz\comment{ (or Serre, if $X$ is not necessarily metrizable)} fibration.
\end{proposition}

\begin{remark}
The condition that $X$ be metrizable is only necessary to guarantee that the space of homotopy links is metrizable, and therefore paracompact.\\
The proof of the above proposition reduces essentially to finding a map
\[ \phi:\holink\big(\langle\overrightarrow{X,Y}\rangle;X\setminus Y,Y\big)\To\clop{0}{+\infty} \]
such that
\[ \gamma\big(\phi(\gamma,\tau)\big)\in U\setminus Y \]
for all $(\gamma,\tau)\in\holink\big(\langle\overrightarrow{X,Y}\rangle;X\setminus Y,Y\big)$. Such a map exists by a standard partition of unity argument.\\
The metrizability of $X$ will be assumed repeatedly with a similar purpose.
\end{remark}

\begin{proposition}[factorization of homotopy links]\label{proposition:fact_holink}
Let $X$ be a metrizable topological space, $Y$ a closed subspace of $X$, and $U$ a neighborhood of $Y$ in $X$.\\
Let $P$ denote the pullback of
\begin{diagram}[h=2.1em,midshaft,balance]
&&\mathclap{\holink\big(\langle\overrightarrow{U,Y}\rangle;U\setminus Y,Y\big)}\\
&&\dTo{s}\\
\qquad\holink\big(\langle\overrightarrow{X,Y}\rangle;X\setminus Y,U\setminus Y\big)&\rTo{\quad t\quad}&U\setminus Y
\end{diagram}
Then concatenation of filtered paths gives a natural map
\[ \concat:P\To\holink\big(\langle\overrightarrow{X,Y}\rangle;X\setminus Y,Y\big) \]
which is a homotopy equivalence over $(X\setminus Y)\times Y$.
\end{proposition}

\begin{remark}
Propositions \sref{proposition:holink_shrink_neigh} and \sref{proposition:fact_holink} have obvious analogues which hold for spaces of filtered paths.
\end{remark}

\begin{definition}[fibrewise open cone]
Let $f:X\to Y$ be a map of topological spaces.
We define the {\em fibrewise open cone} of $f$, $C_Y f$, to be the pushout of
\begin{diagram}[h=2.1em]
X\,&\cong X\times\set{0}&\rInto{\ \ \quad}&X\times\clop{0}{+\infty}\\
\dTo{f}\\
Y
\end{diagram}
\end{definition}

\begin{remark}
Note that $Y$ includes as a closed subspace of $C_Y f$, and $C_Y f$ naturally projects to $Y$. Furthermore, there is a natural inclusion
\[ X\cong X\times\set{1}\Into C_Y f\setminus Y \]
\end{remark}

\begin{proposition}\label{proposition:holink_cone}
Let $f:X\to Y$ be a Hurewicz fibration.\\
Then the map
\[ t:\holink\big(\langle\overrightarrow{C_Y f,Y}\rangle;C_Y f\setminus Y,Y\big)\To Y \]
is a Hurewicz fibration. Moreover, the map
\[ s:\holink\big(\langle\overrightarrow{C_Y f,Y}\rangle;C_Y f\setminus Y,Y\big)\To C_Y f\setminus Y \]
is a homotopy equivalence which is homotopic to a homotopy equivalence over $Y$. Finally, there are two natural maps
\[ X\Into C_Y f\setminus Y\To\holink\big(\langle\overrightarrow{C_Y f,Y}\rangle;C_Y f\setminus Y,Y\big) \]
which are both homotopy equivalences over $Y$.
\end{proposition}

The previous results allow a preliminary justification of our motto that homotopy link spaces generalize normal sphere bundles (of closed embeddings of manifolds).

\begin{proposition}
Assume $i:Y\to X$ is a closed embedding of manifolds without boundary.\\
Let $\nu_i$ be the normal bundle of $i$ (over $Y$), and $S(\nu_i)$ the unit sphere bundle of $\nu_i$.\\
Then there are homotopy equivalences over $Y$
\[ \holink(\langle\overrightarrow{X,Y}\rangle;X\setminus Y,Y)\underset{Y}{\simeq}\nu_i\setminus Y\underset{Y}{\simeq}S(\nu_i) \]
\end{proposition}
\begin{proof}
Let $U$ be a tubular neighborhood for $i$. Identifying $Y$ with $i(Y)$, and using proposition \sref{proposition:holink_shrink_neigh} we conclude that the inclusion
\[ \holink\big(\langle\overrightarrow{U,Y}\rangle;U\setminus Y,Y\big)\Into\holink\big(\langle\overrightarrow{X,Y}\rangle;X\setminus Y,Y\big) \]
is a homotopy equivalence over $Y$. On the other hand\[ C_Y\big(S(\nu_i)\big)\underset{Y}{\cong}\nu_i\cong U \] and so proposition \sref{proposition:holink_cone} gives homotopy equivalences over $Y$
\[ \holink(\langle\overrightarrow{U,Y}\rangle;U\setminus Y,Y)\underset{Y}{\simeq}\nu_i\setminus Y\underset{Y}{\simeq}S(\nu_i) \]
\end{proof}

\begin{notation}
The normal bundle of $i:Y\to X$ is
\[ \nu_i\defeq\mbox{\Large$\quot{TY}{(i^\ast{TX})}$} \]
The unit sphere bundle of $\nu_i$ is
\[ S(\nu_i)\defeq\mbox{\Large$\quot{\RR^+}{(\nu_i\hspace{.06em}\setminus\hspace{.08em}Y)}$} \]
\end{notation}

We have shown that $\holink\big(\langle\overrightarrow{X,Y}\rangle;X\setminus Y,Y\big)$ recovers (the homotopy type over $Y$ of) the normal sphere bundle of a closed embedding $Y\hookrightarrow X$ of manifolds without boundary. In more general settings, the space of homotopy links of a pair attempts to give a homotopical version of the normal sphere bundle (which is not available). The fibres need not be spheres.

\section{Tameness and homotopy links}\label{section:tameness}

This section will describe conditions on pairs of spaces under which the homotopy link space of the pair behaves homotopically like a normal sphere bundle of an embedding of manifolds.

\begin{definition}[neighborhood of tameness]\label{definition:neigh_tameness}
Let $X$ be a topological space, and $Y$ a closed subspace of $X$.\\
A neighborhood $U$ of $Y$ in $X$ is called a {\em neighborhood of tameness} of $Y$ in $X$ if there exists a map
\[ G:U\times I\To X \]
such that
\begin{align*}
&G(-,0)=\inclusion\\
&\restrict{G(-,\tau)}{Y}=\inclusion\ ,\ \ \text{for } \tau\in I\\
&G(U\times\set{1})\subset Y\\
&G\big((U\setminus Y)\times\clop{0}{1}\big)\subset X\setminus Y
\end{align*}
If the map $G$ factors through $U$, we call $U$ a {\em strong neighborhood of tameness} of $Y$ in $X$.
\end{definition}

\begin{remark}[restatement in terms of stratified maps]
We could rephrase the conditions appearing in the definition above by stating that $G$ is a stratified map
\[ G:\langle\overrightarrow{U{\times} I,Y{\times} I\cup U{\times}\set{1}}\rangle\To\langle\overrightarrow{X,Y}\rangle \]
which gives a strong deformation retraction of $U$ onto $Y$ within $X$.
\end{remark}

\begin{definition}[tame subspace]\label{definition:tame_subspace}
Let $X$, $Y$ be topological spaces, with $Y$ a closed subspace of $X$.\\
$Y$ is said to be a {\em tame subspace} of $X$ if there exists a neighborhood of tameness $U$ of $Y$ in $X$, and a map
\[ \phi:X\to I \]
such that
\begin{align*}
\phi^{-1}(\set{1})&=Y\\
\overline{\phi^{-1}(\opcl{0}{1})}&\subset U
\end{align*}
\end{definition}

\begin{remark}[simplification of definition for $X$ metrizable]\label{remark:tame_simplification_metric}
If $X$ is metrizable (or more generally, Hausdorff and perfectly normal) then the existence of $\phi$ in the above definition \sref{definition:tame_subspace} is automatic: $Y$ is a tame subspace of $X$ if and only if $Y$ has a neighborhood of tameness in $X$.\\
The definition of tame subspace explicitly mentions $\phi$ only because of the important role it plays in the proof of proposition \sref{proposition:holink_pair_pushout}.
\end{remark}

\begin{definition}[local tameness]
Let $Y$ be a closed subspace of a metrizable space $X$.\\
We say $Y$ is {\em locally tame} in $X$ if each point of $Y$ has a neighborhood $U$ in $X$ for which there exists a stratified map
\[ G:\langle\overrightarrow{U{\times} I,(U\cap Y){\times}I\cup U{\times}\set{1}}\rangle\To\langle\overrightarrow{X,Y}\rangle \]
which gives a deformation retraction, rel $U\cap Y$, of $U$ into $Y$ within $X$ (i.e.\ $G$ verifies the conditions in definition \sref{definition:neigh_tameness}, as stated).
\end{definition}

The following local characterization of tameness is essentially lemma 2.5 in \cite{Quinn}. Observe only that in \cite{Quinn}, a ``tame subspace'' of a metrizable space need not be closed.

\begin{proposition}[local tameness implies tameness]\label{proposition:locally_tame_tame}
Let $X$ be a metrizable topological space, and $Y$ a closed subspace of $X$.\\
$Y$ is a tame subspace of $X$ if and only if $Y$ is locally tame in $X$.
\end{proposition}

Under the condition of tameness, the homotopy link space of a pair shares the following two properties (which we state without proof) with the normal sphere bundle of a closed embedding of manifolds.

\begin{proposition}\label{proposition:strong_neigh_tameness_holink}
Let $X$ be a topological space, and $Y$ a closed subspace of $X$.\\
If $U$ is a strong neighborhood of tameness of $Y$ in $X$, then the map
\[ s:\holink\big(\langle\overrightarrow{U,Y}\rangle;U\setminus Y,Y\big)\To U\setminus Y \]
is a homotopy equivalence. In particular (by proposition \sref{proposition:holink_shrink_neigh}), if $X$ is metrizable there is a homotopy equivalence
\[ \holink\big(\langle\overrightarrow{X,Y}\rangle;X\setminus Y,Y\big)\To U\setminus Y \]
\end{proposition}

\begin{remark}
Under the assumption that $Y$ is an exceptionally tame subspace of metrizable $X$ (as defined later in \sref{definition:exceptionally_tame}), we can weaken the conditions on the neighborhood $U$: it need only deformation retract to $Y$ (not necessarily strongly) through filtered paths in $\langle\overrightarrow{U,Y}\rangle$. The proof in this case uses proposition \sref{proposition:Miller}.
\end{remark}

\begin{proposition}\label{proposition:holink_pair_pushout}
Let $X$ be a topological space, and $Y$ a tame subspace of $X$.\\
Let $hoP$ be the homotopy pushout of
\begin{diagram}[midshaft,h=2.1em]
\holink\big(\langle\overrightarrow{X,Y}\rangle;X\setminus Y,Y\big)&\rTo{t}&Y\\
\dTo{s}\\
X\setminus Y
\end{diagram}
There is a natural map
\[ hoP\To X \]
which is a homotopy equivalence under $Y$.
\end{proposition}

One very useful property of the normal sphere bundle of a closed embedding $Y\to X$ (of manifolds without boundary) is that it fibres over $Y$. The following definition axiomatizes that for the case of homotopy link space of a pair.

\begin{definition}[exceptionally tame subspace]\label{definition:exceptionally_tame}
Let $X$ be a topological space, and $Y$ a tame subspace of $X$.\\
We say $Y$ is an {\em exceptionally tame subspace} of $X$ if the map
\[ t:\holink\big(\langle\overrightarrow{X,Y}\rangle;X\setminus Y,Y\big)\To Y \]
is a Hurewicz fibration.
\end{definition}

\begin{proposition}
Let $X$ be a metrizable space, $Y$ a closed subspace of $X$, and $U$ a neighborhood of $Y$ in $X$.\\
Then $Y$ is an exceptionally tame subspace of $U$ if and only if $Y$ is an exceptionally tame subspace of $X$.
\end{proposition}
\begin{proof}[Sketch of proof]
This result follows from remark \sref{remark:tame_simplification_metric} and proposition \sref{proposition:holink_shrink_neigh}.
\end{proof}

We now present a very simple local condition for a subspace to be exceptionally tame.

\begin{proposition}\label{proposition:locally_trivial_holink_fibration}
Let $X$ be a metrizable space, and $Y$ a closed subspace of $X$.\\
Assume that for each point $y\in Y$, there is a neighborhood $U$ of $y$ in $X$, a pointed space $(Z,\set{z})$, and a homeomorphism
\[ f:(Z,\set{z})\times(U\cap Y)\xTo{\ \cong\ }(U,U\cap Y) \]
(of pairs of spaces) such that the induced map
\[ f:\set{z}\times(U\cap Y)\To U\cap Y \]
is the canonical projection.\\
Then
\[ t:\holink\big(\langle\overrightarrow{X,Y}\rangle;X\setminus Y,Y\big)\To Y \]
is a Hurewicz fibration.
\end{proposition}

The proof of \sref{proposition:locally_trivial_holink_fibration} (which is omitted) uses a Hurewicz uniformization result to conclude that a local fibration is a fibration.
The following result is an immediate corollary of propositions \sref{proposition:locally_tame_tame} and \sref{proposition:locally_trivial_holink_fibration}.

\begin{corollary}[local triviality and tameness implies exceptional tameness]\label{corollary:local_condition_exceptional_tameness}
Let $X$ be a metrizable space, and $Y$ a closed subspace of $X$.\\
Assume that for each point $y\in Y$, there is a neighborhood $U$ of $y$ in $X$, a pointed space $(Z,\set{z})$ such that $\set{z}$ is a tame subspace of $Z$, and a homeomorphism
\[ f:(Z,\set{z})\times(U\cap Y)\xTo{\ \cong\ }(U,U\cap Y) \]
(of pairs of spaces) such that the induced map
\[ f:\set{z}\times(U\cap Y)\To U\cap Y \]
is the canonical projection.\\
Then $Y$ is an exceptionally tame subspace of $X$.
\end{corollary}

\section{Homotopically stratified spaces}\label{section:homotopically_stratified_spaces}

Now we apply the notions of tameness in the previous section to define a rather well behaved class of stratified spaces.

\begin{definition}[homotopically stratified space]
Let $(X,A,f)$ be a stratified space with $X$ metrizable and $A$ finite.\\
We say $(X,A,f)$ is {\em homotopically stratified} if, for any $a,b\in A$ with $a\leq b$, $X_b=f^{-1}(\set{b})$ is an exceptionally tame subspace of $f^{-1}(\set{a,b})$.
\end{definition}

\begin{remark}
Note that the fibration condition in \sref{definition:exceptionally_tame} translates in this case to the condition that
\[ t:\holink(X,X_a,X_b)\To X_b \]
is a Hurewicz fibration.
\end{remark}

\begin{remark}
We assume $X$ is metrizable and $A$ is finite in the definition above for a matter of convenience: the main general results in this section make use of those hypotheses.\\
It is not difficult, however, to relax the finiteness conditions on $A$. We will not pursue this.
\end{remark}

\begin{proposition}\label{proposition:exceptionally_tame_homotopically_stratified}
Assume $Y$ is a closed subspace of a metrizable space $X$.\\
The filtered space $\langle\overrightarrow{X,Y}\rangle$ is homotopically stratified if and only if $Y$ is an exceptionally tame subspace of $X$.
\end{proposition}

We now give our principal example of a homotopically stratified space, which is an elaboration of example \sref{example:principal_example_stratified_space}.

\begin{example}\label{example:M^k_homotopically_stratified}
Let $M$ be a manifold, and $S$ a finite set.\\
Then the stratified space $M^{\times S}=\Map(S,M)$ (example \sref{example:principal_example_stratified_space}) is actually homotopically stratified. This can be easily proved by using corollary \sref{corollary:local_condition_exceptional_tameness}.
\end{example}

\begin{example}
We can generalize the previous example.\\
Let $M$ be a $n$-dimensional manifold. Call a finite set $A$ of closed submanifolds of $M$ {\em locally flat} if any point of $M$ has a chart around it
\[ \varphi:\RR^n\To M \]
such that for any $N\in A$, $\varphi^{-1}(N)$ is a linear subspace of $\RR^n$.\\
Assume now $A$ is a finite set of closed submanifolds of $M$ such that
\begin{enum}
\item $A$ is locally flat,
\item $A$ is closed under intersections, and
\item $M\in A$.
\end{enum}
Then we get a homotopically stratified space $(M,A,f)$, where $A$ is ordered by reverse inclusion, and
\[ \func{f}{M}{\potop(A)\qquad}{x}{\min\set{N\in A:x\in N}} \]
As in the previous example (which is a particular case of the present one), the proof amounts to a simple application of corollary \sref{corollary:local_condition_exceptional_tameness}.
\end{example}

Having introduced the notion of homotopically stratified space, we will now enunciate the main general theorems which we will use in our applications.

The next result is theorem 4.9 in the article \cite{Miller} by Miller. A few cautionary remarks are in order regarding notation in that article: the order of the underlying partially ordered set of a stratified space is reversed in \cite{Miller}, in comparison with the definition here (as cautioned earlier in remark \sref{remark:attention_stratified}). Consequently, the direction of the filtered paths is also reversed. In addition \[ \holink\big(\langle\overrightarrow{X,Y}\rangle;X\setminus Y,Y\big) \] is denoted simply by $\holink(X,Y)$ in \cite{Miller}.

\begin{proposition}[Miller's theorem]\label{proposition:Miller}
Let $(X,A,f)$ be a homotopically stratified space.\\
Given any $a,b\in A$, there is a homotopy
\[ G:\popath(X;X_a,X_b)\times I\To\popath(X;X_a,X_b) \]
such that:
\begin{align*}
&G(-,0)=\id_{\popath(X;X_a,X_b)}\\
&(s,t)\circ G(-,\tau)=(s,t)\ ,\ \ \text{for }\tau\in I\\
&G\Big(\popath(X;X_a,X_b)\times\opcl{0}{1}\Big)\subset\holink(X;X_a,X_b)
\end{align*}
In particular, the inclusion
\[ \holink(X;X_a,X_b)\Into\popath(X;X_a,X_b) \]
is a homotopy equivalence over $X_a\times X_b$.
\end{proposition}

\begin{remark}
This result introduces a remarkable simplification to the analysis of the homotopy type of the space of filtered paths. Replacing a large stratified space, now we need only deal with the much simpler case of a pair of spaces:
\[ \holink(X;X_a,X_b)=\holink\Big(\langle\overrightarrow{f^{-1}(\set{a,b}),X_b}\rangle;X_a,X_b\Big) \]
which can be analyzed with the tools of the previous two sections.
\end{remark}

Miller's result has an immediate corollary about deforming homotopies of filtered paths which will be used (in its full strength) in the next chapter. It says roughly that we can deform a path $\gamma$ in $\popath(X;X_a,X_b)$ in a way that:
\begin{enum}
\item the deformation keeps $\gamma(0)$ and $\gamma(1)$ fixed;
\item for any $\tau\in I$, the source and target of $\gamma(\tau)$ are kept fixed through the deformation;
\item for any $\tau\in\opop{0}{1}$, $\gamma(\tau)$ gets immediately deformed to a homotopy link.
\end{enum}

\begin{corollary}\label{corollary:Miller}
Let $(X,A,f)$ be a homotopically stratified space.\\
Given any $a,b\in A$, there is a homotopy
\[ G:\Map\big(I,\popath(X;X_a,X_b)\big)\times I\To\Map\big(I,\popath(X;X_a,X_b)\big) \]
such that
\begin{align*}
&G(-,0)=\id\\
&(\evaluation{0},\evaluation{1})\circ G(-,\tau)=(\evaluation{0},\evaluation{1})&&,\ \ \text{for }\tau\in I\\
&\im\big(\evaluation{\sigma}\circ G(-,\tau)\big)\subset\holink(X;X_a,X_b)\hspace{-1.4em}&&,\ \ \text{for }(\sigma,\tau)\in\opop{0}{1}\times\opcl{0}{1}
\end{align*}
and the diagram
\begin{diagram}[midshaft]
\Map\big(I,\popath(X;X_a,X_b)\big)\times I&\rTo{G}&\Map\big(I,\popath(X;X_a,X_b)\big)\\
\dTo{\proj}&&\dTo{\Map(I,(s,t))}\\
\Map\big(I,\popath(X;X_a,X_b)\big)&\rTo{\Map(I,(s,t))}&\Map(I,X_a\times X_b)
\end{diagram}
commutes.
\end{corollary}

Unfortunately, Miller's result does not meet our needs completely. We conjecture a strengthening of \sref{proposition:Miller} which would be sufficient. It is entirely analogous to \sref{proposition:Miller}, except that it does not fix the stratum at which filtered paths must end.

\begin{conjecture}[strengthening of Miller's result]\label{conjecture:stronger_Miller}
Let $(X,A,f)$ be a homotopically stratified space.\\
For any $a\in A$, there is a homotopy
\[ G:\popath(X;X_a,X)\times I\To\popath(X;X_a,X) \]
such that:
\begin{align*}
&G(-,0)=\id_{\popath(X;X_a,X)}\\
&(s,t)\circ G(-,\tau)=(s,t)\ ,\ \ \text{for }\tau\in I\\
&G\Big(\popath(X;X_a,X)\times\opcl{0}{1}\Big)\subset\holink(X;X_a,X)
\end{align*}
In particular, the inclusion
\[ \holink(X;X_a,X)\Into\popath(X;X_a,X) \]
is a homotopy equivalence over $X_a\times X$.
\end{conjecture}

\comment{
An idea for a possible proof is given now. Let
\[ B\defeq\popath\Big(\langle\overrightarrow{f^{-1}\set{a,b},X_b}\rangle;X_a,X_b\Big) \]
for some $a,b\in A$ with $a\leq b$. Then we would first verify that the homotopy \[ G:B\times I\To B \] constructed in lemma 3.5 of \cite{Miller} verifies a strong form of continuity, which we will call {\em closure continuity}: given a map
\[ \gamma:\clop{0}{1}\To B \subset H(X) \]
such that for any $tau\in\RR^+$
\[ \restrict{\gamma(-)}{\clop{\tau}{l}} \]
is constant on $\clop{\tau}{1}$
[[???]]

talk about Miller's results here, say how they are not sufficient for me, and say how they can plausibly be strengthened to be sufficient for our purposes (by using the apparent ``closure continuity'' of the homotopies to obtain a special path inclusion from $holink(X_b,X)$ into $popath(X_b,X,X)$, in his notation)
State precisely and give a good name to this stronger result.
}

To compensate for this unproven conjecture, we present the following result which will serve our goals. It has the side effect of bringing strong spaces of filtered paths into play. The somewhat involved proof, which is omitted, uses a partition of unity argument similar to the usual proof of Dold's uniformization theorem.

\begin{proposition}\label{proposition:holinks_equiv_strongpopaths}
Let $X$ be a homotopically stratified space.\\
Then there exists a strong deformation retraction of $\popathstrong(X)$ onto $\holink(X)$ over $X\times X$.\\
More concretely, there is a homotopy
\[ G:\popathstrong(X)\times I\To\popathstrong(X) \]
such that
\begin{align*}
&G(-,0)=\id_{\popathstrong(X)}\\
&(s,t)\circ G(-,\tau)=(s,t)\ ,\ \ \text{for }\tau\in I\\
&\restrict{G(-,\tau)}{\holink(X)}=\inclusion\ ,\ \ \text{for }\tau\in I\\
&G\big(\popathstrong(X)\times\set{1}\big)\subset\holink(X)
\end{align*}
\end{proposition}

\section{Application: spaces related to $\M(M)$}\label{section:application_stratified_M(M)}

We have established the connection between categories of sticky configurations, $\M(X)$, and categories of filtered paths, $\smash{\popathcat(X)}$, in section \sref{section:sticky_filtered}. In the preceding three sections, we have given tools to analyze the homotopy type of the filtered path spaces which are the morphisms in $\smash{\popathcat(X)}$, most importantly, propositions \sref{proposition:Miller} and \sref{proposition:strong_neigh_tameness_holink}. In this section, we will use those tools to describe the homotopy type of spaces related to $\M(M)$ for $M$ a manifold.

\begin{proposition}\label{proposition:holink_many_discs_hodiscrete}
Let $k,l,n\in\NN$.\\
The map
\[ s:\holink\big(\!\Map(k,l{\times}\RR^n);\Conf(l{\times}\RR^n,k),i_l\circ\Set(k,l)\big)\To\Conf(l{\times}\RR^n,k) \]
is a homotopy equivalence (where $i_l:l\hookrightarrow l\times\RR^n$ is the canonical inclusion at the origins).
\end{proposition}
\begin{proof}
Set $Y\defeq i_l\circ\Set(k,l)=\set{i_l\circ g\suchthat g\in\Set(k,l)}$ for brevity. Observe first that the source of the map in the proposition is equal to
\[ \holink\big(\langle\overrightarrow{X,Y}\rangle;\Conf(l\times\RR^n,k),Y\big)=\holink\big(\langle\overrightarrow{X,Y}\rangle;X\setminus Y,Y\big) \]
where X is the subspace of $\Map(k,l\times\RR^n)$ given by
\[ X\defeq\Conf(l{\times}\RR^n,k)\cup Y \]
(note that $\Conf(l\times\RR^n,k)=X\setminus Y$).

We will now prove that $X$ is a strong neighborhood of tameness of $Y$ in $X$. Define the continuous map
\[ \func[\rTo{\smash{\qquad}}]{G}{\Map(k,l\times\RR^n)\times I}{\Map(k,l\times\RR^n)}{\quad(f,\tau)}{(1-\tau)f} \]
where multiplication by a scalar is done in each component of $l\times\RR^n$ separately. Note that $G$ gives $\Map(k,l\times\RR^n)$ as a neighborhood of tameness of $Y$ in $\Map(k,l\times\RR^n)$ (see definition \sref{definition:neigh_tameness}). Furthermore, $G$ restricts to a map
\[ G:X\times I\To X \]
which therefore gives $X$ as a strong neighborhood of tameness of $Y$ in $X$. An application of proposition \sref{proposition:strong_neigh_tameness_holink} now finishes the proof.
\end{proof}

While the relation of the space in the previous proposition to categories of sticky configurations is slightly indirect, the next results deal directly with the morphism spaces of $\M(M)$.

\begin{lemma}\label{lemma:homotopy_type_morphism_M}
Assume $k,l,n\in\NN$, and $M$ is a $n$-dimensional manifold without boundary.\\
Let $f:l\times\RR^n\to M$ be an embedding of manifolds.\\
Let $P$ be the pullback of
\begin{diagram}[midshaft,h=3.3em]
&&\holink\big(\parbox[t]{7em}{$\Map(k,\im f);$\\$\Conf(\im f,k),$\\$f\circ i_l\circ\Set(k,l)\big)$}\\
&&\dTo^{\raisebox{-1em}{\scriptsize$s$}}\\
\popath\big(M^{\times k};\Conf(M,k),\Conf(\im f,k)\big)&\rTo{t}&\Conf(\im f,k)
\end{diagram}
Then concatenation of filtered paths induces a natural map
\[ \concat:P\To\holink\big(M^{\times k};\Conf(M,k),f\circ i_l\circ\Set(k,l)\big) \]
which is a homotopy equivalence over $\Conf(M,k)$.
\end{lemma}
\begin{proof}
Let $Y\defeq f\circ i_l\circ\Set(k,l)$ and consider the subspace $X$ of $M^{\times k}$ given by
\[ X\defeq\Conf(M,k)\cup Y \]
Also, let $U$ be the neighborhood of $Y$ in $X$ given by
\[ X\defeq\Conf(\im f,k)\cup Y \]
The result now follows from applying proposition \sref{proposition:fact_holink} to $X$, $Y$, $U$ as defined here.
\end{proof}

\begin{proposition}[homotopy type of morphism space of $\M(M)$]\label{proposition:homotopy_type_morphism_M}
Assume $k,l,n\in\NN$, and $M$ is a $n$-dimensional manifold without boundary.\\
Let $f:l\times\RR^n\to M$ be an embedding of manifolds, and $a:k\to M$ an injective function.\\
Let $Q$ be the pullback of
\begin{diagram}[midshaft,h=3.3em]
&&\holink\big(\parbox[t]{7em}{$\Map(k,l\times\RR^n);$\\$\Conf(l\times\RR^n,k),$\\$i_l\circ\Set(k,l)\big)$}\\
&&\dTo^{\raisebox{-1em}{\scriptsize$s$}}\\
\popath\big(M^{\times k};\set{a},\Conf(\im f,k)\big)&\rTo{f^{-1}\circ\, t}&\Conf(l\times\RR^n,k)
\end{diagram}
Then concatenation of filtered paths induces a natural map
\[ \widetilde{\concat}:Q\To\M(M)(a,f\circ i_l) \]
which is a homotopy equivalence.
Additionally, the canonical projection
\[ \proj:Q\To\popath\big(M^{\times k};\set{a},\Conf(\im f,k)\big) \]
is a homotopy equivalence.
\end{proposition}
\begin{proof}
Given that the map from the preceding lemma \sref{lemma:homotopy_type_morphism_M}
\[ \concat:P\To\holink\big(M^{\times k};\Conf(M,k),f\circ i_l\circ\Set(k,l)\big) \]
is a homotopy equivalence over $\Conf(M,k)$, we obtain a homotopy equivalence
\[ \widetilde{\concat}:Q\To\holink\big(M^{\times k};\set{a},f\circ i_l\circ\Set(k,l)\big) \]
by pulling back along $\set{a}\hookrightarrow\Conf(M,k)$. The last statement of \sref{proposition:Miller} (Miller's theorem) now implies that composing with the inclusion of homotopy links into filtered paths
\[ \widetilde{\concat}:Q\To\popath\big(M^{\times k};\set{a},f\circ i_l\circ\Set(k,l)\big) \]
gives a homotopy equivalence (recall that $M^{\times k}$ is homotopically stratified by example \sref{example:M^k_homotopically_stratified}). From construction \sref{construction:mor_M(X)_popath} (more precisely, equation \seqref{equation:mor_M(X)_popath}), we conclude that
\[ \widetilde{\concat}:Q\To\M(M)(a,f\circ i_l) \]
is a homotopy equivalence. We leave it to the reader to verify that the map coincides with the map described in the statement of the proposition being proved.

Finally, propositions \sref{proposition:holink_many_discs_hodiscrete} and \sref{proposition:holink_s_fibration} guarantee that
\[ s:\holink\big(\!\Map(k,l{\times}\RR^n);\Conf(l{\times}\RR^n,k),i_l\circ\Set(k,l)\big)\To\Conf(l{\times}\RR^n,k) \]
is both a fibration and a homotopy equivalence. Consequently, the pullback
\[ \proj:Q\To\popath\big(M^{\times k};\set{a},\Conf(\im f,k)\big) \]
is also a homotopy equivalence.
\end{proof}

This result has an immediate corollary which comes from noticing that there is a homotopy equivalence (induced by reparametrization of Moore paths) between
\[ \popath\big(M^{\times k};\set{a},\Conf(\im f,k)\big) \]
and the homotopy fibre over $a$ of 
\[ f\circ -:\Conf(l\times\RR^n,k)\To\Conf(M,k) \]

\begin{corollary}
Assume $k,l,n\in\NN$, and $M$ is a $n$-dimensional manifold without boundary.\\
Let $f:l\times\RR^n\to M$ be an embedding of manifolds, and $a:k\to M$ an injective function.\\
There is a homotopy equivalence
\[ \M(M)(a,f\circ i_l)\xTo{\ \simeq\ }\hofibre_{\hspace{.1em}a}\hspace{-.12em}\Big(\Conf(l\times\RR^n,k)\xTo{f\circ -}\Conf(M,k)\Big) \]
\end{corollary}





\chapter{Sticky configurations and embedding spaces}\label{chapter:sticky<->embeddings}

\section*{Introduction}

In this chapter, we will return to the construction $\M(X)$, and show how it relates to the constructions in the chapter \ref{chapter:embeddings_manifolds} on spaces of embeddings of manifolds. We will essentially show that for any $n$-manifold $M$ (with the appropriate geometric structure), $\M(M)$ is equivalent to the Grothendieck construction of the right module over $\E^G_n$ associated to $M$.

\section*{Summary}

The first section, \sref{section:Groth_embeddings}, defines the appropriate Grothendieck-like construction, $\Total^G_n[M]$, of the right module over $\E^G_n$ associated with any manifold $M$ with a $G$-structure. It also defines the analogous Grothendieck-like construction for $\E_n$, namely $\Total_n[M]$.

Section \sref{section:invariance_total_category} shows that all the $\Top$-categories $\disccat{\Total^G_n[M]}$ (for any $G$ over $GL(n,\RR)$) and $\disccat{\Total_n[M]}$ are equivalent if the underlying manifold $M$ is the same.

Section \sref{section:morphisms_Total_nM} provides a useful analysis of the homotopy type of the morphism spaces in $\disccat{\Total_n[M]}$.

The remaining sections of this chapter are quite long, due to the necessarily convoluted nature of the comparison between $\Total_n[M]$ and $\M(M)$, for $M$ a $n$-dimensional manifold.

In section \sref{section:T_nM<->M(M)} we partly construct a category $\mathcal{Z}_M$, and define functors --- $F_\Total$ and $F_\M$ --- from it to $\disccat{\Total_n[M]}$ and $\M(M)$. Section \sref{section:composition_ZM} finishes the construction of the category $\mathcal{Z}_M$, by defining the composition. Finally, section \sref{section:equiv_TnM_M(M)} shows that the functors 
\begin{align*}
F_\Total&:\mathcal{Z}_M\To\disccat{\Total_n[M]}\\
F_\M&:\mathcal{Z}_M\To\M(M)
\end{align*}
are weak equivalences of Top-categories.

\section{The Grothendieck construction of embeddings}\label{section:Groth_embeddings}

Let us fix $n\in\NN$. This section will describe Grothendieck constructions involving the functors $\internal\E^G_n[M]$ of section \sref{section:internal_presheaves_E^f_n}.

\begin{definition}[total category of manifold with $G$-structure]
Let $G$ be topological group over $GL(n,\RR)$ (see definition \sref{definition:groups/GL(n,R)}).\\
Let $M$ be a $n$-manifold equipped with a $G$-structure (definition \sref{definition:manifold_Gstructure}).\\
We define the {\em total category of \(M\)}, $\Total^G_n[M]$, as the Grothendieck construction of the path category (see definition \sref{definition:path_category_presheaf}) of $\internal\E^{\smash{G}}_n[M]$ (definition \sref{definition:internal_presheaves_E^f_n}):
\[ \Total^G_n[M]\defeq\Groth\big(\pathcat\circ\internal\E^G_n[M]\big) \]
which is an internal category in $\Top$.
\end{definition}

\begin{remark}[functoriality of total category]
The above definition extends to a functor
\[ \Total^G_n[-]:\big(\REmb^G_n\big)_0\To\Cat(\Top) \]
in a straightforward manner.
\end{remark}

\begin{remark}\label{remark:fibrant_T^G_n}
The internal category (in $\Top$) $\Total^G_n[M]$ is fibrant (in the sense of definition \sref{definition:fibrant_internal_category}) by corollary \sref{corollary:fibrant_Groth(path)}.
\end{remark}

Given a morphism $h:G\to H$ in $\overGL{n}$, the two-cell $\internal\varepsilon_h$ in diagram \seqref{diagram:natural_transf_internal_group} gives a natural transformation
\[ \internal\varepsilon_h:\internal\E^G_n[-]\To\internal\E^H_n[h_\ast -]\circ\op{\internal(h_\ast)} \]
This induces (via construction \sref{construction:Groth(f,alpha)} and proposition \sref{proposition:Groth(functorA->B)}; see also proposition \sref{proposition:naturality_internal_path_category_presheaf}) a natural transformation in $\HomCatlarge{(\REmb^{\smash{G}}_n)_0}{\Cat(\Top)}$
\[ \Total^h_n[-]:\Total^G_n[-]\To\Total^H_n[h_\ast -] \]
Moreover, given another morphism $h':H\to I$ in $\overGL{n}$, the diagram
\begin{equation}\label{diagram:comm_diagram_T^G_n}
\begin{diagram}[midshaft,h=2.4em]
\Total^G_n[-]&\rTo{\;\Total^h_n[-]\;}&\Total^H_n[h_\ast -]\\
&\rdTo_{\mathllap{\Total^{h' h}_n[-]}}&\dTo_{\mathrlap{\Total^{h'}_n[h_\ast -]}}\\
&&\Total^I_n[(h'\circ h)_\ast -]
\end{diagram}
\end{equation}
commutes in $\HomCatlarge{(\REmb^{\smash{G}}_n)_0}{\Cat(\Top)}$.

\begin{definition}[total category of manifold]
Let $M$ be a $n$-dimensional manifold without boundary.\\
The {\em total category of \(M\)}, $\Total_n[M]$, is defined to be the Grothendieck construction of the path category (definition \sref{definition:path_category_presheaf}) of $\internal\E_n$ (see observation \sref{remark:internal_presheaf_E_n}):
\[ \Total_n[M]\defeq\Groth\big(\pathcat\circ\internal\E_n[M]\big) \]
\end{definition}

\begin{remark}
Similarly to observation \sref{remark:fibrant_T^G_n}, we conclude that the internal category (in $\Top$) $\Total_n[M]$ is fibrant.
\end{remark}

As before, this extends to a functor
\[ \Total_n[-]:(\Emb_n)_0\To\Cat(\Top) \]

The two-cell in diagram \seqref{diagram:E^G_n_E_n} translates to a natural transformation
\[ q:\internal\E^G_n[-]\To\internal\E_n[-]\circ\op{q} \]
for any $G$ in $\overGL{n}$. This induces (via \sref{proposition:Groth(functorA->B)} and \sref{proposition:naturality_internal_path_category_presheaf}) a natural transformation in $\HomCat{(\Emb^G_n)_0}{\Cat(\Top)}$
\[ q:\Total^G_n[-]\To\Total_n[-] \]
(where we identify $qM$ with $M$). Moreover, for any morphism $h:G\to H$ in $\overGL{n}$, we obtain a commutative diagram
\begin{equation}\label{diagram:q_absorbs_T}
\begin{diagram}
\Total^G_n[-]&\rTo{\;\Total^h_n[-]\;}&\Total^H_n[h_\ast -]\\
&\rdTo(1,2)_{\mathllap{q}}\ldTo(1,2)_{\mathrlap{q}}\\
&\Total_n[-]
\end{diagram}
\end{equation}

\comment{
The two-cell in diagram \seqref{diagram:E_n_E^GL_n} translates to a natural transformation
\[ \internal\E_n[-]\To\internal\E^{GL(n,\RR)}_n[-]\circ\op{\inclusion} \]
(where $\inclusion:\internal\E_n\hookrightarrow\internal\E^{\smash{GL(n,\RR)}}_n$) which is an objectwise weak equivalence. This, in turn, induces (via construction \sref{construction:Groth(f,alpha)}) a natural transformation in $\HomCat{(\Emb_n)_0}{\Cat(\Top)}$
\[ \Total^\inclusion_n[-]:\Total_n[-]\To\Total^{GL(n,\RR)}_n[-] \]
which, according to proposition \sref{} [[missing reference]], is an objectwise weak equivalence (\sref{} [[missing reference: notion of weak equivalence of internal categories in $\Top$]]). Consequently, for any $n$-manifold without boundary $M$, we obtain a weak equivalence of $\Top$-categories
\[ \disccat{\Total^\inclusion_n[M]}:\disccat{\Total_n[M]}\xTo{\;\sim\;}\disccat{\Total^{GL(n,\RR)}_n[M]} \]
(by proposition \sref{} [[missing reference: weak equivalence between fibrant internal categories gives weak equivalence of corresponding $\Top$-categories]] since both $\Total_n[M]$ and $\Total^{\smash{GL(n,\RR)}}_n[M]$ are fibrant internal categories in $\Top$).
\end{remark}
}

\section{Homotopy invariance of total category}\label{section:invariance_total_category}

Let us fix $n\in\NN$. This section is devoted to proving the following result.

\begin{proposition}\label{proposition:equivalence_T^G_n}
Let $G$ be an object of $\overGL{n}$, $M$ a $n$-manifold equipped with a $G$-structure.\\
The $\Top$-functor
\[ \disccat{q}:\disccat{\Total^G_n[M]}\To\disccat{\Total_n[M]} \]
is a weak equivalence of $\Top$-categories (see definition \sref{definition:weak_equivalence_Top-cat}).
\end{proposition}

We prove this result in two parts.

\begin{lemma}\label{lemma:local_equivalence_T^G_n}
Let $G$ be an object of $\overGL{n}$, and $M$ a $n$-manifold with a $G$-structure.\\
The $\Top$-functor
\[ \disccat{q}:\disccat{\Total^G_n[M]}\To\disccat{\Total_n[M]} \]
is a local homotopy equivalence of $\Top$-categories.
\end{lemma}
\begin{proof}[Sketch of proof]
According to corollary \sref{corollary:local_homotopy_equiv_id_equiv}, it suffices to show that for each $k,l\in\NN$ the commutative square
\begin{diagram}[midshaft,objectstyle=\scriptstyle,labelstyle=\scriptscriptstyle]
\REmb^G_n(l\times\RR^n,k\times\RR^n)\times\REmb^G_n(k\times\RR^n,M)&\rTo{\ q\times q\ }&\Emb(l\times\RR^n,k\times\RR^n)\times\Emb(k\times\RR^n,M)\\
\dTo{(\composition,\proj)}&&\dTo{(\composition,\proj)}\\
\REmb^G_n(l\times\RR^n,M)\times\REmb^G_n(k\times\RR^n,M)&\rTo{q\times q}&\Emb(l\times\RR^n,M)\times\Emb(k\times\RR^n,M)
\end{diagram}
(where ``$\composition$'' denotes the composition of embeddings) is homotopy cartesian. The bottom map is a Hurewicz fibration by the last remark in \sref{remark:homotopy_properties_REmb}. It is straightforward to show that the diagram is also a pullback square: it amounts to a tedious verification (on the level of sets and topologies) directly from the definition of $G$-augmented embedding spaces, noticing that any map of principal $G$-bundles is a fibrewise isomorphism. In conclusion, the square above is homotopy cartesian, as was required.
\end{proof}

\begin{lemma}\label{lemma:essentially_surjective_T^G_n}
Let $G$ be an object of $\overGL{n}$, and $M$ a $n$-manifold with a $G$-structure.\\
The functor
\[ \pi_0\big(\disccat{q}\big):\pi_0\Big(\disccat{\Total^G_n[M]}\Big)\To\pi_0\Big(\disccat{\Total_n[M]}\Big) \]
is essentially surjective.
\end{lemma}
\begin{proof}
Assume $k\in\NN$, and $f\in\Emb(k\times\RR^n,M)$ is an object of $\disccat{\Total_n[M]}$. Consider the maps
\[ \Emb(k\times\RR^n,M)\xTo{D_0}\Frame\big(T\Conf(M,k)\big)\xTo{\proj}\Conf(M,k) \]
and
\[ \REmb^G_n(k\times\RR^n,M)\xTo{\RDer^G_0}\Princ_{G^{\times k}}\big(T\Conf(M,k)\big)\xTo{\proj}\Conf(M,k) \]
and let $g\in\REmb^G_n(k\times\RR^n,M)$ be such that
\[ \proj\circ D_0(f)=\proj\circ\RDer^G_0(g) \]
which exists because $\RDer^G_0$ is a trivial fibration (proposition \sref{proposition:RDer^G_0_equivalence}), and therefore surjective. The object $\disccat{q}(g)$ is just $q(g)$ where $q$ is as usual the projection
\[ q:\REmb^G_n(k\times\RR^n)\To\Emb(k\times\RR^n,M) \]
The following lemma implies that $q(g)$ is isomorphic to $f$ in $\pi_0\big(\disccat{\Total_n[M]}\big)$, since
\[ q(g)\circ i_k=\proj\circ D_0\circ q(g)=\proj\circ\RDer^G_0(g)=\proj\circ D_0(f)=f\circ i_k \]
which ends this proof.
\end{proof}

\begin{lemma}
Let $M$ be a $n$-manifold, and $k\in\NN$.\\
If $f,g\in\Emb(k\times\RR^n,M)$ verify ($i_k:k\hookrightarrow k\times\RR^n$ is the inclusion at the origins from \seqref{equation:i_k})
\[ f\circ i_k=g\circ i_k \]
then $f$, $g$ are isomorphic objects of $\pi_0\big(\disccat{\Total_n[M]}\big)$.
\end{lemma}
\begin{proof}[Sketch of proof]
Let us first prove the following special case of the lemma: if $f,g\in\Emb(k\times\RR^n,M)$ verify
\begin{align*}
f\circ i_k&=g\circ i_k\\
\im f&\subset\im g
\end{align*}
then $f$, $g$ are isomorphic objects of $\pi_0\big(\disccat{\Total^G_n[M]}\big)$.

Under those conditions, there exists an embedding
\[ \phi:k\times\RR^n\To k\times\RR^n \]
such that $g\circ\phi=f$. This determines a morphism $\phi:f\to g$ in $\disccat{\Total_n[M]}$.
Choose now an embedding
\[ \phi':k\times\RR^n\To k\times\RR^n \]
such that $\phi'\circ i_k=i_k$ and its differential at a point $(i,0)$ of $k\times\RR^n$ is
\[ d\phi'(i,0)=\big(d\phi(i,0)\big)^{-1} \]
Then we conclude that
\[ D_0(g)=D_0(f\circ\phi') \]
Since $D_0$ is a trivial fibration (by proposition \sref{proposition:D_0_equivalence}) and therefore has contractible fibres, there exists a Moore path, $x=(\gamma,1)$, in $\Emb(k\times\RR^n,M)$ which
\begin{enum}
\item starts at $g$,
\item ends at $f\circ\phi'$, and
\item such that $D_0\circ\gamma$ is a constant path in $\Emb(k\times\RR^n,M)$.
\end{enum}
We thus get an induced morphism $(x,\phi'):g\to f$ in $\disccat{\Total_n[M]}$.

The composition
\[ f\xTo{\phi}g\xTo{(x,\phi')}f \]
is such that the corresponding path $D_0\big(\gamma(-)\circ\phi\big)$ is constant. Since $D_0$ has contractible fibres, $\gamma(-)\circ\phi$ can be deformed --- keeping the endpoints fixed --- to a Moore path of length 0 in $\Emb(k\times\RR^n,M)$. This supplies a path $\lambda_f$ in $\disccat{\Total_n[M]}(f,f)$ from $\phi\circ(x,\phi')$ to $\id_f$.

Analogously, the composition
\[ g\xTo{(x,\phi')}f\xTo{\phi}g \]
is such that the corresponding path $D_0\circ\gamma$ is constant. Since $D_0$ is a trivial fibration, $\gamma$ can be deformed --- keeping the endpoints fixed --- to a Moore path of length 0 in $\Emb(k\times\RR^n,M)$. This gives a path $\lambda_g$ in $\disccat{\Total_n[M]}(g,g)$ from $(x,\phi')\circ\phi$ to $\id_g$.

The paths $\lambda_f$ and $\lambda_g$ show that the the morphisms $(x,\phi')$ and $\phi$ induce inverse isomorphisms in $\pi_0\big(\disccat{\Total^G_n[M]}\big)$. Hence $f$ is isomorphic to $g$ in $\pi_0\big(\disccat{\Total^G_n[M]}\big)$. We have thus proved the special case of the lemma.

Assuming now the special case of the lemma, the general case follows easily. Let $f,g\in\Emb(k\times\RR^n,M)$ be such that $f\circ i_k=g\circ i_k$. Choose an embedding
\[ \phi:k\times\RR^n\To k\times\RR^n \]
such that
\begin{align*}
\phi\circ i_k&=i_k\\
\im(f\circ\phi)&\subset\im g
\end{align*}
Then the embeddings $f\circ\phi$ and $f$ verify the hypothesis of the special case of the lemma, as do the embeddings $f\circ\phi$ and $g$. Therefore, there are isomorphisms
\[ f\simeq f\circ\phi\simeq g \]
in $\pi_0\big(\disccat{\Total^G_n[M]}\big)$.
\end{proof}

Proposition \sref{proposition:equivalence_T^G_n} follows from lemmas \sref{lemma:local_equivalence_T^G_n} and \sref{lemma:essentially_surjective_T^G_n}. It has the following corollary.

\begin{corollary}
Let $h:G\to H$ be a morphism in $\overGL{n}$, and $M$ a $n$-manifold with a $G$-structure.\\
The $\Top$-functor
\[ \disccat{\Total^h_n[M]}:\disccat{\Total^G_n[M]}\To\disccat{\Total^H_n[h_\ast M]} \]
is a weak equivalence of $\Top$-categories.
\end{corollary}
\begin{proof}
The functor above is one of the arrows in diagram \seqref{diagram:q_absorbs_T}. Since the other two arrows are weak equivalences --- by proposition \sref{proposition:equivalence_T^G_n} --- then $\disccat{\Total^h_n[M]}$ is one as well.
\end{proof}

\comment{
We finish this section with a direct alternate proof of the fact that
\[ \disccat{\Total^h_n[M]}:\disccat{\Total^G_n[M]}\To\disccat{\Total^H_n[h_\ast M]} \]
is a local weak equivalence. A variation of this proof would also recover \sref{lemma:local_equivalence_T^G_n}.\comment{ Alternatively, we could just observe that the map
\[ q:\disccat{\Total^{GL(n,\RR)}_n[M]}\To\disccat{\Total_n[M]} \]
is obviously a weak equivalence, since it has a left inverse which is a weak equivalence.}

According to proposition \sref{proposition:local_homotopy_equiv_Groth(path)} (which reduces to observing that the homotopy fibres of the vertical maps in the diagram below are equivalent to the morphism spaces in the categories being analyzed) it is enough to prove that for any $k,l\in\NN$ and any $f\in\REmb^G_n(k\times\RR^n,M)$, the commutative square
\begin{diagram}
\REmb^G_n(l\times\RR^n,k\times\RR^n)&\rTo{h_\ast}&\REmb^H_n(l\times\RR^n,k\times\RR^n)\\
\dTo{f\circ -}&&\dTo{(h_\ast f)\circ -}\\
\REmb^G_n(l\times\RR^n,M)&\rTo{h_\ast}&\REmb^H_n(l\times\RR^n,M)
\end{diagram}
is homotopy cartesian. This square fits as the back face in the commutative cube (recall remark \sref{remark:naturality_RDer})
\begin{diagram}[midshaft,balance,hug,objectstyle=\scriptstyle,labelstyle=\scriptscriptstyle]
\REmb^G_n(l\times\RR^n,k\times\RR^n)&&\rTo_{h_\ast}&&\REmb^H_n(l\times\RR^n,k\times\RR^n)\\
&\rdTo_{\mathllap{\RDer^G_0}}^{\sim}&&&\vLine{(h_\ast f)\circ -}&\rdTo_{\mathllap{\RDer^H_0}}^{\sim}\\
\dTo[lowershortfall=.75em]{f\circ -}&&\Princ_{\!G^{\times l}}(T\Conf(k\times\RR^n,l))&\rTo_{h_\ast\ \ }&\HonV&&\Princ_{\!H^{\times l}}(T\Conf(k\times\RR^n,l))\\
&&\dTo{f\circ -}&&\dTo\\
\REmb^G_n(l\times\RR^n,M)\hLine&&\VonH&\rTo_{h_\ast\ \ }&\REmb^H_n(l\times\RR^n,M)&&\dTo_{(h_\ast f)\circ -}\\
&\rdTo[leftshortfall=1.1em]_{\mathllap{\RDer^G_0}}^{\sim}&&&&\rdTo_{\mathllap{\RDer^H_0}}^{\sim}\\
&&\Princ_{\!G^{\times l}}(T\Conf(M,l))&&\rTo_{h_\ast}&&\Princ_{\!H^{\times l}}(T\Conf(M,l))
\end{diagram}
where all the slanted arrows are weak equivalences by proposition \sref{proposition:RDer^G_0_equivalence}. Therefore the back face is homotopy cartesian if the front face is, and our task is reduced to proving that
\begin{diagram}[midshaft]
\Princ_{G^{\times l}}(T\Conf(k\times\RR^n,l))&\rTo{\ h_\ast\ }&\Princ_{H^{\times l}}(T\Conf(k\times\RR^n,l))\\
\dTo{f\circ -}&&\dTo{(h_\ast f)\circ -}\\
\Princ_{G^{\times l}}(T\Conf(M,l))&\rTo_{h_\ast}&\Princ_{H^{\times l}}(T\Conf(M,l))
\end{diagram}
is a homotopy pullback square. Observe that the left vertical map is $G^{\times l}$-equivariant, the right vertical map is $H^{\times l}$-equivariant, and the two horizontal maps are equivariant with respect to the map
\[ h^{\times l}:G^{\times l}\To H^{\times l} \]
Note also that the horizontal maps induce homeomorphisms on the quotients (by $G^{\times l}$ on the left, and by $H^{\times l}$ on the right). The desired result now follows from the following lemma, whose proof we leave to the reader.

\begin{lemma}
Let $h:G\to H$ be a map of topological groups.\\
Assume
\begin{align*}
X\To A\\
Y\To B
\end{align*}
are numerable principal $G$-bundles, and
\begin{align*}
X'\To A'\\
Y'\To B'
\end{align*}
are Serre fibrations and principal $H$-bundles.\\
Assume also that
\begin{equation}\label{diagram:cartesian_square_G_H}
\begin{diagram}[h=2.1em]
X&\rTo{g}&X'\\
\dTo{f}&&\dTo{f'}\\
Y&\rTo{g'}&Y'
\end{diagram}
\end{equation}
is a commutative square where 
\begin{enum}
\item $f$ is $G$-equivariant;
\item $f'$ is $H$-equivariant;
\item the horizontal maps are equivariant with respect to the map $h:G\to H$;
\item the horizontal maps induce weak equivalences $A\xto{\smash{\sim}}A'$, $B\xto{\smash{\sim}}B'$ on the base spaces.
\end{enum}
Then the above square \seqref{diagram:cartesian_square_G_H} is homotopy cartesian.
\end{lemma}
}

\section{Analysis of morphisms of $\Total_n[M]$}\label{section:morphisms_Total_nM}

This section is devoted to a simple analysis of the homotopy type of the morphism spaces in $\disccat{\Total_n[M]}$, which will be of use later.

\begin{construction}
Let be a $n$-manifold without boundary ($n\in\NN$), and
\begin{align*}
e&\in\Emb_n(k\times\RR^n,M)\\
f&\in\Emb_n(l\times\RR^n,M)
\end{align*}
Recall the natural homotopy equivalence
\begin{equation}\label{equation:map_T_n->hofib_Emb}
\disccat{\Total_n[M]}(e,f)\To\hofibre_e\!\Big(\Emb(k\times\RR^n,l\times\RR^n)\xTo{f\circ -}\Emb(k\times\RR^n,M)\Big)
\end{equation}
from proposition \sref{proposition:morphism_space_Groth(path)}.\\
Observing that the square
\begin{equation}\label{diagram:square_Emb->Conf}
\begin{diagram}
\Emb_n(k\times\RR^n,l\times\RR^n)&\rTo{\ -\circ i_k\ }&\Conf(l\times\RR^n,k)\\
\dTo{f\circ -}&&\dTo{f\circ -}\\
\Emb_n(k\times\RR^n,M)&\rTo{-\circ i_k}&\Conf(M,k)
\end{diagram}
\end{equation}
commutes, we obtain an induced map between the homotopy fibres of the vertical maps. Composing it with \seqref{equation:map_T_n->hofib_Emb} gives a map
\[ \varsigma:\disccat{\Total_n[M]}(e,f)\To\hofibre_{e\circ i_k}\!\Big(\Conf(l\times\RR^n,k)\xTo{f\circ -}\Conf(M,k)\Big) \]
\end{construction}

\begin{proposition}[homotopy type of morphisms in $\disccat{\Total_n{[}M{]}}$]\label{proposition:analysis_Total_nM(e,f)}
Let $M$ be a $n$-manifold without boundary (where $n\in\NN$).\\
Let $k,l\in\NN$, $e\in\Emb_n(k\times\RR^n,M)$, and $f\in\Emb_n(l\times\RR^n,M)$.\\
The map
\[ \varsigma:\disccat{\Total_n[M]}(e,f)\To\hofibre_{e\circ i_k}\!\Big(\Conf(l\times\RR^n,k)\xTo{f\circ -}\Conf(M,k)\Big) \]
is a homotopy equivalence.
\end{proposition}
\begin{proof}
From the construction of the map $\varsigma$, it suffices to show that the commutative square \seqref{diagram:square_Emb->Conf} is homotopy cartesian. Consider then the commutative diagram
\begin{diagram}
\Emb_n(k\times\RR^n,l\times\RR^n)&\rTo_{\sim}^{\;D_0\;}&\Frame\big(T\Conf(l\times\RR^n,k)\big)&\rTo{\;\proj\;}&\Conf(l\times\RR^n,k)\\
\dTo{f\circ -}&&\dTo{D f\circ -}&&\dTo{f\circ -}\\
\Emb_n(k\times\RR^n,M)&\rTo_{\sim}^{D_0}&\Frame\big(T\Conf(M,k)\big)&\rTo{\proj}&\Conf(M,k)
\end{diagram}
where $D_0$ is the derivative at the origins defined in \sref{definition:derivative_origins}, and $D f$ is the derivative of $f$ (definition \sref{definition:derivative}). The two arrows marked $D_0$ are homotopy equivalences, by proposition \sref{proposition:D_0_equivalence}. Consequently, the inner left square is homotopy cartesian. On the other hand, the maps marked $\proj$ are Hurewicz fibrations; also, the inner right square is cartesian, since that square is a map of principal $GL(n,\RR)$-bundles. Therefore, the inner right square is homotopy cartesian. In conclusion, the outer square is a homotopy pullback square. The proof is completed by identifying the outer square in the above diagram with the square \seqref{diagram:square_Emb->Conf}.
\end{proof}

\begin{remark}
This proposition and proposition \sref{proposition:homotopy_type_morphism_M} (which says that a morphism space of $\M(M)$ is equivalent to the target of $\varsigma$) are the motivation and the foundation for the proof given later that $\M(M)$ and $\disccat{\Total_n[M]}$ are weakly equivalent.
\end{remark}

We could similarly construct a homotopy equivalence (natural in $G\in\overGL{n}$ and $M\in\REmb^{\smash{G}}_n$\footnote{We leave it to the reader (again, as in observation \sref{remark:naturality_RDer}) to precisely say what this means.})
\begin{equation}\label{equation:map_Total^G_n->Conf}
\disccat{\Total^G_n[M]}(e,f)\xTo{\ \sim\ }\hofibre_{e\circ i_k}\!\Big(\Conf(l\times\RR^n,k)\xTo{f\circ -}\Conf(M,k)\Big)
\end{equation}
for a $n$-manifold $M$ with a $G$-structure, and $G$-augmented embeddings $e$, $f$. The proof that it is a homotopy equivalence would parallel the proof above (using proposition \sref{proposition:RDer^G_0_equivalence} instead of \sref{proposition:D_0_equivalence}).

The natural homotopy equivalence \seqref{equation:map_Total^G_n->Conf} gives an immediate direct proof of the fact that
\[ \disccat{\Total^h_n[M]}:\disccat{\Total^G_n[M]}\To\disccat{\Total^H_n[h_\ast M]} \]
is a local homotopy equivalence, for any morphism $h:G\to H$ in $\overGL{n}$. Furthermore, the obvious commutative diagram (where we denote the underlying embeddings of $e$ and $f$ by the same letters)
\begin{diagram}
\disccat{\Total^G_n[M]}(e,f)&\rTo{\disccat{q}}&\disccat{\Total_n[M]}(e,f)\\
&\rdTo(1,2){\mathrlap{\seqref{equation:map_Total^G_n->Conf}}}\ldTo(1,2){\mathllap{\varsigma}}\\
&\hspace{-6em}\hofibre_{e\circ i_k}\!\Big(\Conf(l\times\RR^n,k)\xTo{f\circ -}\Conf(M,k)\Big)\hspace{-6em}
\end{diagram}
gives a quick reproof of lemma \sref{lemma:local_equivalence_T^G_n}.

\section{Connecting $\Total_n[M]$ and $\M(M)$}\label{section:T_nM<->M(M)}

Throughout this section we fix $n\in\NN$, and $M$ a $n$-manifold without boundary.

Having established in the preceding section that any $\disccat{\Total^G_n[M]}$ is weakly equivalent (as a $\Top$-category) to $\disccat{\Total_n[M]}$, we will now proceed to show that $\disccat{\Total_n[M]}$ is weakly equivalent to $\M(M)$ (from section \sref{section:sticky_configurations}). We will do this by constructing a natural (two arrow) zig-zag of $\Top$-categories between $\disccat{\Total_n[M]}$ and $\M(M)$ in the present section and the next, and proving later that the maps in the zig-zag are weak equivalences.

We could similarly construct direct two arrow zig-zags (of weak equivalences) between $\disccat{\Total^G_n[M]}$ and $\M(M)$. For simplicity, we will omit this.

We proceed to construct the zig-zag in $\Top$-categories, which we will denote by
\[ \disccat{\Total_n[M]}\xlongleftarrow{\ F_\Total\ }\mathcal{Z}_M\rTo{\ F_\M\ }\M(M) \]

\begin{definition}[objects of $\mathcal{Z}_M$]
Define $\ob\mathcal{Z}_M$ to be the set
\[ \ob\mathcal{Z}_M\defeq\ob\big(\disccat{\Total_n[M]}\big)=\coprod_{k\in\NN}\Emb(k\times\RR^n,M) \]
\end{definition}

\begin{remark}
We ignore the obvious topology $\ob\mathcal{Z}_M$, since it will not be necessary.\\
However it is possible to define a category $C$ internal to $\Top$ whose discretization $\disccat{C}$ is $\mathcal{Z}_M$ and whose space of objects is the topological space
\[ \ob C=\coprod_{k\in\NN}\Emb(k\times\RR^n,M) \]
with the coproduct topology.
\end{remark}

\begin{definition}[functor $F_\Total:\mathcal{Z}_M\to\Total_n{[}M{]}$: map on objects]
The map of sets $\ob F_\Total$ is defined to be the identity function
\[ \id:\ob\mathcal{Z}_M\To\ob\Big(\disccat{\Total_n[M]}\Big) \]
\end{definition}

\begin{definition}[functor $F_\M:\mathcal{Z}_M\to\M(M)$: map on objects]\label{definition:F_M_objects}
The map of sets
\[ \ob F_\M:\ob\mathcal{Z}_M\To\ob\M(M) \]
associates to an embedding $e\in\Emb(k\times\RR^n,M)$ the following element of $\Conf(M,k)$
\[ \ob F_\M(e)=e\circ i_k \]
where $i_k:k\to k\times\RR^n$ is the canonical inclusion at the origins (equation \seqref{equation:i_k}).
\end{definition}

A few pictorially inclined definitions will be useful in order to define the morphisms in $\mathcal{Z}_M$, and give some intuition into their structure.

\begin{definition}[squares with assigned verticals]
Let $X$ be a topological space, and $Y\to H(X)$ a map of topological spaces ($H$ is the space of Moore paths from \sref{definition:Moore_path_space}).\\
The {\em space of squares in \(X\) with verticals in \(Y\)}, $\squares{X}{Y}$, is defined to be
\[ \squares{X}{Y}\defeq H(Y) \]
\end{definition}

The above are only squares in the loosest sense of the word. Nevertheless, the pictorial intuition coming from this designation is useful. It comes from the fact that we have a map
\begin{equation}\label{equation:squares->mapsonsquare}
\squares{X}{Y}\To H(H(X))\simeq\Map(\clcl{0}{1}\times\clcl{0}{1},X)
\end{equation}
The purpose of the above definition is two-fold: first, it constrains the resultant maps $\clcl{0}{1}\times\clcl{0}{1}\to X$ to having some specified type (given by the inclusion $Y\to H(X)$)) when the first coordinate is fixed. Second, the use of Moore paths, instead of usual paths, allows for strictly associative concatenation or gluing: in the case of squares, we can glue along common edges.

\begin{definition}[edges of squares]
Let $X$ be a topological space, and $f:Y\to H(X)$ a map of topological spaces.\\
We define the maps
\begin{align*}
T&:\squares{X}{Y}\To H(X)&&\textit{top edge}\\
B&:\squares{X}{Y}\To H(X)&&\textit{bottom edge}\\
L&:\squares{X}{Y}\To Y&&\textit{left edge}\\
R&:\squares{X}{Y}\To Y&&\textit{right edge}
\end{align*}
to be (recall the maps defined on Moore paths in \sref{definition:maps_Moore_path_space})
\begin{align*}
T&:H(Y)\xTo{H(f)}H(H(X))\xTo{H(t)}H(x)\\
B&:H(Y)\xTo{H(f)}H(H(X))\xTo{H(s)}H(x)
\end{align*}
\begin{align*}
L&:H(Y)\xTo{s}Y\\
R&:H(Y)\xTo{t}Y
\end{align*}
\end{definition}

\begin{definition}[triangles with assigned verticals]
Let $X$ be a topological space, and $f:Y\to H(X)$ a map of topological spaces.\\
The {\em space of triangles in \(X\) with verticals in \(Y\)}, $\rtriangles{X}{Y}$, is defined to be the limit of
\vspace{-.3em}
\begin{diagram}[midshaft,h=2em]
\set{0}\\
\dInto\\
\clop{0}{+\infty}&\lTo{l}&H(X)&\lTo{f}&Y&\lTo{L}&\squares{X}{Y}
\end{diagram}
\ie the subspace of $\squares{X}{Y}$ of squares whose left edge has zero length.
\end{definition}

The motivation for this nomenclature is the existence of a map
\begin{equation}\label{equation:triangles->mapsontriangles}
\rtriangles{X}{Y}\To\Map(tri,X)
\end{equation}
where \[ tri\defeq\set{(x,y)\in\clcl{0}{1}\times\clcl{0}{1}\suchthat y\leq x} \] is the subspace of $\clcl{0}{1}\times\clcl{0}{1}$ below the diagonal. Again, the purpose of this definition is to constrain the type of paths obtained when fixing the first coordinate, and also allowing for strictly associative gluing of maps.

\begin{definition}[edges of triangles]
Let $X$ be a topological space, and $f:Y\to H(X)$ a map of topological spaces.\\
We define the maps
\begin{align*}
T&:\rtriangles{X}{Y}\To H(X)&&\textit{top (or diagonal) edge}\\
B&:\rtriangles{X}{Y}\To H(X)&&\textit{bottom edge}\\
R&:\rtriangles{X}{Y}\To Y&&\textit{right edge}
\end{align*}
to be the restriction of the corresponding maps on $\square_X(Y)$ to $\rtriangle_X(Y)$:
\begin{align*}
T&:\rtriangles{X}{Y}\Into\squares{X}{Y}\xTo{T}H(x)\\
B&:\rtriangles{X}{Y}\Into\squares{X}{Y}\xTo{B}H(x)\\
R&:\rtriangles{X}{Y}\Into\squares{X}{Y}\xTo{R}Y
\end{align*}
\end{definition}

We can now define the morphisms of the category $\mathcal{Z}_M$. Recall the several spaces of filtered paths in stratified spaces from chapter \ref{chapter:stratified_spaces}.

\begin{remark}\label{remark:maps_mor_Total_n}
Given embeddings $e\in\Emb_n(k\times\RR^n,M)$, $f\in\Emb_n(l\times\RR^n,M)$, there are natural projections
\begin{align*}
\mathbf{m}&:\disccat{\Total_n[M]}(e,f)\To\Emb(k\times\RR^n,l\times\RR^n)\\
\mathbf{h}&:\disccat{\Total_n[M]}(e,f)\To H\big(\Emb(k\times\RR^n,M)\big)
\end{align*}
\end{remark}

\begin{definition}[morphisms of $\mathcal{Z}_M$]\label{definition:mor_Z_M}
Let $k,l\in\NN$.\\
For any $e\in\Emb(k\times\RR^n,M)$, and $f\in\Emb(l\times\RR^n,M)$, define the topological space $\mathcal{Z}_M(e,f)$ to be the subspace of the product
\[ \big(\disccat{\Total_n[M]}(e,f)\big)\times\popathstrong\big(\Map(k,l\times\RR^n)\big)\times\rtrianglesBig{M^{\times k}}{\popathstrong\big(M^{\times k}\big)}\times\popath\big(M^{\times k}\big) \]
constituted by tuples $(a,b,c,d)$ such that
\begin{align*}
T(c)&=d\\
B(c)&=\mathbf{h}(a)\circ i_k\\
f\circ b&=R(c)\\
s(b)&=\mathbf{m}(a)\circ i_k\\
t(b)&=i_k\circ\pi_0\big(\mathbf{m}(a)\big)
\end{align*}
where
\begin{enum}
\item we make the identifications (in the obvious way)
\[ \pi_0(k\times\RR^n)=k\qquad\quad\pi_0(l\times\RR^n)=l \]
\item $i_k:k\hookrightarrow k\times\RR^n$ is the canonical inclusion (at the origins)
\item $s(b)=b(0)$ denotes the source (beginning point) of $b$
\item $t(b)=b(l(b))$ denotes the target (end point) of $b$
\end{enum}
\end{definition}

\begin{remark}[variation: $\mathcal{Z}'_M$]\label{remark:Z'M}
We could replace both occurrences of strong spaces of filtered paths ($\smash{\popathstrong}$) in the definition above by usual filtered path spaces ($\smash{\popath}$). Let us call the $\Top$-category resulting from such a replacement by $\mathcal{Z}'_M$.\\
The composition in $\mathcal{Z}'_M$ is defined by the same formula as the composition of $\mathcal{Z}_M$ (which will be described in the next section).
\end{remark}

\begin{remark}\label{remark:mor_Z_M_bigdiagram}
The above is a fairly complicated definition. It defines $\mathcal{Z}_M(e,f)$ as the limit of the following diagram of topological spaces
\begin{diagram}[balance,midshaft,objectstyle=\scriptstyle,labelstyle=\scriptscriptstyle]
H(\Emb(k\times\RR^n,M))&\lTo{\mathbf{h}}&\fbox{\vphantom{{\scriptsize p}{\small d}}\smash{$\scriptstyle\hspace{-.1em}\disccat{\Total_n[M]}(e,f)\hspace{-.1em}$}}&\rTo{\mathbf{m}}&\Emb(k\times\RR^n,l\times\RR^n)&\rTo{\pi_0}&\Set(k,l)\\
\dTo_{-\circ i_k}&&&&\dTo{-\circ i_k}&&\dTo{i_k\circ -}\\
H(M^{\times k})&\lTo{B}&\fbox{\vphantom{{\scriptsize p}d}\smash{$\scriptstyle\hspace{-.12em}\rtriangles{M^{\times k}}{\popathstrong(M^{\times k})}\hspace{-.12em}$}}&&\Map(k,l\times\RR^n)&&\Map(k,l\times\RR^n)\\
&&\dTo{T}&\rdTo{\mathrlap{R}}&&\luTo{s}&\uTo{t}\\
\fbox{\vphantom{{\tiny p}{\small d}}\smash{$\scriptstyle\hspace{-.12em}\popath(M^{\times k})\hspace{-.12em}$}}&\rTo{\inclusion}&H(M^{\times k})&&\popathstrong(M^{\times k})&\lTo{f\circ -}&\fbox{\vphantom{{\scriptsize p}{\small d}}\smash{$\scriptstyle\hspace{-.12em}\popathstrong(\Map(k,l\times\RR^n))\hspace{-.12em}$}}
\end{diagram}
where boxes are drawn around each factor appearing in definition \sref{definition:mor_Z_M}.\\
Again, we could replace all three occurrences of strong spaces of filtered paths ($\smash{\popathstrong}$) in the diagram above by usual filtered path spaces ($\smash{\popath}$). The resulting diagram would have $\mathcal{Z}'_M(e,f)$ as its limit.
\end{remark}

\begin{notation}
We will denote elements of $\mathcal{Z}_M(e,f)$ by 4-tuples like we did in definition \sref{definition:mor_Z_M}.
\end{notation}

We will finish this section by defining the functors $F_\M$ and $F_\Total$ on the level of morphisms. In the next section, we will occupy ourselves with describing the composition in $\mathcal{Z}_M$.

\begin{definition}[functor $F_\Total:\mathcal{Z}_M\to\Total_n{[}M{]}$: map on morphisms]
Let $k,l\in\NN$, $e\in\Emb(k\times\RR^n,M)$, and $f\in\Emb(l\times\RR^n,M)$.\\
The map
\[ \mor F_\Total:\mathcal{Z}_M(e,f)\To\disccat{\Total_n[M]}(e,f) \]
is the canonical projection (see definition \sref{definition:mor_Z_M}).
\end{definition}

\begin{definition}[functor $F_\M:\mathcal{Z}_M\to\M(M)$: map on morphisms]
Let $k,l\in\NN$, $e\in\Emb(k\times\RR^n,M)$, and $f\in\Emb(l\times\RR^n,M)$.\\
Recall definition \sref{definition:F_M_objects}.\\
The map
\[ \mor F_\M:\mathcal{Z}_M(e,f)\To\M(M)(e\circ i_k,f\circ i_l) \]
is given by (denoting elements of $\mathcal{Z}_M(e,f)$ by 4-tuples like we did in definition \sref{definition:mor_Z_M})
\[ (\mor F_\M)(a,b,c,d)=d \]
Equivalently, $\mor F_\M$ is the unique map which makes the diagram
\begin{diagram}[midshaft]
\mathcal{Z}_M(e,f)&\rTo{\mor F_\M}&\M(M)(e\circ i_k,f\circ i_l)\\
&\rdTo(1,2)_{\mathllap{\proj}}\ldTo(1,2)_{\mathrlap{\inclusion}}\\
&\popath\big(M^{\times k}\big)
\end{diagram}
commute.
\end{definition}

\section{Composition in $\mathcal{Z}_M$}\label{section:composition_ZM}

We again fix (in this section) $n\in\NN$, and a $n$-manifold without boundary $M$.

We now proceed to construct the composition in $\mathcal{Z}_M$, which is a bit involved. The most non-trivial part of defining the composition resides in gluing the triangles in the morphisms of $\mathcal{Z}_M$ to obtain new triangles.

\begin{construction}[horizontal gluing of squares ($g_H$)]
Let $X$ be a topological space, and $f:Y\to H(X)$ a map of topological spaces.\\
Let $P_H$ be the pullback in the pullback square
\begin{diagram}[midshaft,h=2.1em]
P_H&\rTo{p_1}&\squares{X}{Y}\\
\dTo{p_2}&&\dTo_{R}\\
\squares{X}{Y}&\rTo{L}&Y
\end{diagram}
The {\em horizontal gluing} map
\[ g_H:P_H\To\squares{X}{Y} \]
is exactly the concatenation map for $H(Y)$ (see definition \sref{definition:concat_Moore_paths}).
\end{construction}

\begin{remark}[intuition for horizontal gluing]
This horizontal gluing operation corresponds approximately (under the map \seqref{equation:squares->mapsonsquare}) to having continuous functions
\begin{align*}
\clcl{0}{1}\times\clcl{0}{1}&\To X\\
\clcl{1}{2}\times\clcl{0}{1}&\To X
\end{align*}
which coincide on $\set{1}\times\clcl{0}{1}$, and gluing them to obtain a continuous function
\[ \clcl{0}{2}\times\clcl{0}{1}\To X \]
\end{remark}

\begin{construction}[vertical gluing of squares ($g_V$)]
Let $X$ be a topological space, and $f:Y\to H(X)$ a map of topological spaces which is injective.\\
Assume that there is a (necessarily unique) commutative diagram in $\Top$
\begin{equation}\label{diagram:aux_vert_concat_squares}
\begin{diagram}[midshaft,h=2.3em]
Y\underset{X}{\times}Y&\rTo{\concat}& Y\\
\dTo{f\times f}&&\dTo{f}\\
H(X)\underset{X}{\smash{_t\times_s}}H(X)&\rTo{\ \concat\ }&H(X)
\end{diagram}
\end{equation}
where the bottom map is concatenation of Moore paths in $X$ (definition \sref{definition:concat_Moore_paths}), and the pullbacks are the obvious ones for which this concatenation map makes sense (we indicate the maps in the pullback).\\
Let $P_V$ be the pullback in the pullback square
\begin{diagram}[midshaft,h=2.1em]
P_V&\rTo{p_1}&\squares{X}{Y}\\
\dTo{p_2}&&\dTo{T}\\
\squares{X}{Y}&\rTo{B}&H(X)
\end{diagram}
Then $P_V$ is naturally identified with $H\big(Y\times_X Y\big)$ (the pullback $Y\times_X Y$ being the same as in diagram \seqref{diagram:aux_vert_concat_squares})
The {\em vertical gluing} map
\[ g_V:P_V\To\squares{X}{Y} \]
is defined to be the map
\[ P_V=H\Big(Y\underset{X}{\times}Y\Big)\xTo{\ H(\concat)\ }H(Y) \]
where $\concat$ is the map assumed to exist in diagram \seqref{diagram:aux_vert_concat_squares}.
\end{construction}

\begin{remark}[intuition for vertical gluing]
This vertical gluing operation corresponds approximately (under the map \seqref{equation:squares->mapsonsquare}) to having continuous functions
\begin{align*}
\clcl{0}{1}\times\clcl{0}{1}&\To X\\
\clcl{0}{1}\times\clcl{1}{2}&\To X
\end{align*}
which coincide on $\clcl{0}{1}\times\set{1}$, and gluing them to obtain a continuous function
\[ \clcl{0}{1}\times\clcl{0}{2}\To X \]
\end{remark}

Note that the assumptions of the previous construction are verified for the inclusions
\begin{align*}
\popath(X)\Into H(X)\\
\popathstrong(X)\Into H(X)
\end{align*}
for any stratified space $X$.

\begin{construction}[triangle gluing operation ($g_T$)]
Let $X$ be a topological space, and $f:Y\to H(X)$ a map of topological spaces which is injective.\\
Assume that there is a (necessarily unique) commutative diagram in $\Top$ given by \seqref{diagram:aux_vert_concat_squares}.\\
Let $P_T$ be the subspace of the product $\rtriangles{X}{Y}\times\squares{X}{Y}\times\rtriangles{X}{Y}$ which is the limit of the diagram
\begin{diagram}[midshaft,h=2em]
&&&&\rtriangles{X}{Y}\\
&&&&\dTo{B}\\
&&&&H(X)\\
&&&&\uTo{T}\\
\rtriangles{X}{Y}&\rTo{R}&Y&\lTo{L}&\squares{X}{Y}
\end{diagram}
where the first copy of $\rtriangles{X}{Y}$ in the product corresponds to the bottom left entry in the diagram.\\
Then there is a gluing map
\[ g_T:P_T\To\rtriangles{X}{Y} \]
which is given equivalently by either of the two following procedures:
\begin{enum}
\item gluing horizontally a triangle in the bottom left (entry of the diagram above) with the result of gluing vertically a square in the bottom right with a triangle in the top right:
\[ \qquad g_T(a,b,c)=g_H(a,g_V(b,c))\qquad\text{for }(a,b,c)\in P_T \]
\item gluing horizontally a triangle in the bottom left (entry of the diagram above) with a square in the bottom right and then glue vertically the result with a triangle in the top right:
\[ \qquad g_T(a,b,c)=g_V(g_H(a,b),c)\qquad\text{for }(a,b,c)\in P_T \]
\end{enum}
\end{construction}

\begin{remark}[intuition for triangle gluing]\label{remark:g_T->gluingontriangles}
So now we know that we can glue two triangles and one square (with compatible edges) into one triangle. This corresponds approximately (under the maps \seqref{equation:triangles->mapsontriangles} and \seqref{equation:squares->mapsonsquare}) to having continuous functions
\begin{align*}
\set{(x,y)\in\clcl{0}{1}\times\clcl{0}{1}:y\leq x}&\To X\\
\clcl{1}{2}\times\clcl{0}{1}&\To X\\
\set{(x,y)\in\clcl{1}{2}\times\clcl{1}{2}:y\leq x}&\To X
\end{align*}
such that the first two coincide on $\set{1}\times\clcl{0}{1}$ and the last two coincide on $\clcl{1}{2}\times\set{1}$, and gluing them to obtain a continuous function
\[ \set{(x,y)\in\clcl{0}{2}\times\clcl{0}{2}:y\leq x}\To X \]
\end{remark}

Now we can define the composition in $\mathcal{Z}_M$. Recall that we denote the elements of $\mathcal{Z}_M(e,f)$ by 4-tuples in the product
\[ \big(\disccat{\Total_n[M]}(e,f)\big)\times\popathstrong\big(\Map(k,l\times\RR^n)\big)\times\rtrianglesBig{M^{\times k}}{\popathstrong\big(M^{\times k}\big)}\times\popath\big(M^{\times k}\big) \]
for $e\in\Emb(k\times\RR^n,M)$ and $f\in\Emb(l\times\RR^n,M)$. Also, for the following definition, it may be useful to refer to the diagram within observation \sref{remark:mor_Z_M_bigdiagram}.

\begin{definition}[composition in $\mathcal{Z}_M$]
Let $k,l,m\in\NN$, $e\in\Emb(k\times\RR^n,M)$, $e'\in\Emb(l\times\RR^n,M)$, and $e''\in\Emb(m\times\RR^n,M)$.\\
Given $(a,b,c,d)\in\mathcal{Z}_M(e,e')$ and $(a',b',c',d')\in\mathcal{Z}_M(e',e'')$, their composition is defined to be the element
\begin{align*}
(a',b',c',d')\circ(a,b,c,d)\defeq\bigg(&a\circ a',\\
&\concat\Big(\mathbf{m}(a')\circ b,b'\circ\pi_0\big(\mathbf{m}(a)\big)\Big),\\
&g_T\Big(c,\mathbf{h}(a')\circ b,c'\circ\pi_0\big(\mathbf{m}(a)\big)\Big),\\
&\concat\Big(d,d'\circ\pi_0\big(\mathbf{m}(a)\big)\Big)\bigg)
\end{align*}
of $\mathcal{Z}_M(e,e'')$. Here $\concat$ designates concatenation of filtered paths.
\end{definition}

\begin{remark}
There is not much to say about this composition: it is essentially the only thing that can be done, if one takes the mental picture from observation \sref{remark:g_T->gluingontriangles} seriously.\\
An element of $(a,b,c,d)\mathcal{Z}_M(e,e')$ and an element of $(a',b',c',d')\mathcal{Z}_M(e',e'')$ each determine a triangle. In order to obtain a new triangle, we just need to use an appropriate square and apply the triangle gluing operation. This square is obtained by taking the filtered path $c$ and tracing  it along the homotopy of embeddings $\mathbf{h}(a')$.
\end{remark}

After the laborious definitions and constructions given in this section and the previous one, we leave it to the dedicated reader to check that all relevant maps are well-defined and continuous, that the data for $\mathcal{Z}_M$ indeed defines a $\Top$-category, and that $F_\Total$, $F_\M$ give $\Top$-functors.

\section{Equivalence between $\Total_n[M]$ and $\M(M)$}\label{section:equiv_TnM_M(M)}

Let us fix $n\in\NN$ in this section. We collect in the following proposition the results described in the previous two sections.

\begin{proposition}
$\mathcal{Z}_\bullet$ defines a functor
\[ \mathcal{Z}_\bullet:\big(\Emb_n\big)_0\To\Top\dash\Cat \]
Furthermore, $F_\Total$ and $F_\M$ give natural transformations
\begin{align*}
F_\Total&:\mathcal{Z}_\bullet\To\disccat{\Total_n[-]}\\
F_\M&:\mathcal{Z}_\bullet\To\M
\end{align*}
\end{proposition}

\begin{remark}
Recall from observation \sref{remark:functoriality_M(X)} that $\M(-)$ is functorial with respect to injective maps of topological spaces. Therefore, one easily extracts a functor
\[ \M:\big(\Emb_n\big)_0\To\Top\dash\CAT \]
which is used in the previous definition.
\end{remark}

\begin{remark}[case of $\mathcal{Z}'_M$]
Recall the category $\mathcal{Z}'_M$ from \sref{remark:Z'M}. It admits completely analogous functors
\begin{align*}
\mathcal{Z}'_M&\To\disccat{\Total_n[M]}\\
\mathcal{Z}'_M&\To\M(M)
\end{align*}
\end{remark}

In this section, we will show that $F_\Total$ and $F_\M$ give essentially surjective local homotopy equivalences of $\Top$-categories. We state the result now.

\begin{proposition}[$F_\M$, $F_\Total$ are weak equivalences]\label{proposition:M(M)_zig-zag_equiv_TotalM}
For each $n$-manifold without boundary, $M$, the $\Top$-functors
\begin{align*}
F_\Total&:\mathcal{Z}_M\To\disccat{\Total_n[M]}\\
F_\M&:\mathcal{Z}_M\To\M(M)
\end{align*}
are essentially surjective local homotopy equivalences of $\Top$-categories.
\end{proposition}

It is quite easy to see that $F_\M$ and $F_\Total$ are essentially surjective. We are left with proving that the functors $F_\Total$ and $F_\M$ are local homotopy equivalences.
This will be a consequence of a sequence of constructions and lemmas, which will occupy us for the remainder of this section.

Let us start by fixing a $n$-dimensional manifold $M$ without boundary, $k,l\in\NN$, $e\in\Emb(k\times\RR^n,M)$, and $f\in\Emb(l\times\RR^n,M)$.

\begin{describe}[strategy for the proof]\label{describe:initial_strategy_FM_FT}
In our proof that $F_\M$ and $F_\Total$ are local equivalences, we would like to compare $\mathcal{Z}_M(e,f)$ with the (appropriate) fibre product of $\disccat{\Total_n[M]}(e,f)$ and the space of homotopies of filtered paths.\\
This comparison would ideally take the form of a map that
\begin{enum}
\item projects $\mathcal{Z}_M(e,f)$ onto $\disccat{\Total_n[M]}(e,f)$;
\item associates to a triangle $a$ with vertical filtered paths and whose edges are filtered paths (such a triangle is part of the data for an element of $\mathcal{Z}_M(e,f)$), a homotopy through filtered paths from the edge $T(a)$ to the concatenation of the other edges, $\concat(B(a),R(a))$.
\end{enum}
The purpose of doing this is that this fibre product is obviously homotopy equivalent to $\mathcal{Z}_M(e,f)$ (since we can deform homotopies to a constant one).\\
On the other hand, the fibre product has a map to $\M(M)(e\circ i_k,f\circ i_l)$ (by taking the appropriate endpoint of the homotopy of filtered paths), which can be shown to be an equivalence (essentially by using propositions \sref{proposition:analysis_Total_nM(e,f)} and \sref{proposition:homotopy_type_morphism_M}).\\
If done compatibly with the functors $F_\M$ and $F_\Total$, this comparison would prove that both functors are local homotopy equivalences.
\end{describe}

\begin{describe}[correction to the strategy for the proof]\label{describe:correction_strategy_FM_FT}
There is one obvious problem with the above strategy: there is no easy or meaningful way to associate to a triangle $a$ (as in the preceding description) a homotopy through filtered paths from the edge $T(a)$ to the concatenation of the other edges, $\concat(B(a),R(a))$. Any naive systematic attempt to do so will result in general in homotopies through non-filtered paths.\\
To fix this problem, we consider the subspace, $V^\holink_M(e,f)$, of $\mathcal{Z}_M(e,f)$ of elements whose corresponding triangles have vertical homotopy links (instead of just vertical filtered paths).\\
Applying a naive procedure to such a triangle does indeed give a homotopy through filtered paths. The comparison mentioned in the uncorrected strategy above will thus take the form of a zig-zag through $V^\holink_M(e,f)$.
\end{describe}

\begin{definition}[subspace of $\mathcal{Z}_M(e,f)$ of vertical homotopy links]\label{definition:Z_vertical_holinks}
Let $k,l\in\NN$, $e\in\Emb(k\times\RR^n,M)$, and $f\in\Emb(l\times\RR^n,M)$.\\
The {\em subspace of vertical homotopy links}, $V^\holink_M(e,f)$, of $\mathcal{Z}_M(e,f)$ is given by 
\[ V^\holink_M(e,f)\defeq\set{(a,b,c,d)\in\mathcal{Z}_M(e,f)\suchthat c\in\rtrianglesbig{M^{\times k}}{\holink(M^{\times k})}} \]
Equivalently, $V^\holink_M(e,f)$ can be defined as the limit of the diagram obtained from the one in \sref{remark:mor_Z_M_bigdiagram} by replacing all three occurrences of strong spaces of filtered paths ($\smash{\popathstrong}$) with the corresponding homotopy link spaces ($\holink$).
\end{definition}

\begin{lemma}[central lemma 1]\label{lemma:V->Z_equiv}
The inclusion map
\[ V^\holink_M(e,f)\Into\mathcal{Z}_M(e,f) \]
is the inclusion of a strong deformation retract. In particular, it is a homotopy equivalence.
\end{lemma}
\begin{proof}
This is an immediate consequence of proposition \sref{proposition:holinks_equiv_strongpopaths}.
\end{proof}

\begin{remark}[case of $\mathcal{Z}'_M$]\label{remark:conj_stronger_Miller_Z'}
Recall the category $\mathcal{Z}'_M$ from \sref{remark:Z'M}. It would be an immediate consequence of conjecture \sref{conjecture:stronger_Miller} that the inclusion
\[ \inclusion:V^\holink_M(e,f)\Into\mathcal{Z}'_M(e,f) \]
is a homotopy equivalence.
\end{remark}

\begin{describe}[current position within the strategy for the proof]
Having defined the subspace of $\mathcal{Z}_M(e,f)$ whose triangles have vertical homotopy links, we will describe precisely in the next two constructions a specific (naive) procedure to convert such triangles into homotopies of filtered paths (see also description \sref{describe:correction_strategy_FM_FT}).
\end{describe}

\begin{construction}
We will construct a precise map from squares in $X$ to $\Map(I\times I,X)$.
Let $f:Y\to H(X)$ be any map. Then we define
\[ sq:\squares{X}{Y}\To\Map(I\times I,X) \]
to be the composition
\begin{align*}
\squares{X}{Y}&\xlongequal{\smash{\quad}}H(Y)\\
&\Into H(Y)\\
&\xTo{\reparam}\Map\big(I,Y)\\
&\xTo{\Map(I,f)}\Map\big(I,H(X)\big)\\
&\xTo{\reparam}\Map\big(I,\Map(I,X)\big)\\
&\xlongequal{\smash{\quad}}\Map(I\times I,X)
\end{align*}
(where ``$\reparam$'' is the canonical reparametrization map of Moore paths from \sref{definition:reparametrization_Moore_paths}). Under the map $sq$, the edges correspond in the obvious way:
\begin{enum}
\item the top edge of a square in $\squares{X}{Y}$ corresponds to restricting to $I\times\set{1}$;
\item the bottom edge of a square corresponds to restricting to $I\times\set{0}$;
\item the right edge of a square corresponds to restricting to $\set{1}\times I$;
\item the left edge of a square corresponds to restricting to $\set{0}\times I$.
\end{enum}
\end{construction}

\begin{construction}[from triangles of links to homotopies of filtered paths]
Let
\[ p:I\times I\To I\times I \]
be any map which is injective in the interior and such that for $x\in\clcl{0}{1}$
\begin{align*}
p(x,1)&=(1,1)\\
p(0,x)&=(x,1)\\
p(x,0)&=(0,1-x)\\
p\left(1,\frac{x}{2}\right)&=(x,0)\\
p\left(1,\frac{1+x}{2}\right)&=(1,x)
\end{align*}
Let $X$ be a stratified space, and consider the subspace of $\rtrianglesbig{X}{\holink(X)}$ given by
\begin{align*}
BT_{Y,Z}\defeq\big\{x\in\rtrianglesbig{X}{\holink(X)}\suchthat&B(x)\in\popath(X),\\
&T(x)\in\smash{\popath}(X;Y,Z)\big\}
\end{align*}
for $Y$, $Z$ subspaces of $X$.
Then the composition
\begin{align*}
BT_{Y,Z}&\Into\rtrianglesbig{X}{\holink(X)}\\
&\Into\squaresbig{X}{\holink(X)}\\
&\xTo{sq}\Map(I\times I,X)\\
&\xTo{\Map(p,X)}\Map(I\times I,X)\\
&\xlongequal{\smash{\quad}}\Map\big(I,\Map(I,X)\big)
\end{align*}
(where the last map preserves the orders, as shown, of the two copies of $I$) factors through the subspace
\[ \Map\Big(I,\poMap_{Y,Z}(I,X)\Big) \]
of $\Map(I,\Map(I,X))$, where
\[ \poMap_{Y,Z}(I,X)\defeq\set{\gamma\in\poMap(I,X)\suchthat\gamma(0)\in Y\,,\,\gamma(1)\in Z} \]
is a subspace of $\smash{\poMap}(I,X)$.
Thus we constructed a map
\[ tr:BT_{Y,Z}\To\Map\Big(I,\poMap_{Y,Z}(I,X)\Big) \]

Moreover, for any $x$ in the image of $tr$, $x(\tau)$ is a homotopy link in $X$ for any $\tau\in\opop{0}{1}$. This observation leads us to the next lemma, which is a straightforward application of the corollary \sref{corollary:Miller} to Miller's result.
\end{construction}

\begin{lemma}\label{lemma:aux_triangles_holinks->homotopies_popaths}
Assume $X$ is a homotopically stratified space, and $X_a$, $X_b$ are strata of $X$.\\
Then the map
\[ tr:BT_{X_a,X_b}\To\Map\Big(I,\poMap_{X_a,X_b}(I,X)\Big) \]
is part of a homotopy equivalence where
\begin{enum}
\item the homotopy
\[ \Map\Big(I,\poMap_{X_a,X_b}(I,X)\Big)\times I\To\Map\Big(I,\poMap_{X_a,X_b}(I,X)\Big) \]
fixes the boundary of $I\times I$, pointwise;
\item the homotopy
\[ BT_{X_a,X_b}\times I\To BT_{X_a,X_b} \]
fixes the canonical reparametrizations of all edges $T$, $R$, $B$ of triangles.
\end{enum}
\end{lemma}

\begin{remark}
We demand that the homotopies in the previous lemma have such specific properties so that the homotopy equivalence is preserved by taking fibre products over the edge maps. This will be used in lemma \sref{lemma:V->Aux_equiv}.
\end{remark}

\begin{describe}[current position within the strategy for the proof]
The following construction details the fibre product, mentioned in the initial strategy \sref{describe:initial_strategy_FM_FT}, which we will compare with $\mathcal{Z}_M(e,f)$.\\
This comparison will be made via a zig-zag through $V^\holink_M(e,f)$, which will be defined in \sref{construction:V->Aux}.
\end{describe}

\begin{construction}[auxiliary spaces]\label{construction:auxiliary_spaces}
Let $\mathtt{Aux}_1$ denote the limit of
\begin{diagram}[midshaft,balance,h=2.2em]
&&&&\mathclap{\shortstack[l]{$\mathllap{\holink\big(}\Map(k,l\times\RR^n);$\\$\Conf(l\times\RR^n,k),$\\$i_l\circ\Set(k,l)\big)$}}\\
&&&&\dTo{s}\\
\disccat{\Total_n[M]}(e,f)&\rTo_{\sref{remark:maps_mor_Total_n}}^{\ \;\mathbf{m}\;\ }&\Emb(k\times\RR^n,l\times\RR^n)&\rTo{\ -\circ i_k\ }&\Conf(l\times\RR^n,k)&
\end{diagram}
viewed as a subspace of the product of the bottom left and the top right entries of the diagram.\\
Define $\mathtt{Aux}_2$ to be the subspace of (recall equation \seqref{equation:mor_M(X)_popath} from construction \sref{construction:mor_M(X)_popath})
\[ \M(M)(e\circ i_k,f\circ i_l)=\popath\big(M^{\times k};\set{e\circ i_k},f\circ i_l\circ\Set(k,l)\big) \]
constituted by the paths of length $1$. Note that $\mathtt{Aux}_2$ is naturally a subspace of $\smash{\poMap\big(I,M^{\times k}\big)}$. As a useful aside, observe that the inclusion \[ \mathtt{Aux}_2\xhookrightarrow{\smash{\ \ }}\M(M)(e\circ i_k,f\circ i_l) \] is a homotopy equivalence.\\
With this, the map
\[ \mathtt{ccr}:\mathtt{Aux}_1\To\mathtt{Aux}_2 \]
is defined by ($\concat$ designates concatenation of paths, and $\mathbf{h}$ is given in \sref{remark:maps_mor_Total_n})
\[ \mathtt{ccr}(a,b)\defeq\reparam\Big(\concat\big(\reparam(\mathbf{h}(a)\circ i_k),\reparam(f\circ b)\big)\Big) \]
for $(a,b)\in\mathtt{Aux}_1$. Observe that the effect of the multiple reparametrizations ($\reparam$) is to obtain a path of length 1, where each of the concatenated paths occupies half of that length.\\
We finally construct the pullback of
\begin{diagram}[midshaft,h=2em]
&&\mathtt{Aux}_1\\
&&\dTo{\mathtt{ccr}}\\
\Map(I,\mathtt{Aux}_2)&\rTo{\ \evaluation{1}\ }&\mathtt{Aux}_2
\end{diagram}
which we call $\mathtt{Aux}$.
\end{construction}

\begin{lemma}[central lemma 2]\label{lemma:central_1_Aux1->Tn}
The canonical projection
\[ \proj:\mathtt{Aux}_1\To\disccat{\Total_n[M]}(e,f) \]
is a homotopy equivalence.
\end{lemma}
\begin{proof}
$\mathtt{Aux}_1$ is the limit of
\begin{diagram}[midshaft,balance,h=2.2em]
&&&&\quad\mathclap{\shortstack[l]{$\mathllap{\holink\big(}\!\Map(k,l\times\RR^n);$\\$\Conf(l\times\RR^n,k),$\\$i_l\circ\Set(k,l)\big)$}}\\
&&&&\dTo{s}\\
\disccat{\Total_n[M]}(e,f)&\rTo{\ \;\mathbf{m}\;\ }&\Emb(k\times\RR^n,l\times\RR^n)&\rTo{\ -\circ i_k\ }&\Conf(l\times\RR^n,k)&
\end{diagram}
The vertical map is known to be a Hurewicz fibration (by proposition \sref{proposition:holink_s_fibration}) and a homotopy equivalence (proposition \sref{proposition:holink_many_discs_hodiscrete}), and we conclude that
\[ \proj:\mathtt{Aux}_1\To\disccat{\Total_n[M]}(e,f) \]
is a homotopy equivalence.
\end{proof}

\begin{lemma}[central lemma 3]\label{lemma:central_2_ccr}
The map
\[ \mathtt{ccr}:\mathtt{Aux}_1\To\mathtt{Aux}_2 \]
is a homotopy equivalence.
\end{lemma}
\begin{proof}[Sketch of proof]
Consider the following homotopy commutative diagram
\begin{diagram}[midshaft,hug]
\mathtt{Aux}_1&\rTo{\mathtt{ccr}}&\mathtt{Aux}_2\\
&\rdTo_{\mathllap{\widetilde{\mathtt{cc}}}}&\dInto_{\inclusion}\\
&&\M(M)(e\circ i_k,f\circ i_l)
\end{diagram}
where $\widetilde{\mathtt{cc}}$ is defined similarly to $\mathtt{ccr}$ but without reparametrizing (see construction \sref{construction:auxiliary_spaces}):
\[ \qquad\widetilde{\mathtt{cc}}(a,b)\defeq\concat\big(\mathbf{h}(a)\circ i_k,f\circ b\big)\qquad\text{for }(a,b)\in\mathtt{Aux}_1 \]
Since the triangle above commutes up to homotopy, and
\[ \inclusion:\mathtt{Aux}_2\Into\M(M)(e\circ i_k,f\circ i_l) \]
is a homotopy equivalence, we need only show that $\widetilde{\mathtt{cc}}$ is a homotopy equivalence.

Now consider the commutative diagram
\begin{diagram}[midshaft,objectstyle=\scriptstyle,labelstyle=\scriptscriptstyle,h=2.9em]
\mathtt{Aux}_1&\rTo{h1}&Q&\rTo{h2}&\holink(\parbox[t]{6em}{\scriptsize$\Map(k,l\times\RR^n);$\\$\Conf(l\times\RR^n,k),$\\$i_l\circ\Set(k,l))$}\\
\dTo{v1}&&\dTo{v2}&&\dTo^{\raisebox{-.8em}{$\scriptscriptstyle s$}}\\
\disccat{\Total_n[M]}(e,f)&\rTo_{\sref{remark:maps_mor_Total_n}}^{\ \mathbf{h}(-)\circ i_k\ }&\popath(M^{\times k};\set{e\circ i_k},\Conf(\im f,k))&\rTo{\ \ f^{-1}\circ\, t\ \ }&\Conf(l\times\RR^n,k)
\end{diagram}
in which both small squares are cartesian. The outer square is just the pullback square that defines $\mathtt{Aux}_1$.

We know that the arrow $s$ is a Hurewicz fibration (by \sref{proposition:holink_s_fibration}) and a homotopy equivalence (by \sref{proposition:holink_many_discs_hodiscrete}). Therefore both $v1$ and $v2$ are homotopy equivalences.

On the other hand, the map $\mathbf{h}(-)\circ i_k$ is a homotopy equivalence: it fits in a commutative triangle
\begin{diagram}[midshaft,hug,h=2.7em]
\disccat{\Total_n[M]}(e,f)&\rTo{\ \mathbf{h}(-)\circ i_k\ }&\popath\big(M^{\times k};\set{e\circ i_k},\Conf(\im f,k)\big)\\
&\rdTo_{\mathllap{\varsigma}}^{\mathrlap{\sim}}&\dTo^{\rotc{90}{$\scriptstyle\sim$}}_{\reparam}\\
&&\hofibre_{e\circ i_k}\!\Big(\Conf(l\times\RR^n,k)\xTo{f\circ -}\Conf(M,k)\Big)
\end{diagram}
where $\varsigma$ is the homotopy equivalence from proposition \sref{proposition:analysis_Total_nM(e,f)}. We conclude that $h1$ is also a homotopy equivalence (given that $v1$, $v2$, and $\mathbf{h}(-)\circ i_k$ are).

Observe now that the space $Q$ is exactly the one appearing in proposition \sref{proposition:homotopy_type_morphism_M} under the same name, and that we actually have a commuting diagram
\begin{diagram}[midshaft]
\mathtt{Aux}_1&\rTo^{h1}_{\sim}&Q\\
&\rdTo(1,2)_{\mathllap{\widetilde{\mathtt{cc}}}}\ldTo(1,2)_{\mathrlap{\widetilde{\concat}}}\\
&\M(M)(e\circ i_k,f\circ i_l)
\end{diagram}
(where we have used the notation of \sref{proposition:homotopy_type_morphism_M}). Since proposition \sref{proposition:homotopy_type_morphism_M} tells us that the map on the right, $\widetilde{\concat}$, is a homotopy equivalence, we conclude that the map on the left, $\widetilde{\mathtt{cc}}$, is a homotopy equivalence, which ends this proof.
\end{proof}

\begin{remark}
Lemma \sref{lemma:central_2_ccr} does most of the work comparing $\Total_n[M]$ and $\M(M)$, being the only one to make use of propositions \sref{proposition:homotopy_type_morphism_M} and \sref{proposition:analysis_Total_nM(e,f)}.
\end{remark}

\begin{construction}[from $V^\holink_M(e,f)$ to the auxiliary spaces]\label{construction:V->Aux}
Observe that for any $(a,b,c,d)\in V^\holink_M(e,f)$
\[ c\in(T,B)^{-1}\big(\popath\big)\subset\rtrianglesbig{M^{\times k}}{\holink(M^{\times k})} \]
From this we define the map
\[ \func{\tau'}{V^\holink_M(e,f)}{\Map(I,\mathtt{Aux}_2)}{(a,b,c,d)}{tr(c)} \]
where we have identified $\mathtt{Aux}_2$ with a subspace of $\smash{\poMap(I,M^{\times k})}$.\\
There is also a canonical projection (recall the definition of $\mathtt{Aux}_1$ from \sref{construction:auxiliary_spaces})
\[ \func{\tau''}{V^\holink_M(e,f)}{\mathtt{Aux}_1}{(a,b,c,d)}{(a,c)} \]
The maps $\tau'$ and $\tau''$ give a commutative diagram
\begin{diagram}[midshaft,h=2em]
V^\holink_M(e,f)&\rTo{\tau''}&\mathtt{Aux}_1\\
\dTo{\tau'}&&\dTo_{\mathtt{ccr}}\\
\Map(I,\mathtt{Aux}_2)&\rTo{\ \evaluation{1}\ }&\mathtt{Aux}_2
\end{diagram}
and so assemble into a map
\[ \tau:V^\holink_M(e,f)\To\mathtt{Aux} \]
\end{construction}

The following lemma is a straightforward consequence of \sref{lemma:aux_triangles_holinks->homotopies_popaths} and the definition of $V^\holink_M(e,f)$ in \sref{definition:Z_vertical_holinks}.

\begin{lemma}[central lemma 4]\label{lemma:V->Aux_equiv}
The map
\[ \tau:V^\holink_M(e,f)\To\mathtt{Aux} \]
is a homotopy equivalence.
\end{lemma}

The four central lemmas --- \sref{lemma:V->Z_equiv}, \sref{lemma:central_1_Aux1->Tn}, \sref{lemma:central_2_ccr}, and \sref{lemma:V->Aux_equiv} --- are so called because of their instrumentality in our proof that $F_\M$ and $F_\Total$ are local equivalences. More precisely, these lemmas encapsulate all the homotopical properties of
\begin{enum}
\item spaces of filtered paths and homotopy links,
\item morphisms spaces in $\M(M)$,
\item morphism spaces in $\Total_n[M]$
\end{enum}
which we use in showing $F_\M$ and $F_\Total$ are local equivalences.

\begin{describe}[current position within the strategy for the proof]
We have now defined all pertinent objects and maps.\\
All that remains to do is using the zig-zag (via $V^\holink_M(e,f)$) between the space $\mathcal{Z}_M(e,f)$ and the auxiliary fibre product $\mathtt{Aux}$ to prove that $F_\M$ and $F_\Total$ are local equivalences.
\end{describe}

We now relate the map $\tau$ with the functor $F_\Total$ and use this relation to prove $F_\Total$ is a local homotopy equivalence.

\begin{lemma}\label{lemma:Aux_V_Z_Total_commutes}
The diagram
\begin{equation}\label{diagram:Aux_V_Z_Total}
\begin{diagram}[midshaft,h=2.3em]
V^\holink_M(e,f)&\rTo{\tau}&\mathtt{Aux}&\rTo{\proj}&\mathtt{Aux}_1\\
\dInto{\inclusion}&&&&\dTo{\proj}\\
\mathcal{Z}_M(e,f)&&\rTo{\mor F_\Total}&&\disccat{\Total_n[M]}(e,f)
\end{diagram}
\end{equation}
commutes.
\end{lemma}

\begin{proposition}[$F_\Total$ is a local homotopy equivalence]
The map
\[ \mor F_\Total:\mathcal{Z}_M(e,f)\To\disccat{\Total_n[M]}(e,f) \]
is a homotopy equivalence.
\end{proposition}
\begin{proof}
The central lemmas \sref{lemma:V->Z_equiv}, \sref{lemma:V->Aux_equiv}, and \sref{lemma:central_1_Aux1->Tn} state that the maps
\begin{align*}
\inclusion&:V^\holink_M(e,f)\Into\mathcal{Z}_M(e,f)\\
\tau&:V^\holink_M(e,f)\To\mathtt{Aux}\\
\proj&:\mathtt{Aux}_1\xTo{\;\sim\;}\disccat{\Total_n[M]}(e,f)
\end{align*}
are homotopy equivalences. Since $\mathtt{Aux}$ is the pullback of
\begin{diagram}[midshaft,h=2em]
&&\mathtt{Aux}_1\\
&&\dTo_{\mathtt{ccr}}\\
\Map(I,\mathtt{Aux}_2)&\rTo{\ \evaluation{1}\ }&\mathtt{Aux}_2
\end{diagram}
and $\evaluation{1}$ is a Hurewicz fibration and a homotopy equivalence, it follows that the projection
\[ \proj:\mathtt{Aux}\xTo{\;\sim\;}\mathtt{Aux}_1 \]
is a homotopy equivalence.

We have proved that all the arrows in diagram \seqref{diagram:Aux_V_Z_Total} are homotopy equivalences, except the bottom one. We conclude that the bottom arrow
\[ \mor F_\Total:\mathcal{Z}_M(e,f)\To\disccat{\Total_n[M]}(e,f) \]
is also a homotopy equivalence.
\end{proof}

The next two lemmas relate $\tau$ with the functor $F_\M$, and prove that $F_\M$ is a local homotopy equivalence.

\begin{lemma}
The diagram (where $\reparam$ gives the canonical reparametrization to a filtered path of length 1)
\begin{equation}\label{diagram:Aux_V_Z_M_commutes}
\begin{diagram}[midshaft]
V^\holink_M(e,f)&\rTo{\tau}&\mathtt{Aux}&\rTo{\proj}&\Map(I,\mathtt{Aux}_2)\\
\dInto{\inclusion}&&&&\dTo{\evaluation{0}}\\
\mathcal{Z}_M(e,f)&\rTo{\mor F_\M}&\M(e\circ i_k,f\circ i_l)&\rTo{\reparam}&\mathtt{Aux}_2
\end{diagram}
\end{equation}
commutes.
\end{lemma}

\begin{proposition}[$F_\M$ is a local homotopy equivalence]
The map
\[ \mor F_\M:\mathcal{Z}_M(e,f)\To\M(M)(e\circ i_k,f\circ i_l) \]
is a homotopy equivalence.
\end{proposition}
\begin{proof}
The central lemmas \sref{lemma:V->Z_equiv} and \sref{lemma:V->Aux_equiv} state that the maps
\begin{align*}
\inclusion&:V^\holink_M(e,f)\Into\mathcal{Z}_M(e,f)\\
\tau&:V^\holink_M(e,f)\To\mathtt{Aux}
\end{align*}
are homotopy equivalences. Furthermore, it is easy to see that
\begin{align*}
\evaluation{0}&:\Map(I,\mathtt{Aux}_2)\To\mathtt{Aux}_2\\
\reparam&:\M(M)(e\circ i_k,f\circ i_l)\To\mathtt{Aux}_2
\end{align*}
are both homotopy equivalences.

The canonical projection
\[ \proj:\mathtt{Aux}\To\Map(I,\mathtt{Aux}_2) \]
is a homotopy equivalence because the commutative square
\begin{diagram}[midshaft,h=2em]
\mathtt{Aux}&\rTo{\proj}&\mathtt{Aux}_1\\
\dTo{\proj}&&\dTo_{\mathtt{ccr}}\\
\Map(I,\mathtt{Aux}_2)&\rTo{\ \evaluation{1}\ }&\mathtt{Aux}_2
\end{diagram}
is cartesian (by definition of $\mathtt{Aux}$), $\evaluation{1}$ is a Hurewicz fibration, and
\[ \mathtt{ccr}:\mathtt{Aux}_1\To\mathtt{Aux}_2 \]
is a homotopy equivalence (by central lemma \sref{lemma:central_2_ccr}).

We have proved that all arrows in diagram \seqref{diagram:Aux_V_Z_M_commutes}, except for $\mor F_\M$, are homotopy equivalences. In conclusion, $\mor F_\M$ is a homotopy equivalence as well.
\end{proof}

\begin{remark}[case of $\mathcal{Z}'_M$: removing strong spaces of filtered paths]
Recall the category $\mathcal{Z}'_M$ from remark \sref{remark:Z'M}. As stated before it admits functors to $\M(M)$ and $\disccat{\Total_n[M]}$. The proof that $F_\M$ and $F_\Total$ are weak equivalences holds for those functors as well, with the exception of central lemma \sref{lemma:V->Z_equiv}. As explained in observation \sref{remark:conj_stronger_Miller_Z'}, this could be remedied by assuming the conjecture \sref{conjecture:stronger_Miller} strengthening Miller's result.\\
In conclusion, if we assume conjecture \sref{conjecture:stronger_Miller}, then we can construct weak equivalences between $\mathcal{Z}'_M$ and each of the $\Top$-categories $\M(M)$ and $\disccat{\Total_n[M]}$. This would obviate our use of strong spaces of filtered paths, since they only enter in the definition of the category $\mathcal{Z}_M$.
\end{remark}





\chapter{Homotopical properties of enriched categories}\label{chapter:homotopical_properties_enriched_categories}

\section*{Introduction}

This chapter aims to present some elementary aspects of an ad hoc theory of homotopy colimits in enriched model categories. In order to meet our needs in the final chapter, we will present a definition of {\em derived enriched colimit} for an enriched indexing category. To the author's knowledge, this notion appears somewhat rarely in the literature in such a generality. It has nevertheless been considered, for example, in \cite{Shulman} in a more systematic manner. The reader can also look at section A.3.3 of \cite{Lurie3}.

The final section describes, without proof, how these derived enriched colimits behave with respect to Grothendieck constructions.

\section*{Summary}

Section \sref{section:categories_intervals} analyzes some categories of {\em intervals} defined as subcategories of the category $\Ord$ of finite ordinals. Section \sref{section:monads_intervals} explains how these categories naturally index functors associated to algebras over a monad.

Section \sref{section:bar_construction_enriched} constructs monads whose algebras are $V$-functors, for $V$ a closed symmetric monoidal category. Together with the functors defined in section \sref{section:monads_intervals}, this gives rise to several bar-type constructions for enriched functors. In particular, given $V$-categories $A$ and $C$, and functors $F:A\to C$ and $G:\op{A}\to V$, we obtain a two-sided bar construction $\BarConst(G,A,F)$.

Section \sref{section:derived_enriched_colimits} uses the two-sided bar construction of the previous section to define the derived enriched colimit $G\tensor^\derived_A F$ when $V$ is an appropriate simplicial model category, and $C$ is a $V$-model category. This concept is applied in section \sref{section:homotopy_colimits_enriched} to construct the homotopy colimit $\hocolim F$, when the monoidal structure on $V$ is cartesian. Furthermore, a notion of homotopy cofinality is explored in this context.

Section \sref{section:weak_equivalence_enriched_categories} defines when a functor between two $V$-categories is a weak equivalence, and proves that such a functor is necessarily homotopy cofinal.

The final section \sref{section:Grothendieck_construction_homotopy_colimit} states without proof a result which informally says that
\[ \hocolim_{\Groth(G)}(F\circ\pi)\simeq\realization{\!\vphantom{\big(}\Nerve{\disccat{G}}}\tensor^\derived_A F \]
for $G:\op{\internal A}\to\Cat(V)$ an internal $\Cat(V)$-valued functor (where the functor $\pi:\Groth(F)\to A$ is the projection).

\section{Categories of intervals}\label{section:categories_intervals}

Recall the category $\Ord$ of finite ordinals (see \sref{notation:Ord_Delta}). Given a non-empty ordinal $a$ in $\Ord$, we will denote its minimum (respectively, maximum) by $\min a$ (respectively, $\max a$).

We will now define several subcategories of $\Ord$ which correspond to demanding the preservation of minima and or maxima of the ordinals. These will be very useful for defining bar-like constructions.

\begin{definition}[category of left intervals]
We define the {\em category of left intervals}, $\LeftInt$, to be the subcategory of $\Ord$ consisting of the morphisms $f:a\to b$ in $\Ord$ such that $a$ is non-empty and
\[ f(\min a)=\min b \]
\end{definition}

\begin{definition}[category of right intervals]
We define the {\em category of right intervals}, $\RightInt$, to be the subcategory of $\Ord$ consisting of the morphisms $f:a\to b$ in $\Ord$ such that $a$ is non-empty and
\[ f(\max a)=\max b \]
\end{definition}

\begin{definition}[categories of intervals]
We define the {\em category of intervals}, $\Int$, to be the subcategory of $\Ord$ consisting of the morphisms $f:a\to b$ in $\Ord$ such that $a$ is non-empty and
\begin{align*}
f(\min a)&=\min b\\
f(\max a)&=\max b
\end{align*}
Define the {\em category of strict intervals}, $\StrictInt$, to be the full subcategory of $\Int$ generated by the ordinals $a$ such that
\[ \min a\neq\max a \]
\end{definition}

\begin{remark}
Equivalently, $\StrictInt$ is the full subcategory of $\Int$ generated by the ordinals with at least two elements.
\end{remark}

\begin{construction}[functors which reverse order]
There is a functor
\[ \reverse:\Ord\To\Ord \]
which takes an ordinal $a$ to the ordinal $\reverse(a)$ which has the same underlying set as $a$, and whose order is the reverse of that of $a$. Informally, $\reverse$  reverses the order of an ordinal.\\
The functor $\reverse$ is an isomorphism of categories such that
\[ \reverse\circ\reverse=\id_\Ord \]
Additionally, $\reverse$ restricts to the following isomorphisms of categories:
\begin{align*}
\reverse&:\LeftInt\To\RightInt\\
\reverse&:\RightInt\To\LeftInt\\
\reverse&:\Int\To\Int\\
\reverse&:\StrictInt\To\StrictInt
\end{align*}
which always verify
\[ \reverse\circ\reverse=\id \]
for appropriate compositions.
\end{construction}

Recall that the category $\Ord$ has a monoidal structure
\[ +:\Ord\times\Ord\To\Ord \]
whose unit is the empty ordinal.

\begin{construction}[functors which add minimum or maximum]
The functor
\[ 1+-:\Ord\To\Ord \]
lands within the category of left intervals. In particular, we get an induced functor
\[ 1+-:\Ord\To\LeftInt \]
Similarly, there are functors
\begin{align*}
1+-&:\RightInt\To\StrictInt\\
-+1&:\Ord\To\RightInt\\
-+1&:\LeftInt\To\StrictInt\\
1+-+1&:\Ord\To\StrictInt
\end{align*}
These functors verify the formulas
\begin{align*}
\reverse(1+-)&=\reverse(-)+1\\
\reverse(-+1)&=1+\reverse(-)
\end{align*}
\end{construction}

\begin{remark}[simplicial object with extra degeneracy]
The concept of an augmented simplicial object with extra degeneracy can be stated easily using these functors.\\
Giving an extra degeneracy to an augmented simplicial object in a category $C$
\[ F:\op{\Ord}\To C \]
is equivalent to finding an extension
\[ \widetilde{F}:\op{\LeftInt}\To C \]
of $F$ making the diagram
\begin{diagram}[midshaft,h=2.3em]
\op{\Ord}&\rTo{F}&C\\
\dTo{\op{(1+-)}}&\ruTo_{\mathrlap{\widetilde{F}}}\\
\op{\LeftInt}
\end{diagram}
commute.
\end{remark}

Given an appropriate simplicial space $F:\op{\Delta}\to\Top$ with an augmentation $F\to X$, and an extra degeneracy, it is a standard result that the geometric realization of $F$ is equivalent to $X$. The following proposition puts this in perspective.

\begin{proposition}\label{proposition:1+-:Delta->LeftInt_homotopy_final}
The functor
\[ F:\Delta\Into\Ord\xTo{1+-}\LeftInt \]
is a homotopy final functor.
\end{proposition}
\begin{proof}[Sketch of proof]
We are required to show that for any left interval $x$, the category $F/x$ has weakly contractible nerve. We know that $F/x$ is the (usual) Grothendieck construction of the functor
\[ \LeftInt(F-,x):\op{\Delta}\To\Set \]
By Thomason's theorem (e.g.\ theorem 18.9.3 of \cite{Hirschhorn}), we know that there is a weak equivalence of simplicial sets
\[ \Nerve\big(\Groth\big(\LeftInt(F-,x)\big)\big)\xTo{\;\sim\;}\LeftInt(F-,x) \]
Hence we obtain a weak equivalence in $\sSet$
\[ \Nerve(F/x)\xTo{\;\sim\;}\LeftInt(F-,x) \]
Using the previous observation, we notice that the augmented simplicial set $\LeftInt(F-,x)\to 1$ has an extra degeneracy, and therefore the augmentation
\[ \LeftInt(F-,x)\To 1 \]
gives a weak equivalence of simplicial sets. In conclusion, the nerve of the category $F/x$ is weakly contractible.
\end{proof}

\begin{corollary}
The functor
\[ \Delta\Into\Ord\xTo{-+1}\RightInt \]
is homotopy final.
\end{corollary}
\begin{proof}
There is a unique isomorphism of categories
\[ \reverse:\Delta\To\Delta \]
such that
\begin{diagram}[midshaft,h=2.1em]
\Delta&\rInto&\Ord\\
\dTo{\reverse}&&\dTo{\reverse}\\
\Delta&\rInto&\Ord
\end{diagram}
commutes up to natural isomorphism. Since the diagram
\begin{diagram}[midshaft,h=2.1em]
\Ord&\rTo{-+1}&\RightInt\\
\dTo{\reverse}&&\dTo{\reverse}\\
\Ord&\rTo{1+-}&\LeftInt
\end{diagram}
commutes, we thus get a square
\begin{diagram}[midshaft,h=2.1em]
\Delta&\rTo{-+1}&\RightInt\\
\dTo{\reverse}&&\dTo{\reverse}\\
\Delta&\rTo{1+-}&\LeftInt
\end{diagram}
which commutes up to a natural isomorphism. The result now follows from the previous proposition coupled with the fact that the two vertical arrows are isomorphisms of categories.
\end{proof}

There is one last useful property of the categories of intervals which we must address. It concerns a duality $\op{\Ord}\simeq\Int$.

\begin{construction}[Stone duality for intervals]
There is a natural functor
\[ \dual:\op{\Ord}\To\Int \]
such that for an ordinal $a$, $\dual(a)$ is the set $\Ord(a,2)$ with the partial order induced from $2$: it is actually a total order.\\
Reciprocally, there is a functor
\[ \dual:\Int\To\op{\Ord} \]
which associates with each interval $a$, the set $\Int(a,2)$ with the partial order induced from $2$: again, this is a total order.\\
Then the compositions
\begin{align*}
&\Int\xTo{\dual}\op{\Ord}\xTo{\dual}\Int\\
&\op{\Ord}\xTo{\dual}\Int\xTo{\dual}\op{\Ord}
\end{align*}
are naturally isomorphic to the identity functors. In particular, the duality functors are inverse equivalences of categories.
\end{construction}

\begin{remark}
The duality functors above give rise to a duality isomorphism
\[ \op{\LeftInt}\simeq\RightInt \]
where, for example, the diagram
\begin{diagram}[midshaft,h=2.3em]
\op{\LeftInt}&\rTo_{\sim}^{\ \dual\ }&\RightInt\\
\dTo{\op{\inclusion}}&&\dTo{1+-}\\
\op{\Ord}&\rTo{\ \dual\ }&\Int
\end{diagram}
commutes.
\end{remark}

\begin{corollary}\label{corollary:opDelta_equiv_StrictInt}
The equivalence
\[ \dual:\op{\Ord}\To\Int \]
restricts to an equivalence
\[ \dual:\op{\Delta}\To\StrictInt \]
\end{corollary}

\begin{remark}
We will consider $\op{\Delta}$ naturally as an equivalent subcategory of $\StrictInt$ via this duality functor.
\end{remark}

\begin{proposition}\label{proposition:StrictInt->RightInt_homotopy_cofinal}
The inclusion functors
\begin{align*}
\StrictInt&\Into\RightInt\\
\StrictInt&\Into\LeftInt
\end{align*}
are homotopy cofinal.
\end{proposition}
\begin{proof}
It is enough to prove that the first functor is homotopy cofinal: the second one follows by using the functor $\reverse$ appropriately.

The inclusion
\[ \StrictInt\Into\RightInt \]
fits into a commutative diagram
\begin{diagram}[midshaft,h=2.3em]
\op{\Delta}&\rTo{\op{(1+-)}}&\op{\LeftInt}\\
\dTo{\dual}&&\dTo{\dual}\\
\StrictInt&\rInto{\inclusion}&\RightInt
\end{diagram}
Since the two vertical arrows are equivalences of categories, and the top arrow is homotopy cofinal (proposition \sref{proposition:1+-:Delta->LeftInt_homotopy_final}), we conclude that
\[ \StrictInt\Into\RightInt \]
is homotopy cofinal.
\end{proof}

\section{Monads and categories of intervals}\label{section:monads_intervals}

The relevance of the categories of intervals defined in the previous proposition is related to how they naturally index algebras over a monad. We collect in this section the relevant results. Let us fix a category $C$ and a monad $T$ on $C$.

We begin with the well-known fact that monoidal functors from $\Ord$ into a monoidal category $D$ are the same as monoids in $D$. We apply it to the monoidal category $\HomCat{C}{C}$ of endo-functors of $C$.

\begin{construction}
Any monad $T$ on a category $C$ gives rise to an essentially unique monoidal functor
\[ T^\bullet:\Ord\To\HomCat{C}{C} \]
such that $T$ is the monoid $T^\bullet(1)$ in the monoidal category $\HomCat{C}{C}$. The monoidal structure on $\HomCat{C}{C}$ is given by composition.\\
Composing with the evaluation at an object $x$ of $C$ gives a functor
\[ T^\bullet x:\Ord\To C \]
This construction obviously extends to a functor
\[ T^\bullet -:C\To\HomCat{\Ord}{C} \]
\end{construction}

We will now describe how the functor $T^\bullet x$ changes when one allows $x$ to become an algebra for $T$.

\begin{proposition}\label{proposition:functors_intervals_T-algebra}
Assume $x$ is a $T$-algebra in $C$. Then there exist functors
\begin{align*}
T^{\bullet-1} x&:\RightInt\To C\\ 
T^{\bullet-1} x&:\Int\To T\dash\alg
\end{align*}
such that the diagram
\begin{diagram}[midshaft]
\Int&\rInto&\RightInt&\lTo{\ -+1\ }&\Ord\\
\dTo{T^{\bullet-1}x}&&\dTo{T^{\bullet-1}x}&\ldTo_{\mathrlap{T^{\bullet}x}}\\
T\dash\alg&\rTo{\proj}&C
\end{diagram}
commutes.
\end{proposition}

\begin{notation}
In our notation $\bullet$ is meant to represent the number of points in an ordinal. So the notation $T^{\bullet-1}x$ is meant to inform about its value at an ordinal with $n+1$ points ($n\in\NN$):
\[ (T^{\bullet-1}x)(n+1)\simeq T^{\circ n}x \]
\end{notation}

To finish this section, we analyze the case of a free $T$-algebra.

\begin{proposition}\label{proposition:functor_intervals_free_T-algebra}
Let $x$ be an object of $C$, and consider the free $T$-algebra $Tx$ in $C$.\\
There exists a functor
\[ T^\bullet x:\LeftInt\To T\dash\alg \]
such that the diagram
\begin{diagram}[midshaft,h=2.3em]
\Int&\rInto&\LeftInt&\rInto&\Ord\\
&\rdTo_{\mathllap{T^{\bullet-1}(Tx)}}&\dTo_{T^\bullet x}&&\dTo_{T^\bullet x}\\
&&T\dash\alg&\rTo{\proj}&C
\end{diagram}
commutes.
\end{proposition}

\section{Bar constructions for enriched categories}\label{section:bar_construction_enriched}

In this section, we apply the constructions of the preceding section to obtain bar constructions for enriched functors. Throughout this section, let $V$ denote a bicomplete symmetric monoidal closed category, and let $C$ be a $V$-category which is cocomplete (as a $V$-category). Recall that $C_0$ denotes the underlying category of $C$.

\begin{construction}[monad whose algebras are enriched functors]\label{construction:T_AC_monad}
Let $A$ be a small $V$-category. We will now construct a monad on the category $\HomCat{\ob A}{C_0}$ whose category of algebras is equivalent to $V\dash\CAT(A,C)$.\\
Define the functor
\[ T_{A,C}:\HomCat{\ob A}{C_0}\To\HomCat{\ob A}{C_0} \]
on objects by
\[ T_{A,C}(F)\defeq\coprod_{a\in\ob A}F(a)\tensor A(a,-) \]
Here, ``$\tensor$'' denotes tensoring of an object of $V$ with an object of $C$, which is always possible since $C$ is cocomplete. We expect the functoriality of $T_{A,C}$ to be clear to the reader.\\
The functor $T_{A,C}$ becomes a monad on $\HomCat{\ob A}{C_0}$ with the unit
\[ \eta:\id_{\HomCat{\ob A}{C_0}}\To T_{A,C} \]
being determined by the identity morphisms of the $V$-category $A$, and the multiplication
\[ \mu:T_{A,C}\circ T_{A,C}\To T_{A,C} \]
arising from the composition in $A$. We leave the straightforward details of defining the unit and multiplication for $T_{A,C}$ to the reader.
\end{construction}

\begin{proposition}\label{proposition:T_AC-alg=V-CAT(A,C)}
Let $A$ be a small $V$-category.\\
There is a canonical isomorphism of categories
\[ T_{A,C}\dash\alg\xTo{\ \cong\ }V\dash\CAT(A,C) \]
\end{proposition}

\begin{remark}
Let us consider the specific case $C=V$.\\
In order to define the monad $T_{A,V}$, it is only necessary that $V$ be a monoidal category whose monoidal product preserves coproducts in each variable. In this case, the algebras for $T_{A,V}$ are somewhat akin to internal functors on $\internal A$. In fact, if $V$ is cartesian monoidal with totally disjoint small coproducts, then the two notions coincide.\\
Moreover, the possibility of defining $T_{A,V}$ with fewer assumptions on $V$ could be used to extend the concepts in the next sections, for instance, to the case of $\Top$. We will, however, not pursue this.
\end{remark}

In view of this proposition, we will identify $V$-functors $A\to C$ with $T_{A,C}$-algebras. The upshot of this perspective is that we can immediately obtain bar constructions for functors.

\begin{construction}[bar construction for enriched functor]\label{construction:bar_const_enriched_functor}
Let $A$ be a small $V$-category, and $F:A\to C$ a $V$-functor.\\
According to the last proposition, $F$ gives an algebra for $T_{A,C}$, and therefore we get functors (proposition \sref{proposition:functors_intervals_T-algebra})
\begin{align*}
(T_{A,C})^{\bullet-1} F&:\RightInt\To\HomCat{\ob A}{C_0}\\ 
(T_{A,C})^{\bullet-1} F&:\Int\To V\dash\CAT(A,C)
\end{align*}
We will rename them
\begin{align*}
\BarConst(A,F)&:\RightInt\To\HomCat{\ob A}{C_0}\\ 
\BarConst(A,F)&:\Int\To V\dash\CAT(A,C)
\end{align*}
These functors verify a commutative diagram
\begin{diagram}[midshaft]
\Int&\rInto&\RightInt\\
\dTo{\BarConst(A,F)}&&\dTo{\BarConst(A,F)}\\
V\dash\CAT(A,C)&\rTo{\ \proj\ }&\HomCat{\ob A}{C_0}
\end{diagram}
Both bar constructions $\BarConst(A,F)$ are functorial on $F\in V\dash\CAT(A,C)$ and the $V$-category $A$.
\end{construction}

\begin{notation}
By default, we will usually mean the functor
\[ \BarConst(A,F):\Int\To V\dash\CAT(A,C) \]
when we refer to $\BarConst(A,F)$.
\end{notation}

\begin{remark}
The value of $\BarConst(A,F)$ at an ordinal with $n+1$ points ($n\in\NN$) is
\begin{align*}
\BarConst(A,F)(n+1)&\simeq (T_{A,C})^{\circ n}F\\
&=\ \ \coprod_{\mathclap{a_1,\ldots,a_n\in\ob A}}\ \ {\mbox{\small $F(a_1)\tensor A(a_1,a_2)\tensor\cdots\tensor A(a_{n-1},a_n)\tensor A(a_n,-)$}}
\end{align*}
Moreover, $\BarConst(A,F)(1)=F$.
\end{remark}

\begin{notation}[bar construction for contravariant functor]\label{notation:bar_construction_contravariant_functor}
Let $A$ be a small $V$-category, and $G:\op{A}\to C$ a $V$-functor.\\
We will denote the bar construction $\BarConst(\op{A},G)$ by $\BarConst(G,A)$
\[ \BarConst(G,A)\defeq\BarConst(\op{A},G) \]
\end{notation}

\begin{construction}[two-sided bar construction]
Let $A$ be a small $V$-category.\\
Let $F:A\to C$ and $G:\op{A}\to V$ be $V$-functors.\\
We define the two-sided bar construction
\[ \BarConst(G,A,F):\Int\To C \]
to be the composition
\[ \Int\xTo{\BarConst(A,F)}V\dash\CAT(A,C)\xTo{G\tensor_A -} C \]
which exists since $C$ is cocomplete by assumption. In equation form, we get
\[ \BarConst(G,A,F)=G\tensor_A\BarConst(A,F) \]
The bar construction $\BarConst(G,A,F)$ is functorial in $A$, $F$, and $G$.
\end{construction}

\begin{remark}
Note that the restriction of $\BarConst(G,A,F)$ to $\op{\Delta}$ (recall corollary \sref{corollary:opDelta_equiv_StrictInt})
\[ \op{\Delta}\xTo[\sim]{\,\dual\,}\StrictInt\Into\Int\xTo{\BarConst(G,A,F)}C \]
is naturally isomorphic to the usual two-sided bar construction.\\
From this observation, it becomes natural to consider the restriction of $\BarConst(G,A,F)$ to the category of strict intervals.
\end{remark}

\begin{remark}
The value of the two-sided bar construction on a strict interval with $n+2$ elements ($n\in\NN$) is
{\small \[ \BarConst(G,A,F)(1+n+1)\simeq\ \ \coprod_{\mathclap{a_0,\ldots,a_n\in\ob A}}\ \ F(a_0)\tensor A(a_0,a_1)\tensor\cdots\tensor A(a_{n-1},a_n)\tensor G(a_n) \]}
\end{remark}

\begin{proposition}
Let $A$ be a small $V$-category. Let $F:A\to C$ and $G:\op{A}\to V$ be $V$-functors.\\
The two-sided bar construction $\BarConst(G,A,F)$ is naturally isomorphic to (recall notation \sref{notation:bar_construction_contravariant_functor})
\[ \BarConst(G,A,F)=\BarConst(G,A)\tensor_A F \]
\end{proposition}

\begin{remark}[symmetry of bar construction]
From this proposition, we immediately get the symmetry of the two sided-bar construction
\[ \BarConst(F,\op{A},G)=\BarConst(G,A,F) \]
for any functors $F:A\to V$ and $G:\op{A}\to V$.
\end{remark}

\section{Derived enriched colimits}\label{section:derived_enriched_colimits}

In this section, we assume that $V$ is a symmetric monoidal closed simplicial model category with cofibrant unit (see section \sref{section:model_categories}). To be more precise, we demand that $V$ is a bicomplete symmetric monoidal closed $\sSet$-category, and a simplicial model category, for which the monoidal product in $V$ verifies the pushout-product axiom (definition \sref{definition:left_Quillen_bifunctor}). Moreover, the unit $I$ of the monoidal structure of $V$ is required to be cofibrant.

\comment{We could replace (without adding any significant changes to the proofs below) the assumption of (1) $V$ being simplicial with (2) a strong symmetric monoidal functor $-\tensor I:\sSet\To V$ preserving colimits, and preserving cofibrations and trivial cofibrations. This is almost the same as the above, except that $V$ does not need to have a simplicial enrichment. This refinement encompasses the case of $\Top$.\\
One may be able to assume that $V$ is just framed. However the proofs below would not always work because they assume quite a few properties of enriched (simplicial) colimits, and thus might not transfer immediately to the framed case.}

\begin{definition}[locally cofibrant enriched category]
Let $A$ be a $V$-category.\\
We say that $A$ is {\em locally cofibrant} if for any $a,b\in\ob A$, the morphism object $A(a,b)$ is cofibrant in $V$.
\end{definition}

\begin{definition}[identity-cofibrant enriched category]\label{definition:identity-cofibrant_V-cat}
Let $A$ be a $V$-category.\\
We say that $A$ is {\em identity-cofibrant} if $A$ is locally cofibrant, and the identity morphism of $A$ at $a$ \[ \id_a:I\To A(a,a) \] is a cofibration for any $a\in\ob A$.
\end{definition}

\begin{proposition}
The functor
\[ V(I,-):V\To\sSet \]
is a lax symmetric monoidal $\sSet$-functor.
\end{proposition}

\begin{remark}\label{remark:enriched_colimit_V_sSet}
In particular, any $V$-category $C$ gives rise to a $\sSet$-category $[V(-,I)]C$.\\
A $\sSet$-enriched colimit in $[V(-,I)]C$, $G\tensor_A F$, for any $\sSet$-functors
\begin{align*}
F&:A\To [V(-,I)]C\\
G&:\op{A}\To\sSet
\end{align*}
can be computed as a $V$-enriched colimit:
\[ G\tensor_A F=(G\tensor I)\ \ \tensor_{\mathclap{(-{\tensor}I)A}}\ \ F \]
where $-\tensor I:\sSet\To V$ is a strong symmetric monoidal $\sSet$-functor (due to $V$ being closed).
\end{remark}

\begin{proposition}
For any $V$-model category $C$, the $\sSet$-category $[V(-,I)]C$ is canonically a simplicial model category.
\end{proposition}

\begin{definition}[derived enriched colimit]
Let $A$ be a small locally cofibrant $V$-category, and $C$ a cocomplete $V$-model category (definition \sref{definition:V-model_category}).\\
Let $F:A\to C$, $G:\op{A}\to V$ be $V$-functors which are objectwise cofibrant.\\
We define the {\em derived enriched colimit} $G\tensor^\derived_A F$ to be
\[ G\tensor^\derived_A F\defeq\hocolim_{\StrictInt}\BarConst(G,A,F) \]
\end{definition}

\begin{remark}[clarification of definition]
In the previous definition, the simplicial model category $[V(I,-)]C$ underlying $C$ is required to make sense of the homotopy colimit.\\
Alternatively, one can just observe (thanks to remark \sref{remark:enriched_colimit_V_sSet}) that the homotopy colimit is
\[ \hocolim_{\StrictInt}\big(\BarConst(G,A,F)\big)=\big[\Nerve\big(\op{\StrictInt}/-\big)\tensor I\big]\ \tensor_{\mathclap{\StrictInt}}\ \BarConst(G,A,F) \]
The enriched colimit on the right hand side is then just a $V$-enriched colimit in $C$, with no reference to the underlying simplicial category of $C$.
\end{remark}

\begin{remark}[cofibrancy conditions]
For simplicity, we assume that $F$ and $G$ are objectwise cofibrant, so as not to introduce cofibrant replacements into the definition.\\
Note that the restriction of the bar construction in the definition above to $\StrictInt$ is objectwise cofibrant, as needed to have homotopy invariance of the homotopy colimit.\\
If $A$ is identity-cofibrant, we can replace the above homotopy colimit along $\StrictInt$ with the geometric realization
\[ \hocolim_{\StrictInt}\BarConst(G,A,F)\xTo{\;\sim\;}\realization{\BarConst(G,A,F)} \]
which is defined after restricting the bar construction to $\op{\Delta}$. The map is a weak equivalence since the bar construction then gives a Reedy cofibrant simplicial object in $C$.
\end{remark}

\begin{remark}[homotopy invariance]
The construction $G\tensor^\derived_A F$ is homotopy invariant: if $G\to G'$ and $F\to F'$ are natural transformations which are objectwise weak equivalences of objectwise cofibrant functors, then the natural map
\[ G\tensor^\derived_A F\To G'\tensor^\derived_A F' \]
is a weak equivalence in $C$. This homotopy invariance is a straightforward consequence of the homotopy invariance of the homotopy colimit (along $\StrictInt$).
\end{remark}

\begin{lemma}
Let $A$ be a small locally cofibrant $V$-category, and $C$ a cocomplete $V$-model category.\\
If $F:A\to C$ is an objectwise cofibrant functor, then the canonical augmentation (note that $\BarConst(A,F)(1)=F$)
\[ \BarConst(A,F)\To F \]
induces a $V$-natural weak equivalence
\[ \hocolim_{\StrictInt}\BarConst(A,F)\xTo{\,\sim\,} F \]
of objectwise cofibrant functors $A\to C$.
\end{lemma}
\begin{proof}[Sketch of proof]
It is enough to verify that for each $a\in\ob A$, the map
\begin{equation}\label{equation:aux_hocolim_Bar(F,A)_equiv_F}
\hocolim_{\StrictInt}\big(\BarConst(A,F)(a)\big)\To F(a)
\end{equation}
is a weak equivalence of cofibrant objects in $C$. $F(a)$ is cofibrant by hypothesis. The left hand side is also cofibrant since it is the homotopy colimit of an objectwise cofibrant diagram in $C$.

There exists a commutative diagram
\begin{diagram}[midshaft,h=2.3em]
\Int&\rTo{\BarConst(A,F)}& V\dash\CAT(A,C)&\rTo{\evaluation{a}}&C\\
\dInto&&\dTo{\proj}&&\dEqual\\
\RightInt&\rTo{\BarConst(A,F)}&\HomCat{\ob A}{C_0}&\rTo{\evaluation{a}}&C\\
\end{diagram}
which gives a factorization
\[ \hocolim_{\StrictInt}\big(\BarConst(A,F)(a)\big)\xTo{f}\hocolim_{\RightInt}\big(\BarConst(A,F)(a)\big)\xTo{g}F(a) \]
of the map \seqref{equation:aux_hocolim_Bar(F,A)_equiv_F}. The second arrow, $g$, is a weak equivalence in $C$ since
\begin{enum}
\item $\BarConst(A,F)(a)$ is objectwise cofibrant;
\item $1$ is terminal in $\RightInt$;
\item $\big[\BarConst(A,F)(a)\big](1)=F(a)$, and $g$ is induced by the unique morphism from each object of $\RightInt$ to $1$.
\end{enum}
On the other hand, the first arrow, $f$, is a weak equivalence because
\[ \inclusion:\StrictInt\Into\RightInt \]
is homotopy cofinal (proposition \sref{proposition:StrictInt->RightInt_homotopy_cofinal}) and $\BarConst(A,F)(a)$ is objectwise cofibrant.

In conclusion, the map \seqref{equation:aux_hocolim_Bar(F,A)_equiv_F} is a weak equivalence, as required.
\end{proof}

\begin{definition}[homotopy left Kan extension]
Let $A$, $B$ be small locally cofibrant $V$-categories, and $C$ a cocomplete $V$-model category (definition \sref{definition:V-model_category}).\\
Let $F:\op{A}\to C$ be an objectwise cofibrant $V$-functor. Let $f:A\to B$ be a $V$-functor.\\
We define the {\em homotopy left Kan extension} of $F$ along $\op{f}$
\[ \hoLKE_{\op{f}}F:\op{B}\To C \]
by the expression
\[ \hoLKE_{\op{f}}F\defeq\hocolim_{\StrictInt}\big(\LKE_{\op{f}}\BarConst(F,A)\big) \]
\end{definition}

\begin{proposition}[change of categories]\label{proposition:derived_enriched_colimit_change_categories}
Let $A$, $B$ be small locally cofibrant $V$-categories, and $C$ a cocomplete $V$-model category.\\
Let $F:B\to C$, $G:\op{A}\to V$ be $V$-functors which are objectwise cofibrant. Let $f:A\to B$ be a $V$-functor.\\
Then there is a $V$-natural weak equivalence in $C$
\[ \big(\hoLKE_{\op{f}}G\big)\tensor^\derived_B F\xTo{\,\sim\,} G\tensor^\derived_A(F\circ f) \]
\end{proposition}
\begin{proof}[Sketch of proof]
Let $X:B\to C$ be given by
\[ X\defeq\hocolim_{\StrictInt}\BarConst(B,F) \]
By the previous lemma, the canonical map $X\to F$ is a natural weak equivalence of objectwise cofibrant functors $B\to C$. Therefore, the induced map
\[ G\tensor^\derived_A(X\circ f)\xTo{\,\sim\,} G\tensor^\derived_A(F\circ f) \]
is a weak equivalence. We will now manipulate the left hand side:
\begin{align*}
G\tensor^\derived_A(X\circ f)&=\hocolim_{\StrictInt}\BarConst(G,A,X\circ f)\\
&=\hocolim_{\StrictInt}\Big(\BarConst(G,A)\tensor_A(X\circ f)\Big)\\
&=\Big(\hocolim_{\StrictInt}\BarConst(G,A)\Big)\tensor_A (X\circ f)\\
&=\LKE_{\op{f}}\!\Big(\hocolim_{\StrictInt}\BarConst(G,A)\Big)\tensor_B X \\
&=\big(\hoLKE_{\op{f}}G\big)\tensor_B X\\
&=\big(\hoLKE_{\op{f}}G\big)\tensor_B\Big(\hocolim_{\StrictInt}\BarConst(B,F)\Big)\\
&=\hocolim_{\StrictInt}\Big(\big(\hoLKE_{\op{f}}G\big)\tensor_B\BarConst(B,F)\Big)\\
&=\hocolim_{\StrictInt}\BarConst\!\big(\hoLKE_{\op{f}}G,B,F\big)\\
&=\big(\hoLKE_{\op{f}}G\big)\tensor^\derived_B F
\end{align*}
\end{proof}

\section{Homotopy colimits of enriched functors}\label{section:homotopy_colimits_enriched}

Assume now that $V$ is a cartesian closed simplicial model category whose unit is cofibrant (see section \sref{section:model_categories}). To be more precise, we demand that $V$ is a bicomplete cartesian closed $\sSet$-category, and a simplicial model category, for which the product in $V$ verifies the pushout-product axiom. Furthermore, the terminal object $1$ of $V$ is cofibrant.

\begin{definition}[homotopy colimit of enriched functor]
Let $A$ be a small locally cofibrant $V$-category, and $C$ a cocomplete $V$-model category (see definition \sref{definition:V-model_category}).\\
Let $F:A\to C$ be a $V$-functor which is objectwise cofibrant.\\
We define the {\em homotopy colimit} of $F$ to be
\[ \hocolim F\defeq 1\tensor^\derived_A F \]
\end{definition}

\begin{remark}[homotopy invariance]
The homotopy invariance for the derived enriched colimit entails that when $F\to F'$ is a natural transformation which is an objectwise weak equivalence of objectwise cofibrant functors, the induced morphism
\[ \hocolim F\To\hocolim F' \]
is a weak equivalence in $C$.
\end{remark}

\begin{remark}[relation to usual homotopy colimit]
If $A$ happens to be an ordinary small category (viewed as a $V$-category in the usual manner), then the homotopy colimit above is canonically equivalent to the usual homotopy colimit:
\begin{align*}
1\tensor^\derived_A F&=\hocolim_\StrictInt\BarConst(1,A,F)\\
&=\hocolim_\StrictInt\Big(\BarConst(1,A)\tensor_A F\Big)\\
&=\Big(\hocolim_\StrictInt\BarConst(1,A)\Big)\tensor_A F\\
&\xTo{\sim}\realization{\BarConst(1,A)}\tensor_A F\\
&=\Nerve(\op{A}/-)\tensor_A F
\end{align*}
The first entry is the enriched homotopy colimit as defined above. The last entry is the usual homotopy colimit of $F$ along the ordinary category $A$.\\
The non-identity weak equivalence in the calculation above follows from the fact that $A$ is identity-cofibrant.
\end{remark}

\begin{definition}[homotopy cofinal enriched functor]
Let $A$, $B$ be locally cofibrant $V$-categories, with $A$ small.\\
Let $F:A\to B$ be a $V$-functor.\\
We say the functor $F$ is {\em homotopy cofinal} with respect to $V$ if for any object $x$ of $B$, the unique map
\[ 1\tensor^\derived_A B(x,F-)\To 1 \]
is a weak equivalence in $V$.
\end{definition}

\begin{remark}
It is easy to relax the conditions of the definition to allow any $V$-category $B$: just substitute $B(x,F-)$ with a cofibrant replacement of it.
\end{remark}

\begin{proposition}\label{proposition:homotopy_cofinal_colimits_equiv}
Let $A$, $B$ be small locally cofibrant $V$-categories, and $C$ a cocomplete $V$-model category.\\
Let $F:B\to C$ be a $V$-functor which is objectwise cofibrant. Let $f:A\to B$ be a $V$-functor.\\
There is a natural morphism in $C$
\[ \hocolim(F\circ f)\To\hocolim F \]
which is a weak equivalence if $f$ is homotopy cofinal.
\end{proposition}
\begin{proof}[Sketch of proof]
The natural map arises from the functoriality of derived enriched colimits, which is a consequence of the functoriality of the two-sided bar construction. We leave the details to be worked out by the reader.

The map in the statement fits as the bottom map in the commutative triangle
\begin{diagram}[h=2.3em,balance]
&\big(\hoLKE_{\op{f}}1\big)\tensor^\derived_B F\\
\ldTo(1,2)[uppershortfall=-.2em,lowershortfall=0em]{\mathllap{g}}&&\rdTo(1,2)[uppershortfall=-.2em,lowershortfall=0em]{\mathrlap{h}}\\
1\tensor_A^\derived F&\rTo&1\tensor_B^\derived F
\end{diagram}
The map $g$ in the triangle is the map from proposition \sref{proposition:derived_enriched_colimit_change_categories}, and therefore is a weak equivalence. The map $h$ in the triangle is induced by the unique map
\[ \hoLKE_{\op{f}} 1\To 1 \]
Given $x\in\ob B$, there are natural isomorphisms
\begin{align*}
\big(\hoLKE_{\op{f}}1\big)(x)&=\hocolim_{\StrictInt}\big(\LKE_{\op{f}}\BarConst(1,A)\big)(x)\\
&=\hocolim_{\StrictInt}\Big(\BarConst(1,A)\tensor_A B(x,f-)\Big)\\
&=\hocolim_{\StrictInt}\BarConst\big(1,A,B(x,f-)\big)\\
&=1\tensor^\derived_A B(x,f-)
\end{align*}
It follows that if $f$ is homotopy cofinal then
\[ \hoLKE_{\op{f}} 1\To 1 \]
is a natural weak equivalence (of objectwise cofibrant functors). Therefore the map $h$ in the commutative triangle above is a weak equivalence.

In conclusion $g$ and $h$ in the triangle are weak equivalences, and so the the bottom map is a weak equivalence, as we intended to prove.
\end{proof}

\section{Weak equivalence of enriched categories}\label{section:weak_equivalence_enriched_categories}

Let $V$ denote a complete model category whose terminal object, $1$, is cofibrant. We consider $V$ with the cartesian symmetric monoidal structure.

\begin{construction}
Let us define the functor
\[ \pi_0:V\To\Set \]
to be given on an object $x$ of $V$ by
\[ \pi_0(x)=\pi_l(1,x) \]
where $\pi_l(1,x)$ denotes the quotient of $V(1,x)$ by the equivalence relation of left homotopy (see definition 7.3.2 of \cite{Hirschhorn}): left homotopy gives an equivalence relation because $1$ is cofibrant (see proposition 7.4.5 of \cite{Hirschhorn}).\\
The functoriality of $\pi_0$ is induced from that of $V(1,-)$.
\end{construction}

\begin{remark}
Note that in the cases of simplicial sets or topological spaces, the functor $\pi_0$ defined above is canonically isomorphic to the usual functor $\pi_0$.
\end{remark}

We leave the proof of the following statement to the reader.

\begin{proposition}
The functor
\[ \pi_0:V\To\Set \]
preserves all finite products.
\end{proposition}

\begin{definition}[weak equivalence of $V$-categories]\label{definition:weak_equivalence_V-categories}
Let $F:A\to B$ be a $V$-functor.\\
We say $F$ is a {\em weak equivalence} with respect to $V$ if $F$ is locally a weak equivalence in $V$ (recall terminology \sref{notation:local_nameproperty}), and the functor
\[ \pi_0 F:\pi_0 A\To\pi_0 B \]
is essentially surjective.
\end{definition}

\begin{example}
In the case of $\Top$ with the Str{\o}m model structure, this recovers the notion of weak equivalence between topologically enriched categories (see definition \sref{definition:weak_equivalence_Top-cat}).
\end{example}

\begin{lemma}
Assume $V$ is a cartesian closed model category with cofibrant unit.\\
Let $A$ be a locally cofibrant $V$-category, and $a,b\in\ob A$.\\
If $a$, $b$ are isomorphic in $\pi_0 A$ then there exists a morphism $f:a\to b$ in $A$ such that the natural transformation
\[ A(b,-)\xTo{-\circ f} A(a,-) \]
is an objectwise weak equivalence in $V$.
\end{lemma}
\begin{proof}[Sketch of proof]
If $a$, $b$ are isomorphic in $\pi_0 A$, there exist morphisms $f:a\to b$, $g:b\to a$ in $A$:
\begin{align*}
f&:1\To A(a,b)\\
g&:1\To A(b,a)
\end{align*}
together with a left homotopy
\[ lH_1:f\circ g\underset{l}{\simeq}\id_b \]
of maps $1\to A(b,b)$ in $V$, and a left homotopy
\[ lH_2:g\circ f\underset{l}{\simeq}\id_a \]
of maps $1\to A(a,a)$ in $V$.

The morphisms $f$, $g$ induce natural transformations
\begin{align*}
f^\ast:A(b,-)&\xTo{-\circ f} A(a,-)\\
g^\ast:A(a,-)&\xTo{-\circ g} A(b,-)
\end{align*}
and we can construct a left homotopy
\[ (f^\ast\circ g^\ast)_x\underset{l}{\simeq}\id_{A(a,x)} \]
for each $x\in\ob A$ by taking the product of the left homotopy $lH_2$ with $A(a,x)$ and composing in $A$. More precisely, let the left homotopy $lH_2$ be given by the cylinder
\[ 1\amalg 1\xTo{h} X\xTo{\;\sim\;}1 \]
(where $h$ is a cofibration), and the map
\[ lH_2:X\To A(a,a) \]
such that
\[ lH_2\circ h=(f\circ g)\amalg\id_a \]
Then we construct the cylinder
\[ A(a,x)\amalg A(a,x)\xTo{h\times A(a,x)} X\times A(a,x)\xTo{\;\sim\;}A(a,x) \]
where the first arrow is a cofibration because $A(a,x)$ is cofibrant. The left homotopy between $(f^\ast\circ g^\ast)_x$ and $\id_{A(a,x)}$ is defined by the composition
\[ X\times A(a,x)\xTo{lH_2\times\id}A(a,a)\times A(a,x)\xTo{\composition} A(a,x) \]
(where the second arrow is composition in $A$).

Similarly, the left homotopy $lH_1$ can be used to construct a left homotopy
\[ (g^\ast\circ f^\ast)_x\underset{l}{\simeq}\id_{A(b,x)} \]
for each $x\in\ob A$. In conclusion, for each $x\in\ob A$, $(f^\ast\circ g^\ast)_x$ and $(g^\ast\circ f^\ast)_x$ are left homotopic to the respective identity maps, and in particular are weak equivalences. Given that $(f^\ast)_x\circ (g^\ast)_x$ and $(g^\ast)_x\circ(f^\ast)_x$ are weak equivalences, the two-out-of-six property of model categories implies that both $(f^\ast)_x$ and $(g^\ast)_x$ are weak equivalences. This finishes the proof.
\end{proof}

We finish this section with a predictable result.

\begin{proposition}[weak equivalence implies homotopy cofinal]\label{proposition:weak_equivalence_implies_homotopy_cofinal}
Assume $V$ is a cartesian closed simplicial model category whose unit is cofibrant.\\
Let $A$, $B$ be locally cofibrant $V$-categories, with $A$ small.\\
Let $F:A\to B$ be a $V$-functor.\\
If $F$ is a weak equivalence with respect to $V_0$ then $F$ is homotopy cofinal with respect to $V$.
\end{proposition}
\begin{proof}[Proof]
Consider an object $b$ of $B$. We want to prove that
\[ 1\tensor^\derived_A B(b,F-)\xTo{\;\sim\;} 1 \]
is a weak equivalence in $V$.

Let $a\in\ob A$ be such that there is an isomorphism
\[ (\pi_0 F)(a)\simeq b \]
in $\pi_0 B$: such an object of $A$ exists since $\pi_0 F$ is essentially surjective. By the preceding lemma, there exists a morphism $f:b\to Fa$ in $B$ such that the natural transformation
\[ B(Fa,-)\xTo{-\circ f} B(b,-) \]
is an objectwise weak equivalence in $V$. On the other hand, the functor $F$ induces a natural transformation
\[ A(a,-)\To B(Fa,F-) \]
which is an objectwise weak equivalence. Since all three functors are objectwise cofibrant, we obtain weak equivalences in $V$
\[ 1\tensor^\derived_A A(a,-)\xTo{\;\sim\;}1\tensor^\derived_A B(Fa,F-)\xTo{\;\sim\;}1\tensor^\derived_A B(b,F-) \]
The result is now a consequence of
\[ 1\tensor^\derived_A A(a,-)\xTo{\;\sim\;} 1 \]
being a weak equivalence, by the lemma \sref{lemma:A(x,-)tensor_1_contractible} presented next.
\end{proof}

\begin{lemma}\label{lemma:A(x,-)tensor_1_contractible}
Assume $V$ is a cartesian closed simplicial model category whose unit is cofibrant.\\
Let $A$ be a small locally cofibrant $V$-category.\\
For any object $x$ of $A$, the unique map
\[  1\tensor^\derived_A A(x,-)\To 1 \]
is a weak equivalence.
\end{lemma}
\begin{proof}
By definition (see construction \sref{construction:bar_const_enriched_functor} for last line)
\begin{align*}
1\tensor^\derived_A A(x,-)&=\hocolim_\StrictInt\BarConst\big(1,A,A(x,-)\big)\\
&=\hocolim_\StrictInt\Big(1\tensor_A\BarConst\big(A,A(x,-)\big)\Big)\\
&=\hocolim_\StrictInt\Big(1\tensor_A\big[(T_{A,V})^{\bullet-1}\big(A(x,-)\big)\big]\Big)
\end{align*}
On the other hand, $A(x,-)$ is a free $T_{A,V}$-algebra (recall construction \sref{construction:T_AC_monad}):
\[ A(x,-)=T_{A,V}(\delta_x) \]
where
\[ \delta_x:\ob A\To C_0 \]
is defined by
\[ \delta_x(a)=\left\{
\begin{array}{ll}
\!1&\text{if } a=x\\
\!\emptyset&\text{if } a\neq x
\end{array}
\right. \]
Here, $\emptyset$ denotes an initial object of $V$. Given that $A(x,-)$ is a free $T_{A,V}$-algebra, there is a commutative diagram
\begin{diagram}[midshaft,h=2.3em]
\Int&\rTo{\;(T_{A,V})^{\bullet-1}(A(x,-))\;}&V\dash\CAT(A,V)\\
&\rdInto(1,2)\ruTo(1,2)_{\mathrlap{(T_{A,V})^\bullet(\delta_x)}}\\
&\LeftInt\\
\end{diagram}
by proposition \sref{proposition:functor_intervals_free_T-algebra} (recall also proposition \sref{proposition:T_AC-alg=V-CAT(A,C)}). This induces maps
\begin{align*}
1\tensor^\derived_A A(x,-)&\xlongequal{\smash{\quad}}\hocolim_\StrictInt\Big(1\tensor_A\big[(T_{A,V})^{\bullet-1}\big(A(x,-)\big)\big]\Big)\\
&\xTo{f}\hocolim_\LeftInt\Big(1\tensor_A\big((T_{A,V})^\bullet(\delta_x)\big)\Big)\\
&\To 1
\end{align*}
The middle arrow, $f$, is a weak equivalence because the inclusion
\[ \inclusion:\StrictInt\Into\LeftInt \]
is homotopy cofinal (proposition \sref{proposition:StrictInt->RightInt_homotopy_cofinal}). The last arrow
\[ \hocolim_\LeftInt\Big(1\tensor_A\big((T_{A,V})^\bullet(\delta_x)\big)\Big)\To 1 \]
is a weak equivalence because $\LeftInt$ has a terminal object, $1$, and
\begin{align*}
\Big(1\tensor_A\big((T_{A,V})^\bullet(\delta_x)\big)\Big)(1)&=1\tensor_A T_{A,V}(\delta_x)\\
&=1\tensor_A A(x,-)\\
&=1
\end{align*}
In summary
\[ 1\tensor^\derived_A A(x,-)\xTo{\;\sim\;}1 \]
is a weak equivalence.
\end{proof}

\section{Grothendieck constructions}\label{section:Grothendieck_construction_homotopy_colimit}

Grothendieck constructions play an important role in this text. Subsequently, it is useful to know how they relate to derived enriched colimits. In this section we give a calculation of the homotopy left Kan extension of a functor along the projection of a Grothendieck construction.

The motivation for such a calculation is a simple categorical fact: if $A$ is a small category, $F:\op{A}\to\Cat$ is a functor, and
\[ \pi:\Groth(F)\To A \]
is the canonical projection, then
\[ \LKE_{\op{\pi}}G=\colim_{\op{F(-)}}G \]
for each $G:\op{\Groth(F)}\to C$.
This result is a simple consequence of the adjunction
\begin{diagram}[midshaft]
F(x)&\pile{\rInto\\ \ \smash{\scriptscriptstyle\perp}\\ \lTo}&x/\pi
\end{diagram}
which shows that the inclusion $F(x)\hookrightarrow x/\pi$ is a final functor, for each $x\in\ob A$.

The main result in this section is a homotopical generalization of that calculation. We state it without proof after a few preliminary definitions.

\begin{definition}[value of internal $\Cat(V)$-valued functor]
Let $V$ be a category with pullbacks.\\
Let $A$ be a category object in $V$, and
\[ F:A\To\Cat(V) \]
an internal $\Cat(V)$-valued functor.\\
Given an internal functor $x:1\to A$, the {\em value of \(F\) at \(x\)}, $F(x)$, is the internal category in $V$ corresponding to the internal functor
\[ F\circ x:1\To\Cat(V) \]
\end{definition}

\begin{remark}
Note that an internal $\Cat(V)$-valued functor
\[ (P,p_0,p_1):1\To\Cat(V) \]
is the same as an internal category $P$ in $V$.
\end{remark}

\begin{remark}
An internal functor $x:1\to A$ is uniquely determined by $\ob x:1\to\ob A$.\\
In particular, if $A=\internal B$ for $B$ a $V$-category, an object $x\in\ob B$ is equivalent to giving an internal functor $x:1\to A$.
\end{remark}

\begin{definition}[pointwise locally cofibrant internal functor]
Let $V$ be a category with pullbacks and a model category. Let $A$ a be an internal category in $V$.\\
We say that an internal $\Cat(V)$-valued functor
\[ F:A\To\Cat(V) \]
is {\em pointwise locally cofibrant} if for any internal functor $x:1\to A$, the $V$-category $\disccat{F(x)}$ is locally cofibrant.
\end{definition}

\begin{proposition}
Let $V$ be a cartesian closed simplicial model category with cofibrant unit, and $C$ a cocomplete $V$-model category.\\
Assume that $V_0$ has totally disjoint small coproducts (see definition \sref{definition:totally_disjoint_coproducts} and terminology \sref{notation:small_totally_disjoint_coproducts}), and that the object $1$ of $V_0$ is connected over $\Set$ (definition \sref{definition:connected_object}).\\
Assume furthermore that $A$ is a small locally cofibrant $V$-category, and
\[ F:\op{\internal A}\To\Cat(V_0) \]
is a pointwise locally cofibrant internal $\Cat(V_0)$-valued functor.\\
For any objectwise cofibrant $V$-functor
\[ G:\op{\big(\disccat{\Groth(F)}\big)}\To C \]
there are canonical $V$-natural weak equivalences
\[ \hocolim_{\op{\left(\disccat{F(-)}\vphantom{M^d}\right)}}G\xTo{\;\sim\;}\hoLKE_{\op{\pi}}G\xTo{\;\sim\;}\hocolim_{\op{\left(\disccat{F(-)}\vphantom{M^d}\right)}}G \]
of objectwise cofibrant $V$-functors $\op{A}\to C$, whose composition (as displayed) is the identity.\\
Here, $\pi:\disccat{\Groth(F)}\to A$ denotes the canonical projection (see propositions \sref{proposition:projection_Groth_construction} and \sref{proposition:disccat_internal=id}).
\end{proposition}

\begin{remark}[Clarification]
In the preceding statement, we use, for each $x\in\ob A$, the natural inclusion \[ \disccat{F(x)}\Into\disccat{\Groth(F)} \] to restrict the functor $G$ to $\op{\big(\disccat{F(x)}\big)}$.\\
Note also that, while $\disccat{F(-)}$ does not define a functor on $\op{A}$ in any naive sense, the construction
\[ \hocolim_{\op{\left(\disccat{F(-)}\vphantom{M^d}\right)}}G \]
does define a $V$-functor on $\op{A}$. We leave the details to the reader.
\end{remark}

Having calculated the homotopy left Kan extension along the (opposite of the) projection
\[ \pi:\disccat{\Groth(F)}\To A \]
the following result is now an application of proposition \sref{proposition:derived_enriched_colimit_change_categories}.

\begin{corollary}\label{corollary:Groth_hocolim}
Let $V$ be a cartesian closed simplicial model category with cofibrant unit, and $C$ a cocomplete $V$-model category.\\
Assume that $V_0$ has totally disjoint small coproducts (see \sref{definition:totally_disjoint_coproducts} and \sref{notation:small_totally_disjoint_coproducts}), and that the object $1$ of $V_0$ is connected over $\Set$ (see \sref{definition:connected_object}).\\
Assume furthermore that $A$ is a small locally cofibrant $V$-category, and
\[ G:\op{\internal A}\To\Cat(V_0) \]
is a pointwise locally cofibrant internal $\Cat(V_0)$-valued functor.\\
For any objectwise cofibrant $V$-functor
\[ F:A\To C \]
there is a natural weak equivalence in $C$
\[ \Big(\hocolim_{\op{\left(\disccat{G(-)}\vphantom{M^d}\right)}}1\Big)\tensor^\derived_A F\xTo{\;\sim\;}\hocolim_{\Groth(G)}(F\circ\pi) \]
\end{corollary}

\begin{remark}[relation to nerve]
The left hand side of the weak equivalence in the corollary is related to the nerve of $\disccat{G(-)}$.\\
Observe that for each small $V$-category $B$ there is a natural projection
\[ \hocolim_{\op{B}} 1\To\realization{\BarConst(1,\op{B},1)}=\realization{\BarConst(1,B,1)} \]
which is a weak equivalence under good conditions (e.g.\ if $B$ is identity-cofibrant). The object on the right is the {\em realization of the nerve of \(B\)}.
\end{remark}





\chapter{Invariants of $\E^G_n$-algebras}\label{chapter:invariants_En-algebras}

\section*{Introduction}

In the last chapter of this thesis, we present a definition of a homotopical invariant, $\T^G(A;M)$, of an algebra $A$ over the PROP $\E^G_n$, for each $n$-manifold with a $G$-structure. In particular, we obtain an invariant of $E_n$-algebras in the case $G=1$. We will also prove that $\T^1(A,S^1)$ is the topological Hochschild homology of $A$, when $A$ is an associative ring spectrum.

\section*{Summary}

Section \sref{section:the_invariants} defines the simplicial PROPs $S\E^G_n$, and gives a definition of the invariant $\T^G(A;M)$ of an $S\E^G_n$-algebra $A$. It is defined for each $n$-manifold $M$ with a $G$-structure.

Section \sref{section:classifying_spaces_path_categories} calculates the homotopy colimit of the constant functor $1$ along the category $\kappa\big(\disccat{\pathcat}X\big)$ to be $X$. This calculation is used in section \sref{section:relation_invariant_M(M)} to describe the invariant $\T^G(A;M)$ as a homotopy colimit along the category $\disccat{\Total^G_n[M]}$, which is weakly equivalent to $\M(M)$ (as proved in chapter \ref{chapter:sticky<->embeddings}).

Section \sref{section:invariant_relation_THH} uses the results of chapters \ref{chapter:sticky<->embeddings} and \ref{chapter:sticky_conf_S^1} to show that when $A$ is an associative ring spectrum, $\T^1(A,S^1)$ is weakly equivalent to $THH(A)$.

\section{The invariants}\label{section:the_invariants}

We will now define the desired invariants of algebras over the PROPs $\E^G_n$. This will require taking algebras in a $V$-model category for some appropriate symmetric monoidal model category $V$. The topological nature of the $k\Top$-PROPs $\kappa\E^G_n$, and the right modules $\kappa\E^G_n[M]$, would make $k\Top$ the natural choice for the enriching category for our invariants. Unfortunately, our right modules are not valued in CW-complexes. Therefore, in order to easily obtain homotopy invariance of our construction, the model structure on $k\Top$ would have to be the Str{\o}m model structure or a mixed model structure (consult \cite{Cole} or chapter 4 of \cite{May-Sigurdsson} regarding mixed model structures). There are very few instances in the literature (known to the author) of model categories enriched over those model structures in $k\Top$. We will thus define the desired invariants for the case of simplicial model categories. This has the advantage that simplicial model categories are very common, possibly even the norm.

\begin{remark}[simplicial PROP $S\E^G_n$]
Let $n\in\NN$, and $G$ a topological group over $GL(n,\RR)$.\\
Recall the product preserving functors
\begin{align*}
S&:\Top\to\sSet\\
S&:k\Top\to\sSet
\end{align*}
which associate to each space its singular simplicial set.\\
We will be working with the $\sSet$-PROP $S\E^G_n=S\kappa\E^G_n$.
\end{remark}

\begin{definition}[simplicial right modules over $S\E^G_n$]
Let $n\in\NN$, and $G$ a topological group over $GL(n,\RR)$. Let $M$ be a $n$-manifold with a $G$-structure.\\
We define the right module over $S\E^G_n$
\[ S\E^G_n[M]:\op{\big(S\E^G_n\big)}\To\sSet \]
to be the composition of the $s\Set$-functors
\[ \op{\big(S\E^G_n\big)}=\op{\big(S\kappa\E^G_n\big)}\xTo{S(\kappa\E^G_n[M])}S(k\Top)\xTo{S}\sSet\]
\end{definition}

\begin{remark}[clarification]
In the above definition, the right module $\kappa\E^G_n[M]$ over $\kappa\E^G_n$ is defined in \sref{definition:kappaE^f_n}.\\
The category $S(k\Top)$ is the $\sSet$-category associated with the $k\Top$-cate\-go\-ry $k\Top$, and 
\[ S:S(k\Top)\To\sSet \]
is the $\sSet$ functor induced by $S$.
\end{remark}

\begin{definition}[invariants of $S\E^G_n$-algebras]
Let $n\in\NN$, and $G$ a topological group over $GL(n,\RR)$. Let $M$ be a $n$-manifold with a $G$-structure.\\
Let $C$ be a symmetric monoidal simplicial model category (definition \sref{definition:symmetric_monoidal_V-model_category}) with cofibrant unit.\\
For any objectwise cofibrant $S\E^G_n$-algebra $A$ in $C$, we define the {\em \(M\)-indexed invariant of \(A\)} to be
\[ \T^G(A;M)\defeq S\E^G_n[M]\,\tensor^\derived_{\mathclap{S\E^G_n}}\,A \]
\end{definition}

\begin{remark}[cofibrancy conditions]
Given that $C$ is a symmetric monoidal simplicial model category with cofibrant unit, the condition that $A$ be objectwise cofibrant is equivalent to requiring that $A(\RR^n)$ is cofibrant in $C$.\\
Under these conditions, the canonical map
\[ \T^G(A;M)=S\E^G_n[M]\,\tensor^\derived_{\mathclap{S\E^G_n}}\,A\To\realization{\BarConst\!\big(S\E^G_n[M],S\E^G_n,A\big)} \]
is a weak equivalence.
\end{remark}

\begin{remark}[functoriality of invariant]
The above construction is easily seen to extend to a functor
\[ \T:S\E^G_n\dash\alg(C)\times S\REmb^G_n\To C \]
\end{remark}

\begin{proposition}[homotopy invariance]
Let $n\in\NN$, and $G$ a topological group over $GL(n,\RR)$. Let $M$ be a $n$-manifold with a $G$-structure.\\
Let $C$ be a symmetric monoidal simplicial model category with cofibrant unit.\\
Given a weak equivalence
\[ F:A\To B \]
of objectwise cofibrant $S\E^G_n$-algebras $C$, the induced map
\[ \T^G(F;M):\T^G(A;M)\To\T^G(B;M) \]
is a weak equivalence in $C$.
\end{proposition}

\begin{remark}[modifying the definition if unit of $C$ is not cofibrant]
It may be useful to remove the condition that the unit $I$ is a cofibrant object of $C$ from the definition of $\T^G(-;M)$. This would allow us to apply it to the category of spectra from \cite{EKMM}, for example.\\
If $C$ is a symmetric monoidal simplicial model category in which the unit $I$ is not cofibrant in $C$, it is necessary to require that the unit map of the $S\E^G_n$-algebra $A$
\[ I\To A(\RR^n) \]
(coming from the unique morphism $\emptyset\to\RR^n$ in $\E^G_n$) is a cofibration in $C$. This guarantees that the tensor powers of $A(\RR^n)$, appearing as values of $A$, have the correct homotopy type.\\
Moreover, to obtain the ``correct'' answer, and maintain homotopy invariance of $\T^G(A;M)$, the definition would have to be modified to
\[ \widetilde{\T}(A;M)\defeq S\E^G_n[M]\,\tensor^\derived_{\mathclap{S\E^G_n}}\,A^{\text{cof}} \]
where $A^{\text{cof}}$ indicates an objectwise cofibrant replacement of $A$.\\
The remainder of the text would hold true with this modification in place.
\end{remark}

\section{Classifying spaces of path categories}\label{section:classifying_spaces_path_categories}

In the next section we will apply corollary \sref{corollary:Groth_hocolim} to the invariant $\T^G(A;M)$ to obtain it as a homotopy colimit along the (simplicial category associated to the) Grothendieck construction $\Total^G_n[M]$ of $\pathcat\circ\internal\E^G_n[M]$. With that in mind, we will show that
\[ \hocolim_{\kappa(\disccat{\pathcat X})}1\simeq X \]
for most topological spaces $X$.

\begin{construction}\label{construction:ev_real(pathX)->X}
Recall the nerve of an internal category from definition \sref{definition:nerve_functor}.\\
Given a topological space $X$, there is a canonical map
\[ ev:\realization{\Nerve\big(\pathcat(X)\big)}\To X \]
given on $k$-simplices ($k\in\NN$) by the formula
\[ ev:\Nerve\big(\pathcat(X)\big)(k+1)\times\Delta^k\To X \]
\[ ev\big((\gamma_i,\tau_i)_{i=1}^k,(\mathtt{t}_i)_{i=0}^k\big)\defeq (\gamma_1\ast\cdots\ast\gamma_k)\left(\sum_{i=1}^k\tau_i(\mathtt{t}_i+\cdots+\mathtt{t}_k)\right) \]
for
\[ (\gamma_i,\tau_i)_{i=1}^k\in\Nerve(\pathcat X)(k+1)=\overbrace{H(X)\underset{X}{_t\times_s}H(X)\underset{X}{_t\times_s}\cdots\underset{X}{_t\times_s}H(X)}^{k} \]
\[ (\mathtt{t}_i)_{i=0}^k\in\Delta^k=\set{x\in\RR^{k+1}\suchthat\mbox{$\sum_{i=0}^{k}$}x_i=1} \]
Also, $\gamma_1\ast\cdots\ast\gamma_k$ denotes the path obtained by concatenating all the Moore paths $(\gamma_i,\tau_i)$.
\end{construction}

\begin{remark}
The map
\[ ev:\realization{\Nerve\big(\pathcat(X)\big)}\To X \]
can easily be seen to be a homotopy equivalence.
\end{remark}

\begin{construction}
Let $X$ be a topological space in $k\Top$.\\
There is a canonical map
\[ \restrict{\BarConst\big(1,\kappa(\disccat{\pathcat}X),1\big)}{\op{\Delta}}\Into\Nerve\big(\pathcat(X)\big) \]
given by the canonical inclusion objectwise. We obtain an induced map on the geometric realizations
\[ \inclusion:\realization{\BarConst\big(1,\kappa(\disccat{\pathcat}X),1\big)}\To\realization{\Nerve\big(\pathcat(X)\big)} \]
(where the left realization is computed in $k\Top$, and the one on the right is computed in $\Top$).
We define the map
\[ \mathtt{ev}:\hocolim_{\kappa(\disccat{\pathcat}X)}1\To X \]
as the composition
\[ \hocolim_{\kappa(\disccat{\pathcat}X)}1\xTo{\proj}\realization{\BarConst\big(1,\kappa(\disccat{\pathcat}X),1\big)}\xTo{\inclusion}\realization{\Nerve\big(\pathcat(X)\big)}\xTo{ev}X \]
\end{construction}

\begin{remark}[metrizable spaces]\label{remark:metrizable_spaces_kTop}
Any metrizable topological space is in $k\Top$.\\
Any finite product of metrizable topological spaces is metrizable, and thus is in $k\Top$. Therefore the finite product in $k\Top$ of metrizable spaces is computed in $\Top$.\\
Moreover, given a second countable locally compact Hausdorff space $K$, the space $\Map(K,X)$ is metrizable, and therefore in $k\Top$.\\
Putting all these remarks together, we conclude that for any metrizable space $X$, the path category $\pathcat(X)$ {\em coincides} with the $k\Top$-category $\kappa\big(\pathcat(X)\big)$.
\end{remark}

We leave the following lemma to be proved by the reader. It uses the characterization of Str{\o}m cofibrations as strong neighborhood deformation retracts.

\begin{lemma}\label{lemma:X_well_pointed_implies_loopsX_well_pointed}
If $X$ is a topological space, and $x\in X$ is such that
\[ \set{x}\Into X \]
is a Str{\o}m cofibration, then
\[ \set{e}\Into H(X;x,x) \]
is a Str{\o}m cofibration. Here, $H(X;x,x)$ is the space of Moore loops on $X$ based at $x$, and $e$ is the zero-length loop in $H(X;x,x)$.
\end{lemma}

\begin{lemma}\label{lemma:ev_hocolim_equiv_X_kTop}
For any metrizable space $X$, the map
\[ \mathtt{ev}:\hocolim_{\kappa(\disccat{\pathcat}X)}1\To X \]
is a weak equivalence. If $X$ is homotopy equivalent to a CW-complex, then this map is a homotopy equivalence.
\end{lemma}
\begin{proof}
For any space $Y$, and any subset $T$ of $Y$, define $\disccat{\pathcat}(Y;T)$ to be the full $\Top$-subcategory of $\disccat{\pathcat}(Y)$ generated by $T$.

Let $P$ be a subset of $X$ such that the canonical map of sets
\[ P\Into X\xTo{\proj}\pi_0 X \]
is a bijection $P\to\pi_0 X$. Factor $P\Into X$ as
\[ P\Into\overline{X}\xTo{\sim}X \]
where $\overline{X}$ is metrizable, the first map is a cofibration in $k\Top$, and the second one is a trivial fibration (recall that we use the Str{\o}m model structure on $k\Top$; the factorization constructed in \cite{Strom} verifies these conditions).
Consider the following commutative diagram
\begin{diagram}[midshaft]
\hocolim_{\kappa(\disccat{\pathcat}(X;P))}1&\rInto{\ \ \;\sim\ \ }&\hocolim_{\kappa(\disccat{\pathcat}X)}1&\rTo{\mathtt{ev}}&X\\
\uTo[uppershortfall=0.2em]{\rotc{90}{$\scriptstyle\sim$}}&&\uTo[uppershortfall=0.2em]{\rotc{90}{$\scriptstyle\sim$}}&&\uTo{\rotc{90}{$\scriptstyle\sim$}}\\
\hocolim_{\kappa(\disccat{\pathcat}(\overline{X};P))}1&\rInto{\ \;\sim\ }&\hocolim_{\kappa(\disccat{\pathcat}\overline{X})}1&\rTo{\mathtt{ev}}&\overline{X}\\
\end{diagram}
where all the arrows marked $\xto{\,\sim\,}$ are homotopy equivalences. The vertical arrows are homotopy equivalences because $\overline{X}\to X$ is a homotopy equivalence, and so the $k\Top$-functors
\begin{align*}
\kappa\big(\disccat{\pathcat}\overline{X}\big)&\To\kappa\big(\disccat{\pathcat}X\big)\\
\kappa\big(\disccat{\pathcat}(\overline{X};P)\big)&\To\kappa\big(\disccat{\pathcat}(X;P)\big)
\end{align*}
are weak equivalences with respect to $k\Top$ (see definition \sref{definition:weak_equivalence_V-categories}) with the Str{\o}m model structure, and thus homotopy cofinal (see propositions \sref{proposition:weak_equivalence_implies_homotopy_cofinal} and \sref{proposition:homotopy_cofinal_colimits_equiv}). The horizontal inclusions are homotopy equivalences because $P\to\pi_0 X=\pi_0\overline{X}$ is a bijection and therefore
\begin{align*}
\kappa\big(\disccat{\pathcat}(\overline{X};P)\big)&\To\kappa\big(\disccat{\pathcat}\overline{X}\big)\\
\kappa\big(\disccat{\pathcat}(X;P)\big)&\To\kappa\big(\disccat{\pathcat}X\big)
\end{align*}
are also weak equivalences of $k\Top$-categories.

We will show that the composition of the bottom row
\[ \mathtt{ev}:\hocolim_{\kappa(\disccat{\pathcat}(\overline{X};P))}1\To\overline{X} \]
is a weak equivalence. First observe that this map factors as
\[ \hocolim_{\kappa(\disccat{\pathcat}(\overline{X};P))}1\xTo{\proj}\realization{\BarConst\big(1,\kappa\big(\disccat{\pathcat}(\overline{X};P)\big),1\big)}\rTo{ev}\overline{X} \]
by construction of the map $\mathtt{ev}$. The left arrow is a homotopy equivalence because $\kappa\big(\disccat{\pathcat}(\overline{X};P)\big)$ is identity-cofibrant (definition \sref{definition:identity-cofibrant_V-cat}): this follows from $P\hookrightarrow\overline{X}$ being a cofibration, together with lemma \sref{lemma:X_well_pointed_implies_loopsX_well_pointed} and remark \sref{remark:metrizable_spaces_kTop}. Thus we are left with proving that 
\begin{equation}\label{equation:aux_ev_Bar(1,path(X,P))->X}
ev:\realization{\BarConst\big(1,\kappa\big(\disccat{\pathcat}(\overline{X};P)\big),1\big)}\rTo\overline{X}
\end{equation}
is a weak equivalence. This is an immediate consequence of the natural isomorphism
\begin{align*}
\realization{\BarConst\big(1,\kappa\big(\disccat{\pathcat}(\overline{X};P)\big),1\big)}&=\coprod_{p\in P}\realization{\BarConst\big(1,\kappa\big(\disccat{\pathcat}(X;\set{p})\big),1\big)}\\
&=\coprod_{p\in P}B\big(\kappa\,H(\overline{X},p,p)\big)
\end{align*}
where $B\big(\kappa\,H(\overline{X},p,p)\big)$ denotes the classifying space (computed in $k\Top$) of the topological group $\kappa\,H(\overline{X},p,p)$ of Moore loops based at $p$. The map \seqref{equation:aux_ev_Bar(1,path(X,P))->X} is seen to be a weak equivalence as an immediate consequence of lemma 15.4 of \cite{May}, which shows that $B\big(\kappa\,H(\overline{X},p,p)\big)$ maps by a weak equivalence to the path component of $p$ in $\overline{X}$. Note only that the map in lemma 15.4 of \cite{May} differs from \seqref{equation:aux_ev_Bar(1,path(X,P))->X} by a reversal of the simplices, i.e.\ a homeomorphism of the source (compare the formula there with the formula in construction \sref{construction:ev_real(pathX)->X}).

Assume now that $X$ is homotopy equivalent to a CW-complex. In order to prove that
\[ \mathtt{ev}:\hocolim_{\kappa(\disccat{\pathcat}X)}1\To X \]
is a homotopy equivalence, it is enough to show that \seqref{equation:aux_ev_Bar(1,path(X,P))->X} is a homotopy equivalence. From what we have already proved, it suffices to show that the source of \seqref{equation:aux_ev_Bar(1,path(X,P))->X} is homotopy equivalent to a CW-complex. This follows from the homotopy equivalence
\[ \realization{\BarConst\big(1,\realization{S\big(\disccat{\pathcat}(\overline{X};P)\big)},1\big)}\xTo{\ \sim\ }\realization{\BarConst\big(1,\kappa\big(\disccat{\pathcat}(\overline{X};P)\big),1\big)} \]
where the $k\Top$-category $\realization{S\big(\disccat{\pathcat}(\overline{X};P)\big)}$ is obtained by applying $S$ and then geometric realization to $\kappa\big(\disccat{\pathcat}(\overline{X};P)\big)$. That map is a homotopy equivalence because the canonical $k\Top$-functor from which it arises
\[ F:\realization{S\big(\disccat{\pathcat}(\overline{X};P)\big)}\To\kappa\big(\disccat{\pathcat}(\overline{X};P)\big) \]
is an essentially surjective local homotopy equivalence, and therefore a weak equivalence, of identity-cofibrant $k\Top$-categories. We just need to check all these conditions for $F$. The functor $F$ is obviously essentially surjective, and a local weak equivalence. We have proved that the target of $F$ is identity-cofibrant earlier in this proof, and the source is clearly identity-cofibrant. Since the morphism spaces of the source of $F$ are CW-complexes, it remains to show that the morphism spaces of the target of $F$ are homotopy equivalent to CW-complexes.

We are thus left with proving that $H(\overline{X},p,p)$ is homotopy equivalent to a CW-complex for each $p\in P$ (recall observation \sref{remark:metrizable_spaces_kTop}). Since $H(\overline{X},p,p)$ is homotopy equivalent to $\Omega_p\overline{X}$, we can use the results from \cite{Milnor} (namely, corollary 3) to finish our proof. All we need to check is that the pair $(\overline{X},\set{p})$ is homotopy equivalent (as a pair) to a CW-pair. This follows easily from our current assumption that $X$, and hence $\overline{X}$, is homotopy equivalent to a CW-complex, together with the fact that the inclusion $\set{p}\hookrightarrow\overline{X}$ is a Str{\o}m cofibration.
\end{proof}

\begin{remark}
Our assumption that $X$ be metrizable serves only to nullify the effects of applying the functor $\kappa$ (by observation \sref{remark:metrizable_spaces_kTop}), thus minimizing any complications from switching to $k\Top$.
\end{remark}

We will now translate these results to the simplicial world.

\begin{construction}
Let $X$ be a topological space.\\
Define the map of simplicial sets
\[ \mathtt{ev}:\hocolim_{S(\disccat{\pathcat}X)}1\To SX \]
to be the adjoint to the map in $k\Top$
\[ \realization{\hocolim_{S(\disccat{\pathcat}X)}1}= \hocolim_{\realization{S(\disccat{\pathcat}X)}}1\xTo{\,\sim\,}\hocolim_{\kappa(\disccat{\pathcat}X)}1\xTo{\mathtt{ev}}X \]
where the middle arrow is a weak equivalence induced by the canonical $k\Top$-functor
\[ \realization{S\big(\disccat{\pathcat}X\big)}\To\kappa\big(\disccat{\pathcat}X\big) \]
which is an essentially surjective local weak equivalence.\\
This map is natural in the topological space $X$.
\end{construction}

The following proposition is an easy consequence of \sref{lemma:ev_hocolim_equiv_X_kTop}.

\begin{proposition}
If $X$ is a metrizable topological space, the map
\[ \mathtt{ev}:\hocolim_{S(\disccat{\pathcat}X)}1\To SX \]
is a weak equivalence of simplicial sets.
\end{proposition}

\begin{remark}
The condition that $X$ be metrizable is not essential.
\end{remark}

\begin{remark}\label{remark:hocolim_op_path_equiv_X}
Noticing that
\[ \hocolim_{\op{A}}1=1\tensor^\derived_{\op{A}}1=1\tensor^\derived_A 1=\hocolim_A 1 \]
for any $\sSet$-category $A$, we obtain a $\sSet$-natural map
\[ \mathtt{ev}:\hocolim_{\op{S(\disccat{\pathcat}X)}}1\To SX \]
for each topological space $X$, which is a weak equivalence when $X$ is metrizable.
\end{remark}

\section{Relation to $\M(M)$}\label{section:relation_invariant_M(M)}

In this section we will show that the invariant $\T^G(-;M)$ is a homotopy colimit along (the simplicial category associated with) $\disccat{\Total^G_n[M]}$. Since $\disccat{\Total^G_n[M]}$ is weakly equivalent (by a zig-zag) to $\M(M)$ (propositions \sref{proposition:equivalence_T^G_n} and \sref{proposition:M(M)_zig-zag_equiv_TotalM}), this connects the invariant $\T^G(-;M)$ to the category $\M(M)$. Let
\[ S\pi:S\disccat{\Total^G_n[M]}\To S\E^G_n \]
be the canonical projection.

\begin{proposition}\label{proposition:invariant_hocolim_Total^G_n}
Let $n\in\NN$, and $G$ a topological group over $GL(n,\RR)$. Let $M$ be a $n$-manifold with a $G$-structure.\\
Let $C$ be a symmetric monoidal simplicial model category with cofibrant unit.\\
There is a natural zig-zag of weak equivalences in $C$
\[ \T^G(A;M)\xlongleftarrow{\;\sim\;}\bullet\xTo{\;\sim\;}\hocolim_{S(\disccat{\Total^G_n[M]})}(A\circ S\pi) \]
for each objectwise cofibrant $S\E^G_n$-algebra $A$ in $C$.
\end{proposition}
\begin{proof}
The object $\bullet$ is given by
\[ \bullet\defeq\Big(\hocolim_{\op{(\disccat{G(-)})}}1\Big)\tensor^\derived_{\E^G_n} A \]
where the internal $\Cat(\sSet)$-valued functor
\[ G:\op{\internal\big(S\E^G_n\big)}\To\Cat(\sSet) \]
is simply (recall definition \sref{definition:path_category_presheaf} for the meaning of ``$\pathcat$'' in this case)
\[ G\defeq\Cat(S)\big(\pathcat\circ\internal\E^G_n[M]\big) \]

By proposition \sref{proposition:Groth_transfer_categories}
\[ \Groth(G)=\Cat(S)\left(\Groth\big(\pathcat\circ\internal\E^G_n[M]\big)\right)=\Cat(S)\big(\Total^G_n[M]\big)\]
where the last equality comes from the definition of $\Total^G_n[M]$. Proposition \sref{proposition:transfer_internal_enriched_categories} now ensures
\[ \disccat{\Groth(G)}=S\big(\disccat{\Total^G_n[M]}\big) \]
The weak equivalence
\[ \bullet\xTo{\;\sim\;}\hocolim_{S(\disccat{\Total^G_n[M]})}(A\circ S\pi) \]
is therefore a consequence of corollary \sref{corollary:Groth_hocolim}. The weak equivalence
\[ \T^G(A;M)=S\E^G_n[M]\,\tensor^\derived_{\mathclap{S\E^G_n}}\,A\xlongleftarrow{\;\sim\;}\bullet \]
is induced by the natural weak equivalence (see remark \sref{remark:hocolim_op_path_equiv_X})
\[ \mathtt{ev}:\hocolim_{\op{S(\disccat{\pathcat}X)}}1\To SX \]
applied to the values (definition \sref{definition:value_internal_Top_presheaf}) of $\internal\E^G_n[M]$:
\[ X=\internal\E^G_n[M](k\times\RR^n)=\REmb^G_n(k\times\RR^n,M) \]
for $k\in\NN$. Note that $\REmb^G_n(k\times\RR^n,M)$ is a metrizable space for each $k\in\NN$.
\end{proof}

\section{Relation to topological Hochschild homology}\label{section:invariant_relation_THH}

In this section, we will apply the results of chapter \ref{chapter:sticky_conf_S^1} to conclude that $\T^G(-;M)$ recovers topological Hochschild homology of associative ring spectra, when $G=1$ and $M$ is the parallelized manifold $S^1$. We assume that $(\spectra,\wedge,S)$ is a symmetric monoidal simplicial model category in which the unit $S$ is cofibrant. This holds for the category of symmetric spectra.

Recall from section \sref{section:PROPs_augmented_embeddings} that there are weak equivalences of $\Top$-PROPs
\[ \omega:\E^1_1\xTo{\sim}\E^{\smash{GL^+(1,\RR)}}_1\xTo{\sim}\Ass \]
We will denote the corresponding weak equivalence of $\sSet$-PROPs by
\[ S\omega:S\E^1_1\xTo{\sim}\Ass \]

\begin{proposition}
Let $\underline{A}$ be an objectwise cofibrant $\Ass$-algebra in the symmetric monoidal category of spectra, $(\spectra,\wedge,S)$. Let $A$ denote the underlying associative monoid of $\underline{A}$ (see example \sref{example:associative_PROP}).\\
There exists a zig-zag of weak equivalences in $\spectra$ connecting $THH(A)$ and $\T^1(\underline{A}\circ S\omega,S^1)$, where $S^1$ is viewed as a parallelized manifold. The zig-zag is natural in the $\Ass$-algebra $\underline{A}$.
\end{proposition}
\begin{proof}
According to proposition \sref{proposition:invariant_hocolim_Total^G_n} there is a natural zig-zag of weak equivalences
\begin{equation}\label{equation:aux_T(A,S^1)_equiv_THH(A)_1}
\T^1(\underline{A}\circ S\omega,S^1)\xlongleftarrow{\sim}\bullet\xTo{\sim}\hocolim_{S(\disccat{\Total^1_1[S^1]})}(\underline{A}\circ S\omega\circ S\pi)
\end{equation}

Consider now the diagram
\begin{diagram}[midshaft,hug]
\pi_0\big(\mathcal{Z}_{S^1}\big)&\rTo^{\ \sref{proposition:M(M)_zig-zag_equiv_TotalM}\ }_{\ \sim\ }&\pi_0\big(\M(S^1)\big)&\lTo^{\ \sref{proposition:rho_RR_MZ(RR)->M(S^1)}\ }_{\ \sim\ }&\pi_0\big(\M_\ZZ(\RR)\big)\\
\dTo^{\sref{proposition:M(M)_zig-zag_equiv_TotalM}}_{\rotc{90}{$\scriptstyle\sim$}}&&&\rdDashto_{\sim}&\dTo^{\rotc{-90}{$\scriptstyle\sim$}}_{\sref{proposition:ZO_equiv}}\\
\pi_0\big(\disccat{\Total_1[S^1]}\big)&&\rDashto^{\sim}_{f}&&\EE
\end{diagram}
where the full arrows are equivalences of categories determined by the references next to the arrows. Thus we can construct the dashed arrows in an essentially unique way so that the diagram commutes up to natural isomorphisms. In conclusion, we obtain a weak equivalence
\[ F:\disccat{\Total_1[S^1]}\xTo[\proj]{\sim}\pi_0\big(\disccat{\Total_1[S^1]}\big)\xTo[f]{\sim}\EE \]
of $\Top$-categories, since $\Total_1[S^1]$ is homotopically discrete by propositions \sref{proposition:M(M)_zig-zag_equiv_TotalM} and \sref{corollary:M(S^1)_locally_homotopically_discrete}.

Consider now the diagram
\begin{diagram}[midshaft]
\disccat{\Total^1_1[S^1]}&\rTo^{\ \;\disccat{q}\;\ }_{\sim}&\disccat{\Total_1[S^1]}&\rTo^{\ F\ }_{\sim}&\EE\\
\dTo{\pi}&&&&\dTo^{\psi}_{\sref{construction:psi_EE->OrdSigma}}\\
\E^1_1&&\rTo{\omega}&&\Ord\Sigma
\end{diagram}
where $\disccat{q}$ is the weak equivalence from lemma \sref{lemma:local_equivalence_T^G_n}. We leave to the reader the straightforward check that this diagram commutes up to natural isomorphism. This is true assuming we chose the correct orientation on $S^1$: this orientation depends on the choice of $\omega$. Applying the singular simplicial set functor, $S$, to the diagram preserves the weak equivalences. Thus we get a weak equivalence
\begin{equation}\label{equation:aux_T(A,S^1)_equiv_THH(A)_2}
\begin{split}
\hocolim_{S(\disccat{\Total^1_1[S^1]})}(\underline{A}\circ S\omega\circ S\pi)&\simeq\hocolim_{S(\disccat{\Total^1_1[S^1]})}\big(\underline{A}\circ\psi\circ SF\circ S\disccat{q}\big)\\
&\xTo{\sim}\hocolim_{\EE}(\underline{A}\circ\psi)
\end{split}
\end{equation}
given that both $SF$ and $S\disccat{q}$ are weak equivalences, and therefore homotopy cofinal. Finally, proposition \sref{proposition:THH_hocolim_EE} gives us a natural zig-zag of weak equivalences
\begin{equation}\label{equation:aux_T(A,S^1)_equiv_THH(A)_3}
\hocolim_{\EE}(\underline{A}\circ\psi)\xlongleftarrow{\sim}\bullet\xTo{\sim}THH(A)
\end{equation}

Putting together \seqref{equation:aux_T(A,S^1)_equiv_THH(A)_1}, \seqref{equation:aux_T(A,S^1)_equiv_THH(A)_2}, and \seqref{equation:aux_T(A,S^1)_equiv_THH(A)_3} gives the required natural zig-zag of weak equivalences.
\end{proof}

\comment{
\section{Comparison with other constructions}
Salvatore: map to Fulton-MacPherson operads and right modules\\
Lurie: proposition 4.2.4.1 of HTT \cite{Lurie3}
}



\begin{singlespace}

\nocite{*}
\bibliographystyle{amsalpha}
\bibliography{thesis}

\end{singlespace}

\end{document}